\setheadfoot{\onelineskip}{2\onelineskip} 
\DeclareMathAlphabet{\mathpzc}{OT1}{pzc}{m}{it}
\newcommand{\bigexists}{%
\mathop{\lower.9ex\hbox{%
   \scalebox{1.9}{\ensuremath{\exists}}}}\limits}
\DeclareFontFamily{U}{mathx}{\hyphenchar\font45}
\DeclareFontShape{U}{mathx}{m}{n}{
      <5> <6> <7> <8> <9> <10>
      <10.95> <12> <14.4> <17.28> <20.74> <24.88>
      mathx10
      }{}
\DeclareSymbolFont{mathx}{U}{mathx}{m}{n}
\DeclareMathAccent{\widecheck}{0}{mathx}{"71}
\newif\ifpgfshaperectangleroundnortheast
\newif\ifpgfshaperectangleroundnorthwest
\newif\ifpgfshaperectangleroundsoutheast
\newif\ifpgfshaperectangleroundsouthwest
\def\pgf@sh@bg@rectangle{%
  \pgfkeysgetvalue{/pgf/outer xsep}{\outerxsep}%
  \pgfkeysgetvalue{/pgf/outer ysep}{\outerysep}%
  \pgfpathmoveto{\pgfpointadd{\southwest}{\pgfpoint{\outerxsep}{\outerysep}}}%
  {\ifpgfshaperectangleroundnorthwest\else\pgfsetcornersarced{\pgfpointorigin}\fi%
    \pgfpathlineto{\pgfpointadd{\southwest\pgf@xa=\pgf@x\northeast\pgf@x=\pgf@xa}{\pgfpoint{\outerxsep}{-\outerysep}}}}%
  {\ifpgfshaperectangleroundnortheast\else\pgfsetcornersarced{\pgfpointorigin}\fi%
    \pgfpathlineto{\pgfpointadd{\northeast}{\pgfpoint{-\outerxsep}{-\outerysep}}}}%
  {\ifpgfshaperectangleroundsoutheast\else\pgfsetcornersarced{\pgfpointorigin}\fi%
    \pgfpathlineto{\pgfpointadd{\southwest\pgf@ya=\pgf@y\northeast\pgf@y=\pgf@ya}{\pgfpoint{-\outerxsep}{\outerysep}}}}%
  {\ifpgfshaperectangleroundsouthwest\else\pgfsetcornersarced{\pgfpointorigin}\fi%
    \pgfpathclose}}
\tikzset{
	oriented WD/.style={
		every to/.style={out=0,in=180,draw},
    label/.style={
    	font=\everymath\expandafter{\the\everymath\scriptstyle},
      inner sep=0pt,
      node distance=2pt and -2pt},
    semithick,
    node distance=1 and 1,
    decoration={markings, mark=at position \stringdecpos with \stringdec},
    ar/.style={postaction={decorate}},
    execute at begin picture={\tikzset{
    	x=\bbx, y=\bby,
      every fit/.style={inner xsep=\bbx, inner ysep=\bby}}}
    },
    string decoration/.store in=\stringdec,
    string decoration={\arrow{stealth};},
    string decoration pos/.store in=\stringdecpos,
    string decoration pos=.7,
    bbx/.store in=\bbx,
    bbx = 1.5cm,
    bby/.store in=\bby,
    bby = 1.5ex,
    bb port sep/.store in=\bbportsep,
    bb port sep=1.5,
    bb port length/.store in=\bbportlen,
    bb port length=4pt,
    bb penetrate/.store in=\bbpenetrate,
    bb penetrate=0,
    bb min width/.store in=\bbminwidth,
    bb min width=1cm,
    bb rounded corners/.store in=\bbcorners,
    bb rounded corners=2pt,
    bb spider/.style={
    	bb port sep=1, bb port length=10pt, bbx=.4cm, bb min width=.4cm, bby=.8ex},
    bb small/.style={
    	bb port sep=1, bb port length=2.5pt, bbx=.4cm, bb min width=.4cm, bby=.7ex},
		bb medium/.style={
			bb port sep=1, bb port length=2.5pt, bbx=.4cm, bb min width=.4cm, bby=.9ex},
    bb/.code 2 args={
    	\pgfmathsetlengthmacro{\bbheight}{\bbportsep * (max(#1,#2)+1) * \bby}
      \pgfkeysalso{draw,minimum height=\bbheight,minimum
       width=\bbminwidth,outer sep=0pt,
         rounded corners=\bbcorners,thick,
         prefix after command={\pgfextra{\let\fixname\tikzlastnode}},
         append after command={\pgfextra{\draw
            \ifnum #1=0{} \else foreach \i in {1,...,#1} {
            	($(\fixname.north west)!{\i/(#1+1)}!(\fixname.south west)$) +(-\bbportlen,0) coordinate (\fixname_in\i) -- +(\bbpenetrate,0) coordinate (\fixname_in\i')}\fi 
            \ifnum #2=0{} \else foreach \i in {1,...,#2} {
            	($(\fixname.north east)!{\i/(#2+1)}!(\fixname.south east)$) +(-
\bbpenetrate,0) coordinate (\fixname_out\i') -- +(\bbportlen,0) coordinate (\fixname_out\i)}\fi;
           }}}
		},
			bb name/.style={
     	append after command={
				\pgfextra{\node[anchor=north] at (\fixname.north) {#1};}
			}
		}
  }
\tikzset{
	unoriented WD/.style={
  	every to/.style={draw},
  	shorten <=-\penetration, shorten >=-\penetration,
  	label distance=-2pt,
  	thick,
  	node distance=\spacing,
  	execute at begin picture={\tikzset{
  		x=\spacing, y=\spacing, circuit logic US, tiny circuit symbols}
		}
  },
  pack size/.store in=\psize,
  pack size = 12pt,
	penetration/.store in=\penetration,
	penetration = 0pt,
  spacing/.store in=\spacing,
  spacing = 8pt,
  link size/.store in=\lsize,
  link size = 2pt,
  pack color/.store in=\pcolor,
  pack color = blue,
 	pack inside color/.store in=\picolor,
  pack inside color=blue!20,
 	pack outside color/.store in=\pocolor,
  pack outside color=blue!50!black,
 	surround sep/.store in=\ssep,
  surround sep=8pt,
 	link/.style={
  	circle, 
  	draw=black, 
  	fill=black,
  	inner sep=0pt, 
 		minimum size=\lsize
 	},
  pack/.style={
 		circle, 
 		draw = \pocolor, 
  		fill = \picolor,
  		inner sep = .25*\psize,
 		minimum size = \psize
  },
  func/.style={
  	pack,
		rectangle,
		rounded corners=.5*\psize,
		inner ysep=.125*\psize,
		minimum width=1.125*\psize,
		inner xsep=.25*\psize,
  },
  funcr/.style={
    func,
    rectangle round north west=false, 
		rectangle round south west=false,
  },
  funcl/.style={
    func,
		rectangle round north east=false, 
		rectangle round south east=false,
  },
  funcu/.style={
    func,
		rectangle round south east=false, 
		rectangle round south west=false,
  },
  funcd/.style={
    func,
		rectangle round north east=false, 
		rectangle round north west=false,
  },
  outer pack/.style={
 		ellipse, 
 		draw,
  	inner sep=\ssep,
  	color=gray,
 	},
  intermediate pack/.style={
 		ellipse,
 		dashed, 
  	draw,
  	inner sep=\ssep,
 		color=\pocolor,
 	},
}
\tikzset{
	spider diagram/.style={
		every to/.style={out=0, in=180, draw, thick},
		thick,
		execute at begin picture={\tikzset{
    	x=\leglen, y=\leglen/3}}
	},
	dot size/.store in=\dotsize,
	dot size = 5pt,
	dot fill/.store in=\dotfill,
	dot fill = black,
	leg length/.store in=\leglen,
	leg length = 15pt,
	baby/.style={dot size = 2pt, leg length = 6pt},
	young/.style={dot size = 3pt, leg length = 10pt},
	special spider/.code n args={4}{
		\pgfkeysalso{circle, draw, thick, inner sep=0, fill=\dotfill, minimum width=\dotsize,
  		prefix after command={\pgfextra{\let\fixname\tikzlastnode}},
  		append after command={\pgfextra{
  			\ifnum #1=0{} \else {\foreach \i in {1,...,#1} {
					\tikzmath{\anglei={-90*(#1+1-2*\i)/#1};}
  				\draw [thick]
						(\fixname) .. controls 
						($(\fixname.center)-(\anglei:#3/3)$) and ($(\fixname.center)-(\anglei:#3*2/3)$) .. 
						({$(\fixname.center)-(\anglei:#3*2/3)$}-|{$(\fixname.center)-(#3,0)$}) coordinate (\fixname_in\i);
  			}}\fi
  			\ifnum #2=0{} \else {\foreach \i in {1,...,#2} {
					\tikzmath{\anglei={90*(#2+1-2*\i)/#2};}
  				\draw [thick]
						(\fixname.center) .. controls 
						($(\fixname.center)+(\anglei:#4/3)$) and ($(\fixname.center)+(\anglei:#4*2/3)$) .. 
						({$(\fixname.center)+(\anglei:#4*2/3)$}-|{$(\fixname.center)+(#4,0)$}) coordinate (\fixname_out\i);
  			}}\fi
  		}}
		}
	},
	spider/.code 2 args={
		\pgfkeysalso{special spider={#1}{#2}{\leglen}{\leglen}}
	}
}
\tikzset{
	inner WD/.style={
		unoriented WD, 
		surround sep=2pt, 
		font=\tiny, 
		anchor=center
	}
}
\tikzset{
  function/.style={->, thin, shorten <=4pt, shorten >=4pt}
}
\tikzset{
  tick/.style={
  	postaction={
    	decorate,
      decoration={
      	markings, mark=at position 0.5 with {
					\draw[-] (0,.4ex) -- (0,-.4ex);
				}
			}
		}
	}
} 
\newcommand{\tickar}{\begin{tikzcd}[baseline=-0.5ex,cramped,sep=small,ampersand 
replacement=\&]{}\ar[r,tick]\&{}\end{tikzcd}}
\newcommand{\simpletheta}{
\begin{tikzpicture}[unoriented WD, surround sep=2pt, pack size=6pt, font=\tiny, baseline=(theta.base)]
	\node[pack] (theta) {$\theta$};
	\draw (theta.west) -- +(-2pt, 0);
	\draw (theta.east) -- +(2pt, 0);
\end{tikzpicture}
}
\newcommand{\dectheta}{\begin{tikzpicture}[unoriented WD, surround sep=2pt, font=\tiny, pack size=6pt, baseline=(theta.base)]
	\node[pack] (theta) {$\theta$};
	\draw (theta.west) to[pos=1] node[left=-3pt] {$\Gamma_1$} +(-2pt, 0);
	\draw (theta.east) to[pos=1] node[right=-3pt] {$\Gamma_2$} +(2pt, 0);
\end{tikzpicture}}
  \setlist{noitemsep, nolistsep}
	\setlist[description]{leftmargin=0em, itemindent=2em}
\theoremstyle{plain}
\newtheorem{theorem}{Theorem}[chapter] 
\newtheorem{proposition}[theorem]{Proposition}
\newtheorem{corollary}[theorem]{Corollary}
\newtheorem{lemma}[theorem]{Lemma}
\theoremstyle{definition}
\newtheorem{definition}[theorem]{Definition}
\newtheorem{notation}[theorem]{Notation}
\newtheorem*{axiom*}{Axiom}
\theoremstyle{remark}
\newtheorem{example}[theorem]{Example}
\newtheorem{remark}[theorem]{Remark}
\newtheorem{warning}[theorem]{Warning}
\newcommand{\Set}[1]{\mathrm{#1}}
\newcommand{\ord}[1]{\underline{#1}}
\newcommand{\const}[1]{\mathtt{#1}}
\newcommand{\cat}[1]{\mathcal{#1}}
\newcommand{\ccat}[1]{\mathscr{#1}}
\newcommand{\Cat}[1]{{\mathsf{#1}}}
\newcommand{\CCat}[1]{\mathbb{\StrLeft{#1}{1}}\Cat{\StrGobbleLeft{#1}{1}}}
\newcommand{\funn}[1]{\mathrm{#1}}
\newcommand{\funr}[1]{\mathcal{#1}}
\newcommand{\ffunr}[1]{\mathbf{#1}}
\DeclareMathOperator{\ob}{\Set{Ob}}
\DeclareMathOperator{\im}{im}
\DeclareMathOperator{\inc}{inc}
\DeclarePairedDelimiter{\pair}{\langle}{\rangle}
\DeclarePairedDelimiter{\unary}{{\langle\,}}{{\,\rangle}}
\DeclarePairedDelimiter{\copair}{[}{]}
\DeclarePairedDelimiter{\classify}{{\raisebox{1pt}{$\ulcorner$}}}{{\raisebox{1pt}{$\urcorner$}}}
\DeclarePairedDelimiter{\abs}{\lvert}{\rvert}
\DeclarePairedDelimiter{\church}{\llbracket}{\rrbracket}
  \definecolor{darkblue}{rgb}{0,0,0.7} 
  \newcommand{\define}[1]{\emph{#1}}
\newcommand{\rrel}[1]{\CCat{Rel}_{#1}}
\newcommand{\rgcat}{\Cat{RgCat}}
\newcommand{\rgcalc}{\Cat{RgCalc}}
\newcommand{\rrgcat}[1][]{\CCat{RgCat}}
\newcommand{\rgpocat}{\Cat{RgPocat}}
\newcommand{\sub}{\Cat{Sub}}
\newcommand{\syn}{\ffunr{syn}}
\newcommand{\prd}{\ffunr{prd}}
\newcommand{\lsh}[2][T]{{#2}_!}
\newcommand{\ust}[2][T]{#2^*}
\newcommand{\tn}[1]{\textnormal{#1}}
\newcommand{\id}{\funn{id}}
\newcommand{\finset}{\Cat{FinSet}}
\newcommand{\smset}{\Cat{Set}}
\newcommand{\cospan}{\Cat{Cospan}}
\newcommand{\pposet}{\CCat{Poset}}
\newcommand{\ladj}{\Cat{LAdj}}
\newcommand{\radj}{\Cat{RAdj}}
\newcommand{\rradj}{\CCat{RAdj}}
\newcommand{\smcat}{\Cat{Cat}}
\newcommand{\ppocat}{\CCat{Pocat}}
\newcommand{\rela}[1]{\CCat{IntRel}_{#1}}
\newcommand{\func}[1]{\cat{R}_{#1}}
\newcommand{\str}{\const{str}}
\newcommand{\pr}{P}
\newcommand{\true}{\const{true}}
\newcommand{\ol}[1]{\overline{#1}}
\newcommand{\comp}{\mathtt{comp}}
\newcommand{\typeset}{\mathrm{T}}
\newcommand{\frc}[1][\typeset]{
  \ifthenelse{\equal{#1}{blank}}{\Cat{FRg}}{\Cat{FRg}(#1)}
}
\newcommand{\frb}[1][\typeset]{\CCat{FRg}(#1)}
\newcommand{\out}{\mathrm{out}}
\newcommand{\cocolon}{:\!}
\newcommand{\too}{\longrightarrow}
\newcommand{\tto}{\rightrightarrows}
\newcommand{\To}[1]{\xrightarrow{#1}}
\newcommand{\from}{\leftarrow}
\newcommand{\From}[1]{\xleftarrow{#1}}
\newcommand{\tofrom}{\leftrightarrows}
\newcommand{\surj}{\twoheadrightarrow}
\newcommand{\inj}{\rightarrowtail}
\renewcommand{\ss}{\subseteq}
\newcommand{\imp}{\Rightarrow}
\renewcommand{\iff}{\Leftrightarrow}
\newcommand{\op}{^\mathrm{op}}
\newcommand{\inv}{^{-1}}
\newcommand{\tp}{^\dagger}
\newcommand{\slice}[1]{_{/#1}}
\newcommand{\terminal}{\star}
\newcommand{\ppf}{\mathbb{P}_f}
\newcommand{\cp}{\mathbin{\fatsemi}}
\newcommand{\tens}{\oplus}
\newcommand{\unit}{0}
\newcommand{\powset}{\cat{P}}
\newcommand{\powfin}{\powset_{f}}
\newcommand{\supp}{\Set{Supp}}
\newcommand{\vars}{\Set{Vars}}
\newcommand{\type}{\Set{Tp}}
\newcommand{\Prod}[1][-]{\classify{#1}}
\newcommand{\nn}{\mathbb{N}}
\newcommand{\ajax}{\Cat{Ajax}}
\newcommand{\aajax}{\CCat{Ajax}}
\newcommand{\lax}{\Cat{Lax}}
\newcommand{\aadjmon}{\CCat{AdjMon}}
\newcommand{\msl}{\wedge\tn{-}\Cat{SL}}
\newcommand{\qqand}{\qquad\text{and}\qquad}
\newcommand{\qand}{\quad\text{and}\quad}
\newcommand{\exclude}[1]{}
\newcommand{\commentout}[1]{}
\newcommand{\adj}[5][30pt]{
\begin{tikzcd}[ampersand replacement=\&, column sep=#1]
  #2\ar[r, shift left=5pt, "{#3}"]\ar[r, phantom, "\Rightarrow" yshift=-.6pt]\&
  #5\ar[l, shift left=5pt, "{#4}"]
\end{tikzcd}
}
\newcommand{\adjr}[5][30pt]{
\begin{tikzcd}[ampersand replacement=\&, column sep=#1]
  #2\ar[r, shift left=5pt, "{#3}"]\ar[r, phantom, "\Leftarrow" yshift=-.6pt]\&
  #5\ar[l, shift left=5pt, "{#4}"]
\end{tikzcd}
}
\newcommand{\pb}[1][very near start]{\ar[dr, phantom, #1, "\lrcorner"]}
\begin{document}   

  \title{Graphical Regular Logic}
  \author{Brendan Fong and David I.\ Spivak\thanks{Spivak and Fong acknowledge support from AFOSR grants FA9550-14-1-0031 and FA9550-17-1-0058.}}
  \date{\vspace{-.3in}}

\maketitle

\begin{abstract}
  Regular logic can be regarded as the \emph{internal language} of regular
  categories, but the logic itself is generally not given a categorical
  treatment. In this paper, we understand the syntax and proof rules of regular
  logic in terms of the free regular category $\frc$ on a set $\typeset$. From this point of view,
  regular theories are certain monoidal 2-functors from a suitable 2-category of
  contexts---the 2-category of relations in $\frc$---to that of posets. Such functors assign to each context the
  set of formulas in that context, ordered by entailment. We refer to such a
  2-functor as a \emph{regular calculus} because it naturally gives rise to a
  graphical string diagram calculus in the spirit of Joyal and Street. Our key
  aim to prove that the category of regular categories is essentially reflective
  in that of regular calculi. Along the way, we demonstrate how to use this
  graphical calculus.
  
  Keywords: regular logic, category theory, primitive positive formula
\end{abstract}

\chapter{Introduction}\label{chap.intro}

Regular logic is the fragment of first order logic generated by equality ($=$),
true ($\true$), conjunction ($\wedge$), and existential quantification ($\exists$). A
defining feature of this fragment is that it is expressive enough to define
\emph{functions} and \emph{composition} of functions, or more generally of
relations: given relations $R \subseteq X \times Y$ and $S \subseteq Y \times
Z$, their composite is given by the formula
\[
  R \cp S = \{(x,z) \mid \exists y.R(x,y)\wedge S(y,z)\}.
\]
Indeed, regular logic is the internal language of regular categories, which may
in turn be understood as a categorical characterization of the minimal structure
needed to have a well-behaved notion of relation.

While regular categories put emphasis on the notion of \emph{binary} relation, the
existence of finite products allows them to handle $n$-ary relations---that is,
subobjects of $n$-fold products---and their composition. To organize more complicated multi-way composites of relations, many fields have
developed some notion of wiring diagram. A good amount of recent work, including but not
limited to control theory
\cite{bonchi2014categorical,baez2015categories,fong2016categorical}, database
theory and knowledge representation
\cite{bonchi2018graphical,patterson2017knowledge}, electrical engineering
\cite{baez2018compositional}, and chemistry \cite{baez2017compositional},
all serve to demonstrate the link between these languages and categories for which the morphisms
are relations. 

A first goal of this paper is to clarify the link between regular logic and
these various graphical languages. In doing so, we provide a new diagrammatic syntax for regular
logic, the titular \emph{graphical regular logic}. Rather than pursue a direct
translation with the classical syntax for first order logic, we demonstrate a tight connection between graphical regular logic and
the notion of \emph{regular category}. A second goal, then, is to repackage the structure of a
regular category into terms that cleanly reflect its underlying logical theory. We call
the resulting categorical structure a \emph{regular calculus}. Regular calculi are based on free regular categories, so let's begin there. 

We will show that the \emph{free regular category} $\frc[blank]$ on a singleton set can
be obtained by freely adding a fresh terminal object to $\finset\op$. Here is a
depiction of a few objects in $\frc[blank]$:
\begin{equation}\label{eqn.free_reg}
\begin{tikzcd}
	0&
	s\ar[l, >->]&
	1\ar[l, ->>]\ar[r]&
	2\ar[l, shift left=5pt]\ar[l, shift right=5pt]&
	\cdots
\end{tikzcd}
\end{equation}
The object $s$ is the coequalizer of the two distinct maps $2\tto 1$, so in a
sense it prevents the unique map $1\to 0$ from being a regular epimorphism. Thus
one may think of $s$ as representing the \emph{support} of an abstract object in
a regular category. In $\smset$, the support of any object is either empty or singleton, but in general the concept is more refined. For example, the topos of sheaves on a space $X$ is regular, and the support of a sheaf $r$ is the union $U\ss X$ of all open sets on which $r(U)$ is nonempty.

For any small set $\typeset$ of \emph{types} (also known as \emph{sorts}), the free regular category on $\typeset$ is then the $\typeset$-fold coproduct of regular categories $\frc\coloneqq\bigsqcup_\typeset\frc[blank]$. That is, we have an adjunction
\begin{equation}\label{eqn.fr_ob_adj}
\adj[40pt]{\smset}{\frc[blank]}{\ob}{\rgcat}
\end{equation}
which we will construct explicitly in \cref{thm.fr_is_free}. For any
regular category $\cat{R}$, the counit provides a canonical regular functor, which
we denote $\Prod\colon\frc[\ob\cat{R}]\to\cat{R}$. Note also that this extends to a
2-functor $\Prod\colon \rrel{\frc[\ob\cat{R}]}\to\rrel{\cat{R}}$ between the
associated relation bicategories.

Write $\frb\coloneqq \rrel{\frc}$ for this bicategory of relations. Just as $\frc[blank]$ is closely related to the opposite
of the category of finite sets (see \eqref{eqn.free_reg}), the objects in $\frb$ are, at a first
approximation, much like finite sets $\ord{n}$ equipped with a function $\ord{n}
\to \typeset$, and morphisms are much like corelations: equivalence relations
on some coproduct $\ord{n+n'}$. We draw objects and morphisms as on the left and
right below:
\[
  \begin{aligned}
  \begin{tikzpicture}[inner WD]
    \node[pack, minimum size = 3ex] (rho) {};
    \draw (rho.180) to[pos=1] node[left] (w) {$y$} +(180:2pt);
    \draw (rho.75) to[pos=1] node[above] (n) {$z$} +(75:2pt);
    \draw (rho.-20) to[pos=1] node[right] (e) {$y$} +(-20:2pt); 
    \node[link,fill=white,thin] at (rho.270) {};
    \node[below=-.2 of rho.270] (s) {$w,x$};
  \end{tikzpicture}
  \end{aligned}
  \hspace{3cm}
  \begin{aligned}
  \begin{tikzpicture}[inner WD]
    \node[pack] (a) {};
    \node[outer pack, inner sep=10pt, fit=(a)] (outer) {};
    \node[link] (link1) at ($(a.west)!.6!(outer.west)$) {};
    \node[link] (link2) at ($(a.45)!.5!(outer.45)$) {};
    \node[link] (link3) at ($(a.-20)!.5!(outer.-20)$) {};
    \node[link,fill=white,thin] at ($(a.100) + (110:7pt)$) (dot) {};
    \node[above=-.3 of link1] {$x$};
    \node[above=-.3 of link2] {$y$};
    \node[above=-.2 of link3] {$y$};
    \node[above=-.3 of dot] {$w$};
    \draw (outer.west) -- (link1);
    \draw (a.40) -- (link2);
    \draw (link2) -- (outer.45);
    \draw (a.-20) -- (link3);
    \draw (link3) -- (outer.0);
    \draw (link3) -- (outer.-45);
  \end{tikzpicture}	
\end{aligned}
\]
The left-hand circle, equipped with its labeled ports and white dot, represents an object in $\frc$; we call this picture a \emph{shell}. Here each port represents
an element of the associated finite set $\ord{3}$, the white dot captures aspects related to the
support object $s$ of $\frc[blank]$, and the labels $x,y$ etc.\ are
elements of $\typeset$. In the right-hand morphism, the inner shell
represents the domain, outer shell represents the codomain, and the things between them---the connected components of
the wires and the white dots---represent the equivalence classes of the aforementioned equivalence relation. 

A regular calculus lets us think of each object $\Gamma\in\frb$---each
shell---as a context for formulas in some regular theory, and of each morphism,
i.e.\ each wiring diagram $\Gamma\tickar\Gamma'$, as a method for converting
$\Gamma$-formulas to $\Gamma'$-formulas, using $=$, $\true$,$\wedge$, and $\exists$. We next
want to think about how regular categories fit into this picture.

If $\cat{R}$ is a regular category, formulas in the associated regular theory are given by relations $x\ss r_1\times\cdots\times r_n$, where $x$ and the $r_i$ are objects in $\cat{R}$, i.e.\ $r_\bullet\colon\ord{n}\to\cat{R}$. Thus we could consider $\Gamma\coloneqq r_\bullet$ as a context, and this brings us back to the free regular category $\frb[\ob\cat{R}]$. The counit functor $\Prod\colon\frc[\ob\cat{R}]\to\cat{R}$ sends $\Gamma$ to $\Prod[\Gamma]\coloneqq r_1\times\cdots\times r_n$. A key feature of regular categories is that the subobjects $\sub_{\cat{R}}(r_1\times\cdots\times r_n)$ form a meet-semilattice, elements of which we call \emph{predicates} in context $\Gamma$. As we shall see, the collection of all
these semilattices, when related by the structure of $\frb[\ob\cat{R}]$,
includes enough data to recover the regular category $\cat{R}$ itself.

Indeed, consider the commutative diagram
\[
\begin{tikzcd}
	\frc[\ob\cat{R}]\ar[r, "\Prod"]\ar[d]&
	\cat{R}\ar[d]\\
	\frb[\ob\ccat{R}]\ar[r, "\Prod"']&
	\ccat{R}\ar[r, "{\ccat{R}(I,-)}"']&[20pt]
	\pposet
\end{tikzcd}
\]
where the vertical maps represent inclusions of a regular 1-category into its bicategory of relations, and the hom-2-functor $\ccat{R}(I,-)$ sends each object $r\in\ob\cat{R}=\ob\ccat{R}$ to the subobject lattice $\sub_{\cat{R}}(r)=\ccat{R}(I,r)$. We can denote the composite of the bottom maps as
\begin{equation}\label{eqn.construct_rels_on_obs}
	\sub_{\cat{R}}\Prod\colon\frb[\ob\cat{R}]\too\pposet.
\end{equation}
The domain $\frb[\ob\cat{R}]$ is a category of contexts and the functor $\sub_{\cat{R}}\Prod[\Gamma]$ assigns the poset of predicates to each context $\Gamma$.

As mentioned, we will show how to reconstruct $\cat{R}$---up to
equivalence---from the contexts $\Gamma\in\frc[\ob\cat{R}]$ and their predicate posets $\sub_\cat{R}\Prod[\Gamma]$ as in
\cref{eqn.construct_rels_on_obs}, once we give the abstract structure by which they hang
together. The question is, given any set $\typeset$, what extra structure do we need on a functor 
\[\pr\colon\frb\too\pposet\] 
in order to construct a regular category from it?

Whatever the required structure on $\pr$ is, of course $\sub_{\cat{R}}\Prod$ needs to have that structure. First of all, $\sub_{\cat{R}}\Prod$ is a 2-functor, and it happens to be the composite of $\rrel{\ulcorner-\urcorner}$ and $\sub_\cat{R}$. It is not hard to check that the 2-functor $\Prod$ is strong monoidal, whereas the 2-functor $\ccat{R}(I,-)$ is only lax monoidal: given objects $r_1,r_2\in\cat{R}$ the induced monotone map $\times\colon\sub_{\cat{R}}(r_1)\times\sub_{\cat{R}}(r_2)\to\sub_{\cat{R}}(r_1\times r_2)$ is not an isomorphism. However, $\sub_{\cat{R}}\Prod$ has a bit more structure than merely being a lax functor: each laxator has a left adjoint
\[
\adjr{1}{\true}{!}{\sub_{\cat{R}}(1)}
\qquad
\adjr[50pt]{\sub_{\cat{R}}(r_1)\times\sub_{\cat{R}}(r_2)}{\times}{\pair{\im_1,\  \im_2}}{\sub_{\cat{R}}(r_1\times r_2).}
\]

Abstractly, if $\ccat{R}$ and $\ccat{P}$ are monoidal 2-categories, we say that
a lax monoidal functor $\ccat{R} \to \ccat{P}$ is \emph{ajax} (``adjoint-lax'')
if its laxators $\rho$ and $\rho_{v,v'}$ are right adjoints in $\ccat{P}$.
Thus we have seen that $\sub_{\cat{R}}\Prod\colon\frb[\ob\cat{R}]\too\pposet$ is
ajax. This is precisely the structure required to reconstruct a regular
category. 

Ajax functors have the important property that they preserve adjoint monoids, a notion we introduce.
An \emph{adjoint monoid} is an object with both monoid and
comonoid structures, such that the monoid maps are right adjoint to their
corresponding comonoid maps. In particular, we will see that each object in
$\frb$ has a canonical adjoint monoid structure, and that adjoint monoids in
$\pposet$ are exactly meet-semilattices. This guarantees that ajax functors $\frb
\to \pposet$ send objects in $\frb$---contexts---to meet-semilattices.

We now come to our main definition.

\begin{definition}\label{def.reg_sketch}
A \emph{regular calculus} is a pair $(\typeset, \pr)$ where $\typeset$ is a set and  $\pr\colon\frb\to\pposet$ is an ajax 2-functor. 

A \emph{morphism} $(\typeset, \pr)\to(\typeset',\pr')$ of regular calculi is a pair $(F,F^\sharp)$ where $F\colon\typeset\to\typeset'$ is a function and $F^\sharp$ is a monoidal natural transformation
\[
\begin{tikzcd}[row sep=0pt]
	\typeset\ar[dd, "F"']&
	\frb
		\ar[dd, "{\frb[F]}"']\ar[dr, bend left=15pt, "\pr", ""' name=T]\\
	&&\pposet\\
	\typeset'&
	\frb[\typeset']\ar[ur, bend right=15pt, "\pr'"', "" name=T']
	\ar[from=T, to=T'-|T, twocell, "F^\sharp"']
\end{tikzcd}
\]
that is strict in every respect: all the required coherence diagrams of posets commute on the nose. We denote the category of regular calculi by $\rgcalc$.
\end{definition}
%
%
%
%

The goal of this paper is to prove that $\rgcat$ is \emph{essentially reflective
in $\rgcalc$} (see \cref{thm.main}). More precisely, this means:

\begin{theorem}
The ``predicates'' mapping in \cref{eqn.construct_rels_on_obs} extends to a fully faithful functor
\begin{equation}\label{eqn.rels}
\begin{aligned}
	\prd\colon\rgcat\to&\rgcalc\\
	\cat{R}\mapsto&\left(\ob\cat{R},\sub_{\cat{R}}\Prod\right),
\end{aligned}
\end{equation}
and this functor has a left adjoint, the ``syntactic category,''
\[\adj{\rgcalc}{\syn}{\prd}{\rgcat.}\]
Moreover, for any regular category $\cat{R}$, the counit functor $\syn(\prd(\cat{R}))\to\cat{R}$ is an equivalence.
\end{theorem}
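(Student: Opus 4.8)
The plan is to construct the two functors explicitly, establish the adjunction $\syn\dashv\prd$ by a natural hom-bijection, and then verify that the counit is a pointwise equivalence---which simultaneously delivers the full faithfulness of $\prd$. The functor $\prd$ is already essentially described: on objects it is $\cat{R}\mapsto(\ob\cat{R},\sub_{\cat{R}}\Prod)$, and the discussion preceding \cref{def.reg_sketch} shows $\sub_{\cat{R}}\Prod$ is ajax, so this lands in $\rgcalc$ (cf.\ \eqref{eqn.construct_rels_on_obs}). On morphisms, a regular functor $G\colon\cat{R}\to\cat{R}'$ preserves finite limits, monos, and images, hence restricts to monotone maps $\sub_{\cat{R}}(r)\to\sub_{\cat{R}'}(\ob G\,r)$; these assemble into the monoidal natural transformation $G^\sharp$ sitting over the relabelling $\frb[\ob G]$, and strictness holds because subobject posets and their reindexing maps are preserved on the nose. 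Functoriality of $\prd$ is then a routine check.

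The substantial work is $\syn$, which I would build as a \emph{syntactic category} of predicates-in-context. Because $\pr$ is ajax, the canonical adjoint-monoid structure on each $\Gamma\in\frb$ is sent to a meet-semilattice structure on $\pr(\Gamma)$, so each $\pr(\Gamma)$ is a poset of predicates closed under $\true$ and $\wedge$. I take the objects of $\syn(\typeset,\pr)$ to be pairs $(\Gamma,\phi)$ with $\Gamma\in\frb$ and $\phi\in\pr(\Gamma)$, and a morphism $(\Gamma,\phi)\to(\Delta,\psi)$ to be a \emph{functional predicate}: an element $\theta\in\pr(\Gamma\tens\Delta)$ lying below the laxator image of $(\phi,\psi)$ and satisfying totality and single-valuedness. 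Each of these conditions is expressible from the data at hand---the reindexing maps $\pr(\omega)$ along the projection, diagonal, and equality relations $\omega$ of $\frb$, the semilattice operations, and the existential quantifiers furnished by the left adjoints to the laxators (the ajax structure). Composition is relational composition, computed by existentially quantifying over the middle context using the composition $1$-cells of $\frb$, and identities are the equality predicates. The \textbf{main obstacle} is to prove that $\syn(\typeset,\pr)$ is actually \emph{regular}. The cleanest route is to recognize the full predicate structure as a locally posetal bicategory of relations---same objects, hom-posets $\{\theta\in\pr(\Gamma\tens\Delta)\mid \theta\le\phi\boxtimes\psi\}$ (where $\boxtimes$ is the laxator), and relational composition---and to check it is a unitary, tabular bicategory of relations. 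The ajax axioms are engineered to supply exactly the Frobenius reciprocity and Beck--Chevalley equalities this requires, with the tabulation of $\theta$ provided by $(\Gamma\tens\Delta,\theta)$ and its two functional projections. By the standard correspondence (Freyd--Scedrov) between unitary tabular allegories and regular categories, the category of maps---the left-adjoint $1$-cells, i.e.\ functional predicates---is regular, and it is precisely $\syn(\typeset,\pr)$. Functoriality of $\syn$ then follows since a calculus morphism $(F,F^\sharp)$ transports predicates compatibly with every operation above, hence sends functional predicates to functional predicates.

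With both functors in hand I would exhibit a natural bijection $\rgcat(\syn(\typeset,\pr),\cat{R})\iso\rgcalc((\typeset,\pr),\prd\cat{R})$. A regular functor out of $\syn(\typeset,\pr)$ is determined, via the universal property behind the adjunction \eqref{eqn.fr_ob_adj} extended to predicates, by an object of $\cat{R}$ for each type together with a structure-preserving assignment of predicates to predicates---exactly the data of a regular-calculus morphism into $\prd\cat{R}$; naturality is straightforward. The counit $\epsilon_\cat{R}\colon\syn(\prd\cat{R})\to\cat{R}$ is the identity on types and sends $(\Gamma,\phi)$ to the domain of the mono $\phi\hookrightarrow\Prod[\Gamma]$. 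Finally I would show each $\epsilon_\cat{R}$ is an equivalence. It is essentially surjective, since every $r\in\cat{R}$ is the image of $((r),\top)$, the one-type context on $r$ with its maximal predicate. It is fully faithful because a morphism $(\Gamma,\phi)\to(\Delta,\psi)$ in $\syn(\prd\cat{R})$ is by definition a functional relation between the subobjects $\phi$ and $\psi$, and the defining feature of a regular category is precisely that functional relations coincide with genuine morphisms $\phi\to\psi$. Since the counit is a pointwise equivalence, the general fact that a right adjoint is fully faithful exactly when its counit is invertible (here, an equivalence) yields that $\prd$ is fully faithful, completing the proof.
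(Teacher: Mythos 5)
Your definitions of $\prd$ and of $\syn(\typeset,\pr)$ (contexts paired with predicates; morphisms the total, deterministic relations lying below the laxator image), and your proof that the counit $\epsilon_{\cat{R}}\colon\syn(\prd(\cat{R}))\to\cat{R}$ is an equivalence, all agree in substance with the paper (\cref{prop.rels}, \cref{def.RT}, \cref{thm.characterize_functions}, \cref{prop.essential_reflection}). The genuine gap is your last step. You deduce full faithfulness of $\prd$ from ``the general fact that a right adjoint is fully faithful exactly when its counit is invertible (here, an equivalence).'' That fact concerns invertibility in the ambient 1-category: for the strict 1-adjunction $\syn\dashv\prd$ between the 1-categories $\rgcalc$ and $\rgcat$, it requires each $\epsilon_{\cat{R}}$ to be an \emph{isomorphism of categories}. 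Here $\epsilon_{\cat{R}}$ is only an equivalence, and visibly not an isomorphism: it sends $(\Gamma,\varphi)$ to the domain of the subobject $\varphi\ss\Prod[\Gamma]$, so it is many-to-one on objects (for instance $(\unary{a}\oplus\unary{b},\true)$ and $(\unary{a\times b},\true)$ have the same image). Moreover, the fact you cite is an ``iff,'' so your argument would prove too much: a strict 1-adjunction whose right adjoint is fully faithful forces the counit to be a pointwise isomorphism, which fails here. Hence full faithfulness cannot be extracted formally from the adjunction together with the counit being an equivalence; it needs its own, structure-specific proof. The paper supplies one: faithfulness in \cref{prop.rels}, where two regular functors with $\prd(\funr{F})=\prd(\funr{G})$ are shown to agree on morphisms by evaluating on graphs $\pair{\id_r,f}$; and fullness in \cref{cor.rels_full}, where a morphism $(F,F^\sharp)$ is realized as $\prd$ of the regular functor $\epsilon'_{\cat{R}}\cp\syn(F,F^\sharp)\cp\epsilon_{\cat{R}'}$, with the equality $\prd(\funr{F})=(F,F^\sharp)$ checked concretely rather than deduced abstractly. (At most faithfulness comes for free: since $\epsilon'\cp\epsilon=\id_{\cat{R}}$, the counit is a split epimorphism in $\rgcat$, and a right adjoint is faithful iff its counit components are epi; fullness is exactly the part that does not follow.)

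Two further differences from the paper are less serious but worth noting. First, you establish regularity of $\syn(\typeset,\pr)$ by verifying the axioms of a tabular, unitary bicategory of relations and invoking the Freyd--Scedrov/Carboni--Walters correspondence. This is a legitimate alternative, but it is precisely the route the paper discusses and declines in \cref{sec.CW}, opting instead for direct graphical constructions of the terminal object, pullbacks, and pullback-stable image factorizations (\cref{lemma.terminal}, \cref{lemma.pullbacks}, \cref{lemma.image_fact}, \cref{lemma.pb_stability}); the paper's stated reasons are self-containedness and that the functoriality of the Carboni--Walters correspondence is not documented precisely enough to feed into the adjunction---which is exactly what you need it for. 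Second, your hom-bijection ``via the universal property behind \eqref{eqn.fr_ob_adj} extended to predicates'' conceals the real work of constructing the unit of the adjunction; in the paper this is done explicitly via the poset isomorphism $\{t\le s\}\cong\sub_{\func{\pr}}(\Gamma,s)$ of \cref{prop.T_sub_RT}, after which the triangle identities still have to be verified.
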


In order to prove this result, we will also show that each object $(\typeset, \pr)\in\rgcalc$ can be understood as a graphical language for a theory in regular logic. Indeed, the usual syntactic category for that theory will be the regular category $\syn(\typeset, \pr)$. 

\subsection{Related work}

Regular categories were first defined by Barr \cite{barr:1971a}, as a way to
elucidate the structure present in abelian categories. Shortly thereafter, Freyd
and Scedrov were the first to make the connection to regular logic. Similarly to
the present work, they focused on the structure of the bicategory
of relations, seeking an axiomatization through the notion of an allegory, a
poset-enriched category (a \emph{po-category}) with an identity-on-objects
involution, such that every hom-poset is a meet-semilattice, and such that the
modular law holds \cite{freyd1990categories}. 

Carboni and Walters also sought to axiomatize these objects, defining functionally complete cartesian bicategories of relations
\cite{Carboni:1987a}. A cartesian bicategory is a monoidal po-category in which
every object is equipped with an adjoint monoid in a coherent way. Functionally
complete bicategories of relations further require that these monoids and
comonoids obey the Frobenius law, and that a sensible notion of image
factorization exists.

Both allegories and bicategories of relations take the structure of a regular
category, and decompress it into a (locally posetal) 2-categorical expression.
While regular calculi have similar features to both allegories and cartesian
bicategories, such as emphasizing that the hom-posets are meet-semilattices or
that there are adjoint monoid structures on each object, they represent this
data in terms of a \emph{functor} rather than a category. 

In the world of databases, regular formulas correspond to conjunctive queries, and entailment corresponds to query containment. A well-known theorem of Chandra
and Merlin states that (conjunctive) query containment is decidable; their
proof translates logical expressions into graphical representations
\cite{chandra1977optimal}. In more recent work, Bonchi, Seeber, and Soboci\'nski
show that the Chandra--Merlin approach permits an elegant
formalization in terms of the Carboni--Walters axioms for bicategories of
relations \cite{bonchi2018graphical}. Patterson has also considered bicategories of
relations, and their Joyal-Street string calculus \cite{joyal1991geometry}, as a graphical way of
capturing the regular logical aspects knowledge representation
\cite{patterson2017knowledge}.

Presenting regular categories using monoidal maps $\frb\to\pposet$ fits into an emerging pattern. In \cite{Spivak.Schultz.Rupel:2016a} it was shown that lax monoidal functors $1\text{--}\Cat{Cob}_\typeset\to\smset$ present traced monoidal categories, and in \cite{fong2019hypergraph} it was shown that lax monoidal functors $\cospan_\typeset\to\smset$ present hypergraph categories. But now in all three cases, the domain of the functor represents a particular language of string diagrams, and the codomain represents a choice of enriching category. The present paper can be seen as an extension of that work, showing that regular categories are something like poset-enriched hypergraph categories.

\subsection{Outline}

We begin in \cref{chap.regular_categories} with a section reviewing the
definition and basic properties of regular categories $\cat{R}$, emphasizing in
particular the construction of the symmetric monoidal po-category
$\rrel{\cat{R}}$ of relations in $\cat{R}$. In fact, we will say that a
po-category $\ccat{R}$ is a \emph{regular po-category} if it is isomorphic to
the relations po-category of some regular category
$\ccat{R}\cong\rrel{\cat{R}}$.

In \cref{chap.adjoint_monoids} we introduce the notion
of adjoint monoid. We show the category of adjoint monoids in a po-category
$\ccat{C}$ is given by the category of ajax monoidal functors $1 \to \ccat{C}$,
that adjoint monoids in $\pposet$ are meet-semilattices, that every object in a
relations po-category has a canonical adjoint monoid structure, and that the
subobject functor of a regular po-category is ajax.

In \cref{chap.free_reg} we turn our attention to free regular categories and
free regular po-categories on a set. In particular, we give an explicit
construction of the free regular category on a set $\typeset$ as the opposite of
the comma category $\finset\downarrow\powfin(\typeset)$; the free regular
po-category on $\typeset$ is its relations po-category. At this point we can
give our main definition: a regular calculus is an ajax functor from a free
regular po-category to that of posets. We then give a fully faithful functor
$\prd\colon\rgcat\to\rgcalc$, from regular categories to regular calculi.

In \cref{chap.graphical_reglog}, we introduce graphical regular
logic. First, we give an explicit, graphical description of the objects,
morphisms, and order in a free regular po-category. We then define the
\emph{graphical terms} of a regular calculus. Given a regular calculus $P\colon
\frb \to \pposet$, a graphical term is a morphism $\omega\colon \Gamma_1 \times
\dots \times \Gamma_k \tickar \Gamma_\out$ in $\frb$ together with elements
$\theta_i \in P(\Gamma_i)$ for each $i = 1,\dots, k$. We give rules for
composing and reasoning with these. Having set up our language, we now proceed towards the construction of a regular
category from a regular calculus.

In \cref{chap.relations}, we define the
po-category of internal relations of an regular calculus. This construction is
a relational version of the standard \emph{syntactic category} constructions: an object
is a context--predicate pair $(\Gamma,\varphi)$, where $\Gamma$ is an object of
$\frb$ and $\varphi \in P(\Gamma)$, and a morphism $(\Gamma,\varphi) \to
(\Gamma',\varphi')$ is a predicate $\theta$ in the joint context $\Gamma \times
\Gamma'$ that entails $\varphi$ and $\varphi'$.

In \cref{chap.functions}, we show that the category of left adjoints in the
po-category of internal relations, which we call the category of internal functions,
is a regular category. We explicitly construct limits and image factorizations
using graphical regular logic.

Finally, in \cref{chap.ess_refl}, we construct the functor $\syn\colon \rgcalc
\to \rgcat$ adjoint to $\prd$, and show that the two form an
essential reflection.

\subsection{Notation and 2-categorical background}\label{page.notation}

Let us fix some notation. Most is standard, but we highlight in particular
our use of $\cp$ for composition, of the term \emph{po-category} for locally
posetal 2-category, and of an arrow $\Rightarrow$ pointing in the direction of
the left adjoint to signify an adjunction.
\smallskip

\begin{itemize}

 \item We typically denote composition in diagrammatic order, so the composite of $f\colon A\to B$ and $g\colon B\to C$ is $f\cp g\colon A\to C$. We often denote the identity morphism $\id_c\colon c\to c$ on an object $c\in\cat{C}$ simply by the name of the object, $c$. Thus if $f\colon c\to d$, we have $(c\cp f)=f=(f\cp d)$.

  \item We may denote the terminal object of any category by $\terminal$, and
    the associated map from an object $c$ as $!\colon c \to \terminal$, but we denote the top element of any poset $P$ by $\true\in P$.

  \item We denote the universal map into a product by $\pair{f,g}$ and the universal map out of a coproduct by $\copair{f,g}$.

  \item Given a natural number $n\in\nn$, define $\ord{n}\coloneqq\{1,2,\ldots,n\}\in\finset$; in particular $\ord{0}=\varnothing$.

	\item Given a lax monoidal functor $F\colon\cat{C}\to\cat{D}$, we denote the laxators by $\rho\colon I\to F(I)$ and $\rho_{c,c'}\colon F(c)\otimes F(c')\to F(c\otimes c')$ for any $c,c'\in\cat{C}$. We use the same notation for longer lists, e.g.\ we write $\rho_{c,c',c''}$ for the canonical map $F(c)\otimes F(c')\otimes F(c'')\to F(c\otimes c'\otimes c'')$.

\end{itemize}

\paragraph{Symmetric monoidal po-categories.}
We use the term \define{po-category} to mean locally posetal 2-category, i.e.\ a
category enriched in partially ordered sets (posets). \define{Po-functors} are,
of course, poset-enriched functors (functors that preserve the local
order). The set of po-functors $\ccat{C}\to\ccat{D}$ itself has a natural order, where
$F\leq G$ iff $F(c)\leq G(c)$ for all $c\in\ccat{C}$. We define $\ppocat$ to be
the po-category of po-categories and po-functors. 

We use $\CCat{Xyz}$---with first character made blackboard bold---to denote
named po-categories and $\Cat{Xyz}$ for named 1-categories. We rely fairly
heavily on this; for example our notations for the free regular category and the
free regular po-category on a set $\typeset$ differ only in this way: $\frc$ vs.\
$\frb$.

A po-category is, in particular, a (strict) 2-category, and po-functors are
(strict) 2-functors. As such there is a forgetful functor $\ppocat\to\smcat$
sending each po-category and po-functor to its underlying 1-category and
1-functor. A \define{symmetric monoidal po-category} is a po-category $\ccat{C}$ 
together with po-functors $\otimes\colon \ccat{C} \times \ccat{C} \to \ccat{C}$ 
and $I\colon \terminal \to \ccat{C}$ whose underlying 1-structures form a
symmetric monoidal category.

The symmetric monoidal po-category $\pposet$ has posets $P$ as objects, monotone
maps $f\colon P \to Q$ as morphisms, and order given by $f \leq g$ iff $f(p)
\leq g(p)$ for all $p$.  Its monoidal structure is given by cartesian product
$P\times Q$, with the terminal poset $1$ the monoidal unit.

\paragraph{Adjunctions in a 2-category.}
Recall that, given a 2-category $\ccat{C}$, an \emph{adjunction in $\ccat{C}$}
consists of a pair of objects $c,d\in\ob\ccat{C}$, a pair of morphisms
$L\colon c\to d$ and $R\colon d\to c$, and a pair of 2-morphisms $\eta\colon
d\imp (L\cp R)$ and $\epsilon\colon (R\cp L)\imp c$ such that a pair of equations hold:
\[
  \id_L =
  \begin{aligned}
    \begin{tikzcd}[row sep=7pt, column sep=large]
      c\ar[dr, "L"]\ar[dd, equal]\\
      \ar[r, phantom, pos=.3, "\overset{\eta}{\Longrightarrow}" above=-6pt]&
      d\ar[dl, "R" description]\ar[dd, equal]\\
      c\ar[dr, "L"']\ar[r, phantom, pos=.7, "\underset{\epsilon}{\Longrightarrow}" below=-5pt]&~\\&
      d
    \end{tikzcd}
  \end{aligned}
  \qquad\qqand\qquad
  \id_R =
  \begin{aligned}
    \begin{tikzcd}[row sep=7pt, column sep=large]
      &
      d\ar[dl, "R"']\ar[dd, equal]\\
      c\ar[dd, equal]\ar[dr, "L" description]\ar[r, phantom, pos=.7, "\overset{\epsilon}{\Longrightarrow}" above=-6pt]&
      ~\\
      \ar[r, phantom, pos=.3, "\underset{\eta}{\Longrightarrow}" below=-5pt]&
      d\ar[dl, "R"]\\
      c
    \end{tikzcd}
  \end{aligned}
\]
Noting that both $\eta$ and $\epsilon$ always point in the direction of the left
adjoint $L$, we write
\[\adj{c}{L}{R}{d}\] 
to denote an adjunction, where the 2-arrow points in the direction of the left
adjoint. We sometimes write $L\dashv R$ inline, but are careful to avoid the $\vdash$ symbol in this context; \emph{the symbol $\vdash$ always means entailment}. We denote the category with the same objects and with left adjoints as
morphisms as $\ladj(\ccat{C})$.

\goodbreak
\chapter{Background on regular categories} \label{chap.regular_categories}

Regular categories are, roughly speaking, categories that have a good notion of relations. Relations, which we sometimes call \emph{predicates}, are subobjects of products, and composites of relations are formed using pullbacks and image factorizations; regular categories are categories that have suitably interoperable finite limits and image factorizations. We now proceed to make this precise.

\section{Definition of regular categories and functors}

Regular categories were first defined by Barr \cite{barr:1971a} to isolate important aspects of abelian categories. The reader who is unacquainted with regular categories and/or regular logic may see \cite{butz1998regular}.

\begin{definition}[Barr]
  A \define{regular category} is a category $\cat{R}$ with the following properties:
  \begin{enumerate}
	\item it has all finite limits;
	\item the kernel pair of any morphism $f\colon r\to s$ admits a coequalizer $r\times_s r\tto r\to\Set{coeq}(f)$, which we denote $\im(f)\coloneqq\Set{coeq}(f)$ and call the \emph{image} of $f$; and 
	\item the pullback---along any map---of a regular epimorphism (a coequalizer of any parallel pair) is again a regular epimorphism.
\end{enumerate}

  A \define{regular functor} is a functor
  between regular categories that preserves finite limits and regular epis. We write $\rgcat$ for the category of regular categories.
\end{definition}

\begin{lemma}\label{lemma.OFS}
For any $f\colon r\to r'$, the universal map $\im(f)\to r'$ is monic. Thus every map $f$ can be factored into a regular epimorphism followed by a monomorphism: $r\surj \im(f)\inj r'$, and this constitutes an orthogonal factorization system. In particular, image  factorization is unique up to isomorphism.
\end{lemma}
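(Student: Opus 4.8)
The plan is to first manufacture the comparison map out of $\im(f)$, reduce the whole statement to a single monicity claim, prove that claim using the pullback-stability of regular epimorphisms, and then obtain the orthogonal factorization system and uniqueness as essentially formal consequences.

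First I would construct the map $\im(f)\to r'$. By definition $\im(f)=\Set{coeq}(f)$ is the coequalizer of the kernel pair $k_1,k_2\colon r\times_{r'}r\tto r$, with coequalizing map $q\colon r\surj\im(f)$ a regular epimorphism. Since $k_1,k_2$ form the pullback of $f$ along itself we have $k_1\cp f=k_2\cp f$, so the universal property of the coequalizer yields a unique $\bar f\colon\im(f)\to r'$ with $q\cp\bar f=f$. This already exhibits the factorization $r\surj\im(f)\To{\bar f}r'$ whose first leg is a regular epimorphism, so the entire lemma reduces to three points: (i) $\bar f$ is monic; (ii) regular epimorphisms and monomorphisms are orthogonal; and (iii) uniqueness up to isomorphism, which will follow from (i) and (ii).

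The main step is (i). I would show $\bar f$ is monic by proving that its kernel pair $\pi_1,\pi_2\colon L\tto\im(f)$, where $L\coloneqq\im(f)\times_{r'}\im(f)$, has $\pi_1=\pi_2$; recall that a morphism is monic precisely when its two kernel-pair projections coincide. Writing $K\coloneqq r\times_{r'}r$, the identity $k_1\cp f=k_2\cp f$ together with $f=q\cp\bar f$ gives, via the universal property of the pullback $L$, a comparison map $w\colon K\to L$ satisfying $w\cp\pi_i=k_i\cp q$. Moreover $K$ is the iterated pullback of $L$ along $q$ in the two coordinates: pulling $\pi_2\colon L\to\im(f)$ back along $q$ produces $\im(f)\times_{r'}r$, and pulling its remaining projection back along $q$ again produces $r\times_{r'}r=K$. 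Each of these two legs is a pullback of the regular epimorphism $q$, hence a regular epimorphism by the third defining property of $\cat R$, so $w$—their composite—is at least an epimorphism. Now $q$ coequalizes $k_1,k_2$, whence $w\cp\pi_1=k_1\cp q=k_2\cp q=w\cp\pi_2$; cancelling the epimorphism $w$ gives $\pi_1=\pi_2$. Thus $\bar f$ is monic and the factorization $r\surj\im(f)\inj r'$ is established.

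For (ii) I would verify the unique diagonal fill-in for any commuting square with a regular epimorphism $e\colon a\surj b$ on the left and a monomorphism $m\colon c\inj d$ on the right, top $g\colon a\to c$ and bottom $h\colon b\to d$, so $g\cp m=e\cp h$. Uniqueness of a diagonal is immediate since $e$ is epic. For existence I first invoke the standard fact that a regular epimorphism is the coequalizer of its own kernel pair (valid here since $\cat R$ has all finite limits): letting $a_1,a_2$ be the kernel pair of $e$, we compute $a_1\cp g\cp m=a_1\cp e\cp h=a_2\cp e\cp h=a_2\cp g\cp m$, and as $m$ is monic this forces $a_1\cp g=a_2\cp g$. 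Since $e=\Set{coeq}(a_1,a_2)$, the map $g$ factors uniquely as $e\cp\delta$ for a unique $\delta\colon b\to c$; then $e\cp(\delta\cp m)=g\cp m=e\cp h$ and the epimorphism $e$ cancels to give $\delta\cp m=h$, so $\delta$ is the required diagonal. Finally, (iii) follows formally: two image factorizations of $f$ furnish two such lifting squares whose unique diagonals are mutually inverse, yielding a canonical isomorphism of images; combined with the evident closure of regular epimorphisms and of monomorphisms under composition and their containment of isomorphisms, this shows the pair $(\text{regular epi},\text{mono})$ is an orthogonal factorization system. I expect step (i) to be the crux, since it is the only place where the full force of pullback-stability of regular epimorphisms is needed, and the identification of $K$ as the iterated pullback of $L$ along $q$ must be done carefully to guarantee that $w$ is at least epic.
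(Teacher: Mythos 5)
The paper does not actually prove this lemma itself: its ``proof'' is a pointer to \cite[Proposition 2.4]{butz1998regular}. Your proposal should therefore be judged as a self-contained replacement, and as such it is correct, and it is essentially the standard argument that the citation outsources. The construction of $\bar f$ from the coequalizer property is right, and the crux---monicity of $\bar f$---is handled properly: you exhibit the kernel pair $K=r\times_{r'}r$ of $f$ as a two-step pullback of the kernel pair $L=\im(f)\times_{r'}\im(f)$ of $\bar f$ along $q$ (first in one coordinate, giving $\im(f)\times_{r'}r$, then in the other), observe that each step is a pullback of the regular epimorphism $q$ and hence regular epic by axiom (3), and then cancel the resulting epimorphism $w\colon K\to L$ against $w\cp\pi_1=k_1\cp q=k_2\cp q=w\cp\pi_2$ to get $\pi_1=\pi_2$. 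You are also careful in exactly the right place: you only claim $w$ is an epimorphism, since closure of regular epis under composition is not yet available at that stage. The orthogonality argument via ``a regular epimorphism is the coequalizer of its own kernel pair'' is likewise correct and standard. One small inaccuracy: in step (iii) you call the closure of regular epimorphisms under composition ``evident,'' but it is not evident from the definition of regular category; it is, however, a formal consequence of what you have already established (a class defined by a left lifting property is closed under composition, and your factorization plus orthogonality identifies the regular epis with the maps left orthogonal to all monos), so this is a cosmetic rather than a substantive gap.
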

\begin{proof}
This is \cite[Proposition 2.4]{butz1998regular}.
\end{proof}

\begin{definition} \label{def.support}
  The \define{support} of an object $r$ in a regular category is the image $r
  \surj \supp(r) \inj\terminal$ of its unique map to the terminal object.
\end{definition}

\begin{definition}
  A \define{subobject} of an object $r$ in a category is an isomorphism class of
  monomorphisms $r' \inj r$, where morphisms between monomorphisms are as in the
  slice category over $r$. This defines a partially ordered set $\sub(r)$. We write $r'\ss r$ to denote the equivalence class represented by $r'\inj r$.
\end{definition}

\begin{proposition}\label{prop.sub_ladj}
Any morphism $f\colon r\to s$ in a regular category $\cat{R}$ induces an adjunction
\begin{equation}\label{eqn.subobject_adj}
\adj{\sub(r)}{\lsh{f}}{\ust{f}}{\sub(s).}
\end{equation}
This extends to a functor $\sub\colon\cat{R}\to\ladj(\pposet)$.
\end{proposition}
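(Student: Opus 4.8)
The plan is to realize the right adjoint $\ust{f}$ as pullback along $f$ and the left adjoint $\lsh{f}$ as image factorization, and then to establish the adjunction as a Galois connection of posets, reading off the two required triangle identities automatically from the defining biconditional. To set up the maps: for a subobject $s'\inj s$ put $\ust{f}(s')\coloneqq r\times_s s'$, the pullback of $s'\inj s$ along $f$; since pullbacks of monomorphisms are monomorphisms this is a subobject of $r$, and functoriality of pullback makes it well defined on isomorphism classes and monotone. For a subobject $r'\inj r$ put $\lsh{f}(r')\coloneqq\im(r'\inj r\To{f}s)$, using the image factorization of \cref{lemma.OFS}; uniqueness of image factorization makes this well defined on isomorphism classes.

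The core step is the biconditional
\[
  \lsh{f}(r')\ss s'\iff r'\ss\ust{f}(s')
\]
for all $r'\in\sub(r)$ and $s'\in\sub(s)$, which I would prove by chaining two elementary equivalences. First, $\lsh{f}(r')\ss s'$ holds iff the composite $r'\to s$ factors through $s'\inj s$: this is the universal property of the image in the orthogonal factorization system of \cref{lemma.OFS}, since the regular epi $r'\surj\lsh{f}(r')$ is orthogonal to the mono $s'\inj s$, so a factorization of $r'\to s$ through $s'$ induces a unique diagonal $\lsh{f}(r')\to s'$, and conversely. Second, $r'\to s$ factors through $s'$ iff $r'\inj r$ factors through $r\times_s s'\inj r$: this is the universal property of the pullback, as a factorization $r'\to s'$ is exactly what is needed, together with $r'\inj r$, to induce a map $r'\to r\times_s s'$ over $r$, while conversely one recovers $r'\to s'$ by post-composing with the projection. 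Monotonicity of both maps then follows formally from this biconditional, giving the adjunction $\lsh{f}\dashv\ust{f}$ in $\pposet$ as displayed.

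For functoriality of $\sub\colon\cat{R}\to\ladj(\pposet)$, sending $r\mapsto\sub(r)$ and $f\mapsto\lsh{f}$ — which indeed lands in $\ladj(\pposet)$ because each $\lsh{f}$ is a left adjoint — I would verify the two equations. The identity $(\id_r)_!=\id_{\sub(r)}$ is immediate, as a monomorphism $r'\inj r$ is its own image. For composition, given $f\colon r\to s$ and $g\colon s\to t$, I would compare $\im(r'\to t)$ with $\im\bigl(\im(r'\to s)\to t\bigr)$: the composite $r'\surj\im(r'\to s)\surj\im(\im(r'\to s)\to t)$ is a regular epi followed by the mono into $t$, because the left class of an orthogonal factorization system is closed under composition, so by the uniqueness clause of \cref{lemma.OFS} it is the image factorization of $r'\to t$, giving $\lsh{(f\cp g)}=\lsh{f}\cp\lsh{g}$.

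I expect the main obstacle to be the adjunction biconditional, where one must deploy the diagonal fill-in property of the factorization system and the pullback universal property in tandem and in both directions; the functoriality is then routine, resting only on the fact that \cref{lemma.OFS} yields an orthogonal, hence composition-closed, factorization system.
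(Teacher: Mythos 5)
Your proposal is correct and follows exactly the paper's approach: the paper likewise defines $\lsh{f}$ by image factorization and $\ust{f}$ by pullback, and then simply asserts that adjointness "follows from the orthogonality of the factorization system" of \cref{lemma.OFS} and that the constructions are functorial. Your write-up just supplies the details (the Galois-connection biconditional via the diagonal fill-in and the pullback universal property, and composition-closure of the left class for functoriality) that the paper leaves to the reader.
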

\begin{proof}
Given a subobject $r'\ss r$ or $s'\ss s$, define $\lsh{f}(r')\ss s$ and $\ust{f}(s')\ss r$ as follows:
\[
\begin{tikzcd}
	r'\ar[r, >->]\ar[d, dashed, ->>]&
	r\ar[d, "f"]\\
	\lsh{f}(r')\ar[r, dashed, >->]&
	s
\end{tikzcd}
\hspace{1in}
\begin{tikzcd}
	\ust{f}(s')\ar[r, dashed, >->]\ar[d, dashed]\pb&
	r\ar[d, "f"]\\
	s'\ar[r, >->]&
	s
\end{tikzcd}
\]
The fact that these are adjoint follows from the orthogonality of the factorization system in \cref{lemma.OFS}, and the constructions are functorial.
\end{proof}

The following proposition discusses some well-known properties of subobjects in a regular category. In \cref{rem.tangible} we explain how these properties are 1-categorical reflections of a more elementary 2-categorical story.

\begin{proposition}\label{prop.tangible}
Let $\cat{R}$ be a regular category. The functor $\sub\colon\cat{R}\to\ladj(\pposet)$ satisfies the following:
\begin{enumerate}
	\item $\sub(r)$ is a meet-semilattice for each $r\in\cat{R}$,
	\item for each cospan $f\colon r'\to r\from s\cocolon g$, the Beck-Chevalley condition (right)
	holds for the pullback square (left):
	\[
	\begin{tikzcd}
		r'\times_r s\ar[r, "\pi_2"]\ar[d, "\pi_1"']\pb&
		s\ar[d, "g"]\\
		r'\ar[r, "f"']&
		r
	\end{tikzcd}
	\qquad
	\begin{tikzcd}
		\sub(r'\times_r s)\ar[d, "\lsh{\pi_1}"']&
		\sub(s)\ar[d, "\lsh{g}"]\ar[l, "\ust{\pi_2}"']\\
		\sub(r')&
		\sub(r)\ar[l, "\ust{f}"]
	\end{tikzcd}	
	\]
	\item for each regular epimorphism $f\colon r'\surj r$ and $\varphi\in \sub(r)$, the following holds:
	\[\lsh{f}\big(\ust{f}(\varphi)\big)=\varphi.\]
	\item for each $f\colon r'\to r$, and $\varphi\in \sub(r)$ and $\varphi'\in \sub(r')$, Frobenius reciprocity holds:
	\[
	\lsh{f}(\varphi\wedge\ust{f}(\varphi'))=\lsh{f}(\varphi)\wedge\varphi'
	\]
\end{enumerate}
A \emph{regular functor} $\funr{F}\colon\cat{R}\to\cat{R}'$ induces a natural transformation $\alpha\colon\sub_{\cat{R}}\to\sub_{\cat{R}'}(\funr{F}-)$ such that
\begin{enumerate}
	\item $\alpha$ is natural with respect to both adjoints, $\lsh{f}$ and $\ust{f}$, for each $f\colon r'\to r$, and
	\item $\alpha_r$ is a meet-semilattice map for each $r\in\cat{R}$.
\end{enumerate}
\end{proposition}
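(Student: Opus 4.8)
The plan is to treat all the listed properties as consequences of two facts established earlier: the (regular epi, mono) orthogonal factorization system of \cref{lemma.OFS}, and the third regular-category axiom, that regular epimorphisms are stable under pullback. For the first property, I would observe that $\sub(r)$ has a top element, namely the maximal subobject represented by $\id_r\colon r\inj r$, and that the meet of two subobjects $a\inj r$ and $b\inj r$ is their pullback $a\times_r b\inj r$; this is monic because monos are stable under pullback, and it is the greatest lower bound by the universal property of the pullback. Since $\cat{R}$ has all finite limits these exist, so $\sub(r)$ is a meet-semilattice. Property~3 is equally quick: for a regular epi $f\colon r'\surj r$ and $\varphi=(a\inj r)$, the subobject $\ust{f}(\varphi)$ is the pullback $a\times_r r'\inj r'$, and stability gives that the projection $a\times_r r'\surj a$ is again a regular epi; thus the composite $a\times_r r'\to r'\xrightarrow{f}r$ factors as $a\times_r r'\surj a\inj r$, and uniqueness of image factorization (\cref{lemma.OFS}) identifies $\lsh{f}(\ust{f}(\varphi))$ with $a=\varphi$.

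The two substantive properties, Beck--Chevalley (2) and Frobenius reciprocity (4), I would derive from a single recurring move: \emph{pull back the regular-epi half of an image factorization and invoke stability to recognise the result as a new image factorization}. For Beck--Chevalley, write $s'=(b\inj s)$ and factor $b\inj s\xrightarrow{g}r$ as $b\surj c\inj r$, so that $\lsh{g}(s')=c$ and $\ust{f}(\lsh{g}(s'))=r'\times_r c$. On the other side, the pasting lemma identifies $\ust{\pi_2}(s')$ with $r'\times_r b\inj r'\times_r s$, and $\lsh{\pi_1}$ of this is the image of the projection $r'\times_r b\to r'$. Pulling the regular epi $b\surj c$ back along the projection $r'\times_r c\to c$ yields, via the third axiom, a regular epi $r'\times_r b\surj r'\times_r c$; since $r'\times_r c\inj r'$ is monic, uniqueness of factorization gives that this image is exactly $r'\times_r c$, proving $\lsh{\pi_1}\ust{\pi_2}=\ust{f}\lsh{g}$ as monotone maps $\sub(s)\to\sub(r')$.

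Frobenius reciprocity runs along the same lines. Writing $\varphi=(a\inj r')$ and $\varphi'=(b\inj r)$, the meet $\varphi\wedge\ust{f}(\varphi')$ is represented by $a\times_r b\inj r'$ (where $a\to r$ abbreviates $a\inj r'\xrightarrow{f}r$), so the left-hand side $\lsh{f}(\varphi\wedge\ust{f}(\varphi'))$ is the image of $a\times_r b\to r$; the right-hand side is $c\times_r b$, where $a\surj c\inj r$ is the image factorization of $a\to r$, so that $\lsh{f}(\varphi)=c$. Pulling the regular epi $a\surj c$ back along $c\times_r b\to c$ produces a regular epi $a\times_r b\surj c\times_r b$, and since $c\times_r b\inj r$ is monic, the image of $a\times_r b\to r$ is $c\times_r b$, as required. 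I expect this step---setting up the pullback pastings so that each diagram really is the pullback of a regular epi, and then quoting stability together with uniqueness of factorization---to be the only real obstacle; everything else is bookkeeping, and in fact the mild index clash in the stated hypotheses (which lattice $\varphi,\varphi'$ inhabit) is resolved by reading the formula itself.

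For the final clause, I would use that a regular functor $\funr{F}$ preserves finite limits (hence monos and pullbacks) and regular epis by definition, so it preserves the entire factorization system, giving $\funr{F}\im(g)\cong\im(\funr{F}g)$. Thus $\funr{F}$ sends a representing mono $a\inj r$ to a mono $\funr{F}a\inj\funr{F}r$, yielding a well-defined monotone map $\alpha_r\colon\sub_{\cat{R}}(r)\to\sub_{\cat{R}'}(\funr{F}r)$. Naturality with respect to $\ust{f}$ is immediate from preservation of pullbacks, and naturality with respect to $\lsh{f}$ from preservation of image factorizations. Finally $\alpha_r$ preserves the top element because $\funr{F}\id_r=\id_{\funr{F}r}$, and it preserves binary meets because $\funr{F}$ preserves the pullbacks that compute them, so each $\alpha_r$ is a meet-semilattice map.
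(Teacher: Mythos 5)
Your proposal is correct, and it is more self-contained than the paper's own proof: the paper verifies (1) directly, indicates the argument for (3) (pullback stability plus uniqueness of factorizations, exactly as you do), but outsources (2), (4), and the whole clause about regular functors to citations of \cite{butz1998regular} (Lemmas 2.9, 2.6, and 2.10 respectively). What you supply instead is a uniform mechanism---pull back the regular-epi half of an image factorization, use the third regular-category axiom to see the result is again a regular epi onto something monic over the target, then invoke uniqueness from \cref{lemma.OFS}---and you run it three times to get (2), (3), and (4); this is essentially the content of the cited lemmas, so the mathematics agrees, but your version makes the proposition independent of the external reference. Your pasting-of-pullbacks bookkeeping in the Beck--Chevalley and Frobenius steps (e.g.\ $(r'\times_r c)\times_c b \cong r'\times_r b$) is the genuinely load-bearing part and it checks out. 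You were also right to flag and silently repair the index slip in the statement of (4): as written, with $f\colon r'\to r$, the hypotheses $\varphi\in\sub(r)$, $\varphi'\in\sub(r')$ are incompatible with the formula $\lsh{f}(\varphi\wedge\ust{f}(\varphi'))=\lsh{f}(\varphi)\wedge\varphi'$, and the intended reading is the one you adopt, $\varphi\in\sub(r')$ and $\varphi'\in\sub(r)$.
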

\begin{proof}
  For the properties of the functor $\sub$, (1) can be easily verified by
  checking that that binary meets are given by pullback and the top element is
  given by the identity map, (2) is \cite[Lemma 2.9]{butz1998regular}, (3)
  follows from pullback stability of regular epis and uniqueness of
  factorizations (\cref{lemma.OFS}), and (4) is \cite[Lemma
  2.6]{butz1998regular}.

  The properties of $\alpha\colon\sub\to\sub(\funr{F}-)$ are found in/above \cite[Lemma
  2.10]{butz1998regular}.
\end{proof}

\section{The relations po-category construction}

A regular category $\cat{R}$ has exactly the structure and properties necessary
to construct a po-category of relations, or \define{relations po-category}.
 

\begin{definition}
  Let $\cat{R}$ be a regular category; its \emph{relations po-category}
  $\rrel{\cat{R}}$ is the po-category with the same objects as
  $\cat{R}$ but whose morphisms, written $x\colon r\tickar s$, are relations
  $x\ss r\times s$ in $\cat{R}$ equipped with the subobject
  ordering $x\leq x'$ iff $x\ss x'$. The composite $x\cp y$ with a relation
$y\colon s\tickar t$ is obtained by pulling back over $s$ and image factorizing the
map to $r\times t$:
\begin{equation}\label{eqn.rel_composition}
\begin{tikzcd}[column sep=small, row sep=15pt]
  &[10pt]&
  x\times_{s}y\ar[dl]\ar[dr]\ar[d, ->>]&&[10pt]~\\&
  x\ar[d, >->]&
  x\cp y\ar[d, >->]&
  y\ar[d, >->]\\&
	r\times s\ar[dl, bend right=8pt]\ar[dr, bend left=8pt]&
	r\times t\ar[dll, bend left=12pt, crossing over]&
	s\times t\ar[dl, bend right=8pt]\ar[dr, bend left=8pt]\\[-5pt]
	r&&
	s&&
	t\ar[from=ull, bend right=12pt, crossing over]
\end{tikzcd}
\end{equation}
$\rrel{R}$ also inherits a symmetric monoidal structure $I\coloneqq 1$ and $r_1\otimes
  r_2\coloneqq r_1\times r_2$ from the cartesian monoidal structure on $\cat{R}$.
  
  Given a regular functor $\funr{F}\colon\cat{R}\to\cat{R'}$, mapping a relation $x \ss
  r \times s$ to its factorization $\funr{F}(x) \surj \rrel{\funr{F}}(x) \inj \funr{F}(r\times s)
  \cong \funr{F}(r) \times \funr{F}(s)$ induces a (strong) symmetric monoidal po-functor
  $\rrel{\funr{F}}\colon\rrel{\cat{R}} \to \rrel{\cat{R'}}$. We refer to this po-functor
  as the \emph{relations po-functor} of $\funr{F}$.
\end{definition}

It is straightforward to check that the composition rule
\cref{eqn.rel_composition} is unital and associative using the pullback
stability of factorizations, and to check that $\rrel{\funr{F}}$ is indeed a symmetric
monoidal po-functor using the fact that a regular functor $\funr{F}\colon \cat{R} \to
\cat{R'}$ preserves pullbacks and image factorizations. Direct proofs in the
literature of these two facts seem difficult to find, but see for
example \cite[Theorem~2.3]{Jayewardene2000} and \cite[Proposition
4.1]{fong2017decorated} respectively.

The relations po-category is just a repackaging of the data of the regular
category: any regular category can be recovered, at least up to isomorphism, by
looking at the adjunctions in its relations po-category. 

\begin{lemma}[Fundamental lemma of regular categories]\label{lemma.fundamental}
Let $\cat{R}$ be a regular category. Then there is an identity-on-objects isomorphism
\[\cat{R}\to\ladj(\rrel{\cat{R}}).\]
In particular, a relation $x\colon r\tickar s$ is a left adjoint iff it is the graph $x=\pair{\id_r,f}$ of a morphism $f\colon r\to s$ in $\cat{R}$.
\end{lemma}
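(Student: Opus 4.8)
The plan is to exhibit the isomorphism explicitly as the ``graph'' functor $G\colon\cat R\to\ladj(\rrel{\cat R})$ which is the identity on objects and sends $f\colon r\to s$ to its graph $\Gamma_f\coloneqq\pair{\id_r,f}\colon r\inj r\times s$, regarded as a relation $r\tickar s$. The converse relation $\Gamma_f\tp\colon s\tickar r$, obtained by composing the mono $\Gamma_f\inj r\times s$ with the swap $r\times s\iso s\times r$, is the candidate right adjoint, and since $\rrel{\cat R}$ is locally posetal the triangle identities reduce to the two inequalities $\id_r\leq\Gamma_f\cp\Gamma_f\tp$ and $\Gamma_f\tp\cp\Gamma_f\leq\id_s$. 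Both will follow by computing the composites via \cref{eqn.rel_composition}: the relevant pullbacks are taken over $s$ and over $r$ respectively, and the image factorizations can be read off directly because $\Gamma_f$ is monic with invertible first leg. That $G$ is functorial---$G(\id_r)=\id_r$ and $G(f\cp g)=\Gamma_f\cp\Gamma_g$---is the standard computation that composing graphs by pullback-then-image reproduces the graph of the composite, and $G$ is faithful because the subobject $\pair{\id_r,f}$ determines $f$ as its second projection. As $G$ is the identity on objects, it remains only to prove fullness: \emph{every} left adjoint in $\rrel{\cat R}$ is the graph of a morphism.

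For fullness, I would suppose $x\colon r\tickar s$ has a right adjoint $y\colon s\tickar r$ and tabulate $x$ as a jointly monic span $r\xleftarrow{d}x\xrightarrow{c}s$ coming from the mono $x\inj r\times s$; the claim is that $d$ is an isomorphism, whence $f\coloneqq d\inv\cp c\colon r\to s$ satisfies $x=\Gamma_f$. First, $d$ is a regular epimorphism: the unit $\id_r\leq x\cp y$ exhibits the diagonal as a subobject $\Delta_r\inj x\cp y=\im(x\times_s y\to r\times r)$, so pulling the cover $x\times_s y\surj x\cp y$ back along $\Delta_r\inj x\cp y$ yields a regular epi onto $\Delta_r\iso r$ that factors through $d$, and right-cancellation of covers then forces $d$ to be a cover. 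Second, $d$ is monic: this is the categorical form of single-valuedness, in which the unit supplies, over the kernel pair $x\times_r x$, a witness in $y$ matching each of the two $s$-coordinates, and the counit $y\cp x\leq\id_s$ then collapses both to a common value, forcing the two $s$-coordinates---and hence, by joint monicity, the two points of $x$---to agree, so the kernel pair of $d$ is trivial. A morphism that is simultaneously a regular epi and a mono is an isomorphism (its kernel pair is the diagonal, of which it is the coequalizer), so $d$ is invertible and $x=\Gamma_f$.

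The main obstacle is the monicity of $d$, i.e.\ the categorified single-valuedness step. The difficulty is that the counit only bounds $y\cp x$, not $x\tp\cp x$, so one cannot replace $y$ by the converse $x\tp$ at the outset; indeed, showing $y=x\tp$ is essentially equivalent to the map condition itself and so cannot be assumed. The honest route is therefore a diagram chase mirroring the element-level argument: form the kernel pair $K=x\times_r x$, pull the unit's cover back over $K$ to obtain a regular epi $\tilde K\surj K$ equipped with a map to $y$ over the shared $r$-coordinate, and apply the inequality $y\cp x\leq\id_s$ twice to conclude that the two composites $\tilde K\to x\to s$ agree; since $\tilde K\surj K$ is a cover, this equality descends to $K$. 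Equivalently, one may run exactly this argument inside the internal regular logic, using the Beck--Chevalley and Frobenius properties of \cref{prop.tangible} to justify the relational manipulations. Once $d$ is known to be iso, the remaining bookkeeping---that $G$ and the assignment $x\mapsto d\inv\cp c$ are mutually inverse on hom-sets---is immediate, completing the proof that $G$ is an identity-on-objects isomorphism.
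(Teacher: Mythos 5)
Your proposal is correct, and its skeleton matches the paper's: both build the identity-on-objects graph functor, verify $\pair{\id_r,f}\dashv\pair{f,\id_s}$, note functoriality and faithfulness, and then prove fullness by tabulating a left adjoint $x\dashv y$ as a jointly monic span $r\From{d}x\To{c}s$ and showing $d$ is invertible. Where you genuinely diverge is in \emph{how} invertibility of $d$ is established. The paper constructs an explicit two-sided inverse, $i\cp\pi_s$, by treating the unit as an actual morphism $i\colon r\to x\times_s x'$ into the pullback and then running a short chase with the counit equations. But the unit of an adjunction in $\rrel{\cat{R}}$ is only a containment $\Delta_r\leq x\cp x'$ of subobjects of $r\times r$, where $x\cp x'$ is the \emph{image} of $x\times_s x'\to r\times r$; promoting it to a map into $x\times_s x'$ amounts to splitting the cover $(x\times_s x')\times_{x\cp x'}r\surj r$, which a general regular category does not provide (it holds post hoc, once one knows $x$ is a graph, but that is circular as a proof step). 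Your decomposition into ``$d$ is a cover'' (pull the unit's cover back along $\Delta_r$ and right-cancel) plus ``$d$ is monic'' (pull the cover back over the kernel pair $K=x\times_r x$, apply the counit $y\cp x\leq\id_s$ twice to equalize the two $s$-coordinates, and descend along the epi $\tilde K\surj K$) needs no such section: every equality is first proved after base change along a regular epi and then descended, which is exactly the legitimate replacement for the missing lift. So your route is slightly longer in the single-valuedness step, but it is the more robust, choice-free argument; the paper's is shorter but leans on an unjustified (though repairable, by precisely your descent technique) lifting of the unit. Your observation that one cannot assume $y=x\tp$ at the outset—since the counit bounds $y\cp x$ rather than $x\tp\cp x$—is also the right thing to flag, and your argument correctly works with an arbitrary right adjoint $y$ throughout.
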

\begin{proof}
This fact is well known, but since it is crucial to what follows, we provide a proof here. We shall show that there is an
identity-on-objects, full, and faithful functor from $\cat{R}$ to its relations
po-category $\rrel{\cat{R}}$, which maps a morphism $f\colon r \to s$ to its
graph $\pair{\id_r,f} \ss r \times s$. Indeed, it is straightforward to check
that any pair of the form $\pair{\id_r,f} \dashv \pair{f,\id_s}$ is an adjunction,
and subsequently that the proposed map is functorial.

To show that it is full and faithful, we characterize the adjunctions $x \dashv
x'$ in $\rrel{\cat{R}}$. Suppose we have $x \stackrel{\pair{g,f}}\inj r \times
s$ and $x'\stackrel{\pair{f',g'}}\inj s\times r$ with unit $i\colon r\inj (x\cp x')$
and counit $j\colon (x'\cp x)\to s$. This gives rise to the following diagram
(equations shown right):
\[
\begin{tikzcd}[row sep=small, column sep=large]
	x\times_s x'\ar[rr, "\pi_s'"]\ar[ddd, "\pi_s"']&&
	x'\ar[dl, "g'"']\ar[ddl, "f'"]\\&
	r\ar[ul, "i"]\\&
	s\\
	x\ar[uur, "g"]\ar[ur, "f"']&&
	x\times_r x'\ar[uuu, "\pi_r'"']\ar[ll, "\pi_r"]\ar[ul, "j"]
\end{tikzcd}
\hspace{.5in}
\parbox{1.5in}{$
i\cp\pi_s\cp g=\id_r=i\cp\pi_s'\cp g'\\
\pi_r\cp f=j=\pi_r'\cp f'
$}
\]
We shall show that $g$ and $g'$ are isomorphisms, and that $f'=g' \cp g\inv \cp
f$. 

We first show that $i\cp \pi_s$ is inverse to $g$. Since the unit already gives
that $i \cp \pi_s \cp g = \id_r$, it suffices to show that $g\cp i\cp\pi_s=\id_x$.
Moreover, since $\pair{g,f}\colon x\to r\times s$ is monic and $g=(g\cp
i\cp\pi_s)\cp g$, it suffices to show that $f=(g\cp i\cp\pi_s)\cp f$. This is a diagram chase: since $g=g\cp i\cp\pi_s'\cp g'$, we can define a morphism $q\coloneqq\pair{\id_x,g\cp i\cp\pi_s'}\colon x\to x\times_r x'$, and we conclude
\[
	f=q\cp \pi_r\cp f
	=q\cp \pi_r'\cp f'
	=g\cp i\cp\pi_s'\cp f'
	=g\cp i\cp\pi_s\cp f.
\]
Similarly, we see that $i\cp \pi_s'$ is inverse to $g'$, and hence obtain $f'=g' \cp g\inv \cp f$.

Note that this implies the adjunction $x \dashv x'$ is isomorphic to the
adjunction $\pair{1_r,(g\inv \cp f)} \dashv \pair{(g \inv \cp f),\id_s}$. Thus the
proposed functor is full. Faithfulness amounts to the fact that the existence of
a morphism $\pair{1_r,f} \to \pair{1_r,f'}$ implies $f =f'$. This proves the
lemma.
\end{proof}

%

\begin{remark}\label{rem.rels_adjoint_composites}
It follows from the proof of \cref{lemma.fundamental} that $x\colon r\tickar s$ is a right adjoint iff it is the co-graph $\pair{f,\id_s}$ of a morphism $f\colon s\to r$. Furthermore, since any morphism $x=\pair{g,f}\colon r \tickar s$ in $\ccat{R}$ can be written as $x=\pair{g,\id_x} \cp \pair{\id_x,f}$, it follows that every morphism in $\ccat{R}$ can be written as the composite of a right adjoint followed by a left adjoint.
\end{remark}
%

The fundamental lemma says that regular categories can be recovered from their
relations po-categories. Similarly, any regular functor can be recovered as the
action of its relations po-functor on left adjoints. Before expressing this as a
categorical equivalence in \cref{eqn.equiv_regpocat}, we first make the following observation.

\begin{proposition}\label{prop.relations_2functors}
For any regular functor $\funr{F}\colon\cat{R}\to\cat{R}'$, the relations po-functor $\rrel{\funr{F}}\colon\rrel{\cat{R}}\to\rrel{\cat{R}'}$ is strong symmetric monoidal.
\end{proposition}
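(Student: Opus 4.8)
The plan is to exploit that the monoidal structures on both $\rrel{\cat{R}}$ and $\rrel{\cat{R}'}$ are inherited from the cartesian products of $\cat{R}$ and $\cat{R}'$, and then to transport isomorphisms from $\cat{R}'$ into $\rrel{\cat{R}'}$ using the fundamental lemma (\cref{lemma.fundamental}). Since $\rrel{\funr{F}}$ agrees with $\funr{F}$ on objects and is already a symmetric monoidal po-functor (as noted after the definition), the entire content of the proposition is that its laxators are invertible $1$-cells.

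First I would write the laxators down explicitly. Because the unit and tensor of $\rrel{\cat{R}}$ are $I = 1$ and $r_1\otimes r_2 = r_1\times r_2$, and likewise for $\cat{R}'$, the laxators are the relations
\[
  \rho\colon 1'\tickar\funr{F}(1)
  \qquad\text{and}\qquad
  \rho_{r_1,r_2}\colon \funr{F}(r_1)\times\funr{F}(r_2)\tickar\funr{F}(r_1\times r_2),
\]
where $1'$ is the terminal object of $\cat{R}'$; these are the graphs of the canonical comparison maps witnessing that $\funr{F}$ preserves the terminal object and binary products.

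The essential step is then immediate: since $\funr{F}$ is a regular functor it preserves all finite limits, so the comparison maps $1'\to\funr{F}(1)$ and $\funr{F}(r_1\times r_2)\to\funr{F}(r_1)\times\funr{F}(r_2)$ are isomorphisms in $\cat{R}'$. By \cref{lemma.fundamental} the graph of a morphism of $\cat{R}'$ is a left adjoint in $\rrel{\cat{R}'}$, and the graph of an isomorphism $f$ is an isomorphism there, its inverse being the co-graph $\pair{f,\id}$ (cf.\ \cref{rem.rels_adjoint_composites}). Hence $\rho$ and $\rho_{r_1,r_2}$ are invertible, which is exactly the assertion that $\rrel{\funr{F}}$ is strong.

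I expect the only real work---and so the main, if modest, obstacle---to be confirming that the laxators coming from the image-factorization definition of $\rrel{\funr{F}}$ genuinely are these graphs of comparison isomorphisms, rather than taking that identification for granted. Unwinding this amounts to applying $\funr{F}$ to the structural relations and checking that the ensuing image factorizations collapse to the product-comparison maps, which holds precisely because $\funr{F}$ preserves the pullbacks and images used in relational composition (\cref{eqn.rel_composition}).
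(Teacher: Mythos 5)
Your argument is correct and is essentially the paper's own proof, just spelled out: the paper likewise observes that $\rrel{\funr{F}}$ agrees with $\funr{F}$ on objects and that $\funr{F}$ preserves products, so the monoidal comparison maps are isomorphisms and hence $\rrel{\funr{F}}$ is strong. Your extra detail—identifying the laxators as graphs of the comparison maps and invoking \cref{lemma.fundamental} and \cref{rem.rels_adjoint_composites} to see that graphs of isomorphisms are invertible in $\rrel{\cat{R}'}$—is exactly the content the paper leaves implicit.
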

\begin{proof}
The functor $\funr{F}$ and its relations po-functor $\rrel{\funr{F}}$ act the same on objects, so since $\funr{F}$ is product preserving, $\rrel{\funr{F}}$ is strong monoidal.
\end{proof}

Although we do not assume it below, it is a result of Carboni and
Walters that every strong symmetric monoidal functor
$\rrel{\cat{R}}\to\rrel{\cat{R}'}$ is the relations po-functor associated to a
regular functor $\funr{F}\colon\cat{R}\to\cat{R}'$ \cite{Carboni:1987a}. Indeed, this
foreshadows the rephrasing of regular structure in terms of monoidal structure,
which runs through this paper.

In any case, this motivates the following definition.

\begin{definition}\label{def.regular_pocat}
  A po-category is called a \define{regular po-category} if it is isomorphic to the relations po-category $\rrel{\cat{R}}$ of some regular category $\cat{R}$.
  
  A strong symmetric monoidal po-functor between regular po-categories is called
  a \emph{regular po-functor} if it is isomorphic to the relations po-functor
  $\rrel{\funr{F}}$ associated to a regular functor $\funr{F}$. We write $\rgpocat$ for the
  category of regular po-categories.
\end{definition}

By the fundamental lemma (\ref{lemma.fundamental}), we now have an equivalence of categories:
\begin{equation}\label{eqn.equiv_regpocat}
  \begin{tikzcd}
    \rgcat \ar[r, shift left=5pt,"\rrel{-}"] \ar[r, phantom, "\cong"] & 
    \rgpocat. \ar[l, shift left=5pt,"\ladj"]
  \end{tikzcd}
\end{equation}

\chapter{Adjoint monoids and adjoint-lax functors} \label{chap.adjoint_monoids}

The poset of subobjects of an object in a regular category is always a
meet-semilattice. We characterize these as precisely the adjoint monoids in $\pposet$. The seemingly new notion of adjoint monoid makes sense in any monoidal po-category (and more generally):
an \emph{adjoint monoid} is an object equipped with commutative monoid and comonoid
structures such that the multiplication and unit are right adjoint to the comultiplication and counit. Every regular po-category $\ccat{R}$ is isomorphic to its own po-category of adjoint monoids $\ccat{R}\cong\aadjmon(\ccat{R})$. Finally, the subobjects functor
preserves adjoint monoids.

All these ideas are founded on the notion of adjoint-lax monoidal (ajax) po-functor.

\section{Definition and motivation}

In this section we introduce the notions of ajax functor and adjoint monoid.

\begin{definition}
  Let $\ccat{C}$ and $\ccat{D}$ be monoidal po-categories. An \define{adjoint-lax} or \define{ajax} po-functor $F\colon \ccat{C} \to \ccat{D}$ is a lax symmetric monoidal po-functor for which the laxators are right adjoints.
  
  We denote the laxators by $\rho$ and their left adjoints by $\lambda$:
  \[
  \adjr{I}{\rho}{\lambda}{F(I)}
  \qqand
  \adjr{F(c)\otimes F(c')}{\rho_{c,c'}}{\lambda_{c,c'}}{F(c\otimes c')}.
  \]
 \end{definition}

\begin{warning}
The notion of ajax functor has a dual notion of op-ajax functor: an oplax functor $\ccat{C}\to\ccat{D}$ for which the op-laxators are left adjoints. These two notions \emph{do not coincide}! The laxator naturality squares are asked to strictly commute in an ajax functor, and this property only implies that their mate squares, the corresponding oplaxator naturality squares weakly commute.
\end{warning}

Here is a obvious, but useful, consequence of the definition.

\begin{lemma}\label{lemma.ajax}
Every strong monoidal functor between monoidal po-categories is ajax. The composite of ajax po-functors is ajax.
\end{lemma}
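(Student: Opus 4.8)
The plan is to treat the two claims separately, reducing each to a standard 2-categorical fact that simplifies in the poset-enriched setting: since every hom-poset contains at most one 2-cell between any two parallel 1-cells, an adjunction in a po-category is just a pair of morphisms together with the two defining inequalities, the triangle identities holding automatically. For the first claim, suppose $F\colon\ccat{C}\to\ccat{D}$ is strong monoidal, so that each laxator $\rho$ and $\rho_{c,c'}$ is an isomorphism. I would observe that in any 2-category an isomorphism is a right adjoint, its inverse serving as left adjoint with unit and counit supplied by the defining identities $\rho^{-1}\cp\rho=\id$ and $\rho\cp\rho^{-1}=\id$. In a po-category these identities give the required order relations on the nose, so setting $\lambda\coloneqq\rho^{-1}$ and $\lambda_{c,c'}\coloneqq\rho_{c,c'}^{-1}$ exhibits the laxators as right adjoints. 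Hence $F$ is ajax.

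For the second claim, let $F\colon\ccat{C}\to\ccat{D}$ and $G\colon\ccat{D}\to\ccat{E}$ be ajax. The composite $F\cp G$ is lax symmetric monoidal by the usual formula for composing lax monoidal functors, its laxators being the composites
\[
\rho^{F\cp G}=\rho^G\cp G(\rho^F)
\qand
\rho^{F\cp G}_{c,c'}=\rho^G_{F(c),F(c')}\cp G(\rho^F_{c,c'}).
\]
I would then invoke two facts. First, a po-functor is a strict 2-functor and so preserves adjunctions; applying this to $G$ and to the adjunctions $\lambda^F\dashv\rho^F$ and $\lambda^F_{c,c'}\dashv\rho^F_{c,c'}$ witnessing that $F$ is ajax shows that $G(\rho^F)$ and $G(\rho^F_{c,c'})$ are right adjoints, with left adjoints $G(\lambda^F)$ and $G(\lambda^F_{c,c'})$. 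Second, a composite of right adjoints is again a right adjoint. Since $\rho^G$ and $\rho^G_{F(c),F(c')}$ are right adjoints because $G$ is ajax, each laxator of $F\cp G$ is a composite of two right adjoints, hence a right adjoint; explicitly its left adjoint is $G(\lambda^F)\cp\lambda^G$ in the nullary case and $G(\lambda^F_{c,c'})\cp\lambda^G_{F(c),F(c')}$ in the binary case. Therefore $F\cp G$ is ajax.

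I do not expect a genuine obstacle here: both ingredients---that a strict 2-functor preserves adjunctions and that right adjoints compose---are routine, and are especially clean in a po-category. The only points requiring care are bookkeeping ones: confirming that the laxator naturality squares of the composite commute strictly (as an ajax, rather than merely lax, functor demands), which follows from strict commutativity of those of $F$ and $G$ together with functoriality of $G$; and keeping the composition order straight, so that the identified left adjoints compose in the order opposite to that of the right adjoints.
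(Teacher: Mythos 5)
Your proof is correct, and it fills in exactly the argument the paper leaves implicit: the paper states this lemma without proof as an ``obvious consequence of the definition,'' and the intended reasoning is precisely yours---isomorphisms are right adjoints (with inverses as left adjoints), 2-functors preserve adjunctions, and right adjoints compose. One minor clarification: strict commutativity of the laxator naturality squares is automatic for any lax monoidal po-functor (the laxators are genuine natural transformations), so that bookkeeping point needs no special care; the paper's warning about weak commutativity concerns only the mate (oplaxator) squares.
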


Recall that $1$ is the terminal monoidal po-category.

\begin{proposition}\label{prop.adjoint_monoids}
Let $(\ccat{C},I,\otimes)$ be a monoidal po-category. There is a bijection between:
\begin{enumerate}
	\item The set of ajax functors $1\to\ccat{C}$,
	\item The set of commutative monoid objects $(c,\mu,\eta)$ such that $\mu$ and $\eta$ are right adjoints, and
	\item The set of cocommutative comonoid objects $(c,\delta,\epsilon)$ such that $\delta$ and $\epsilon$ are left adjoints.
\end{enumerate}
\end{proposition}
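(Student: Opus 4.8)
The plan is to prove the statement in two stages: $(1)\iff(2)$ by unwinding the definition of an ajax functor out of the terminal monoidal po-category $1$, and $(2)\iff(3)$ by passing to adjoints and using that adjoints in a po-category are unique.

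For $(1)\iff(2)$, recall that $1$ has a unique object $\ast$ with $I=\ast$ and $\ast\otimes\ast=\ast$, and that all of its structure morphisms are identities. Hence a lax symmetric monoidal po-functor $F\colon 1\to\ccat{C}$ is determined by the object $c\coloneqq F(\ast)$ together with its two laxators, a map $\rho\colon I\to F(\ast)=c$ and a map $\rho_{\ast,\ast}\colon c\otimes c=F(\ast)\otimes F(\ast)\to F(\ast)=c$. Writing $\eta\coloneqq\rho$ and $\mu\coloneqq\rho_{\ast,\ast}$, the associativity coherence for the laxators reduces to the monoid law $(\mu\otimes\id_c)\cp\mu=(\id_c\otimes\mu)\cp\mu$, the two unit coherences reduce to $(\eta\otimes\id_c)\cp\mu=\id_c=(\id_c\otimes\eta)\cp\mu$, and the \emph{symmetric} lax axiom reduces to $\sigma_{c,c}\cp\mu=\mu$. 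Thus lax symmetric monoidal functors $1\to\ccat{C}$ are exactly commutative monoid objects, and the extra ajax requirement—that $\rho$ and $\rho_{\ast,\ast}$ be right adjoints—is exactly the requirement that $\eta$ and $\mu$ be right adjoints. So $(1)\iff(2)$ is pure bookkeeping.

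For $(2)\iff(3)$, given a commutative monoid $(c,\mu,\eta)$ with $\mu$ and $\eta$ right adjoints, let $(-)^{L}$ denote the operation of taking left adjoints and set $\delta\coloneqq\mu^{L}$ and $\epsilon\coloneqq\eta^{L}$, so $\delta\dashv\mu$ and $\epsilon\dashv\eta$; conversely, from a cocommutative comonoid with $\delta,\epsilon$ left adjoints, take right adjoints. Since every hom-poset is a poset, adjoints are unique when they exist, so these two assignments are automatically mutually inverse, and the only real content is to check that the comonoid axioms for $(\delta,\epsilon)$ are the left adjoints of the monoid axioms for $(\mu,\eta)$. For this I would use three facts: taking left adjoints is a well-defined operation on hom-posets, so it sends an equality of morphisms to an equality of their left adjoints; it reverses composites, $(f\cp g)^{L}=g^{L}\cp f^{L}$; and because $\otimes$ is a po-functor it carries adjunctions to adjunctions, giving $(\mu\otimes\id_c)^{L}=\delta\otimes\id_c$, $(\eta\otimes\id_c)^{L}=\epsilon\otimes\id_c$, and so on. Applying $(-)^{L}$ to associativity then yields coassociativity $\delta\cp(\delta\otimes\id_c)=\delta\cp(\id_c\otimes\delta)$; applying it to the unit laws yields the counit laws $\delta\cp(\epsilon\otimes\id_c)=\id_c=\delta\cp(\id_c\otimes\epsilon)$; and applying it to commutativity, using that the symmetry is its own left adjoint ($\sigma_{c,c}^{L}=\sigma_{c,c}^{-1}=\sigma_{c,c}$), yields cocommutativity $\delta\cp\sigma_{c,c}=\delta$.

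The main obstacle I anticipate is purely in the $(2)\iff(3)$ bookkeeping: one must track the associators, unitors, and symmetry of $\ccat{C}$ through the adjoint-transpose (most cleanly by invoking coherence to work in a strictification), and verify that $\otimes$, as a po-functor, really does transport the adjunctions $\delta\dashv\mu$ and $\epsilon\dashv\eta$ on the nose to adjunctions like $\delta\otimes\id_c\dashv\mu\otimes\id_c$. Once these are in hand, uniqueness of adjoints forces each axiom to transfer, so no independent coherence verification is required.
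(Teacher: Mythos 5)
Your proposal is correct and follows essentially the same route as the paper: $(1)\Leftrightarrow(2)$ by identifying lax symmetric monoidal functors $1\to\ccat{C}$ with commutative monoids via the laxators, and $(2)\Leftrightarrow(3)$ by passing between the monoid and comonoid structures through the adjunctions $\epsilon\dashv\eta$ and $\delta\dashv\mu$. The only difference is that you spell out the mate/uniqueness-of-adjoints bookkeeping (adjoint-taking reverses composites, $\otimes$ preserves adjunctions, coherence isomorphisms are their own adjoint inverses) that the paper leaves as an assertion.
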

\begin{proof}
\begin{description}
	\item[$(1)\Leftrightarrow(2)$:]The set $\lax(1,\ccat{C})$ of lax symmetric monoidal functors $1\to\ccat{C}$ is well-known to be in bijection with the set of commutative monoid objects $(c,\mu,\eta)$ in $\ccat{C}$. Indeed, $\eta$ and unit $\mu$ come from the 0-ary and 2-ary laxators respectively: $\eta=\rho$ and $\mu=\rho_{1,1}$. Hence the added condition that $\eta$ and $\mu$ have left adjoints is precisely the ajax condition.
	\item[$(2)\Leftrightarrow(3)$:] Suppose given an object $c\in\ccat{C}$ and two adjunctions
	\begin{equation}\label{eqn.adjmon}
		\adjr{I}{\eta}{\epsilon}{c}.
		\qqand
		\adjr{c\otimes c}{\mu}{\delta}{c}
	\end{equation}
	Then $\mu,\eta$ satisfy the commutative monoid laws iff
	$\delta,\epsilon$ satisfy the cocommutative comonoid laws.
	\qedhere
\end{description}
\end{proof}
To summarize, if $(c,\rho,\lambda)\colon 1\to\ccat{C}$ is an ajax functor then the corresponding monoid and comonoid structures on $c$ are given by
\begin{equation}\label{eqn.monoid_comonoid_ajax}
	\eta=\rho\qquad
	\mu=\rho_{1,1}
	\qqand
	\epsilon=\lambda\qquad
	\delta=\lambda_{1,1}
\end{equation}

\Cref{prop.adjoint_monoids} motivates the following definition.

\begin{definition}
Let $(\ccat{C},I,\otimes)$ be a monoidal po-category. An \emph{adjoint commutative monoid} (or simply \emph{adjoint monoid}) in $\ccat{C}$ is a commutative monoid object $(c,\mu,\eta)$ in $\ccat{C}$ such that $\mu$ and $\eta$ are right adjoints.
\end{definition}

Adjoint monoids are a slight weakening of the \emph{internal meet semi-lattice} notion from theoretical computer science; see \cite[Chapter 5]{schalk1994algebras} and references therein.

\begin{proposition}\label{prop.ajax_pres_adjmon}
Ajax functors send adjoint monoids to adjoint monoids.
\end{proposition}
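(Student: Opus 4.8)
The plan is to exploit the bijection of \cref{prop.adjoint_monoids} between adjoint monoids in a monoidal po-category and ajax functors out of the terminal monoidal po-category $1$, together with the closure of ajax functors under composition recorded in \cref{lemma.ajax}.

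First I would recast the data. By \cref{prop.adjoint_monoids}, an adjoint monoid $(c,\mu,\eta)$ in $\ccat{C}$ is the same as an ajax functor $A\colon 1\to\ccat{C}$ with $A(\ast)=c$; under the correspondence \eqref{eqn.monoid_comonoid_ajax} its structure maps are the laxators $\eta=\rho^A$ and $\mu=\rho^A_{1,1}$. Given an ajax functor $F\colon\ccat{C}\to\ccat{D}$, I form the composite $A\cp F\colon 1\to\ccat{D}$. By \cref{lemma.ajax} this composite is again ajax, so applying \cref{prop.adjoint_monoids} a second time yields an adjoint monoid in $\ccat{D}$. Unwinding the bijection, its underlying object is $(A\cp F)(\ast)=F(c)$, and since the laxators of a composite lax monoidal functor are the evident pastings of the two constituent families, its unit and multiplication are
\[
  \eta' = \rho^F\cp F(\eta)
  \qand
  \mu' = \rho^F_{c,c}\cp F(\mu).
\]
Thus $(F(c),\mu',\eta')$ is the sought adjoint monoid.

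There is no genuine obstacle here: the argument is purely formal, and the only point meriting care is the identification of the structure produced by $A\cp F$ with the maps $\eta',\mu'$ displayed above. If one prefers to sidestep the functor picture entirely, the same conclusion follows directly. The lax monoidality of $F$ makes $(F(c),\mu',\eta')$ a commutative monoid by the standard fact that lax monoidal functors preserve commutative monoids. For the adjointness, note that $\rho^F$ and $\rho^F_{c,c}$ are right adjoints because $F$ is ajax, while $F(\eta)$ and $F(\mu)$ are right adjoints because a po-functor is a strict $2$-functor and hence preserves adjunctions (carrying the comultiplication/counit adjunctions witnessing that $\mu$ and $\eta$ are right adjoints to corresponding adjunctions in $\ccat{D}$). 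Since a composite of right adjoints is a right adjoint, both $\eta'$ and $\mu'$ are right adjoints, so $(F(c),\mu',\eta')$ is an adjoint monoid, as required.
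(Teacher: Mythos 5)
Your proof is correct and follows essentially the same route as the paper: the paper's entire proof is ``the composite of ajax functors is ajax, so the result follows from \cref{prop.adjoint_monoids},'' which is exactly your main argument of encoding the adjoint monoid as an ajax functor $1\to\ccat{C}$, composing with $F$, and decoding via \cref{prop.adjoint_monoids}. Your explicit unwinding of the composite laxators and the alternative direct argument via $2$-functoriality are correct elaborations of details the paper leaves implicit.
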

\begin{proof}
The composite of ajax functors is ajax, so the result follows from \cref{prop.adjoint_monoids}.
\end{proof}

We give examples of adjoint monoids after recalling the proof of a well-known lemma.

\begin{lemma}\label{lemma.comonoids_unique}
Let $\ccat{C}$ be a monoidal po-category. If the monoidal structure is cartesian (given by finite products in the underlying 1-category) then every object has a unique comonoid structure, and it is commutative.
\end{lemma}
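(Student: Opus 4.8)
The plan is to reduce the statement to the underlying $1$-category and then invoke the classical fact that finite products endow each object with a unique comonoid structure. A comonoid object $(c,\delta,\epsilon)$ is specified by $1$-morphisms $\delta\colon c\to c\otimes c$ and $\epsilon\colon c\to I$ subject to the (co)associativity and counit \emph{equations}; since these are equations between $1$-morphisms, the local poset enrichment of $\ccat{C}$ plays no role, and we may work entirely in the underlying symmetric monoidal $1$-category, whose tensor is the categorical product $\times$ and whose unit $I$ is the terminal object $\terminal$.

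First I would pin down the counit: since $I=\terminal$ is terminal, there is exactly one morphism $c\to\terminal$, so $\epsilon$ is forced to equal the unique map $!\colon c\to\terminal$.

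Next I would pin down the comultiplication using the counit laws together with the universal property of the product. In a cartesian category the two projections factor as $\pi_1=(\id_c\otimes{!})\cp\runitor$ and $\pi_2=({!}\otimes\id_c)\cp\lunitor$, because postcomposing a factor with the unique map to $\terminal$ and then applying the unitor recovers the projection. Substituting $\epsilon={!}$, the right and left counit laws for $(c,\delta,\epsilon)$ read exactly $\delta\cp\pi_1=\id_c$ and $\delta\cp\pi_2=\id_c$. By the universal property of $c\times c$ there is a unique morphism $c\to c\times c$ whose composites with both projections are $\id_c$, namely the diagonal $\pair{\id_c,\id_c}$; hence $\delta=\pair{\id_c,\id_c}$ is uniquely determined. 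This proves uniqueness, and existence follows by checking that $\delta=\pair{\id_c,\id_c}$ and $\epsilon={!}$ satisfy coassociativity and the counit laws, each of which is immediate from the universal property of products. Cocommutativity is the identity $\delta\cp\sigma_{c,c}=\delta$, where $\sigma_{c,c}$ denotes the symmetry; it holds because $\pair{\id_c,\id_c}\cp\sigma_{c,c}=\pair{\id_c,\id_c}$, as swapping two equal components changes nothing.

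The only step requiring any care is the unitor and symmetry bookkeeping—identifying the projections with the counit composites and confirming that the abstract counit laws translate into $\delta\cp\pi_i=\id_c$. Beyond that there is no real obstacle: everything reduces to the universal property of the product, and the po-enrichment is a red herring for this particular statement.
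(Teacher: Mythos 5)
Your proof is correct and follows essentially the same route as the paper's: the terminality of the unit forces $\epsilon$ to be the unique map $!$, which makes the counit composites into the projections, so the counit laws and the universal property of the product force $\delta$ to be the diagonal. The paper compresses this into a single sentence and leaves existence and cocommutativity implicit; your write-up just makes those routine verifications explicit.
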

\begin{proof}
Since the unit object is terminal, the maps $c\times\epsilon$ and $\epsilon\times c$ are forced to be the projections $c\times c\to c$, so $\delta$ is forced to be the diagonal. 
\end{proof}

\begin{proposition}\label{prop.adjmon_msl}
A poset $P\in\pposet$ is an adjoint monoid iff it is a meet-semilattice, in which case $\eta=\true$ and $\mu=\wedge$.
\end{proposition}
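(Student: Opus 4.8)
The plan is to leverage the two structural results just established: \cref{lemma.comonoids_unique}, which pins down the comonoid structure in a cartesian monoidal po-category, and the characterization $(2)\Leftrightarrow(3)$ of \cref{prop.adjoint_monoids}, which lets me detect an adjoint monoid by checking that the comonoid maps are left adjoints.

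First I would observe that the monoidal structure on $\pposet$ is cartesian, so by \cref{lemma.comonoids_unique} every poset $P$ carries a unique comonoid structure, necessarily commutative: the counit $\epsilon\colon P\to 1$ is the unique map to the terminal poset and the comultiplication $\delta\colon P\to P\times P$ is the diagonal $p\mapsto(p,p)$. By the $(2)\Leftrightarrow(3)$ bijection of \cref{prop.adjoint_monoids}, $P$ is an adjoint monoid precisely when this $\epsilon$ and $\delta$ are left adjoints, i.e.\ when they admit right adjoints $\eta\colon 1\to P$ and $\mu\colon P\times P\to P$.

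Next I would compute these two adjoints directly from the Galois-connection form of an adjunction between posets. For $\epsilon\dashv\eta$, writing $\eta(\terminal)=t$, the condition $\epsilon(p)\leq\terminal\iff p\leq t$ must hold for all $p$; the left side is automatic in the terminal poset, so the adjoint exists iff $t$ is a top element of $P$, giving $\eta=\true$. For $\delta\dashv\mu$, the condition $(p,p)=\delta(p)\leq(a,b)\iff p\leq\mu(a,b)$ unwinds to ``$p\leq a$ and $p\leq b$ iff $p\leq\mu(a,b)$,'' which says exactly that $\mu(a,b)$ is the greatest lower bound of $a$ and $b$; thus the adjoint exists iff all binary meets exist, giving $\mu=\wedge$.

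Combining these, both right adjoints exist iff $P$ has a top element and all binary meets, which is the definition of a meet-semilattice, and in that case $\eta=\true$ and $\mu=\wedge$ as claimed. For the converse I would note that when $P$ is a meet-semilattice these formulas furnish the required right adjoints, and the monoid laws come for free from the $(2)\Leftrightarrow(3)$ equivalence, since the comonoid laws hold automatically by \cref{lemma.comonoids_unique}. I do not anticipate a real obstacle here; the only thing to be careful about is getting the direction of each adjunction right, so that it is the right adjoints---not the left---whose existence is equivalent to the meet-semilattice structure.
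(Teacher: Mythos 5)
Your proposal is correct and follows essentially the same route as the paper's proof: invoke \cref{lemma.comonoids_unique} to fix the comonoid structure as the terminal and diagonal maps, then use the $(2)\Leftrightarrow(3)$ equivalence of \cref{prop.adjoint_monoids} to reduce adjoint-monoid-ness to the existence of right adjoints for $\epsilon$ and $\delta$, which are exactly a top element and binary meets. The only difference is that you spell out the Galois-connection computations that the paper states without proof, which is a welcome level of detail but not a different argument.
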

\begin{proof}
By \cref{lemma.comonoids_unique}, $P$ has a unique comonoid structure given by the terminal and diagonal maps $\epsilon\colon P\to 1$ and $\delta\colon P\to P\times P$. Thus $P$ is an adjoint monoid iff these maps have adjoints as in \cref{eqn.adjmon}, which holds iff $\eta$ is a top element and $\mu$ is a meet.
\end{proof}

\begin{proposition}\label{prop.adjmon_reg}
Let $\cat{R}$ be a regular category. Every object $r\in\ccat{R}$ in its relations po-category has a unique adjoint monoid structure.
\end{proposition}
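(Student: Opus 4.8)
The plan is to reduce the claim to the uniqueness of comonoid structures in the cartesian monoidal category $\cat{R}$, transported through the fundamental lemma. By the equivalence $(2)\Leftrightarrow(3)$ of \cref{prop.adjoint_monoids}, an adjoint monoid structure on $r$ in $\ccat{R}=\rrel{\cat{R}}$ is exactly the same data as a cocommutative comonoid structure $(r,\delta,\epsilon)$ whose structure maps $\delta\colon r\tickar r\otimes r$ and $\epsilon\colon r\tickar I$ are left adjoints: the monoid maps $\mu,\eta$ are then recovered as their right adjoints, and the monoid laws hold iff the comonoid laws do. So it suffices to show that $r$ carries exactly one comonoid structure with left-adjoint structure maps.

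First I would invoke \cref{lemma.fundamental}: left adjoints in $\rrel{\cat{R}}$ are precisely the graphs of morphisms in $\cat{R}$, and the graph assignment gives an identity-on-objects isomorphism $\cat{R}\iso\ladj(\rrel{\cat{R}})$. Since the tensor $\otimes=\times$ on $\rrel{\cat{R}}$ agrees on objects with the cartesian product of $\cat{R}$, since a tensor of left adjoints is again a left adjoint, and since the associator and unitors of $\rrel{\cat{R}}$ are graphs of those of $\cat{R}$ (hence left adjoints), this isomorphism is strong monoidal. Consequently a comonoid in $\rrel{\cat{R}}$ all of whose structure maps are left adjoints is the very same datum as a comonoid in the cartesian monoidal category $\cat{R}$.

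Then I would apply \cref{lemma.comonoids_unique}: because $\cat{R}$ is cartesian monoidal, $r$ has a unique comonoid structure, namely the diagonal $\Delta\colon r\to r\times r$ together with the terminal map $!\colon r\to\terminal$, and this structure is automatically cocommutative. Transporting along the graph isomorphism, the unique comonoid-with-left-adjoint-structure-maps on $r$ in $\rrel{\cat{R}}$ is $\delta=\pair{\id_r,\Delta}$ and $\epsilon=\pair{\id_r,!}$, whose right adjoints (the co-graphs, by \cref{rem.rels_adjoint_composites}) supply the required $\mu$ and $\eta$. By the first paragraph this is exactly the unique adjoint monoid structure, giving both existence and uniqueness.

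The only delicate point is the monoidal compatibility in the middle step: one must confirm that passing to left adjoints respects the tensor, so that ``comonoid with left-adjoint structure maps in $\rrel{\cat{R}}$'' genuinely coincides with ``comonoid in $\cat{R}$'' and not merely with some comonoid whose comultiplication happens to be a graph. Once this identification is secured, uniqueness is immediate from \cref{lemma.comonoids_unique} and no further computation is needed.
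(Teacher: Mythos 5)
Your proposal is correct and follows essentially the same route as the paper's proof: reduce via \cref{prop.adjoint_monoids} $(2)\Leftrightarrow(3)$ to comonoids with left-adjoint structure maps, identify these with comonoids in $\cat{R}$ via the fundamental lemma (\cref{lemma.fundamental}), and conclude by uniqueness of comonoid structures in a cartesian monoidal category (\cref{lemma.comonoids_unique}). The only difference is that you make explicit the monoidal compatibility of the isomorphism $\cat{R}\cong\ladj(\ccat{R})$, a point the paper's terser proof leaves implicit.
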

\begin{proof}
Since $\cat{R}$ is cartesian monoidal, there is a unique cocommutative comonoid
structure on every object by \cref{lemma.comonoids_unique}. By the fundamental
lemma (\ref{lemma.fundamental}), we have an isomorphism
$\cat{R}\cong\ladj(\ccat{R})$, and we are done by \cref{prop.adjoint_monoids}
$(2)\Leftrightarrow(3)$.
\end{proof}

\section{Notions of morphism between ajax functors}

Given two ajax functors $F,F'\colon\ccat{C}\tto\ccat{D}$, we will consider two
sorts of (strong) natural transformations $\alpha\colon F\to F'$ between them,
differing in terms of the strength of their \emph{laxator naturality}. The first
sort only demands that the laxator naturality squares for any $c\in\ccat{C}$ be
\emph{mate squares} in $\ccat{D}$:
\begin{equation}\label{eqn.lax_monoid_hom}
\begin{tikzcd}[column sep=50, row sep=35]
	F(I)\ar[r, "\alpha_I"]\ar[d, shift left=5pt, "\lambda"]&
	F'(I)\ar[d, shift left=5pt, "\lambda'"]\ar[dl, phantom, "\Downarrow"]\\
	I\ar[r, equal]\ar[u, shift left=5pt, "\rho"]\ar[u, phantom, "\Downarrow"]&
	I\ar[u, shift left=5pt, "\rho'"]\ar[u, phantom, "\Downarrow"]
\end{tikzcd}
\qqand
\begin{tikzcd}[column sep=50, row sep=35]
	F(c\otimes c')\ar[r, "\alpha_{c\otimes c'}"]\ar[d, shift left=5pt, "\lambda_{c,c'}"]&
	F'(c\otimes c')\ar[d, shift left=5pt, "\lambda'_{c,c'}"]\ar[dl, phantom, "\Downarrow"]\\
	F(c)\otimes F(c')\ar[r, "\alpha_c\otimes\alpha_{c'}"']\ar[u, shift left=5pt, "\rho_{c,c'}"]\ar[u, phantom, "\Downarrow"]&
	F'(c)\otimes F'(c')\ar[u, shift left=5pt, "\rho'_{c,c'}"]\ar[u, phantom, "\Downarrow"]
\end{tikzcd}
\end{equation}
The meaning of each diagram in \eqref{eqn.lax_monoid_hom} is that any (and all)
of the following four equivalent conditions hold (dropping subscripts and writing
$\alpha^{\otimes 2}\coloneqq(\alpha\otimes\alpha)$):
\begin{equation}\label{eqn.lax_mon_hom_2}
\begin{tikzcd}[column sep=30]
	\rho\cp\alpha\leq\rho'\ar[r, phantom, "\Leftrightarrow"]\ar[d, phantom, "\Updownarrow"]&
	\rho\cp\alpha\cp\lambda'\leq I\ar[d, phantom, "\Updownarrow"]\\
	\alpha\cp\lambda'\leq \lambda\ar[r, phantom, "\Leftrightarrow"]&
	\alpha\leq\lambda\cp\rho'
\end{tikzcd}
\qand
\begin{tikzcd}[column sep=30]
	\rho\cp\alpha \leq \alpha^{\otimes 2}\cp \rho'\ar[r, phantom, "\Leftrightarrow"]\ar[d, phantom, "\Updownarrow"]&
	\rho\cp\alpha\cp\lambda'\leq \alpha^{\otimes 2}\ar[d, phantom, "\Updownarrow"]\\
	\alpha\cp\lambda'\leq \lambda\cp\alpha^{\otimes 2}\ar[r, phantom, "\Leftrightarrow"]&
	\alpha\leq\lambda\cp\alpha^{\otimes 2}\cp\rho'
\end{tikzcd}
\end{equation}
The second sort demands further that some of these inequalities be equalities.

\begin{definition}
Let $F,F'\colon\ccat{C}\tto\ccat{D}$ be ajax functors. A \emph{mate morphism} between them is a natural transformation $\alpha\colon F\to F'$ with mate squares as in \cref{eqn.lax_monoid_hom}. We say that $\alpha$ is \emph{strong} if the monoid part of the diagram strictly commutes (for all $c,c'\in\ccat{C}$):
	\begin{equation}\label{eqn.strong_mate}
		\rho\cp\alpha_I=\rho'
		\qqand
		\rho_{c,c'}\cp\alpha_{c\otimes c'}=(\alpha_c\otimes\alpha_{c
		'})\cp\rho'_{c,c'}.
	\end{equation}
	We denote the corresponding categories by $\ajax(\ccat{C},\ccat{D})$ and $\ajax^\str(\ccat{C},\ccat{D})$ respectively.
	
Suppose $\alpha,\beta\colon F\to F'$ are mate morphisms (possibly strong). We write $\alpha\leq\beta$ if for all $c\in\ccat{C}$ we have $\alpha_c\leq\beta_c$ in the poset $\ccat{C}(F(c),F'(c))$. We denote the corresponding po-categories as
\[
	\aajax(\ccat{C},\ccat{D})
	\qqand
	\aajax^\str(\ccat{C},\ccat{D}).
\]
\end{definition}

Clearly, the po-category structure of $\aajax(\ccat{C},\ccat{D})$ is inherited from $\ppocat(\ccat{C},\ccat{D})$; we record this fact in the following obvious lemma.

\begin{lemma}
The map $\aajax(\ccat{C},\ccat{D})\to\ppocat(\ccat{C},\ccat{D})$ is locally fully faithful.
\end{lemma}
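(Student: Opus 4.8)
The plan is to unwind what \emph{locally fully faithful} means for a po-functor and then observe that the claim collapses to a triviality about componentwise orders. Recall that a po-functor $\Phi\colon\ccat{A}\to\ccat{B}$ between po-categories is locally fully faithful when, for every pair of objects $X,Y$, the induced monotone map on hom-posets $\ccat{A}(X,Y)\to\ccat{B}(\Phi X,\Phi Y)$ is fully faithful as a functor. Since the hom-objects here are \emph{posets}---categories with at most one morphism between any two objects---a functor between them is fully faithful exactly when it is an \emph{order-embedding}, i.e.\ it both preserves and reflects the order.

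First I would fix ajax functors $F,F'\colon\ccat{C}\tto\ccat{D}$ and examine the map on the single hom-poset
\[
\aajax(\ccat{C},\ccat{D})(F,F')\longrightarrow\ppocat(\ccat{C},\ccat{D})(F,F').
\]
By definition a mate morphism $\alpha\colon F\to F'$ \emph{is} a natural transformation of the underlying po-functors, carrying the extra property that its laxator naturality squares are mate squares; the displayed map simply forgets that property, sending $\alpha$ to $\alpha$. Hence it is the inclusion of the subset of those natural transformations satisfying the mate condition, and in particular is injective on each hom-poset.

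The key observation is that both orders in sight are given by the very same formula: in $\aajax(\ccat{C},\ccat{D})$ one sets $\alpha\leq\beta$ iff $\alpha_c\leq\beta_c$ in $\ccat{D}(F(c),F'(c))$ for all $c\in\ccat{C}$, and the order on $\ppocat(\ccat{C},\ccat{D})$ is the identical componentwise order on natural transformations. Thus the order on the source is precisely the restriction of the order on the target, so the inclusion both preserves and reflects $\leq$; that is, it is an order-embedding, hence fully faithful. As $F,F'$ were arbitrary, the po-functor is locally fully faithful.

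I expect no genuine obstacle: this is a bookkeeping lemma whose entire content lies in reading \emph{locally fully faithful} correctly for po-categories. The one point that warrants a moment's care is that full faithfulness of a map between posets means an order-\emph{embedding}, not merely order-preservation; once that is granted, the equality of the two componentwise orders finishes the argument at once.
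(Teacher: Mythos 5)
Your proof is correct and is essentially the paper's own justification made explicit: the paper records this as an ``obvious lemma'' on the grounds that the po-category structure of $\aajax(\ccat{C},\ccat{D})$ is inherited from $\ppocat(\ccat{C},\ccat{D})$, and your argument---mate morphisms are natural transformations with a property, both hom-posets carry the same componentwise order, so the inclusion is an order-embedding and hence fully faithful---is exactly that observation spelled out.
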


\begin{definition}
Let $\ccat{C}$ be a monoidal po-category. Define po-categories
\[
	\aadjmon(\ccat{C})\coloneqq\aajax(1,\ccat{C})
	\qqand
	\aadjmon^\str(\ccat{C})\coloneqq\aajax^\str(1,\ccat{C}),
\]
and refer to them as the po-category of \emph{adjoint monoids} and the po-category of \emph{adjoint monoids and strong maps} respectively.
\end{definition}

Let $\pposet\vert_{\msl}$ denote the full sub-po-category of $\pposet$ spanned
by the meet-semilattices, and let $\msl$ denote the po-category of
meet-semilattices and meet-preserving maps.

\begin{proposition}\label{prop.adjmon_poset}
There are isomorphisms of po-categories
\[
	\aadjmon(\pposet)\cong\pposet\vert_{\msl}
	\qqand
	\aadjmon^\str(\pposet)\cong\msl.
\]
\end{proposition}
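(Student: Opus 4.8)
The plan is to verify the two isomorphisms object-by-object and then morphism-by-morphism, since both are claimed to be the identity once an adjoint monoid is identified with its underlying poset. On objects there is nothing new to do: \cref{prop.adjmon_msl} already identifies the objects of $\aadjmon(\pposet)=\aajax(1,\pposet)$ --- the adjoint monoids in $\pposet$ --- with the meet-semilattices, and records that the monoid data is $\eta=\true$ and $\mu=\wedge$ (while \cref{lemma.comonoids_unique} pins down $\epsilon$ and $\delta$ as the unique terminal and diagonal maps). This is exactly the object class of both $\pposet\vert_{\msl}$ and $\msl$, so the object bijection is immediate for both isomorphisms.

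The substance is at the level of morphisms. First I would observe that, since $1$ is terminal, a functor $1\to\pposet$ is just a poset $P$, and a natural transformation between two such functors is just a monotone map $\alpha\colon P\to P'$; there is no further naturality to impose. So I only have to determine which monotone maps are mate morphisms, and which are strong mate morphisms. Using the equivalent reformulations in \eqref{eqn.lax_mon_hom_2}, the mate condition on $\alpha$ reduces to the two inequalities $\rho\cp\alpha\leq\rho'$ and $\rho_{1,1}\cp\alpha\leq\alpha^{\otimes 2}\cp\rho'_{1,1}$. Substituting $\eta=\true$ and $\mu=\wedge$ from \cref{prop.adjmon_msl}, these unwind to $\alpha(\true_P)\leq\true_{P'}$ and $\alpha(p\wedge q)\leq\alpha(p)\wedge\alpha(q)$ for all $p,q\in P$.

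Here is the crux, and the one spot that needs an actual (if tiny) argument rather than bookkeeping: both inequalities hold automatically for every monotone $\alpha$. The first holds because $\true_{P'}$ is the top element of $P'$; the second holds because $p\wedge q\leq p$ and $p\wedge q\leq q$, so monotonicity gives $\alpha(p\wedge q)\leq\alpha(p)$ and $\alpha(p\wedge q)\leq\alpha(q)$, whence $\alpha(p\wedge q)\leq\alpha(p)\wedge\alpha(q)$. Thus \emph{every} monotone map is a mate morphism, which recovers the morphisms of $\pposet\vert_{\msl}$ and will give $\aadjmon(\pposet)\cong\pposet\vert_{\msl}$. For the strong version, \eqref{eqn.strong_mate} demands precisely that these two inequalities be equalities, i.e.\ $\alpha(\true_P)=\true_{P'}$ and $\alpha(p\wedge q)=\alpha(p)\wedge\alpha(q)$; this is exactly the condition that $\alpha$ preserve finite meets, recovering the morphisms of $\msl$ and hence $\aadjmon^\str(\pposet)\cong\msl$.

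Finally I would note that all four po-categories carry the pointwise order on morphisms ($\alpha\leq\beta$ iff $\alpha(p)\leq\beta(p)$ for all $p$), the order on mate morphisms being inherited from $\ppocat$, so the morphism bijections are order-isomorphisms on each hom-poset and the two assignments are therefore isomorphisms of po-categories, not merely of underlying $1$-categories. The only conceptual point, as flagged above, is the observation that monotonicity makes the lax comparison with meets free; everything else follows mechanically from \cref{prop.adjmon_msl}, \cref{lemma.comonoids_unique}, and the reformulations already recorded in \eqref{eqn.lax_mon_hom_2} and \eqref{eqn.strong_mate}.
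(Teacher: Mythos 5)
Your proof is correct and follows essentially the same route as the paper: objects are handled by \cref{prop.adjmon_msl}, morphisms by showing every monotone map is automatically a mate morphism, and the strong case by unwinding \eqref{eqn.strong_mate} to preservation of $\true$ and $\wedge$. The only cosmetic difference is that you verify the monoid-side inequalities of \eqref{eqn.lax_mon_hom_2} directly (via the top element and monotonicity applied to meets), whereas the paper observes that every poset map is strictly a comonoid homomorphism and therefore satisfies the comonoid-side conditions---two formulations whose equivalence is exactly what \eqref{eqn.lax_mon_hom_2} records.
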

\begin{proof}
By \cref{prop.adjmon_msl,eqn.monoid_comonoid_ajax} we have the desired isomorphisms on objects, and
$\rho=\eta=\true$ and $\rho_{1,1}=\mu=\wedge$. Since every poset map $\alpha\colon P\to P'$ is a
comonoid homomorphism, we have mate diagrams as in \cref{eqn.lax_monoid_hom},
giving $\aadjmon(\pposet)\cong\pposet\vert_{\msl}$.

To see the isomorphism
$\aadjmon^\str(\pposet)\cong\msl$, note that by \eqref{eqn.monoid_comonoid_ajax}, the equations in
\eqref{eqn.strong_mate} precisely say $\alpha(\true)=\true$ and
$\alpha(\wedge)=\wedge(\alpha,\alpha)$.
\end{proof}

\begin{proposition}\label{prop.adjmon_regular}
Let $\ccat{R}$ be a regular po-category. There are isomorphisms
\[
	\aadjmon(\ccat{R})\cong\ccat{R}
	\qqand
	\aadjmon^\str(\ccat{R})\cong\rradj(\ccat{R}).
\]
\end{proposition}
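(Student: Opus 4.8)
The plan is to realise both isomorphisms as identity-on-objects comparisons with $\ccat{R}$ and to reduce the whole statement to identifying the two classes of morphism. On objects this is immediate: by \cref{prop.adjmon_reg} every object of $\ccat{R}$ carries a \emph{unique} adjoint monoid structure, and by \cref{lemma.fundamental} (transporting the cartesian comonoid of $\cat{R}$ along $\cat{R}\iso\ladj(\ccat{R})$) this structure is the one whose comultiplication $\delta$ and counit $\epsilon$ are the left adjoints of the multiplication $\mu$ and unit $\eta$; relationally $\mu\colon c\otimes c\tickar c$ is the converse of the diagonal and $\eta\colon I\tickar c$ the converse of $c\to\terminal$. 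Hence the underlying-object functors of both $\aadjmon(\ccat{R})$ and $\aadjmon^\str(\ccat{R})$ are bijections onto $\ob\ccat{R}$, and only the hom-posets remain.

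First I would show that \emph{every} morphism $\alpha\colon c\tickar c'$ of $\ccat{R}$ is a mate morphism, so that $\aadjmon(\ccat{R})$ and $\ccat{R}$ have literally the same hom-posets. Because the domain is $1$, a natural transformation between the ajax functors $c,c'\colon 1\to\ccat{R}$ is nothing but a morphism $\alpha$ of $\ccat{R}$, and by \cref{eqn.lax_mon_hom_2} being a mate morphism is equivalent to the two inequalities $\alpha\cp\epsilon'\le\epsilon$ and $\alpha\cp\delta'\le\delta\cp(\alpha\otimes\alpha)$. These are exactly the statements that every relation is oplax-natural for the cartesian comonoid; each reduces to the trivial containment obtained by equating coordinates, so holds for all $\alpha$. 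Thus $\alpha\mapsto\alpha$ is fully faithful and order-preserving, giving $\aadjmon(\ccat{R})\iso\ccat{R}$.

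Next, the strong condition \cref{eqn.strong_mate} says precisely that $\alpha$ is a strict monoid homomorphism: $\eta\cp\alpha=\eta'$ and $\mu\cp\alpha=(\alpha\otimes\alpha)\cp\mu'$. Writing $p\colon\alpha\to c$ and $\pi\colon\alpha\to c'$ for the two legs of the relation $\alpha\ss c\otimes c'$ and expanding the composites via \cref{eqn.rel_composition}, the first equality asserts that $\pi$ is a regular epimorphism ($\alpha$ is \emph{total}), while the second---whose inequality $\mu\cp\alpha\le(\alpha\otimes\alpha)\cp\mu'$ is automatic, being the $2$-ary mate inequality of the previous paragraph---forces the reverse containment, which asserts that $\pi$ is monic ($\alpha$ is \emph{single-valued}). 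A morphism that is both a regular epi and monic is an isomorphism (\cref{lemma.OFS}), so $\pi$ is invertible and $\alpha=\pair{f,\id_{c'}}$ is the co-graph of $f\coloneqq\pi\inv\cp p\colon c'\to c$. By \cref{rem.rels_adjoint_composites} the co-graphs are exactly the right adjoints, and conversely every right adjoint is such a co-graph and manifestly satisfies \cref{eqn.strong_mate}. Hence strong mate morphisms coincide with right adjoints; the orders agree (both are the discrete order on co-graphs inherited from $\ccat{R}$), yielding $\aadjmon^\str(\ccat{R})\iso\rradj(\ccat{R})$.

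The main obstacle is the converse half of the second isomorphism: converting the strict equation $\mu\cp\alpha=(\alpha\otimes\alpha)\cp\mu'$ into the categorical assertion that $\pi\colon\alpha\to c'$ is monic, working internally to an arbitrary regular category rather than in $\smset$. Concretely this means computing both sides as genuine subobjects of $(c\otimes c)\otimes c'$ through the pullback-then-image recipe of \cref{eqn.rel_composition} and comparing them; the totality condition coming from $\eta\cp\alpha=\eta'$ is the easier of the two. Once single-valuedness and totality are established, the identification of total single-valued relations with (co-graphs of) functions, and hence with right adjoints, runs exactly parallel to the proof of \cref{lemma.fundamental}.
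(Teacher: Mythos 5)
Your proposal is correct and follows essentially the same route as the paper: objects are handled by the uniqueness of the adjoint monoid structure (\cref{prop.adjmon_reg}), the first isomorphism by observing that the mate (lax comonoid homomorphism) inequalities hold automatically for every relation, and the second by unwinding the strict equations $\eta\cp\alpha=\eta'$ and $\mu\cp\alpha=(\alpha\otimes\alpha)\cp\mu'$ into the codomain leg of the span being a regular epi and a mono respectively, hence an iso, hence $\alpha$ a co-graph, i.e.\ a right adjoint by \cref{rem.rels_adjoint_composites}. The computation you flag as the ``main obstacle'' (equality of the two composites forces the diagonal $\alpha\to\alpha\times_{c'}\alpha$ to be iso) is exactly the step the paper also asserts without detail, so your write-up sits at the same level of rigor as the published proof.
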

\begin{proof}
In \cref{prop.adjmon_reg} we gave an isomorphism $\ob\ccat{R}\cong\ob\aadjmon(\ccat{R})$ coming from the fact that every object $r\in\cat{R}=\ladj(\ccat{R})$ has a unique comonoid structure. Suppose $r\from \alpha\to r'$ is a morphism in $\ccat{R}$. Then there exist unique maps $e,d$ making following diagrams commute:
\[
\begin{tikzcd}[row sep=small]
	r&
	\alpha\ar[l, "f"']\ar[r, "f'"]\ar[dd, dotted, "e" description]&
	r'\\
	r\ar[u, equal]\ar[d]&&
	r'\ar[u, equal]\ar[d]\\
	1&
	1\ar[l, equal]\ar[r, equal]&
	1
\end{tikzcd}
\qqand
\begin{tikzcd}[row sep=small]
	r&
	\alpha\ar[l, "f"']\ar[r, "f'"]\ar[dd, dotted, "d" description]&
	r'\\
	r\ar[u, equal]\ar[d]&&
	r'\ar[u, equal]\ar[d]\\
	r\times r&
	\alpha\times \alpha\ar[l]\ar[r]&
	r'\times r'
\end{tikzcd}
\]
Thus there is an isomorphism between the posets $\ccat{R}(r,r')$ and $\aadjmon(r,r')$. 

Unwinding the definition of strong morphisms between the ajax maps $r,r'\colon 1\to\ccat{R}$ the equation $\eta\cp \alpha=\eta$ implies that the map $\im(f')\to r'$ is an iso, i.e.\ $f'$ is a regular epi; similarly the equation $\mu\cp\alpha=(\alpha\otimes\alpha)\cp\mu$ implies that the map $\alpha\to \alpha\times_{r'}\alpha$ is iso, i.e.\ $f'$ is a mono. In other words, $\alpha$ is strong iff $f'$ is iso, and this holds iff $\alpha$ is a right adjoint (see \cref{rem.rels_adjoint_composites}).
\end{proof}

In passing we note the following connection to hypergraph categories, which are
well known for their own graphical language, and may help some readers contextualize our
main result. This is a corollary of \cite[Theorem 3.1]{fong2017decorated}. 

\begin{proposition}
  Given a regular category $\cat{R}$, the monoidal category underlying $\rrel{\cat{R}}$
  has a hypergraph structure, where the symmetric monoidal structure is given by
  the product in $\cat{R}$, and where for any object $x$ in $\rrel{\cat{R}}$ we
  have $\mu_x$ and $\delta_x$ given by the diagonal subobject $x\ss x \times x
  \times x$, and $\eta_x$ and $\epsilon_x$ given by the maximal subobject $x\ss x$. 
  \end{proposition}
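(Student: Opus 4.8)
The plan is to exhibit on each object $x$ of $\rrel{\cat{R}}$ the special commutative Frobenius structure named in the statement and then check that it is coherent with $\otimes$; a hypergraph category is precisely a symmetric monoidal category so equipped. The comonoid half of the data is already in hand. By \cref{lemma.comonoids_unique}, each $x$ carries a unique comonoid structure $(\delta_x,\epsilon_x)$ for the cartesian product on $\cat{R}$, namely the diagonal $\Delta\colon x\to x\times x$ and the terminal map $!\colon x\to 1$; under the graph embedding $\cat{R}\hookrightarrow\rrel{\cat{R}}$ of \cref{lemma.fundamental} these become the relations $\pair{\id_x,\Delta}$ and $\pair{\id_x,!}$, whose underlying subobjects are exactly the diagonal $x\ss x\times x\times x$ and the maximal subobject $x\ss x$. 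By \cref{prop.adjmon_reg} the monoid $(\mu_x,\eta_x)$ is the right adjoint of this comonoid, and by \cref{rem.rels_adjoint_composites} right adjoints are co-graphs, i.e.\ relational converses; hence $\mu_x$ and $\eta_x$ have the very same underlying subobjects, read in the opposite direction. This identifies all four structure maps with those in the statement.

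It then remains to verify (a) the commutative Frobenius and specialness laws for each $x$, and (b) coherence with the monoidal product. For (a) I would compute the relevant composites directly from the pullback-and-image formula \eqref{eqn.rel_composition}. Because every factor is a (co)graph of a diagonal or terminal map, each pullback is computed elementwise and the resulting relation is already jointly monic, so the image factorization is trivial. Concretely, $\delta_x\cp\mu_x$ unwinds to $\{(a,c)\mid \exists b,\ a=b=c\}=\{(a,a)\}=\id_x$, giving specialness, while both composites in the Frobenius equation reduce to the ``fully connected'' relation $\{((a,a),(a,a))\}$ on $(x\times x)\times(x\times x)$; commutativity and cocommutativity are immediate from symmetry of the diagonal.

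For (b), coherence amounts to the Frobenius structure on $x\otimes y$ being the componentwise one. This follows because in $\cat{R}$ the diagonal of a product factors as $\Delta_{x\times y}=(\Delta_x\times\Delta_y)$ conjugated by the middle symmetry $x\times x\times y\times y\cong x\times y\times x\times y$, and the graph embedding is strong symmetric monoidal on these maps; passing to converses transfers the same compatibility to $\mu$ and $\eta$.

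Rather than carry out these routine relational computations—and especially the full set of coherence conditions for a hypergraph category—by hand, the efficient route, and the one I would ultimately record, is to invoke \cite[Theorem~3.1]{fong2017decorated}, which produces exactly this hypergraph structure with (co)diagonal Frobenius maps and discharges the coherence conditions once and for all; the proposition is then the instance of that theorem applied to $\rrel{\cat{R}}$. The main obstacle is thus not any single law but the coherence bookkeeping: aligning our conventions—subobject-as-relation, the direction of the adjoints, and the identifications $\sub(1\times x)\cong\sub(x)$ and $(x\times x)\times x\cong x\times x\times x$—with the hypotheses of the cited theorem so that its abstract Frobenius data specializes precisely to the concrete diagonal and maximal subobjects named here.
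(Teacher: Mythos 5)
Your proposal is correct and ultimately takes the same route as the paper: the paper offers no proof beyond the sentence preceding the proposition, which declares it a corollary of \cite[Theorem~3.1]{fong2017decorated}, and that is exactly the citation you settle on recording. Your supplementary work---identifying the four structure maps as graphs/co-graphs of diagonals and terminal maps via \cref{lemma.comonoids_unique}, \cref{lemma.fundamental}, \cref{prop.adjmon_reg}, and \cref{rem.rels_adjoint_composites}, and sketching the specialness, Frobenius, and tensor-compatibility checks from the composition formula \eqref{eqn.rel_composition}---is sound and in fact more explicit than anything the paper provides, but it is not needed once the citation is invoked.
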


Loosely speaking, one might think of a regular po-category (\cref{def.regular_pocat}) as a poset-enriched hypergraph
category in which the hom-posets are meet-semilattices.

\section{The subobjects-functor is ajax}

Let $\cat{R}$ be a regular category and recall the subobjects functor $\sub\colon\cat{R}\to\ladj(\pposet)$ from \cref{prop.sub_ladj}. It extends to a po-functor $\sub\colon\ccat{R}\to\pposet$, where $\ccat{R}=\rrel{\cat{R}}$ is the relations po-category. To be explicit, write a relation $A\ss r\times r'$ as a span $r\From{f} A\To{f'} r'$. Then the map $\sub(A)\colon\sub(r)\to\sub(r')$ applied to a subobject $\varphi\ss r$ is given pulling back and then taking the image:
\[
\begin{tikzcd}
	\varphi\ar[d, >->]&
	\cdot\ar[d, >->]\ar[l]\ar[r, ->>]&
	\cdot\ar[d, >->]\\
	r\ar[ur, phantom, very near end, "\llcorner"]&
	A\ar[l]\ar[r]&
	r'
\end{tikzcd}
\]
That is, $\sub(A)=\lsh{f}\cp\ust{g}$. This po-functor is representable: $\sub(-)=\ccat{R}(I,-)$, where $I$ is the terminal object in $\cat{R}$. We now show this po-functor is ajax.

\begin{theorem}\label{thm.sub_is_ajax}
The po-functor 
$
  \sub_{\cat{R}}\colon \ccat{R} \longrightarrow \pposet
$
is ajax for any regular po-category $\ccat{R}$.
\end{theorem}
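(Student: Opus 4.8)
The plan is to exploit the representability $\sub_{\cat{R}}(-)=\ccat{R}(I,-)$ noted just above the theorem, pin down the lax symmetric monoidal structure it induces, and then exhibit explicit left adjoints to each laxator. For any symmetric monoidal po-category the hom-po-functor out of the monoidal unit carries a canonical lax symmetric monoidal structure, and I will take this as the lax structure on $\sub_{\cat{R}}$; checking its coherence axioms is the routine ``hom-out-of-unit'' verification and may be cited rather than ground out. Unwinding this structure through the subobject description, the nullary laxator $\rho\colon 1\to\sub_{\cat{R}}(1)$ selects the identity relation $\id_1=(1\ss 1)$, which is the top element $\true$; and the binary laxator $\rho_{r,r'}\colon\sub_{\cat{R}}(r)\times\sub_{\cat{R}}(r')\to\sub_{\cat{R}}(r\times r')$ sends $(\varphi,\varphi')$ to $\varphi\times\varphi'$, since the monoidal product of the relations $\varphi\colon 1\tickar r$ and $\varphi'\colon 1\tickar r'$ in $\ccat{R}$ is exactly $\varphi\times\varphi'\colon 1\tickar r\times r'$. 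Thus the entire content of the theorem lies in showing that $\rho$ and $\rho_{r,r'}$ are right adjoints in $\pposet$.

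The nullary case is immediate: the unique map $!\colon\sub_{\cat{R}}(1)\to 1$ into the terminal poset is left adjoint to $\rho=\true$, because the Galois connection $!(\psi)\leq q \iff \psi\leq\true$ holds with both sides vacuously true (the left since $1$ is terminal, the right since $\true$ is the top element). So $\,!\dashv\rho$ and $\rho$ is a right adjoint.

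The binary laxator is the heart of the argument. I claim its left adjoint is $\pair{\lsh{\pi_1},\lsh{\pi_2}}\colon\sub_{\cat{R}}(r\times r')\to\sub_{\cat{R}}(r)\times\sub_{\cat{R}}(r')$, sending $\psi\ss r\times r'$ to the pair of direct images along the projections---this is the map written $\pair{\im_1,\im_2}$ in the introduction. To verify the Galois connection, fix $\psi\ss r\times r'$ and $(\varphi,\varphi')$. The inequality $\pair{\lsh{\pi_1},\lsh{\pi_2}}(\psi)\leq(\varphi,\varphi')$ unpacks as $\lsh{\pi_1}(\psi)\leq\varphi$ and $\lsh{\pi_2}(\psi)\leq\varphi'$. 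Transposing each conjunct across the adjunction $\lsh{\pi_i}\dashv\ust{\pi_i}$ of \cref{prop.sub_ladj} yields $\psi\leq\ust{\pi_1}(\varphi)$ and $\psi\leq\ust{\pi_2}(\varphi')$, i.e.\ $\psi\leq\ust{\pi_1}(\varphi)\wedge\ust{\pi_2}(\varphi')$, the meet taken in the meet-semilattice $\sub_{\cat{R}}(r\times r')$ (\cref{prop.tangible}). The final bookkeeping step is the identity
\[
  \ust{\pi_1}(\varphi)\wedge\ust{\pi_2}(\varphi')=\varphi\times\varphi',
\]
which holds because pulling $\varphi\ss r$ back along $\pi_1$ gives $\varphi\times r'$, pulling $\varphi'\ss r'$ back along $\pi_2$ gives $r\times\varphi'$, and their intersection (meets being computed by pullback) is $\varphi\times\varphi'$. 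Hence the displayed conjunction is equivalent to $\psi\leq\varphi\times\varphi'=\rho_{r,r'}(\varphi,\varphi')$, establishing $\pair{\lsh{\pi_1},\lsh{\pi_2}}\dashv\rho_{r,r'}$.

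With both laxators exhibited as right adjoints, $\sub_{\cat{R}}$ is ajax. I expect the main obstacle to be exactly the binary-laxator adjunction, and within it the identity $\ust{\pi_1}(\varphi)\wedge\ust{\pi_2}(\varphi')=\varphi\times\varphi'$ together with the clean use of $\lsh{\pi_i}\dashv\ust{\pi_i}$; everything else is either the formal hom-out-of-unit lax structure or the trivial terminal-poset adjunction. The one point I would take care over is confirming that the lax structure coming from representability genuinely produces $\true$ and $\times$ as its laxators on the nose, so that the adjoints I construct are adjoint to the actual laxators and not merely to isomorphic variants.
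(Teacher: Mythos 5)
Your proof is correct, and it shares the paper's skeleton---use representability $\sub_{\cat{R}}(-)=\ccat{R}(I,-)$ to get the canonical lax structure, dispose of the nullary laxator via the terminal poset, and exhibit $\pair{\lsh{\pi_1},\lsh{\pi_2}}$ as left adjoint to the binary laxator (this is exactly the paper's $\lambda_{r_1,r_2}$, which it defines as postcomposition with the graphs of the projections)---but the verification of the crucial binary adjunction goes by a genuinely different route. The paper stays entirely inside the po-category $\ccat{R}$ and checks the unit and counit inequalities by hand, using the comonoid structure maps $\delta$ and $\epsilon$ on objects of $\ccat{R}$ and lax naturality; this is a string-diagram-style computation in the spirit of the rest of the paper, and it never unfolds what a relation ``is'' in terms of the underlying regular category. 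You instead descend to the underlying regular 1-category: you transpose the Galois connection across the adjunctions $\lsh{\pi_i}\dashv\ust{\pi_i}$ of \cref{prop.sub_ladj}, use that meets in $\sub_{\cat{R}}(r\times r')$ are pullbacks (\cref{prop.tangible}), and reduce everything to the identity $\ust{\pi_1}(\varphi)\wedge\ust{\pi_2}(\varphi')=\varphi\times\varphi'$, which is a standard and correct computation of intersections of product subobjects. Your approach buys elementarity and reuse of results already proved in the paper, and it makes the adjunction a two-line formal consequence of $\lsh{}\dashv\ust{}$; its cost is that it is tied to the presentation of $\ccat{R}$ as $\rrel{\cat{R}}$, whereas the paper's argument uses only the monoidal po-category structure of $\ccat{R}$ together with its adjoint-comonoid data, which is closer to the paper's program of re-expressing regular structure monoidally. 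Your closing caveat is also the right one to flag: the argument does require knowing the laxators are literally $\true$ and $\times$ (not just up to isomorphism), and your identification of them via the monoidal structure of $\rrel{\cat{R}}$ settles this.
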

\begin{proof}
The functor $\sub_{\cat{R}}(-)=\ccat{R}(I,-)$ has a canonical lax monoidal
structure since $I\otimes I\cong I$.  We need to show the laxators $\otimes$ and
$\id_I$ have left adjoints in $\pposet$. The first is easy: $\id_I$ is the top element in
$\ccat{R}(I,I)$ and thus a right adjoint since there is a unique map
$\ccat{R}(I,I)\to 1$.

Now suppose given $r_1,r_2\in\ccat{R}$, and consider the morphisms $\pi_i\colon r_1\otimes r_2\to r_i$ and $\delta\colon r\to r\otimes r$ corresponding to the $i$th projection and the diagonal in $\cat{R}$. Composition with the $\pi_i$ induces a monotone map $\lambda_{r_1,r_2}\colon\ccat{R}(I, r_1\otimes r_2)\to\ccat{R}(I, r_1)\times\ccat{R}(I, r_2)$, natural in $r_1,r_2$. It remains to show that each $\lambda_{r_1,r_2}$ is indeed a left adjoint,
\[
	\adj{\ccat{R}(I, r_1\otimes r_2)}{\lambda_{r_1, r_2}}{\otimes}{\ccat{R}(I, r_1)\times\ccat{R}(I, r_2)}.
\]
For the unit, given $g\colon I\to r_1\otimes r_2$, we have
\begin{align*}
  g
  &=g\cp\delta_{r_1\otimes r_2}\cp((r_1\otimes r_2)\otimes\epsilon_{r_1\otimes r_2})\\
  &=g\cp\delta_{r_1\otimes r_2}\cp((r_1\otimes\epsilon_{r_2})\otimes(\epsilon_{r_1}\otimes r_2))\\
  &\leq \delta_I\cp (g\otimes g)\cp ((r_1\otimes\epsilon_{r_2}))\otimes(\epsilon_{r_1}\otimes r_2)\\
  &=(g\cp(r_1\otimes\epsilon_{r_2}))\otimes(g\cp(\epsilon_{r_1}\otimes r_2)).
\end{align*}
For the counit, given $f_1\colon I\to r_1$ and $f_2\colon I\to r_2$, it suffices to show that $(f_1\otimes f_2)\cp(r_1\otimes \epsilon_{r_2})\leq f_1$, since the other projection is similar. And this holds because
\[
	(f_1\otimes f_2)\cp(r_1\otimes \epsilon_{r_2})
	=f_1\otimes (f_2\cp\epsilon_{r_2})
	\leq f_1\otimes \epsilon_I=f_1.
\qedhere
\]
\end{proof}

\begin{corollary}\label{cor.meetsl}
The po-functor $\sub_{\cat{R}}\colon\ccat{R}\to\pposet$ sends each object $r\in\ccat{R}$ to a meet-semilattice.
\end{corollary}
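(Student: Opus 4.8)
The plan is to read this off as a formal consequence of \cref{thm.sub_is_ajax} together with the characterizations of adjoint monoids assembled earlier in this chapter; all the genuine work has already been done in proving that $\sub_{\cat{R}}$ is ajax. First I would recall from \cref{prop.adjmon_reg} that every object $r$ in the relations po-category $\ccat{R}$ carries a (unique) adjoint monoid structure, coming from its canonical comonoid structure and the fundamental lemma. Equivalently, by \cref{prop.adjoint_monoids} this adjoint monoid is the same data as an ajax functor $1\to\ccat{R}$ picking out $r$.

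Next I would post-compose this ajax functor with $\sub_{\cat{R}}\colon\ccat{R}\to\pposet$. Since composites of ajax functors are ajax (\cref{lemma.ajax}), the composite $1\to\ccat{R}\to\pposet$ is again ajax, so $\sub_{\cat{R}}(r)$ is an adjoint monoid in $\pposet$; this is exactly the statement that ajax functors preserve adjoint monoids (\cref{prop.ajax_pres_adjmon}). Finally, \cref{prop.adjmon_msl} identifies adjoint monoids in $\pposet$ with meet-semilattices (with $\eta=\true$ and $\mu=\wedge$), which delivers the conclusion.

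Since each step is a direct citation, there is no real obstacle here; the corollary is just the payoff of the preceding theorem. The only point worth a second of care is the bookkeeping of \emph{which} meet-semilattice structure transports: one should note that the meet on $\sub_{\cat{R}}(r)$ produced this way is the one induced by the diagonal relation on $r$, and that this agrees with the expected order-theoretic meet, i.e.\ with intersection of subobjects via pullback. This is automatic once one unwinds \cref{eqn.monoid_comonoid_ajax}, but it is the only place where the abstract chain of citations touches the concrete description of $\sub_{\cat{R}}$.
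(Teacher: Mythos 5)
Your proof is correct and follows essentially the same route as the paper: the paper's one-line proof cites exactly the same chain (objects of $\ccat{R}$ are adjoint monoids, $\sub_{\cat{R}}$ is ajax, ajax functors preserve adjoint monoids, and adjoint monoids in $\pposet$ are meet-semilattices), which you have merely unpacked. Your closing remark identifying the resulting meet with intersection of subobjects via pullback is a reasonable extra check but not part of what the corollary asserts.
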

\begin{proof}
This follows from the fact that ajax functors send adjoint monoids to adjoint
monoids; see \cref{thm.sub_is_ajax,prop.ajax_pres_adjmon,prop.adjmon_poset,prop.adjmon_regular}.
\end{proof}

\begin{remark}\label{rem.meet_pres}
In fact, we can get a bit more from \cref{prop.adjmon_poset,prop.adjmon_regular}. If $x\colon r\tickar r'$ is an arbitrary map in $\ccat{R}$ then the monotone map $\sub(x)\colon\sub_\cat{R}(r)\to\sub_{\cat{R}}(r')$ is \emph{not necessarily meet-preserving}. However, if $x$ is the image of a morphism in $\cat{R}\op=\radj(\ccat{R})$ then $\sub(x)$ is meet-preserving. That is, $\cat{R}\op$ is isomorphic to the underlying 1-category of $\aadjmon^\str(\ccat{R})$.
\end{remark}

\begin{remark}\label{rem.tangible}
In \cref{prop.tangible} we discussed four properties of the subobjects functor $\sub\colon\cat{R}\to\ladj(\pposet)$: a meet-semilattice on each object, Beck-Chevalley for pullbacks, Beck-Chevalley for pushouts of effective epimorphisms, and Frobenius reciprocity. These same four properties in fact hold for any ajax po-functor $F\colon \rrel{\cat{R}}\to\pposet$. Moreover, this construction is invertible: given any functor $\cat{R}\to\ladj(\pposet)$, there is an induced ajax po-functor $\rrel{\cat{R}}\to\pposet$, and the two constructions are mutually inverse
\[
\ajax(\rrel{\cat{R}},\pposet)\cong\{S\colon\cat{R}\to\ladj(\pposet)\mid \tn{ four properties in \ref{prop.tangible}}\}.
\]
We do not need this result for our work, so we omit the details. However, it is interesting to see these four well-known---though slightly mysterious---properties fall out of the more elementary definition of adjoint-lax functors to $\pposet$.
\end{remark}

\chapter{Free regular categories and regular calculi} \label{chap.free_reg}

We now construct the free regular category $\frc$---as well as the
free regular po-category $\frb$---on a set $\typeset$. This allows us to define,
in \cref{sec.reg_calc}, a \emph{regular calculus} to be an ajax po-functor
$\frb\to\pposet$. Eventually, in \cref{thm.main}, we will see that $\rgcat$ is
essentially a reflective subcategory of the category $\rgcalc$ of regular
calculi, in the sense that there is an adjunction $\rgcat\tofrom\rgcalc$ such that for any regular category the counit map is an
equivalence of categories. Towards that end, we conclude this section in
\cref{prop.rels} by defining the right adjoint part, $\prd\colon\rgcat\to\rgcalc$.

\section{The free regular category on a set}\label{sec.frc}
We will propose a graphical calculus based on regular categories $\frc$ free on a set $\typeset$. We define $\frc$ in \cref{def.free_reg} and show that it is free in \cref{thm.fr_is_free}.

Write $\ppf(\typeset)$ for the poset of finite subsets of $\typeset$; this, or
equally its opposite category $\ppf(\typeset)\op$, is a free
$\wedge$-semilattice on $\typeset$. Write also $\finset$ for the
category of finite sets and functions. Note that $\finset\op$ is the free
category with finite limits on one object. The free regular category on
$\typeset$ arises when these two structures interact.

Note that for any $\typeset$ there is an inclusion of categories
$\inc\colon\ppf(\typeset)\to\finset$.

\begin{definition}\label{def.free_reg}
Define $\frc\coloneqq(\ppf(\typeset)\op\downarrow\finset\op)$ to be the comma category
\[
\begin{tikzcd}[column sep=30pt, row sep=5pt]
	&\frc\ar[dr, "\vars"]\ar[dl, "\supp"']\\
	\ppf(\typeset)\op\ar[dr, "\inc"']\ar[rr, phantom, "\xRightarrow{\ \type\ }"]&&
	\finset\op\ar[dl, "\id"]\\
	&\finset\op
\end{tikzcd}
\]
for any set $\typeset$. We refer to objects $\Gamma\in\frc$ as \emph{contexts}.
\end{definition}
\goodbreak

We can unpack a context $\Gamma$ into a quasi-traditional form, e.g.\ as
\[\Gamma\qquad=\qquad x_1:\tau_1,\ldots,x_n:\tau_n\mid\tau_1',\ldots \tau_m'\]
which has a finite set of \emph{variables}, $\vars(\Gamma)=\{x_1,\ldots,x_n\}$, whose \emph{support} set is $\supp(\Gamma)=\{\tau_1,\ldots,\tau_n,\tau_1'\ldots,\tau_m'\}$, and which has the \emph{typing} function $\type(x_i)=\tau_i$. The notion of support does not typically have a place in traditional logical contexts, but we include it because $\supp(\Gamma)$ has a definite place in objects of the free regular category.

Working in the skeleton of $\frc$, we can assume that each cardinality has a unique set of variables, e.g.\ $\ord{n}=\{1,\ldots,n\}$. Here is an equivalent but more concrete description of the free regular category on $\typeset$:
\begin{equation}\label{eqn.fr_Lambda_explicit}
\begin{aligned}
	\ob\frc\coloneqq&
	\big\{(n, S, \tau)\mid n\in\nn, S\ss\typeset \tn{ finite}, \tau\colon\ord{n}\to S\big\}\\
	\frc\big(\Gamma,\Gamma'\big)\coloneqq&
	\left\{f\colon \ord{n}' \to \ord{n} \, \middle|
	\begin{tikzcd}[row sep=0]
		\ord{n}\ar[r, "\tau"]&
		S\ar[dr, phantom, sloped, "\ss"]\\
		&&[-13pt]
		\typeset\\
		\ord{n}'\ar[r, "\tau'"']\ar[uu, "f"]&
		S'\ar[uu, phantom, sloped, "\ss"]\ar[ur, phantom, sloped, "\ss"]
	\end{tikzcd}
	\right\}
\end{aligned}
\end{equation}
Given a map $f\colon \Gamma\to\Gamma'$, we denote the corresponding function as
$\ord{f}\colon\ord{n}'\to\ord{n}$. Say that a context $\Gamma=(n, S, \tau)$:
\begin{itemize}
	\item is a \emph{unary context} if it is of the form $(1,\{s\},!)$, i.e. if it has arity $n=1$ and full support $\abs{S}=1$; we denote it simply as $\unary{s}$.
	\item is a \emph{unary support context} if it is of the form
	  $(0,\{s\},!)$; i.e. if it has $n=0$ and $\abs{S}=1$; we abuse notation
	  to denote this $\supp(s)$.
\end{itemize}

\begin{example}
Suppose $\typeset=\{s\}$ is unary. When $n=0$, the map $\tau$ is unique, and we either have $S=\varnothing$ or $S=\{s\}$. Thus we recover the description from \cref{eqn.free_reg}, though in the present terms it looks like this:
\[
\begin{tikzcd}
	(0,\varnothing)&
	(0,\{s\})\ar[l, >->]&
	(1,\{s\})\ar[l, ->>]\ar[r]&
	(2,\{s\})\ar[l, shift left=5pt]\ar[l, shift right=5pt]&
	\cdots
\end{tikzcd}
\]
\end{example}

\begin{example}
For any set $\typeset$, the poset of subobjects of $0$ in $\frc$ is the free meet-semilattice on $\typeset$, i.e.\ the finite powerset $\ppf(\typeset)$. This will follow from \cref{cor.descriptions}.
\end{example}

In \cref{thm.fr_is_free} we will show that $\frc$ is indeed the free regular category on $\typeset$. The following is straightforward.

\begin{lemma}\label{lemma.comma_limits}
Suppose $\cat{C}$, $\cat{D}$, and $\cat{E}$ have $I$-shaped limits, for some small category $I$, and suppose that $f\colon\cat{C}\to\cat{E}$ and $g\colon\cat{D}\to\cat{E}$ preserve $I$-shaped limits. Then the comma category $\cat{B}\coloneqq(\cat{C}\downarrow\cat{D})$ has $I$-shaped limits, and they are preserved and reflected by the projection $(\pi_1,\pi_2)\colon\cat{B}\to\cat{C}\times\cat{D}$.
\end{lemma}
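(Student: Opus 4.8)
The plan is to build the $I$-shaped limit in $\cat{B}=(\cat{C}\downarrow\cat{D})$ componentwise over $\cat{C}$ and $\cat{D}$, and then glue the two pieces together with a single connecting morphism in $\cat{E}$ supplied by a universal property. Fix a diagram $D\colon I\to\cat{B}$, writing its value at $i$ as a triple $(c_i,d_i,\phi_i)$ with $\phi_i\colon f(c_i)\to g(d_i)$, and writing an arrow $\alpha\colon i\to j$ of $I$ as a pair $(u_\alpha,v_\alpha)$ subject to the comma-square identity $f(u_\alpha)\cp\phi_j=\phi_i\cp g(v_\alpha)$. First I would form $c\coloneqq\lim_i c_i$ in $\cat{C}$ with legs $p_i\colon c\to c_i$, and $d\coloneqq\lim_i d_i$ in $\cat{D}$ with legs $q_i\colon d\to d_i$; these exist by hypothesis.

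The crucial step is the construction of $\phi\colon f(c)\to g(d)$. Since $f$ and $g$ preserve $I$-shaped limits, $f(c)=\lim_i f(c_i)$ with legs $f(p_i)$ and $g(d)=\lim_i g(d_i)$ with legs $g(q_i)$. The family $f(p_i)\cp\phi_i\colon f(c)\to g(d_i)$ is a cone over the diagram $i\mapsto g(d_i)$: the cone identity for $\alpha$ reduces, via the comma-square identity and the cone law $p_i\cp u_\alpha=p_j$ in $\cat{C}$, to $f(p_j)\cp\phi_j$. Hence the limit property of $g(d)$ yields a unique $\phi$ with $\phi\cp g(q_i)=f(p_i)\cp\phi_i$ for all $i$. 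This equation is precisely the condition that each $(p_i,q_i)$ be a morphism $(c,d,\phi)\to(c_i,d_i,\phi_i)$ in $\cat{B}$, so $(c,d,\phi)$ together with these legs forms a cone in $\cat{B}$.

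For the universal property, I would take any cone in $\cat{B}$ with apex $(c',d',\phi')$ and legs $(a_i,b_i)$. The families $(a_i)$ and $(b_i)$ are cones in $\cat{C}$ and $\cat{D}$, inducing unique $a\colon c'\to c$ and $b\colon d'\to d$ with $a\cp p_i=a_i$ and $b\cp q_i=b_i$. That $(a,b)$ is a morphism in $\cat{B}$, i.e.\ $f(a)\cp\phi=\phi'\cp g(b)$, is checked by postcomposing with each limit leg $g(q_i)$ and invoking that $(a_i,b_i)$ is itself a comma morphism; equality then follows because $g(d)$ is a limit. Uniqueness of $(a,b)$ is inherited from uniqueness of $a$ and $b$. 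This establishes $(c,d,\phi)=\lim D$, and since $(\pi_1,\pi_2)$ sends this cone to $\big((p_i),(q_i)\big)$---the componentwise, hence genuine, limit cone in $\cat{C}\times\cat{D}$---limits are preserved.

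Finally, for reflection: if a cone in $\cat{B}$ is carried by $(\pi_1,\pi_2)$ to a limit cone in $\cat{C}\times\cat{D}$, then its $\cat{C}$- and $\cat{D}$-components are limit cones, so the induced comparison morphism to $(c,d,\phi)$ has both components invertible. A comma morphism with invertible components is invertible (the candidate inverse satisfies the comma-square identity by conjugating the original one), so the given cone is isomorphic to $(c,d,\phi)$ over $D$ and is therefore also a limit. I expect the only non-bookkeeping step to be the construction and well-definedness of the connecting morphism $\phi$ in the second paragraph, which is exactly where the limit-preservation hypotheses on $f$ and $g$ are consumed; the remaining verifications are routine diagram chases.
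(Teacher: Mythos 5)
Your proof is correct and takes essentially the same route as the paper: construct the limit componentwise in $\cat{C}$ and $\cat{D}$, use the limit-preservation of $f$ and $g$ to induce the connecting morphism $\phi\colon f(\lim c_i)\to g(\lim d_i)$ from the cone $\big(f(p_i)\cp\phi_i\big)_i$, and then verify the universal property, preservation, and reflection. The paper phrases this at the level of functors and the comma natural transformation rather than pointwise, but the decomposition and the key step (where the preservation hypotheses are consumed) are identical; your explicit treatment of reflection via the invertible comparison morphism just fills in what the paper leaves as "we have just seen."
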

\exclude{\begin{proof}
Let $X\colon I\to\cat{B}$ be a diagram, and let $!_I\colon I\to\terminal$ denote the unique functor. Suppose we have limits, i.e.\ natural transformations $!_I\cp\lim(X\cp\pi_1)\to(X\cp\pi_1)$ and $!_I\cp\lim(X\cp\pi_2)\to(X\cp\pi_2)$ in $\cat{C}$ and $\cat{D}$ satisfying the correct universal property. Since $f$ and $g$ preserve $I$-shaped limits, we have $\lim(X\cp\pi_1\cp f)=\lim(X\cp\pi_1)\cp f$ and similarly for $\pi_2$ and $g$. The comma natural transformation $\pi_1\cp f\to \pi_2\cp g$ induces a map $\lim(X\cp\pi_1\cp f)\to\lim(X\cp\pi_2\cp g)$, which we regard as an object of $\cat{B}$, which we call $\lim X$. The commutative square
\[
\begin{tikzcd}
	!_I\cp\lim(X\cp\pi_1\cp f)\ar[r]\ar[d]&
	X\cp\pi_1\cp f\ar[d]\\
	!_I\cp\lim(X\cp\pi_2\cp g)\ar[r]&
	X\cp\pi_2\cp g
\end{tikzcd}
\]
provides a structure map $!_I\cp\lim X\to X$, and it is easy to check it has the correct universal property. Moreover, the maps $\pi_1$ and $\pi_2$ certainly preserve limits, and if $!_I\cp L\to X$ is a cone in $\cat{B}$ that projects to a limit in both $\cat{C}$ and $\cat{D}$ then we have just seen that it is a limit in $\cat{B}$.
\end{proof}}

\begin{proposition}\label{prop.comma_regular}
Let $\cat{R}\to\cat{T}\from\cat{S}$ be regular functors. Then the comma category $\cat{B}\coloneqq(\cat{R}\downarrow\cat{S})$ is regular, and the projection $\cat{B}\to\cat{R}\times\cat{S}$ preserves and reflects finite limits and regular epimorphisms. In particular, $\frc$ is regular for any $\typeset$.
\end{proposition}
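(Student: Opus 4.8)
The plan is to push everything down to the component categories through the projection $P=(\pi_1,\pi_2)\colon\cat{B}\to\cat{R}\times\cat{S}$, exploiting that $\cat{R}\times\cat{S}$ is regular with all of its structure (limits, images, regular epis) computed coordinatewise, and that $P$ preserves and reflects the relevant structure. Finite limits come for free: since regular functors preserve finite limits, \cref{lemma.comma_limits} applies and gives that $\cat{B}$ has finite limits and that $P$ preserves and reflects them. This establishes condition (1) in the definition of regular category and the ``finite limits'' half of the projection claim.

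The substantive work is to identify the regular epimorphisms of $\cat{B}$ with the coordinatewise ones, building image factorizations on the way. Given $m=(m_R,m_S)\colon(r,s,\phi)\to(r',s',\phi')$, I first factor each component in its own regular category, $m_R=e_R\cp\mu_R$ and $m_S=e_S\cp\mu_S$ with $e_R,e_S$ regular epis and $\mu_R,\mu_S$ monos (\cref{lemma.OFS}), writing $\bar r=\im(m_R)$ and $\bar s=\im(m_S)$. To make this a factorization \emph{in} $\cat{B}$ I must supply a structure map $\bar\phi\colon f(\bar r)\to g(\bar s)$, and I would obtain it as the diagonal filler of the square in $\cat{T}$ with top edge $\phi\cp g(e_S)$, bottom edge $f(\mu_R)\cp\phi'$, left edge $f(e_R)$, and right edge $g(\mu_S)$. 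This square commutes because both composites equal $f(m_R)\cp\phi'=\phi\cp g(m_S)$, which is exactly the morphism condition for $m$. Since $f$ preserves regular epis, $f(e_R)$ is a regular epi; since $g$ preserves finite limits and hence monos, $g(\mu_S)$ is a mono; so the orthogonality of the factorization system in $\cat{T}$ (\cref{lemma.OFS}) produces a unique $\bar\phi$. The two triangle identities it satisfies say precisely that $e=(e_R,e_S)$ and $\mu=(\mu_R,\mu_S)$ are morphisms of $\cat{B}$ with $m=e\cp\mu$.

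It then remains to verify this is a (regular epi, mono) factorization in $\cat{B}$. Because $P$ reflects finite limits it reflects monos (monicity is detected by the kernel pair), so $\mu$ is a mono. To see that $e$ is a regular epimorphism I would show it is the coequalizer of its own kernel pair: that kernel pair projects under $P$ to the kernel pairs of $e_R$ and $e_S$; a map $h$ out of $(r,s,\phi)$ coequalizing it projects to maps coequalizing each component kernel pair, and since $e_R,e_S$ are regular epis (hence effective) these induce unique factorizations $u_R,u_S$ through $\bar r,\bar s$. The pair $(u_R,u_S)$ is again a morphism of $\cat{B}$ because the compatibility equation, after precomposing with the epimorphism $f(e_R)$, reduces to the known identities for $e$ and $h$, so $f(e_R)$ may be cancelled. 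This simultaneously yields condition (2) — the coequalizer of the kernel pair of any $m$ is the $e$ just constructed — and shows every regular epi of $\cat{B}$ coincides up to isomorphism with the regular-epi part of its factorization, i.e.\ is coordinatewise a regular epi; together with the converse already proved, $P$ preserves and reflects regular epis.

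Condition (3) is then formal: given a regular epi $e$ in $\cat{B}$ and any map into its codomain, the pullback is computed coordinatewise (as $P$ preserves finite limits), $P(e)$ is a coordinatewise regular epi, and regular epis are pullback-stable in the regular category $\cat{R}\times\cat{S}$; reflecting back through $P$ shows the pullback of $e$ is a regular epi. I expect the main obstacle to be the middle step — manufacturing the structure map $\bar\phi$ and confirming that the coordinatewise factorization is genuinely a regular-epi/mono factorization in $\cat{B}$ — since it is there that the interaction of the two regular functors with the diagonal-fill property of $\cat{T}$, and the cancellation of $f(e_R)$, are essential. For the final assertion, I would simply instantiate the proposition with $\cat{R}=\ppf(\typeset)\op$ and $\cat{S}=\cat{T}=\finset\op$ and the two legs $\inc$ and $\id$, concluding that $\frc$ is regular for every $\typeset$.
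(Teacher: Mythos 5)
Your proof of the main claim is correct, and for the substantive half it takes a genuinely different route from the paper's. The two agree on finite limits: both invoke \cref{lemma.comma_limits}. For coequalizers of kernel pairs and pullback-stability, the paper is very terse: it notes that $\cat{B}\op$ is again a comma category (of the opposite data), so that \cref{lemma.comma_limits} applied to $\cat{B}\op$ shows these coequalizers are also computed componentwise, after which stability of regular epis under pullback is inherited componentwise. You instead build the (regular epi, mono) factorization inside $\cat{B}$ by hand: factor each component, then produce the structure map $\bar\phi$ on the image object as the unique diagonal filler guaranteed by the orthogonality of the factorization system in $\cat{T}$ (\cref{lemma.OFS}), using precisely that $f$ preserves regular epis and $g$ preserves monos, and then verify effectiveness of $e$ by cancelling the epimorphism $f(e_R)$. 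Your route is longer but more self-contained and more honest about where regularity of the two legs enters: the paper's dualization silently requires that regular functors preserve coequalizers of kernel pairs, and it applies a fixed-shape limit lemma to a class of colimits singled out by a property of the diagram rather than by its shape (regular categories need not have all coequalizers, so the literal hypotheses of the dualized lemma are not met). Your construction sidesteps both points, and it also delivers preservation and reflection of regular epimorphisms by the projection as an explicit statement rather than a byproduct.

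One caveat, which you share with the paper, concerns the final instantiation. For the proposition to apply to $\frc=(\ppf(\typeset)\op\downarrow\finset\op)$ one needs $\inc\colon\ppf(\typeset)\op\to\finset\op$ to be a regular functor, and it is not: it fails to preserve binary products, since the product of $S$ with itself in $\ppf(\typeset)\op$ is $S\cup S=S$, whereas the product of $\inc(S)$ with itself in $\finset\op$ is the disjoint union $S\sqcup S$. The conclusion survives because of a sharper form of the comma-category argument: for limits in $(f\downarrow g)$ only the leg $g$ (here $\id$, whose image receives the structure maps) needs to preserve them, and dually for colimits only $f$ (here $\inc$) does---and the only colimits required of $\ppf(\typeset)\op$ are coequalizers of kernel pairs in a poset, which are coequalizers of identity pairs and hence preserved by any functor. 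So $\frc$ is indeed regular, and its limits, regular epis, and images are as described in \cref{cor.descriptions}, but this follows from the refined argument rather than from citing the proposition verbatim. Since the paper's own proof makes the same leap, this is not a defect of your write-up relative to the paper, but you should be aware of it.
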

\begin{proof}
It is well-known that $\finset\op$ is regular, and the finite powerset
$\ppf(\typeset)$ is regular because it has finite meets and, because it is a
poset, regular epis are equalities. Hence the second statement follows from the
first. Since the opposite of a comma category is the comma category of the
opposites of its defining data, \cref{lemma.comma_limits} shows that $\cat{B}$
has finite limits and coequalizers of kernel pairs, and that regular epis are
stable under pullback.
\end{proof}

\begin{corollary}\label{cor.descriptions}
It will be useful to be have the following explicit computations in $\frc$. 
\begin{description}
	\item[terminal:] $0\To{!}\varnothing\ss\typeset$ is terminal. We denote it $0$.
	\item[product:] The product of $\Gamma=(n, S, \tau)$ and $\Gamma'=(n',
	  S', \tau')$ is $(n+n', S\cup S',\copair{\tau,\tau'})$. We denote it $\Gamma\oplus\Gamma'$.
	\item[pullback:] The pullback of a diagram $(n_1, S_1, \tau_1)\to (n, S, \tau)\from (n_2, S_2, \tau_2)$ is obtained as a pushout (and union) in $\finset$:
\[
\begin{tikzcd}[sep=small]
	\ord{n}\ar[rr]\ar[rd]\ar[dd]&&
	\ord{n}_2\ar[rd]\ar[dd]\\&
	S\ar[rr, crossing over]&&
	S_2\ar[dd]\\
	\ord{n}_1\ar[rr]\ar[dr]&&
	\ord{n}_1\sqcup_{\ord{n}}\ord{n}_2\ar[dr]\\&
	S_1\ar[from=uu, crossing over]\ar[rr]&&
	S_1\cup S_2\ar[r, phantom, "\ss"]&[-10pt]\typeset
\end{tikzcd}
\]
	\item[monos:] A map $f\colon(n_1, S_1, \tau_1)\to (n_2, S_2, \tau_2)$ is
	  monic iff the function $\ord{f}\colon\ord{n}_2\to\ord{n}_1$ is
	  surjective.
	\item[regular epis:] A map $f\colon (n_1, S_1, \tau_1)\to (n_2, S_2,
	  \tau_2)$ is regular epic iff both: the corresponding function
	  $\ord{f}\colon \ord{n}_2\to \ord{n}_1$ is injective and $S_2=S_1$.
\end{description}
\end{corollary}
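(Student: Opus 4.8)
The plan is to reduce every item to a computation in the two factor categories $\ppf(\typeset)\op$ and $\finset\op$, exploiting the fact that by \cref{prop.comma_regular} the projection
\[
(\supp,\vars)\colon\frc\too\ppf(\typeset)\op\times\finset\op
\]
preserves and reflects finite limits and regular epimorphisms. Each of the five claims concerns either a finite limit or a (regular) mono/epi condition, so it suffices to know the corresponding elementary data in $\ppf(\typeset)\op$ and $\finset\op$ and then to read off the induced comma-category structure map via the explicit limit construction of \cref{lemma.comma_limits}.

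For the three limit computations I would argue as follows. The terminal object is the empty limit, hence reflected to the pair of terminal objects: the initial object $\varnothing\ss\typeset$ of $\ppf(\typeset)$ and the initial object $\ord{0}=\varnothing$ of $\finset$, carrying the unique empty typing map; this is the object $0$. For the binary product of $\Gamma=(n,S,\tau)$ and $\Gamma'=(n',S',\tau')$, preservation and reflection identify it with the product in each factor, namely the coproduct $\ord{n}\sqcup\ord{n'}\iso\ord{n+n'}$ in $\finset$ and the join $S\cup S'$ in $\ppf(\typeset)$; the recipe of \cref{lemma.comma_limits} then exhibits the comma structure map as the map out of the coproduct induced by $\tau$ and $\tau'$ (each post-composed with the inclusion into $S\cup S'$), that is, $\copair{\tau,\tau'}$. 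The pullback is handled identically: a cospan in $\frc$ projects on variables to a span $\ord{n}_1\from\ord{n}\to\ord{n}_2$ in $\finset$, whose pullback in $\finset\op$ is the pushout $\ord{n}_1\sqcup_{\ord{n}}\ord{n}_2$, and on supports to a span of inclusions out of $S$, whose pullback in $\ppf(\typeset)\op$ is the join $S_1\cup S_2$; this reproduces the displayed pushout-and-union diagram.

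For the final two items I would use the pullback-theoretic characterization of monos together with the reflection of regular epis. A map $f$ is monic iff the diagonal $\Gamma_1\to\Gamma_1\times_{\Gamma_2}\Gamma_1$ is an isomorphism, a finite-limit condition that $(\supp,\vars)$ preserves and reflects; since every morphism in the poset $\ppf(\typeset)\op$ is monic, $f$ is monic in $\frc$ iff $\vars(f)$ is monic in $\finset\op$, i.e.\ iff $\ord{f}\colon\ord{n}_2\to\ord{n}_1$ is epic in $\finset$, i.e.\ surjective. Likewise, $f$ is a regular epi iff both $\supp(f)$ and $\vars(f)$ are; in the poset $\ppf(\typeset)\op$ the only regular epis are identities (as noted in the proof of \cref{prop.comma_regular}), forcing $S_1=S_2$, whereas a regular epi in $\finset\op$ is exactly a mono in $\finset$ (all of which are regular), i.e.\ an injection, giving injectivity of $\ord{f}$.

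The only genuinely delicate point is pinning down the product's typing map $\copair{\tau,\tau'}$: this requires unwinding the limit construction of \cref{lemma.comma_limits} while tracking the double opposite on both $\finset$ and $\ppf(\typeset)$, where variances are easy to invert. Everything else is bookkeeping, resting on the standard facts that $\finset$ has all monos regular and that a poset admits only isomorphisms as regular epis.
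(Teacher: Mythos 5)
Your proposal is correct and follows essentially the same route the paper intends: the corollary is stated without proof precisely because it is meant to follow from \cref{prop.comma_regular} (and \cref{lemma.comma_limits}) by computing limits and regular epis componentwise under the projection to $\ppf(\typeset)\op\times\finset\op$, which is exactly what you do. Your identifications in the two factors (coproducts/pushouts in $\finset$, unions in $\ppf(\typeset)$, monos in $\finset\op$ as surjections, regular epis in $\finset\op$ as injections, and only identities as regular epis in a poset) are all accurate, so the proof is complete.
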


\begin{remark}
As mentioned in \cref{cor.descriptions}, we denote the product of $\Gamma_1$ and $\Gamma_2$ by $\Gamma_1\oplus\Gamma_2$. This is reminiscent of the notation for products in an abelian category. However, it is not quite analogous: in an abelian category the product $V\oplus W$ is a biproduct---i.e.\ also a coproduct---and this is not the case in $\frc$. We use the $\oplus$ notation to remind us that
 \[(n,S,\tau)\oplus(n',S',\tau')\cong(n+n',S\cup S', \copair{\tau,\tau'}).\]
\end{remark}

\begin{remark}
Note that one should think of the support $S=\supp(\Gamma)$ of a context $\Gamma$ as a kind of constraint, because the larger $S$ is, the smaller $\Gamma$ is. Indeed, for any $n\in\nn$ and context $\tau\colon\ord{n}\to S$, if one composes with an inclusion $S\ss S'\ss\typeset$ on the level of support, the result is a monic map in $\frc$ \emph{going the other way},
\[(\ord{n}\To{\tau} S\ss S')\inj(\ord{n}\To{\tau} S).\]
\end{remark}

Recall from \cref{def.support} that the support of an object in a regular
category is the image of its unique map to the terminal object. 


\begin{corollary}\label{cor.support_as_of_unary}
Every unary support context is the support of a unary context.
\end{corollary}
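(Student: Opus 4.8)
The plan is to prove the claim pointwise: for each $s\in\typeset$, I will show that the unary support context $\supp(s)=(0,\{s\},!)$ is the support of the unary context $\unary{s}=(1,\{s\},!)$. By \cref{def.support}, this amounts to exhibiting the image factorization of the unique map $\unary{s}\to 0$ into the terminal object $0=(0,\varnothing,!)$ (terminal by \cref{cor.descriptions}), and recognizing $\supp(s)$ as its image.

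First I would write down the candidate factorization $\unary{s}\surj\supp(s)\inj 0$ and check both legs against the criteria of \cref{cor.descriptions}. Under the identification of morphisms in \eqref{eqn.fr_Lambda_explicit}, the first leg $\unary{s}\to\supp(s)$ corresponds to the empty function $\ord{0}\to\ord{1}$, which is injective, and it preserves support since both contexts have support $\{s\}$; hence it is a regular epimorphism. The second leg $\supp(s)\to 0$ corresponds to the vacuously surjective function $\ord{0}\to\ord{0}$, hence is a monomorphism, and the required support containment $\varnothing\ss\{s\}$ holds so that it is indeed a morphism of $\frc$.

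Next I would note that, because $0$ is terminal, the composite $\unary{s}\surj\supp(s)\inj 0$ is automatically the unique map $\unary{s}\to 0$, so no separate computation of the composite is needed. Since $\frc$ is regular (\cref{prop.comma_regular}), image factorizations exist and are unique up to isomorphism (\cref{lemma.OFS}); a factorization into a regular epimorphism followed by a monomorphism is therefore the image factorization. This identifies $\im(\unary{s}\to 0)=\supp(s)$, i.e.\ $\supp(\unary{s})=\supp(s)$, which is exactly the assertion.

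There is no real obstacle: the entire argument is forced once the two factorization legs are read off correctly. The only point demanding a moment's care is the direction-reversal in \eqref{eqn.fr_Lambda_explicit}, where a morphism $\Gamma\to\Gamma'$ is given by a function $\ord{n}'\to\ord{n}$ on underlying finite sets together with a reverse inclusion of supports; getting this bookkeeping right is what makes the epi and mono criteria of \cref{cor.descriptions} apply as claimed.
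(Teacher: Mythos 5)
Your proof is correct and takes essentially the same route as the paper: the paper's proof simply asserts that the explicit descriptions in \cref{cor.descriptions} make it easy to check that $\unary{s}\surj\supp(s)\inj 0$ is the image factorization of the unique map $\unary{s}\to 0$, and your argument is precisely that check carried out in detail (including the correct handling of the arrow-reversal in \eqref{eqn.fr_Lambda_explicit} and the appeal to \cref{lemma.OFS} for uniqueness of factorizations).
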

\begin{proof}
  Given any unary support context $\supp(s)$, the explicit descriptions in
  \cref{cor.descriptions} make it easy to check that $\unary{s} \surj
  \supp(s)\inj 0$ is the image factorization of the unique map $\unary{s} \to
  0$.
\end{proof}

\begin{corollary}\label{cor.factor_unary_support}
Every object $\Gamma=(n, S, \tau)\in\frc$ can be written as the product of $n$-many unary contexts and $\abs{S}$-many unary support contexts, and morphisms in $\frc$ correspond to projections and diagonals.
\end{corollary}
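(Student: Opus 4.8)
The plan is to verify both assertions by direct appeal to the explicit product formula recorded in \cref{cor.descriptions}. For the object claim, I would exhibit the canonical isomorphism
\[
(n, S, \tau) \;\cong\; \unary{\tau(1)} \oplus \cdots \oplus \unary{\tau(n)} \oplus \bigoplus_{s \in S} \supp(s).
\]
Evaluating the right-hand side with the formula $(n_1, S_1, \tau_1) \oplus (n_2, S_2, \tau_2) = (n_1 + n_2,\, S_1 \cup S_2,\, \copair{\tau_1, \tau_2})$: the arities of the $n$ unary contexts sum to $n$ while each support context contributes $0$; the supports union to $\im(\tau) \cup S = S$ (using $\im(\tau) \subseteq S$); and the copairing of the one-element typing maps recovers $\tau$ exactly. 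Hence the product is $(n, S, \tau)$ on the nose. This step is routine bookkeeping, and \cref{cor.support_as_of_unary} justifies identifying the zero-arity factors as genuine support objects.

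For the morphism claim, I would invoke the universal property of the product from \cref{cor.descriptions}. Writing $\Gamma' = \bigoplus_j \unary{\tau'(j)} \oplus \bigoplus_{t \in S'} \supp(t)$, a morphism $\Gamma \to \Gamma'$ is the same datum as a tuple of morphisms from $\Gamma$ into each generating factor of $\Gamma'$, so it suffices to classify morphisms out of $\Gamma$ into a single generator. Unpacking the hom-set description \eqref{eqn.fr_Lambda_explicit}, a morphism $\Gamma \to \unary{t}$ is precisely a choice of index $i \in \ord{n}$ with $\tau(i) = t$, namely the projection of $\Gamma$ onto its $i$-th unary factor; and a morphism $\Gamma \to \supp(t)$ exists iff $t \in S$, in which case it is unique and is the projection onto the corresponding support factor. (There are no morphisms $\supp(s) \to \unary{t}$, as there is no function $\ord{1} \to \ord{0}$.)

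Assembling these, a general morphism $f \colon \Gamma \to \Gamma'$, corresponding to $\ord{f} \colon \ord{n}' \to \ord{n}$ with $\tau'(j) = \tau(\ord{f}(j))$ and $S' \subseteq S$, is recovered as the tuple $\pair{\pi_{\ord{f}(1)}, \ldots, \pi_{\ord{f}(n')}, (\pi_t)_{t \in S'}}$ of projections from the decomposition of $\Gamma$. When $\ord{f}$ is not injective, the same source factor is sent to several target factors, and the tupling introduces exactly the diagonals; conversely every projection and diagonal is itself a morphism of $\frc$, so their composites exhaust the hom-sets. This establishes that morphisms correspond to projections and diagonals.

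The main obstacle, I expect, is the second claim, whose statement is somewhat informal: the real work is to pin down precisely which morphisms the generators admit—in particular the asymmetry whereby a support context receives a map from a unary context of matching type but emits none to it—and then to articulate how the universal property of the product packages an arbitrary $\ord{f}$, including the non-injective case where diagonals are forced, as a canonical composite of projections and diagonals. The object decomposition, by contrast, is immediate once the product formula is in hand.
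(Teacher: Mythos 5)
Your proof is correct and takes essentially the same route as the paper: both rest on the explicit product formula and hom-set description of \cref{cor.descriptions} and \eqref{eqn.fr_Lambda_explicit}, with your observation that $\im(\tau)\cup S=S$ playing the role of the paper's idempotence remark $\supp(s)\times\supp(s)=\supp(s)$. The paper states the morphism half more tersely---the induced map ``acts coordinatewise according to $\ord{f}\colon\ord{n}'\to\ord{n}$ and $S'\subseteq S$''---which is precisely what your universal-property unpacking (projections for each index, diagonals when $\ord{f}$ is non-injective) makes explicit.
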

\begin{proof}
It follows directly from \cref{cor.descriptions} that
$\Gamma=\prod_{i\in\ord{n}}\unary{\tau(i)}\times\prod_{s\in
S}\supp(s).$ In particular, it will be useful to note the idempotence of support contexts:
\begin{equation}\label{eqn.idem_support}
  \supp(s)\times\supp(s)=\supp(s).
\end{equation}
If $f\colon\Gamma\to\Gamma'$ is a morphism as in \cref{eqn.fr_Lambda_explicit}, then the corresponding map
\[
\begin{tikzcd}
  \prod_{i\in\ord{n}}\unary{\tau(i)}\times\prod_{s\in S}\supp(s)\ar[d,"f"]\\
	\prod_{i'\in\ord{n}'}\unary{\tau'(i')}\times\prod_{s'\in S'}\supp(s')
\end{tikzcd}
\]
acts coordinatewise according to $\ord{f}\colon\ord{n}'\to\ord{n}$ and $S'\ss S$.
\end{proof}

The following theorem establishes the adjunction from \cref{eqn.fr_ob_adj}.

\begin{theorem}\label{thm.fr_is_free}
The category $\frc$ is the free regular category on $\typeset$, i.e.\ there is an adjunction
  \[
    \adj[40pt]{\smset}{\frc[-]}{\ob}{\rgcat.}
  \]
\end{theorem}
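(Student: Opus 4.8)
The plan is to verify the universal property of the left adjoint directly, by exhibiting a unit and checking that it is universal among functions from $\typeset$ into the object-set of a regular category. I would take the unit to be the function $\eta_{\typeset}\colon\typeset\to\ob\frc$ sending $s$ to the unary context $\unary{s}=(1,\{s\},!)$, and then prove that for every regular category $\cat{R}$ the assignment
\[
\rgcat(\frc,\cat{R})\too\smset(\typeset,\ob\cat{R}),
\qquad
G\longmapsto\eta_{\typeset}\cp(\ob G),
\]
is a bijection, natural in $\cat{R}$. Naturality in $\cat{R}$ is immediate from the definition of the map, and since $\frc$ is regular by \cref{prop.comma_regular} the source is a legitimate hom-set; the whole content is therefore bijectivity, which splits into a uniqueness (forcing) argument and an existence (construction) argument.

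For uniqueness, suppose $G\colon\frc\to\cat{R}$ is a regular functor and write $g\coloneqq\eta_{\typeset}\cp(\ob G)$, so $G(\unary{s})=g(s)$. Because $G$ preserves finite limits it preserves the terminal object and binary products, and because it preserves regular epis it preserves the entire image factorization system of \cref{lemma.OFS}. By \cref{cor.support_as_of_unary} the unary support context $\supp(s)$ is the image of the unique map $\unary{s}\to 0$, so $G(\supp(s))$ is forced to be the support $\supp_{\cat{R}}(g(s))$ of $g(s)$ in $\cat{R}$. Now \cref{cor.factor_unary_support} writes every object $\Gamma=(n,S,\tau)$ as $\prod_{i\in\ord{n}}\unary{\tau(i)}\times\prod_{s\in S}\supp(s)$, and every morphism as a composite of projections and diagonals; since $G$ preserves products, the terminal object, and supports, it is pinned down on all objects and morphisms, up to the canonical isomorphisms coming from choices of products, by $g$ alone.

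For existence, given any $g\colon\typeset\to\ob\cat{R}$ I would define $G$ by the forced formula
\[
G(\Gamma)\coloneqq\prod_{i\in\ord{n}}g(\tau(i))\times\prod_{s\in S}\supp_{\cat{R}}(g(s)),
\]
letting $G$ act on a morphism $f$ through the projections and diagonals prescribed by $\ord{f}\colon\ord{n}'\to\ord{n}$ and $S'\ss S$ as in \cref{cor.factor_unary_support}. Functoriality is then a routine check against the coordinatewise description of composition. To see that $G$ is a regular functor I would verify preservation of finite limits and regular epis using the explicit computations of \cref{cor.descriptions}: $0\mapsto\terminal$, the product $\Gamma\oplus\Gamma'$ goes to $G(\Gamma)\times G(\Gamma')$ essentially on the nose, and the regular-epi criterion ($\ord{f}$ injective together with $S_2=S_1$) is matched, via \cref{cor.support_as_of_unary}, by composites of the support regular epis $g(s)\surj\supp_{\cat{R}}(g(s))$ and projections, which are regular epis in $\cat{R}$. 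This yields a regular functor with $\eta_{\typeset}\cp(\ob G)=g$, giving surjectivity.

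The main obstacle is verifying that $G$ preserves pullbacks and regular epis, where the variable part and the support part of a context interact. The pullback in $\frc$ is computed as a pushout-and-union of finite sets (\cref{cor.descriptions}), and showing that $G$ carries this to a genuine pullback in $\cat{R}$ relies on the fact that the support factors are subobjects of the terminal object, so that their products compute as meets and are idempotent --- precisely the translate of $\supp(s)\times\supp(s)=\supp(s)$ from \cref{eqn.idem_support} --- whence a union $S_1\cup S_2$ of supports corresponds to an intersection (meet) of the subobjects $\supp_{\cat{R}}(g(s))$ of $\terminal$. Confirming that this meet, together with the ordinary product pullback on the variables, really satisfies the pullback universal property in $\cat{R}$ is the delicate step, and it leans on pullback-stability of regular epis and uniqueness of image factorization in $\cat{R}$. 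A secondary, bookkeeping obstacle is that products in $\cat{R}$ are only defined up to isomorphism, so strictly speaking the bijection determines $G$ only up to natural isomorphism; to read the result as a genuine adjunction one fixes chosen products in $\cat{R}$ (equivalently reads it as a biadjunction), and the coherence of those choices is what makes the assignment $g\mapsto G$ well defined and natural in $\cat{R}$.
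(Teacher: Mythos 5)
Your proposal is correct in substance and shares its skeleton with the paper's proof: the same unit $s\mapsto\unary{s}$, and the same formula for the induced functor --- your $G(\Gamma)$ is precisely the paper's counit $\Prod[\Gamma]$ transported along $g$ --- supported by the same facts (\cref{cor.descriptions}, \cref{cor.support_as_of_unary}, \cref{cor.factor_unary_support}, \cref{eqn.idem_support}). The genuine differences are two. First, you verify the adjunction as a natural hom-set bijection, while the paper exhibits unit and counit and checks the triangle identities; these are equivalent in labor, and your uniqueness argument via \cref{cor.factor_unary_support} is exactly what the paper invokes for its second triangle identity. Second, and more substantively, for preservation of pullbacks the paper simply cites the fact that $\finset$ (resp.\ $\finset_{/\typeset}$) is the free finite-colimit completion of a point (resp.\ of $\typeset$), whereas you propose an elementary direct verification. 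Your route is more self-contained, and your organizing observation --- that the support factors are subterminal, so products of them are meets and are idempotent --- is the right one. But note that once you observe that each map $G(\Gamma_i)\to G(\Gamma)$ splits as a product of a variable-part map and a support-part map, the rest is universal-property bookkeeping: pullbacks of product maps are products of pullbacks, the variable part is the standard fact that maps into $\prod_{\ord{n}_1}g\times_{\prod_{\ord{n}}g}\prod_{\ord{n}_2}g$ are tuples indexed by the pushout $\ord{n}_1\sqcup_{\ord{n}}\ord{n}_2$, and the support part is an intersection of subterminal objects. In particular, pullback-stability of regular epis and uniqueness of image factorization are not what this step leans on; those belong to the regular-epi step, where one needs that a finite product of regular epis is a regular epi. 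There your phrase ``projections, which are regular epis'' should be corrected: a projection discarding a variable factor is a regular epi only because that factor's support is retained in the codomain, i.e.\ the map is (up to isomorphism) the product of $g(s)\surj\supp_{\cat{R}}(g(s))$ with identities, which is how the paper argues. Finally, your closing caveat about products being defined only up to isomorphism is well taken, but it is not a defect of your approach relative to the paper's: the paper's proof silently makes canonical choices when asserting that the triangle identities are straightforward, so both arguments establish the theorem only in the usual up-to-canonical-isomorphism sense.
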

\begin{proof}
We denote the unit component for a set $\typeset$ by $\unary{-}\colon \typeset\to\ob\frc$; it is given by unary contexts, $\unary{t}=(1,\{t\},!)$. We denote the counit component $\Prod\colon\frc[\ob\cat{R}]\to\cat{R}$ for a regular category $\cat{R}$; it is roughly-speaking given by products and supports in $\cat{R}$ (see \cref{def.support}). More precisely, given a context $\Gamma=(n,S,\tau)\in\frc[\ob\cat{R}]$, we put
\[
	\Prod[\Gamma]\coloneqq
	\prod_{i\in\ord{n}}\tau(i)
	\times
	\prod_{s\in S}\supp(s),
\]
By the universal property of products, a morphism $f\colon\Gamma\to\Gamma'$, i.e.\ a function $\ord{f}\colon \ord{n}'\to \ord{n}$ as in \cref{eqn.fr_Lambda_explicit} naturally induces a map $\Prod[f]\colon\Prod[\Gamma]\to\Prod[\Gamma']$, so $\Prod$ is a functor. We need to check that it is regular and for this we use \cref{cor.descriptions}.

For preservation of finite limits, first
observe that $\Prod$ preserves the terminal object because the empty product in $\cat{R}$ is terminal. For pullbacks we need to check that for every pushout diagram
as to the left, the diagram to the right is a pullback:
\[
\begin{tikzcd}
  \ord{n}\ar[r]\ar[d]\ar[dr, phantom, very near end, "\ulcorner"]&
  \ord{n}_2\ar[d]
  	&[-15pt]&[20pt]
	\prod_{i\in\ord{n}}\unary{\tau'(i)} \ar[dr, phantom, very near end, "\ulcorner"]&
	\prod_{i_2\in\ord{n}_2}\unary{\tau'(i_2)} \ar[l]\\
  \ord{n}_1\ar[r]&
  \ord{n}'\ar[r, "\tau'"]
  &\typeset
	&
	\prod_{i_1\in\ord{n}_1}\unary{\tau'(i_1)} \ar[u]&
	\prod_{i'\in\ord{n}'}\unary{\tau'(i')}\ar[l]\ar[u]
\end{tikzcd}
\]
where $\ord{n'}\cong\ord{n}_1\sqcup_{\ord{n}}\ord{n}_2$ and
$\tau'\colon\ord{n}'\to\typeset$ is the induced map. This follows from the
well-known fact that $\finset$ is the free finite-colimit completion of a
singleton \cite{MacLane.Moerdijk:1992a}, and fact that the slice category
$\finset_{/\typeset}$ is the free
finite-colimit completion of the set $\typeset$.

Finally, suppose $f\colon (n_1, S_1, \tau_1)\to (n_2, S_2, \tau_2)$ is a regular epi in $\frc$; i.e.\ the corresponding function $\ord{f}\colon\ord{n}_2 \to \ord{n}_1$ is monic and $S_1=S_2$. Letting $n'\coloneqq n_1-n_2$, we use \cref{cor.factor_unary_support,eqn.idem_support} to write $\Prod[f]$ as follows:
\[
\begin{tikzcd}
  \displaystyle \prod_{i\in\ord{n}'}\unary{\tau_1(i)}\times
	 \prod_{i\in\ord{n}_2}\unary{\tau_2(i)}\times
	  \prod_{s\in S}\supp(s)\ar[d, "{\Prod[f]}"]
	\\
	\displaystyle \prod_{i\in\ord{n}'}\supp (\tau_1(i))\times
	\prod_{i\in\ord{n}_2}\unary{\tau_2(i)}\times
	\prod_{s\in S}\supp (s).{\color{white}Sup}
\end{tikzcd}
\]
Since for each $i\in\ord{n}'$ the map $\tau_1(i)\to\supp(\tau_1(i))$ is a
regular epi and regular epis are closed under finite products in a regular
category, this shows that $\Prod[f]$ is again a regular epi. Hence
$\Prod$ is a regular functor. The triangle identities are straightforward: the first is that for any $r\in\ob\cat{R}$, the product of a unary context $\unary{r}$ is just $r$. The second follows from \cref{cor.factor_unary_support}.
\end{proof}

Given a function $f\colon\typeset\to\typeset'$, we can use \cref{thm.fr_is_free} and the idempotence of support contexts to see that the induced regular functor $\frc[f]\colon\frc\to\frc[\typeset']$ sends $\Gamma=(\ord{n}\To{\tau}S\ss\typeset)$ to the composite
\[\frc[f](\Gamma)=(\ord{n}\To{\tau}S\stackrel{f\vert_S}\surj f(S)\ss\typeset'),\]
where $S\surj f(S)\inj \typeset'$ is the image factorization of $f$ restricted to
$S$.

\begin{remark}
The free finite limit category on a single generator is $\finset\op$, and there the unique map $\ord{n}\to \varnothing$ is a regular epimorphism for every object $\ord{n}$. Consequently, $\finset\op$ has another universal property: it is the free regular category \emph{in which every object is inhabited}. Of course the same holds for any set $\typeset$: the free finite limit category is also the free ``fully inhabited'' regular category. It is equivalent to the result of inverting the map $(\varnothing,S,!)\to(\varnothing,\varnothing,!)$ in $\frc$ for every $S\in\ppf(\typeset)$.

Because $(\finset\slice{T})\op$ is very similar to---but far more familiar than---$\frc$, it can be useful for intuition to replace $\frc$ with $\finset\op$ throughout this story; the only cost is the assumption of inhabitedness, which is a common assumption in classical logic.
\end{remark}

For any regular category $\cat{R}$, the counit map of the adjunction in \cref{thm.free_reg_bicat} gives a regular functor that we have been denoting
\begin{equation}\label{eqn.counit_reg_cat}
	\Prod\colon\frc[\ob\cat{R}]\too\cat{R}.
\end{equation}
It sends a context $\Gamma=(n,S,\tau)$ to the product
\begin{equation}\label{eqn.Prod}
  \Prod[\Gamma]\coloneqq\prod_{i\in\ord{n}}\unary{\tau(i)}\times\prod_{s\in
  S}\supp(s).
\end{equation}

\subsection{The free regular po-category on a set}

Since $\frc$ is a regular category, we may construct its po-category of
relations
\[\frb\coloneqq\rrel{\frc}.\]
It should be no surprise that these are the
free regular po-categories. Free regular po-categories will form the
foundation of our graphical calculus for regular logic; we give an explicit
description in \cref{chap.graphical_reglog}.

\begin{theorem}\label{thm.free_reg_bicat}
  The po-category $\frb \coloneqq \rrel{\frc}$ is the free regular po-category on the set
  $\typeset$. That is, there is an adjunction
  \[
    \adj[40pt]{\smset}{\frb[-]}{\ob}{\rgpocat.}
  \]
\end{theorem}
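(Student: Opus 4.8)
The plan is to derive this adjunction formally by transporting the free-forgetful adjunction of \cref{thm.fr_is_free} across the equivalence of categories $\rgcat\cong\rgpocat$ established in \cref{eqn.equiv_regpocat}; essentially no new work is needed beyond the theorems already proved. Recall that \cref{thm.fr_is_free} gives an adjunction $\frc[-]\dashv\ob$ relating $\smset$ and $\rgcat$, and that \cref{eqn.equiv_regpocat} exhibits the relations po-category functor $\rrel{-}\colon\rgcat\to\rgpocat$ as an equivalence with quasi-inverse $\ladj\colon\rgpocat\to\rgcat$.

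First I would observe that, by the very definition $\frb\coloneqq\rrel{\frc}$, the functor $\frb[-]\colon\smset\to\rgpocat$ is precisely the composite $\rrel{-}\circ\frc[-]$. Since postcomposing a left adjoint with an equivalence again produces a left adjoint---with right adjoint obtained by precomposing the original right adjoint with the quasi-inverse---the composite $\frb[-]=\rrel{-}\circ\frc[-]$ is left adjoint to $\ob\circ\ladj\colon\rgpocat\to\smset$. The unit and counit, together with the triangle identities, are inherited from those of \cref{thm.fr_is_free} and of the equivalence, so no separate verification is required.

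The only remaining point is to identify the right adjoint $\ob\circ\ladj$ with $\ob$ itself, so that the statement reads as displayed. This is where I would be slightly careful, though it is still immediate: both the relations po-category construction and the formation of left adjoints are identity-on-objects. Concretely, for any regular po-category $\ccat{R}\cong\rrel{\cat{R}}$ the fundamental lemma (\cref{lemma.fundamental}) gives an identity-on-objects isomorphism $\ladj(\ccat{R})\cong\cat{R}$, whence $\ob(\ladj(\ccat{R}))=\ob(\ccat{R})$; and a regular po-functor acts on objects in the same way before and after applying $\ladj$. Thus $\ob\circ\ladj=\ob$ on the nose, and we obtain the adjunction $\frb[-]\dashv\ob$ as claimed. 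Since the substantive content lives entirely in the earlier results, I do not expect any genuine obstacle here---the sole subtlety is this identity-on-objects bookkeeping for the right adjoint.
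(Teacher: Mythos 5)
Your proposal is correct and takes essentially the same route as the paper: the paper's own proof simply declares the result immediate from \cref{thm.fr_is_free} together with the fact that $\rgpocat$ is the image of $\rgcat$ under $\rrel{-}$ (i.e.\ the equivalence \eqref{eqn.equiv_regpocat}). Your write-up merely makes explicit the transport of the adjunction across the equivalence and the identity-on-objects bookkeeping ($\ob\circ\ladj=\ob$) that the paper leaves implicit.
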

\begin{proof}
  This is immediate from \cref{thm.fr_is_free}, which says that $\frc$ is the
  free regular category on $\typeset$, and the fact that the category of regular
  po-categories is the essential image of $\rgcat$ under the po-category of
  relations construction, \cref{def.regular_pocat}.
\end{proof}

For any regular po-category $\ccat{R}$, the counit map of the adjunction in \cref{thm.free_reg_bicat} gives a morphism of regular po-categories that we again denote
\begin{equation}\label{eqn.counit_reg_bicat}
	\Prod\colon\frb[\ob\ccat{R}]\too\ccat{R}.
\end{equation}
It is a strong monoidal functor, basically because the functor in \cref{eqn.counit_reg_cat} in particular preserves finite products.

\section{Regular calculi}\label{sec.reg_calc}

In this section we introduce the notion of a regular calculus. This is a new category-theoretic way to look at the kinds of logical moves---and the relationships between them---found in regular logic.

\subsection{Definition of regular calculi}

The following was given as \cref{def.reg_sketch}, but we repeat it here for
convenience. Recall from \cref{thm.free_reg_bicat} that $\frb\coloneqq\rrel{\frc}$ is the free regular po-category on $\typeset$.

\begin{definition}
A \emph{regular calculus} is a pair $(\typeset, \pr)$ where $\typeset$ is a set
and  $\pr\colon\frb\to\pposet$ is an ajax po-functor. For any object
$\Gamma\in\frb$, we denote the order in the poset $\pr(\Gamma)$ using the
$\vdash_\Gamma$ or $\vdash$ symbol (rather than $\leq$). 

A \emph{morphism} $(\typeset, \pr)\to(\typeset',\pr')$ of regular calculi is a pair $(F,F^\sharp)$ where $F\colon\typeset\to\typeset'$ is a function and $F^\sharp$ is a monoidal natural transformation
\[
\begin{tikzcd}[row sep=0pt]
	\typeset\ar[dd, "F"']&
	\frb
		\ar[dd, "{\frb[F]}"']\ar[dr, bend left=15pt, "\pr", ""' name=T]\\
	&&\pposet\\
	\typeset'&
	\frb[\typeset']\ar[ur, bend right=15pt, "\pr'"', "" name=T']
	\ar[from = T, to = T'-|T, twocell, "F^\sharp"']
\end{tikzcd}
\]
that is strict in every respect: all the required coherence diagrams of posets commute on the nose. We denote the category of regular calculi by $\rgcalc$.
\end{definition}

\subsection{Adjoint notation ($\lsh{f}$ and $\ust{f}$) in regular calculi}

It will be convenient to define notation mimicking that in \cref{eqn.subobject_adj} for $\pr$'s action on adjoints in
$\rrel{\cat{R}}$. Given an ajax po-functor $\pr\colon\rrel{\cat{R}}\to\pposet$, we
can take adjoints and use the fundamental lemma (\cref{lemma.fundamental}) to
obtain the diagram below:
\[
\begin{tikzcd}
  \cat{R}\ar[r, "\cong"']\ar[d, equal]\ar[rr, bend left=15pt, "f\mapsto \lsh{f}"]&
  \ladj(\rrel{\cat{R}})\ar[r, "\ladj(\pr)"']\ar[d, "\cong"]&[15pt]
  \ladj(\pposet)\ar[d, "\cong"]\\
  \cat{R}\ar[r, "\cong"]\ar[rr, bend right=15pt, "f\mapsto\ust{f}"']&
  \radj(\rrel{\cat{R}})\op\ar[r, "\radj(\pr)\op"]&
  \radj(\pposet)\op
\end{tikzcd}
\]
That is, for any $f\colon r\to r'$ in $\cat{R}$ we have an adjunction between posets:
\[
  \adj{\pr(r)}{\lsh{f}}{\ust{f}}{\pr(r').}
\]

In particular, since $\frc$ has finite products (denoted using $0$ and $\oplus$), we will speak of projection maps $\pi_i\colon (\Gamma_1\oplus \Gamma_2)\to \Gamma_i$, for $i=1,2$, diagonal maps $\delta_r\colon r \to r \oplus r$, and the unique map $\epsilon_r\colon r \to 0$. Each determines an adjunction as above, e.g.
\[
	\adj{\pr(r_1\times r_2)}{\lsh{(\pi_i)}}{\ust{(\pi_i)}}{\pr(r_i),}
	\qquad
	\adj{\pr(r)}{\lsh{(\delta_r)}}{\ust{(\delta_r)}}{\pr(r\times r),}
	\qquad
	\adj{\pr(r)}{\lsh{(\epsilon_r)}}{\ust{(\epsilon_r)}}{\pr(0).}
\]

\subsection{Regular calculi send objects to meet-semilattices}

If $\pr\colon\frb\to\pposet$ is a regular calculus, i.e.\ an ajax po-functor, then by \cref{cor.meetsl} the poset $\pr(\Gamma)$ is a meet-semilattice for each object $\Gamma\in\ccat{R}$. Its top element and meet are given by the composites of right adjoints shown here:
\begin{equation}\label{eq.def_true_meet}
\begin{tikzcd}[column sep=25pt]
	1\ar[r, shift left=5pt, "\rho"]\ar[r, phantom, "\Rightarrow" yshift=-.6pt]&
	\pr(0)\ar[l, shift left=5pt, "!"]\ar[r, shift left=5pt, "\ust{\epsilon_\Gamma}"]\ar[r, phantom, "\Rightarrow" yshift=-.6pt]&
	\pr(\Gamma)\ar[l, shift left=5pt, "\lsh{\epsilon_\Gamma}"]
\end{tikzcd}
\:\quad \mbox{and}\quad\:
\begin{tikzcd}[column sep=25pt]
	\pr(\Gamma)\times \pr(\Gamma)\ar[r, shift left=5pt, "\rho_{\Gamma,\Gamma}"]\ar[r, phantom, "\Rightarrow" yshift=-.6pt]&
	\pr(\Gamma\oplus\Gamma)\ar[l, shift left=5pt, "\lambda_{\Gamma,\Gamma}"]\ar[r, shift left=5pt, "\ust{\delta_\Gamma}"]\ar[r, phantom, "\Rightarrow" yshift=-.6pt]&
	\pr(\Gamma).\ar[l, shift left=5pt, "\lsh{\delta_\Gamma}"]
\end{tikzcd}
\end{equation}
\section{The predicates functor $\prd\colon\rgcat \to \rgcalc$}

Let $\cat{R}$ be a regular category and let $\ccat{R}\coloneqq\rrel{\cat{R}}$ denote its relations po-category; note that $\ob\cat{R}=\ob\ccat{R}$. We have a counit map $\Prod\colon\frb[\ob\ccat{R}]\to\ccat{R}$ from \cref{eqn.counit_reg_bicat}, and it is a strong monoidal functor. We can compose it with the ``subobjects'' functor $\sub_\cat{R}\coloneqq\ccat{R}(I,-)\colon\ccat{R}\to\pposet$. The result is a po-functor
\begin{equation}\label{eqn.rels_on_objects}
  \sub_{\cat{R}}\Prod\colon\frb[\ob\ccat{R}]\to\pposet
\end{equation}
which assigns to each context $\Gamma$ the poset of \define{predicates} in $\Gamma$. By \cref{lemma.ajax,thm.sub_is_ajax}, the po-functor $\sub_{\cat{R}}\Prod$, is ajax, so $(\ob\cat{R},\sub_{\cat{R}}\Prod)$ is a regular calculus.

\begin{proposition}\label{prop.rels}
The mapping from \cref{eqn.rels_on_objects} extends to a faithful 
functor
\[\prd\colon\rgcat\to\rgcalc.\]
\end{proposition}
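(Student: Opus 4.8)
The plan is to define $\prd$ on a regular functor $\funr{F}\colon\cat{R}\to\cat{R}'$ by taking its underlying function to be $\ob\funr{F}\colon\ob\cat{R}\to\ob\cat{R}'$ and its structure cell $F^\sharp$ to be built from the comparison maps $\alpha^{\funr{F}}_r\colon\sub_{\cat{R}}(r)\to\sub_{\cat{R}'}(\funr{F}r)$ of \cref{prop.tangible}, by setting $F^\sharp_\Gamma\coloneqq\alpha^{\funr{F}}_{\Prod[\Gamma]}$. To see that this has the right source and target, I would appeal to strict naturality of the counit of the adjunction $\ob\dashv\frb[-]$ from \cref{thm.free_reg_bicat}, applied to the regular po-functor $\rrel{\funr{F}}$: this yields the on-the-nose equality $\frb[\ob\funr{F}]\cp\Prod=\Prod\cp\rrel{\funr{F}}$ of functors $\frb[\ob\cat{R}]\to\ccat{R}'$ (the two $\Prod$'s being the counits for $\cat{R}'$ and $\cat{R}$). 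Whiskering the comparison cell by $\Prod$ then turns $\alpha^{\funr{F}}$ into a transformation from $\sub_{\cat{R}}\Prod$ to $\frb[\ob\funr{F}]\cp(\sub_{\cat{R}'}\Prod)$, which is exactly the shape required of $F^\sharp$.

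Next I would check that $(\ob\funr{F},F^\sharp)$ is a morphism of regular calculi, i.e.\ that $F^\sharp$ is a natural transformation that is strict in every respect. For naturality over all of $\frb[\ob\cat{R}]$ I would use \cref{rem.rels_adjoint_composites}: every morphism in a relations po-category factors as a right adjoint followed by a left adjoint, so naturality of $F^\sharp$ (which at $\Gamma$ is $\alpha^{\funr{F}}$ evaluated along $\Prod[\omega]$) reduces to the two clauses of \cref{prop.tangible} stating that $\alpha^{\funr{F}}$ is natural with respect to both $\ust{f}$ and $\lsh{f}$, and whiskering preserves naturality. For strictness I would invoke that each $\alpha^{\funr{F}}_r$ is a meet-semilattice homomorphism (\cref{prop.tangible}); since the laxators of an ajax functor into $\pposet$ encode precisely the top element and the binary meet (\cref{prop.adjmon_poset,eq.def_true_meet}), preservation of $\true$ and $\wedge$ by each component is exactly the strong equality condition of \cref{eqn.strong_mate}. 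I expect this strictness check to be the main obstacle: a morphism of $\rgcalc$ requires the laxator-coherence squares to commute on the nose, so it is essential that $\alpha^{\funr{F}}$ is a genuine semilattice map and not merely a monotone map giving a mate (inequality) square.

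Functoriality is then mostly bookkeeping. The equality $\prd(\id_{\cat{R}})=\id$ is immediate, since $\ob$ is a functor and $\alpha^{\id}$ is the identity. For composites $\funr{G}\funr{F}$ I would verify that $(\funr{G}\funr{F})^\sharp$ equals the composite of $F^\sharp$ and $G^\sharp$ in $\rgcalc$; this follows from functoriality of $\ob$ and of $\rrel{-}$, strict naturality of the counit $\Prod$, and the composition law $\alpha^{\funr{G}\funr{F}}=\alpha^{\funr{F}}\cp(\rrel{\funr{F}}\cdot\alpha^{\funr{G}})$ for the comparison cells, which in turn comes from the fact that image factorizations compose under a regular functor.

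Finally, for faithfulness, suppose $\prd(\funr{F})=\prd(\funr{G})$ for regular functors $\funr{F},\funr{G}\colon\cat{R}\to\cat{R}'$. Comparing underlying functions gives $\ob\funr{F}=\ob\funr{G}$, so $\funr{F}$ and $\funr{G}$ agree on objects, and comparing structure cells gives $F^\sharp=G^\sharp$. Given any $f\colon r\to s$ in $\cat{R}$, I would evaluate at the context $\Gamma\coloneqq\unary{r}\oplus\unary{s}$, noting that $\Prod[\Gamma]=r\times s$ and hence $F^\sharp_\Gamma=\alpha^{\funr{F}}_{r\times s}\colon\sub_{\cat{R}}(r\times s)\to\sub_{\cat{R}'}(\funr{F}r\times\funr{F}s)$. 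Applying it to the graph $\pair{\id_r,f}$ and using that $\funr{F}$ preserves finite products and monomorphisms, the arrow $\funr{F}\pair{\id_r,f}=\pair{\id_{\funr{F}r},\funr{F}f}$ is already monic, so its image is itself and $F^\sharp_\Gamma$ returns the graph of $\funr{F}f$; likewise $G^\sharp_\Gamma$ returns the graph of $\funr{G}f$. Since $F^\sharp=G^\sharp$, these graphs agree as subobjects of the common object $\funr{F}r\times\funr{F}s=\funr{G}r\times\funr{G}s$, and because the graph assignment is faithful by \cref{lemma.fundamental}, we conclude $\funr{F}f=\funr{G}f$. Therefore $\prd$ is faithful, completing the proof.
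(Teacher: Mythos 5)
Your proposal defines the same functor as the paper and proves faithfulness by the same argument (graphs of morphisms, evaluated at the two-generator context, then \cref{lemma.fundamental}); the difference lies in how the structure cell $F^\sharp$ and its properties are obtained. The paper defines $F^\sharp$ by whiskering the 2-cell $\rrel{\funr{F}}(I,-)\colon\rrel{\cat{R}}(I,-)\to\rrel{\cat{R}'}(I,\rrel{\funr{F}}-)$ (which exists because $\funr{F}(I)=I$) against the counit-naturality square $\ol{\funr{F}}\cp\Prod[-]=\Prod[-]\cp\rrel{\funr{F}}$, so po-naturality of $F^\sharp$ comes for free from the 2-functoriality of $\rrel{\funr{F}}$. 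You instead build $F^\sharp$ from the 1-categorical comparison cell $\alpha^{\funr{F}}$ of \cref{prop.tangible} and recover naturality by hand, using \cref{rem.rels_adjoint_composites} to factor an arbitrary relation as a right adjoint followed by a left adjoint and then invoking the two naturality clauses (for $\ust{f}$ and $\lsh{f}$) of \cref{prop.tangible}. Since $\alpha^{\funr{F}}_r$ is exactly the action of $\rrel{\funr{F}}$ on the hom-poset $\rrel{\cat{R}}(I,r)$, the two constructions produce the same transformation; yours is more elementary and explicit, the paper's is shorter because the 2-categorical packaging absorbs the naturality check.

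One step in your strictness argument is stated imprecisely. The strong condition \cref{eqn.strong_mate} concerns the \emph{laxators} of $\sub_{\cat{R}}\Prod$, i.e.\ the external product maps $\sub_{\cat{R}}(\Prod[\Gamma_1])\times\sub_{\cat{R}}(\Prod[\Gamma_2])\to\sub_{\cat{R}}(\Prod[\Gamma_1]\times\Prod[\Gamma_2])$ and the top element of $\sub_{\cat{R}}(1)$, not the internal meets: by \cref{eq.def_true_meet} the meet is the composite of the laxator with $\ust{(\delta_\Gamma)}$, so ``each $\alpha^{\funr{F}}_r$ preserves $\true$ and $\wedge$'' is not literally the same statement as \cref{eqn.strong_mate}. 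The repair is immediate from ingredients you already have: since $x_1\times x_2=\ust{(\pi_1)}(x_1)\wedge\ust{(\pi_2)}(x_2)$, preservation of the laxators follows from meet-preservation together with naturality with respect to $\ust{(-)}$ — or, most directly, from the fact that a regular functor preserves finite products, so that $\funr{F}(x_1\times x_2)=\funr{F}(x_1)\times\funr{F}(x_2)$ as subobjects. With that sentence corrected, the proof is sound.
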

\begin{proof}
Given an object $\cat{R}$ of $\rgcat$---that is, given a regular category---we
define $\prd(\cat{R})\coloneqq(\ob\cat{R},\sub_{\cat{R}}\Prod)$. As mentioned
above, $\sub_{\cat{R}}\Prod$ is ajax, so $\prd(\cat{R})$ is a regular calculus.
We need to say how $\prd$ behaves on morphisms. 

A regular functor $\funr{F}\colon\cat{R}\to\cat{R}'$ induces a function $\ob \funr{F}\colon\ob\cat{R}\to\ob\cat{R}'$ and hence a morphism $\ol{\funr{F}}\coloneqq\frb[\ob \funr{F}]\colon\frb[\ob\cat{R}]\to\frb[\ob\cat{R}']$. We need to construct a (strict) monoidal natural transformation $\funr{F}^\sharp\colon\sub_{\cat{R}}\Prod\too(\ol{\funr{F}}\cp\sub_{\cat{R}'}\Prod)$.

Let $\Gamma\in\frb[\ob\cat{R}]$ be a context. The left-hand square in the
following diagram commutes by the naturality of the counit $\Prod[-]$, and we
have a map $\rrel{\funr{F}}(I,-)\colon\rrel{\cat{R}}(I,-)\too\rrel{\cat{R}'}(I,-)$
because $\funr{F}(I)=I$. We define $\funr{F}^\sharp$ to be the composite 2-cell, which we
denote $\sub_\funr{F}\Prod[-]$:
\[
\begin{tikzcd}[row sep=5pt]
	\ob\cat{R}\ar[dd, "\ob \funr{F}"']&
	\frb[\ob\cat{R}]
		\ar[dd, "{\ol{\funr{F}}}"']\ar[r, "\Prod"]&
	\rrel{\cat{R}}\ar[dd, "\rrel{\funr{F}}"']\ar[dr, bend left=15pt, pos=.25, "{\rrel{\cat{R}}(I,-)}", ""' name=T]\\
	&&&[35pt]\pposet\\
	\ob\cat{R}'&
	\frb[\ob\cat{R}']\ar[r, "\Prod"']&
	\rrel{\cat{R}'}\ar[ur, bend right=15pt, pos=.25, "{\rrel{\cat{R}'}(I,-)}"', "" name=T']
	\ar[from=T, to=T'-|T, twocell, "{\rrel{\funr{F}}(I,-)}"]
\end{tikzcd}
\]
Thus we define $\prd$ on morphisms by $\prd(\funr{F})=(\ob \funr{F},\sub_\funr{F}\Prod[-])$; it is
easy to check that $\prd$ preserves identities and compositions. It remains to check
that it is faithful, so let $\funr{F},\funr{G}\colon\cat{R}\to\cat{R}'$ be regular
functors and suppose $\prd(\funr{F})=\prd(\funr{G})$. There is agreement on objects $\ob\funr{F}=\ob
\funr{G}$, so let $f\colon r_1\to r_2$ be a morphism in $\cat{R}$ and consider the its
graph $\hat{f}\coloneqq \pair{\id_r,f}\ss r_1\times r_2$. Write $(r_1,r_2) \coloneqq
(2,\{r_1,r_2\},\cong) \in \frb$. From the fact that $\sub_\funr{F}\Prod[r_1, r_2](\hat{f})=\sub_{\funr{G}}\Prod[r_1,r_2](\hat{f})$ it follows that $\funr{F}(f)=\funr{G}(f)$, completing the proof.
\end{proof}
In \cref{cor.rels_full}, we will show that in fact $\prd$ is also full.

The goal for the rest of this paper is to construct a left adjoint to $\prd$ and prove the essential reflection. Our proof will rely on some properties of regular calculi, in particular that they can be incarnated as a sort of \emph{graphical calculus} for regular logic reasoning.

\goodbreak

\chapter{Graphical regular logic} \label{chap.graphical_reglog}

A key advantage of the regular calculus perspective on regular categories and
regular logic is that it suggests a graphical notation for relations in regular
categories, as well as how they behave under base-change and co-base-change. This is the promised graphical regular
logic.

In this section we develop this graphical formalism, first by giving a
graphical description of the free regular po-category on a set, and then by
defining the notion of graphical term, showing how these represent elements of posets,
and explaining how to reason with them. In subsequent sections, we'll use this
graphical regular logic to prove the main theorem.

\section{Depicting free regular po-categories $\frb$}
Since the po-categories $\frb$ form the foundation of our diagrammatic language for regular
logic, we begin our exploration of graphical regular logic by giving an explicit
description of the objects, morphisms, 2-cells, and composition in $\frb$ in
terms of wiring diagrams.

\begin{notation}
By definition, an object of $\frb$ is simply a context $\Gamma=(\ord{n}
\To{\tau} S \subseteq \typeset)$ of $\frc$. We represent a context graphically by a circle with $n$ ports around the exterior, with $i$th port
annotated by the value $\tau(i)$, and with a white dot at the base annotated by
the remaining elements of the support $S \setminus \im \tau$.%
\footnote{By the idempotence of support contexts \cref{eqn.idem_support}, one may equivalently include the whole support, $S$.}
\begin{equation}\label{eqn.shell_pic}
  \begin{tikzpicture}[inner WD]
    \node[pack,minimum size = 3ex] (rho) {};
    \draw (rho.180) to[pos=1] node[left] (w) {$\tau(1)$} +(180:2pt);
    \draw (rho.135) to[pos=1] node[above] (n) {$\tau(2)$} +(135:2pt);
    \node at ($(rho.45)+(45:6pt)$) {$\ddots$}; 
    \draw (rho.-30) to[pos=1] node[right] (e) {$\tau(n)$} +(-30:2pt); 
    \node[link,fill=white,thin] at (rho.270) {};
    \node[below=-.2 of rho.270] (s) {$S\setminus \im \tau$};
    \pgfresetboundingbox
		\useasboundingbox (s-|w) rectangle (n-|e);
  \end{tikzpicture}
\end{equation}
Our convention will be for the ports to be numbered clockwise from the left of
the circle, unless otherwise indicated, and to omit the white dot if $S = \im \tau$. We refer to such an annotated circle as a \emph{shell}.

As a syntactic shorthand for the shell in \eqref{eqn.shell_pic}, we may combine all the ports and the white dot into a single wire labeled with the context $\Gamma\in\frb$:\quad 
$
  \begin{tikzpicture}[inner WD, pack size=8pt, baseline=(rho.-90)]
    \node[pack] (rho) {};
    \draw (rho.180) to[pos=1] node[left] {$\Gamma$} +(180:2pt);
  \end{tikzpicture}
$.
\end{notation}

\begin{example}
Let $\Gamma=(n,S,\tau)$ be the context with arity $n=3$, support $S = \{w,x,y,z\}
\subseteq \typeset$, and typing $\tau\colon \ord{3} \to S$ given by $\tau(1) = \tau(3)=y$, $\tau(2)=z$. It can be depicted by
the shell
\[
  \begin{tikzpicture}[inner WD]
    \node[pack, minimum size = 3ex] (rho) {};
    \draw (rho.180) to[pos=1] node[left] (w) {$y$} +(180:2pt);
    \draw (rho.75) to[pos=1] node[above] (n) {$z$} +(75:2pt);
    \draw (rho.-20) to[pos=1] node[right] (e) {$y$} +(-20:2pt); 
    \node[link,fill=white,thin] at (rho.270) {};
    \node[below=-.2 of rho.270] (s) {$w,x$};
    \pgfresetboundingbox
		\useasboundingbox (s-|w) rectangle (n-|e);
  \end{tikzpicture}
\]
\end{example}

The hom-posets of $\frb=\rrel{\frc}$ are the subobject posets $
  \frb(\Gamma,\Gamma') = \sub(\Gamma \oplus \Gamma').
$
Explicitly, a morphism $\omega\colon \Gamma_1 \tickar \Gamma_\out$ is a represented by monomorphism
\[
  \Gamma_\omega=(n_\omega \xrightarrow{\tau_\omega} S_\omega \subseteq \typeset)
  \inj \Gamma_1 \oplus \Gamma_\out
\]
in $\frc$, and hence specified by a surjection $\omega$ (see \cref{cor.descriptions}) such that
\[
  \begin{tikzcd}[column sep=large]
    \ord{n}_\omega \ar[r,"\tau_\omega"] 
    & S_\omega \ar[d,phantom, sloped, "\supseteq"] \\
    \ord{n}_1+ \ord{n}_\out \ar[r,"{\tau_1+\tau_\out}"'] \ar[u,two heads, "\omega"] 
    & S_1 \cup S_\out
  \end{tikzcd}
\]
commutes. We depict $\omega$ using a \define{wiring diagram}.  More generally, wiring
diagrams will give graphical representations of morphisms $\omega\colon
\Gamma_1\oplus \dots \oplus \Gamma_k \tickar \Gamma_\out$.  

\begin{notation} \label{notation.wiring_diagrams}
  Suppose we have a morphism $\omega\colon \Gamma_1\oplus \dots \oplus \Gamma_k
  \tickar \Gamma_\out$ in $\frb$. We depict $\omega$ as follows.
  \begin{enumerate}[nolistsep, noitemsep]
    \item Draw the shell for $\Gamma_\out$.
    \item Draw the object $\Gamma_i$, for $i=1,\dots,k$, as non-overlapping
      shells inside the $\Gamma_\out$ shell.
    \item For each $i \in \ord{n}_\omega$, draw a black dot anywhere in the region interior to the $\Gamma_\out$ shell but exterior to all the $\Gamma_i$ shells, and annotate it by the value $\tau_\omega(i)$.
    \item Draw a white dot in the same region, annotated by all elements of
      $S_\omega$ not already present in the diagram.
    \item For each element $(i,j) \in \sum_{i=1,\dots,k, \out} \ord{n}_i$,
      draw a wire connecting the $j$th port on the object $\Gamma_i$ to the black dot $\omega(i,j)$.
  \end{enumerate}
  Just as for objects, we may neglect to draw a white dot when $\im\tau=S$. 
  
  For a more compact notation, we may also neglect to explicitly draw the object
  $\Gamma_\out$, leaving it implicit as comprising the wires left dangling on
  the boundary of the diagram.
\end{notation}

\begin{example} \label{ex.wiring_diagram}
  Here is the set-theoretic data of a morphism $\omega\colon \Gamma_1\oplus \Gamma_2
  \oplus \Gamma_3 \tickar \Gamma_\out$, together with its wiring diagram depiction: 
\begin{align*}
 \parbox{4.15in}{\raggedright
	$\Gamma_1 = (3,\{x,y\},\tau_1)$ where $\tau_1(1)=x,\tau_1(2)=\tau_1(2)=y$;\\
	$\Gamma_2 = (3,\{w,x,y\},\tau_3)$ where $\tau_3(1)=\tau_3(2)=\tau_3(3)=x$;\\
	$\Gamma_3 = (4,\{x,y\},\tau_2)$ where $\tau_2(1)=\tau_2(2)=y, \tau_2(3)=\tau_2(4)=x$;\\
	$\Gamma_\out = (6,\{w,x,y,z\},\tau_\out)$ where
	$\tau_\out(1)=y$,\\\qquad$\tau_\out(2)=\tau_\out(3)=\tau_\out(6)=z$,
	$\tau_\out(4)=\tau_\out(5)=x$;\\
	$\Gamma_\omega = (7,\{v,w,x,y,z\},\tau_r)$ where $\tau_\omega(1)=\tau_\omega(2)=y$,\\
	\qquad$\tau_\omega(3)=\tau_\omega(7)=z$, $\tau_\omega(4)=\tau_\omega(5)=\tau_\omega(6)=x$;}
&\qquad
   \begin{tikzpicture}[penetration=0, inner WD,  pack size=20pt, link size=2pt, font=\tiny, scale=2, baseline=(out)]
      \node[pack] at (-1.5,-1) (f) {$3$};
      \node[pack] at (0,1.9) (g) {$1$};
      \node[pack] at (1.5,-1) (h) {$2$};
      \node[outer pack, inner sep=34pt] at (0,.2) (out) {};
      \node[link,label=90:$x$] at ($(f)!.5!(h)$) (link1) {};
      \node[link,label=0:$z$] at (-2.4,-.25) (link2) {};
      \node[link,label distance=-6pt,label=200:$y$] at ($(f.75)!.5!(g.-135)$) (link3) {};
      \node[link,fill=white,thin] at (h.270) {};
      \node[below=-.2 of h.270] {$w,y$};
      \node[link,fill=white,thin] at (out.270) {};
      \node[above=-.2 of out.270] {$w$};
      \node[link,fill=white,thin] at ($(out.160)!.5!(g)$) (dot) {};
      \node[above=-.2 of dot.90] {$v$};
      \begin{scope}[label distance=-6pt]
	\draw (out.280) to (link1);
	\draw (out.190)	to (link2);
	\draw (out.155) to (link3);
	\draw (out.-35)	to node[pos=.5,link,label=10:$x$]{} (h.-30);
	\draw (out.15)	to[out=-165,in=-110] node[pos=.5,link,label
	distance=3pt,label=200:$z$]{} (out.70);
	\draw (f.15) to[out=0,in=165] (link1);
	\draw (f.-15) to[out=0,in=-165] (link1);
	\draw (h.180) to (link1);
	\draw (g.-60) to node[pos=.5,link,label=10:$x$]{} (h.120);
	\draw (f.45) to node[pos=.5,link,label=-10:$y$]{} (g.-105);
	\draw (f.75) to (link3);
	\draw (g.-135) to (link3);
      \end{scope}
    \end{tikzpicture}
\\
\parbox{4.15in}{\raggedright
 $f(1,1)=4$, $f(1,2)=2$, $f(1,3)=1$, $f(2,1)=6$, $f(2,2)=4$,\\
  \qquad $f(2,3)=5$, $f(3,1)=1$, $f(3,2)=2$, $f(3,3)=f(3,4)=6$,\\
  \qquad $f(\out,1)=1$, $f(\out,2)=3$, $f(\out,3)=3$, $f(\out,4)=5$\\
  \qquad $f(\out,5)=6$, $f(\out,6)=7$.}
\end{align*}
\end{example}

\begin{example}\label{ex.no_inner}
Note that we may have $k=0$, in which case there are no inner shells.
For example, the following has $\Gamma_\omega=(2,\{x,y,z,w\},1\mapsto x, 2\mapsto y)$.
  \[
    \begin{tikzpicture}[inner WD]
      \node[link] (dot270) {};
      \node[right=-.2 of dot270] {$x$};
      \node[outer pack, fit=(dot270), inner sep=12pt] (outer) {};
      \node[link] at ($(outer.0) - (0:5pt)$) (dot0) {};
      \node[above=-.2 of dot0] {$y$};
      \node[link,fill=white,thin] at (outer.270) {};
      \node[below=-.2 of outer.270] (w) {$w$};
      \node[link,fill=white,thin] at ($(outer.160) - (160:8pt)$) (dot) {};
      \node[above=-.2 of dot.90] {$z$};
      \draw (outer.0) -- (dot0);
      \draw (outer.70) -- (dot270);
      \draw (outer.180) -- (dot270);
      \draw (outer.300) -- (dot270);
      \pgfresetboundingbox
    	\useasboundingbox (w.north) rectangle (outer.north);
    \end{tikzpicture}
  \]
\end{example}

\begin{remark}\label{rem.multiple}
When multiple wires meet at a point, our convention will be to draw a dot iff
the number of wires is different from two.
\[
\begin{tikzpicture}[unoriented WD, font=\small]
	\node["1 wire"] (P1) {
  \begin{tikzpicture}[inner WD, surround sep=4pt]
    \node[link] (dot) {};
    \node[outer pack, fit=(dot)] (outer) {};
    \draw (dot) -- (outer.west);
  \end{tikzpicture}	
	};
	\node[right=4 of P1, "2 wires"] (P2) {
  \begin{tikzpicture}[inner WD, surround sep=4pt]
    \node[link, white] (dot) {};
    \node[outer pack, fit=(dot)] (outer) {};
    \draw (outer.west) -- (outer.east);
  \end{tikzpicture}	
	};
	\node[right=4 of P2, "3 wires"] (P3) {
  \begin{tikzpicture}[inner WD, surround sep=4pt]
    \node[link] (dot) {};
    \node[outer pack, fit=(dot)] (outer) {};
    \draw (dot) -- (outer.west);
    \draw (dot) -- (outer.east);
    \draw (dot) -- (outer.south);
  \end{tikzpicture}	
	};
	\node[right=4 of P3, "4 wires"] (P4) {
  \begin{tikzpicture}[unoriented WD, surround sep=4pt, font=\tiny]
    \node[link] (dot) {};
    \node[outer pack, fit=(dot)] (outer) {};
    \draw (dot) -- (outer.west);
    \draw (dot) -- (outer.east);
    \draw (dot) -- (outer.south);
    \draw (dot) -- (outer.north);
  \end{tikzpicture}	
	};
	\node[right=5 of P4, "$\cdots$\quad etc."] (etc){};
  \pgfresetboundingbox
	\useasboundingbox ($(P1.north west)+(0,10pt)$) rectangle ($(etc)+(0,-5pt)$);
\end{tikzpicture}  
\]
When wires intersect and we do not draw a black dot, the intended interpretation is that the wires are \emph{not connected}:\quad $
\begin{tikzpicture}[unoriented WD, font=\small, baseline=(P1.-10)]
	\node (P1) {
  \begin{tikzpicture}[unoriented WD, link size=3pt, surround sep=2pt, font=\tiny]
    \node[link] (dot) {};
    \node[outer pack, fit=(dot)] (outer) {};
    \draw (dot) -- (outer.west);
    \draw (dot) -- (outer.east);
    \draw (dot) -- (outer.south);
    \draw (dot) -- (outer.north);
  \end{tikzpicture}
  };
  \node[right=1 of P1] (P2)	{
  \begin{tikzpicture}[unoriented WD, link size=3pt, surround sep=2pt, font=\tiny]
    \node[link, white] (dot) {};
    \node[outer pack, fit=(dot)] (outer) {};
    \draw (outer.east) -- (outer.west);
    \draw (outer.south) -- (outer.north);
  \end{tikzpicture}
  };
  \node at ($(P1)!.5!(P2)$) {$\neq$};
\end{tikzpicture}
$. Of course this is bound to happen when the graph is non-planar.
\end{remark}

The following examples give a flavor of how composition, monoidal product, and
2-cells are represented using this graphical notation.

\begin{example}[Composition as substitution]\label{ex.comp_as_subst}
  Composition of morphisms is described by \define{nesting} of wiring diagrams.
  Let $\omega'\colon \Gamma'\tickar \Gamma_1$ and $\omega\colon \Gamma_1 \tickar \Gamma_\out$ 
  be morphisms in $\frb$. Then the composite relation $\omega'\cp \omega\colon \Gamma'
  \tickar \Gamma_\out$ is given by
  \begin{enumerate}[nolistsep, noitemsep]
    \item drawing the wiring diagram for $\omega'$ inside the inner circle of the diagram for $\omega$, 
    \item erasing the object $\Gamma_1$, 
    \item amalgamating any connected black dots into a single black dot, and
    \item removing all components not connected to the objects $\Gamma'$ or
      $\Gamma_\out$, and adding a single white dot annotated by the set
      containing all elements of $\typeset$ present in these components, but not
      present elsewhere in the diagram.
  \end{enumerate}
  Note that step 3 corresponds to taking pullbacks in $\frc$ (pushouts in
  $\finset$), while step 4 corresponds to epi-mono factorization. 
  
  As a shorthand for composition, we simply draw one wiring diagram directly
  substituted into another, as per step 1. For example, we have
  \[
    \begin{tikzpicture}[unoriented WD, font=\small]
      \node["$\omega'$"] (P1a) {
	\begin{tikzpicture}[inner WD]
	  \node[pack] (a) {};
	  \node[outer pack, inner sep=10pt, fit=(a)] (outer) {};
	  \node[link] (link1) at ($(a.west)!.6!(outer.west)$) {};
	  \node[link] (link2) at ($(a.45)!.5!(outer.45)$) {};
	  \node[link] (link3) at ($(a.-20)!.5!(outer.-20)$) {};
	  \node[link,fill=white,thin] at ($(a.100) + (110:7pt)$) (dot) {};
          \node[above=-.3 of link1] {$x$};
          \node[above=-.3 of link2] {$y$};
          \node[above=-.2 of link3] {$y$};
          \node[above=-.3 of dot] {$w$};
	  \draw (outer.west) -- (link1);
	  \draw (a.40) -- (link2);
	  \draw (link2) -- (outer.45);
	  \draw (a.-20) -- (link3);
	  \draw (link3) -- (outer.0);
	  \draw (link3) -- (outer.-45);
	\end{tikzpicture}	
      };
      \node[right=1 of P1a, "$\omega$"] (P1b) {
	\begin{tikzpicture}[inner WD]
	  \node[pack] (c) {};
	  \node[outer pack, inner sep=10pt, fit=(c)] (outer2) {};
	  \node[link] (link4) at ($(c.west)!.4!(outer2.west)$) {};
	  \node[link] (link5) at ($(c.20)!.5!(outer2.20)$) {};
	  \node[link] (link6) at ($(c.-45)!.5!(outer2.-45)$) {};
          \node[link,fill=white,thin] at (c.270) (dotc) {};
	  \node[link,fill=white,thin] at ($(c.90) + (90:7pt)$) (dot) {};
          \node[above=-.2 of link4] {$x$};
          \node[above=-.2 of link5] {$y$};
          \node[below=-.2 of link6] {$y$};
          \node[below=-.3 of dotc] {$t$};
          \node[above=-.3 of dot] {$z$};
	  \draw (c.west) -- (link4);
	  \draw (c.45) -- (link5);
	  \draw (c.0) -- (link5);
	  \draw (link5) -- (outer2.20);
	  \draw (c.-45) -- (link6);
	  \draw (link6) -- (outer2.-45);
	\end{tikzpicture}	
      };
      \node[right=3 of P1b] (P2) {
	\begin{tikzpicture}[inner WD]
	  \node[pack] (a) {};
	  \node[outer pack, inner sep=7pt, fit=(a)] (c) {};
	  \node[outer pack, inner sep=5pt, fit=(c)] (outer2) {};
	  \node[link] (link1) at ($(a.west)!.6!(c.west)$) {};
	  \node[link] (link2) at ($(a.45)!.5!(c.45)$) {};
	  \node[link] (link3) at ($(a.-20)!.5!(c.-20)$) {};
	  \node[link,fill=white,thin] at ($(a.100) + (110:5pt)$) (dot) {};
          \node[above=-.3 of link1] {$x$};
          \node[above=-.3 of link2] {$y$};
          \node[above=-.2 of link3] {$y$};
          \node[above=-.3 of dot] {$w$};
	  \draw (c.west) -- (link1);
	  \draw (a.40) -- (link2);
	  \draw (link2) -- (c.45);
	  \draw (a.-20) -- (link3);
	  \draw (link3) -- (c.0);
	  \draw (link3) -- (c.-45);
	  \node[link] (link4) at ($(c.west)!.4!(outer2.west)$) {};
	  \node[link] (link5) at ($(c.20)!.5!(outer2.20)$) {};
	  \node[link] (link6) at ($(c.-45)!.5!(outer2.-45)$) {};
          \node[link,fill=white,thin] at (c.270) (dotc) {};
	  \node[link,fill=white,thin] at ($(c.90) + (90:7pt)$) (dot) {};
          \node[above=-.2 of link4] {$x$};
          \node[above=-.2 of link5] {$y$};
          \node[below=-.2 of link6] {$y$};
          \node[below=-.3 of dotc] {$t$};
          \node[above=-.3 of dot] {$z$};
	  \draw (c.west) -- (link4);
	  \draw (c.45) -- (link5);
	  \draw (c.0) -- (link5);
	  \draw (link5) -- (outer2.20);
	  \draw (c.-45) -- (link6);
	  \draw (link6) -- (outer2.-45);
	\end{tikzpicture}	
      };
      \node[right=3 of P2, "$\omega'\cp\omega$"] (P3) {
	\begin{tikzpicture}[inner WD]
	  \node[pack] (c) {};
	  \node[outer pack, inner sep=10pt, fit=(c)] (outer2) {};
	  \node[link] (link) at ($(c.0)!.5!(outer2.0)$) {};
	  \node[link,fill=white,thin] at ($(c.90) + (90:12pt)$) (dot) {};
          \node[below=-.2 of link] {$y$};
          \node[below=-.3 of dot] {$t,w,x,z$};
	  \draw (c.25) -- (link);
	  \draw (c.-25) -- (link);
	  \draw (link) -- (outer2.15);
	  \draw (link) -- (outer2.-15);
	\end{tikzpicture}	
      };
      \node (P1) at ($(P1a.east)!.5!(P1b.west)$) {$\cp$};
      \node at ($(P1b.east)!.5!(P2.west)$) {$=$};
      \node at ($(P2.east)!.5!(P3.west)$) {$=$};
    \end{tikzpicture}
  \]

  For the more general $k$-ary or operadic case, we may obtain the composite
  \[
    (\Gamma_1 \oplus \dots \oplus \Gamma_{i-1} \oplus \omega' \oplus
    \Gamma_{i+1} \oplus \dots \oplus \Gamma_k) \cp \omega
  \] 
  of any two morphisms $\omega'\colon \Gamma'_1\oplus \dots \oplus \Gamma'_k \tickar
  \Gamma_i$ and $\omega\colon \Gamma_1\oplus \dots \oplus \Gamma_k \tickar
  \Gamma_\out$ by substituting the wiring diagram for $\omega'$ into the $i$th inner
  circle of the diagram for $\omega$, and following a procedure similar to that in \cref{ex.comp_as_subst}.
\end{example}

\begin{example}[Monoidal product as juxtaposition]
  The monoidal product of two morphisms in $\frb$ is simply their juxtaposition,
  merging the labels on the floating white dots as appropriate. For
  example, leaving off labels, we might have:
  \[
    \begin{tikzpicture}[unoriented WD, font=\small]
      \node (P1a) {
	\begin{tikzpicture}[inner WD]
	  \node[pack] (a) {};
	  \node[pack, below right=.1 and 2 of a] (b) {};
          \node[link,fill=white,thin] at (b.270) {};
	  \node[outer pack, fit=(a) (b)] (outer) {};
          \node[link,fill=white,thin] at ($(outer.250) - (250:6pt)$) (dot) {};
	  \draw (a.180) -- (a.180-|outer.west);
	  \draw (a.20) to[out=0, in=120] (b.140);
	  \draw (a.-30) to[out=-40, in=180] (b.180);
	  \draw (b.east) -- (b.east-|outer.east);
	\end{tikzpicture}
      };
      \node[below=1 of P1a] (P1b) {
	\begin{tikzpicture}[inner WD]
	  \node[pack, below=2 of $(a)!.5!(b)$] (c) {};
	  \node[link, right=.8 of c] (link1) {};
	  \node[outer pack, fit=(link1) (c)] (outer) {};
          \node[link,fill=white,thin] at ($(outer.70) - (70:4pt)$) (dot) {};
	  \draw (c.20) -- (link1);
	  \draw (c.-20) -- (link1);
	  \draw (link1) -- (link1-|outer.east);
	\end{tikzpicture}	
      };
      \node (P1) at ($(P1a.south)!.5!(P1b.north)$) {$\oplus$};
      \node[right=5 of $(P1a.north)!.5!(P1b.south)$] (e) {$=$};
      \node[right=1 of e] (P2) {
	\begin{tikzpicture}[inner WD]
	  \node[pack] (a) {};
	  \node[pack, below right=.1 and 2 of a] (b) {};
          \node[link,fill=white,thin] at (b.270) {};
	  \node[pack, below=2 of $(a)!.5!(b)$] (c) {};
	  \node[link, right=.8 of c] (link1) {};
	  \node[outer pack, fit=(a) (b) (c)] (outer) {};
          \node[link,fill=white,thin] at ($(outer.200) - (200:20pt)$) (dot) {};
	  \draw (a.180) -- (a.180-|outer.west);
	  \draw (a.20) to[out=0, in=120] (b.140);
	  \draw (a.-30) to[out=-40, in=180] (b.180);
	  \draw (b.east) -- (b.east-|outer.east);
	  \draw (c.20) -- (link1);
	  \draw (c.-20) -- (link1);
	  \draw (link1) -- (link1-|outer.east);
	\end{tikzpicture}	
      };
  \pgfresetboundingbox
	\useasboundingbox (P1b.190-|P1a.west) rectangle (P2.east|-P1a.150);
    \end{tikzpicture}
  \]
\end{example}

\begin{example}[2-cells as breaking wires and removing white dots]\label{lem.breaking}
  Let $\omega,\omega'\colon \Gamma_1\oplus \dots \oplus \Gamma_k \tickar \Gamma_\out$ be morphisms in
  $\frb=\rrel{\frc}$. By definition, there exists a 2-cell $\omega\leq\omega'$ if there is a
  monomorphism $m\colon \Gamma_\omega \inj \Gamma_{\omega'}$ in $\frc$ such that
  $m \cp \omega' = \omega$ holds in $\frb$.
  By \cref{cor.descriptions}, this data consists of a surjection of finite sets
  $m\colon \ord{n}_\omega' \to \ord{n}_\omega$ and an inclusion $S_{\omega'} \subseteq S_{\omega}$. In
  diagrams, the former means 2-cells may break wires, and the latter means they
  may remove annotations from the inner white dot (or remove it completely). For example, we have 2-cells: \qquad
  $
    \begin{aligned}
      \begin{tikzpicture}[unoriented WD, font=\small]
	\node (P3) {
	  \begin{tikzpicture}[inner WD]
	    \node[link] (dot) {};
	    \node[outer pack, surround sep=3pt, fit=(dot)] (outer) {};
	    \draw (dot) -- (outer.0);
	    \draw (dot) -- (outer.120);
	    \draw (dot) -- (outer.240);
	  \end{tikzpicture}	
	};
	\node[right=2 of P3] (P4) {
	  \begin{tikzpicture}[inner WD]
	    \node[link, white] (fake dot) {};
	    \node[outer pack, surround sep=3pt, fit=(fake dot)] (outer) {};
	    \node[link] (dot) at ($(outer.0)+(0:-5pt)$) {};
	    \draw (dot) -- (outer.0);
	    \draw (outer.120) to[out=300, in=60] (outer.240);
	  \end{tikzpicture}	
	};	
	\node at ($(P3.east)!.5!(P4.west)$) {$\leq$};
      \end{tikzpicture}
    \end{aligned}
\quad    \mbox{and}\quad
    \begin{aligned}
      \begin{tikzpicture}[unoriented WD, font=\small]
	\node (P1) {
	  \begin{tikzpicture}[inner WD]
	    \node[link, fill=white] (dot) {};
	    \node[outer pack, surround sep=3pt, fit=(dot)] (outer) {};
	  \end{tikzpicture}
	};
	\node[right=2 of P1] (P2) {
	  \begin{tikzpicture}[inner WD]
	    \node[link, white] (dot) {};
	    \node[outer pack, surround sep=3pt, fit=(dot)] (outer) {};
	  \end{tikzpicture}
	};
	\node at ($(P1.east)!.5!(P2.west)$) {$\leq$};
      \end{tikzpicture}
    \end{aligned}
$.
\end{example}

\section{Graphical terms}

Given a regular calculus $\pr\colon\frb\to\pposet$, we give a graphical representations of its predicates, i.e.\ the elements in $\pr(\Gamma)$ for various contexts $\Gamma\in\frb$. Here's how it works.

\begin{definition}
  A \define{$\pr$-graphical term} $(\theta_1,\dots,\theta_k;\omega)$ in an ajax
  po-functor $\pr\colon \frb \to \pposet$ is a morphism $\omega\colon \Gamma_1\oplus
  \dots \oplus \Gamma_k \tickar \Gamma_\out$ in $\frb$ together with, for each
  $i = 1,\dots,k$, an element $\theta_i \in \pr(\Gamma_i)$.

  We say that the graphical term $t = (\theta_1,\dots,\theta_k; \omega)$
  \define{represents} the poset element 
  \[
    \church{ t } \coloneqq (\pr(\omega)\cp \rho)(\theta_1,\dots,\theta_k)
    \in \pr(\Gamma_\out)
  \]
where $\rho$ is the $k$-ary laxator. If $t$ and $t'$ are graphical terms, we write $t \vdash t'$ when $\church{ t
} \vdash \church{ t'}$, and $t =t'$ when $\church{t}=\church{t'}$.
\end{definition}

\begin{notation}
We draw a graphical term $(\theta_1,\dots,\theta_k; \omega)$ by annotating the $i$th inner shell with its corresponding poset element $\theta_i$. In the case
that $k=1$ and $\omega$ is the identity morphism, we may simply draw the object
$\Gamma_1$ annotated by $\theta_1$:
\[
  \begin{tikzpicture}[inner WD, baseline=(wdot)]
    \node[pack,minimum size = 3ex] (rho) {$\theta_1$};
    \draw (rho.180) to[pos=1] node[left] {$\tau(1)$} +(180:2pt);
    \draw (rho.135) to[pos=1] node[above] {$\tau(2)$} +(135:2pt);
    \node at ($(rho.45)+(45:6pt)$) {$\ddots$}; 
    \draw (rho.-30) to[pos=1] node[right] {$\tau(n)$} +(-30:2pt); 
    \node[link,fill=white,thin] (wdot) at (rho.270) {};
    \node[below=-.2 of rho.270] {$S\setminus \im \tau$};
  \end{tikzpicture}
\]
\end{notation}

\begin{example}
  Recall that we have a diagonal map $\delta\colon \Gamma \to \Gamma\oplus \Gamma$ in
  $\frc\ss\frb$. Given $\theta \in \pr(\Gamma)$, the element $\lsh{(\delta)}(\varphi)
  \in \pr(\Gamma\oplus \Gamma)$ is represented by the graphical term
\[
\begin{tikzpicture}[inner WD, baseline=(dot)]
	\node[pack] (phi) {$\theta$};
	\node[link, below=3pt of phi] (dot) {};
  \node[outer pack, fit=(phi) (dot)] (outer) {};
	\draw (phi) -- (dot);
	\draw (dot) to[pos=1] node[left] {$\Gamma$} (dot-|outer.west);
	\draw (dot) to[pos=1] node[right] {$\Gamma$} (dot-|outer.east);
\end{tikzpicture}
\]
\end{example}

\begin{example}
  When $\typeset=\varnothing$ is empty, $\frc[\varnothing]$ is the
  terminal category. By \cref{prop.adjoint_monoids}, an ajax po-functor
  $\pr\colon \frb \to \pposet$ then simply chooses a $\wedge$-semilattice $\pr(0)$. The
  po-category $\rela{\pr}$ is that $\wedge$-semilattice considered as a one
  object po-category: it has a unique object whose poset of endomorphisms is $P(0)$. The diagrammatic
  language has no wires, since there is only the monoidal unit in
  $\frc[\varnothing]$. The semantics of an arbitrary graphical term $(\theta_1,\ldots,\theta_k;\id)$ is simply the meet $\theta_1\wedge\cdots\wedge\theta_k$.
\end{example}

\begin{remark}
  Graphical terms are an alternate syntax for regular logic. While we will not
  dwell on the translation, a graphical term $(\theta_1,\dots,\theta_k; \omega)$
  represents the regular formula
  \[
    \bigexists_{\substack{i \in \ord{n}_j \\ j \in \{1,\dots,k,\omega\}}} x_{ij} .\bigwedge_{j\in\{
    1,\dots k\}} \theta_k(x_{ij}) \quad\wedge \bigwedge_{\substack{i \in \ord{n}_j \\
  j\in\{1,\dots,k,\out\}}} \big(x_{ij} = x_{\omega(i)j}\big).
  \]
  This formula creates a variable of each element of $\ord{n}_j$, where
  $j\in\{1,\dots,k,\out,\omega\}$, equates any two variables with the same image under
  $\omega$, takes the conjunction with all the formulas $\theta_j$, and the
  existentially quantifies over all variables except those in $\Gamma_\out$.
  In particular, if we were to take $\omega\colon\Gamma_1\oplus\Gamma_2\oplus\Gamma_3\tickar\Gamma_\out$ as in \cref{ex.wiring_diagram}, the
  resulting graphical term would simplify to the formula
  \[
    \psi(y,z,z',x,x',z'') = \exists \tilde{x},\tilde{y}.\theta_1(\tilde{x},\tilde{y},y) \wedge
    \theta_2(\tilde{x},x,x') \wedge \theta_3(y,\tilde{y},x',x') \wedge (z=z') \wedge
    (z''=z'').
  \]
\end{remark}

\begin{remark}
  Note that $\pposet$ is a subcategory of $\CCat{Cat}$. This allows us to take the
  monoidal Grothendieck construction $\int \pr$ of $\pr\colon\frb\to\pposet$, \cite{moeller2018monoidal}. A
  $\pr$-graphical term is an object in the comma category $\int\pr
  \mathord{\downarrow} \frb$. This perspective lends structure to the various
  operations on diagrams belows; we, however, pursue it no further here.
\end{remark}

\section{Reasoning with graphical terms}
The following basic rules for reasoning with diagrams express the (2-)functoriality
and monoidality of $\pr$.

\begin{proposition} \label{prop.diagrams_basic}
  Let $(\theta_1,\dots,\theta_k;\omega)$ be a graphical term, where $\theta_i \in
  \pr(\Gamma_i)$.
  \begin{enumerate}[label=(\roman*)]
    \item (Monotonicity) Suppose $\theta_i \vdash \theta_i'$ for
      some $i$. Then 
      \[
	\church{(\theta_1,\dots,\theta_i,\dots,\theta_k; \omega)} \vdash \church{(\theta_1,\dots,\theta_i',\dots,\theta_k; \omega)}.
      \]
    \item (Breaking) Suppose $\omega \leq \omega'$ in $\frb$. Then
      \[
	\church{(\theta_1,\dots,\theta_k; \omega)} \vdash
	\church{(\theta_1,\dots,\theta_k; \omega')}.
      \]
    \item (Nesting) Suppose $\theta_i = \church{(\theta'_1,\dots,\theta'_\ell; \omega')}$ for some $i$. Then 
\begin{multline*}
	\church{(\theta_1,\dots,\theta_k; \omega)}
	=
	\church{(\theta_1,\dots,\theta_{i-1},\theta'_1, \dots,
	\theta'_\ell,\theta_{i+1},\dots,\theta_k;\\(\Gamma_1\oplus\dots\oplus
	\Gamma_{i-1}\oplus \omega' \oplus \Gamma_{i+1} \oplus \dots \oplus
	\Gamma_k)\cp \omega)}.
\end{multline*}
  \end{enumerate}
\end{proposition}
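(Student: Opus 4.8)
The plan is to unwind the definition $\church{(\theta_1,\dots,\theta_k;\omega)} = \pr(\omega)\big(\rho(\theta_1,\dots,\theta_k)\big)$, where $\rho$ is the $k$-ary laxator, and read each of the three claims off the structure of $\pr$ as an ajax (in particular lax monoidal po-) functor. Parts (i) and (ii) are pure monotonicity statements. For (i), both the $k$-ary laxator $\rho\colon\pr(\Gamma_1)\times\cdots\times\pr(\Gamma_k)\to\pr(\Gamma_1\oplus\cdots\oplus\Gamma_k)$ and the map $\pr(\omega)$ are morphisms in $\pposet$, hence monotone; since $\theta_i\vdash\theta_i'$ is an inequality in the $i$th factor and the product order is computed componentwise, the tuples are ordered, and applying the monotone composite preserves this. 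For (ii), the hypothesis $\omega\leq\omega'$ is an inequality in the hom-poset $\frb(\Gamma_1\oplus\cdots\oplus\Gamma_k,\Gamma_\out)$; because $\pr$ is a po-functor it is locally monotone, so $\pr(\omega)\leq\pr(\omega')$ in $\pposet(\pr(\Gamma_1\oplus\cdots\oplus\Gamma_k),\pr(\Gamma_\out))$, and since the order on $\pposet$ is pointwise, evaluating both sides at $\rho(\theta_1,\dots,\theta_k)$ yields the claim.

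The real content is (iii). Write $\Psi\coloneqq\Gamma_1\oplus\cdots\oplus\Gamma_{i-1}\oplus\omega'\oplus\Gamma_{i+1}\oplus\cdots\oplus\Gamma_k$ for the substituting morphism, and abbreviate the blocks $A\coloneqq\Gamma_1\oplus\cdots\oplus\Gamma_{i-1}$, $B\coloneqq\bigoplus_j\Gamma'_j$, and $C\coloneqq\Gamma_{i+1}\oplus\cdots\oplus\Gamma_k$, so that $\Psi = A\oplus\omega'\oplus C\colon A\oplus B\oplus C\to A\oplus\Gamma_i\oplus C$. First I would use functoriality of $\pr$ to split $\pr(\Psi\cp\omega)=\pr(\Psi)\cp\pr(\omega)$; after substituting $\theta_i=\pr(\omega')\big(\rho'(\theta'_1,\dots,\theta'_\ell)\big)$ this reduces the whole claim to the single identity
\[
\rho(\theta_1,\dots,\theta_k) = \pr(\Psi)\big(\tilde\rho(\theta_1,\dots,\theta_{i-1},\theta'_1,\dots,\theta'_\ell,\theta_{i+1},\dots,\theta_k)\big)
\]
in $\pr(\Gamma_1\oplus\cdots\oplus\Gamma_k)$, where $\tilde\rho$ is the $(k-1+\ell)$-ary laxator and $\rho$ the $k$-ary one.

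To prove this identity I would group the arguments into the three blocks $A,B,C$ and invoke the coherence (associativity) of the iterated laxators to rewrite both $\rho$ and $\tilde\rho$ as the ternary laxator applied to the block-laxed elements $\alpha\coloneqq\rho^A(\theta_1,\dots,\theta_{i-1})$, $\beta\coloneqq\rho'(\theta'_1,\dots,\theta'_\ell)$, and $\gamma\coloneqq\rho^C(\theta_{i+1},\dots,\theta_k)$. The naturality square of the ternary laxator applied to the morphism $A\oplus\omega'\oplus C$ then gives
\[
\pr(A\oplus\omega'\oplus C)\big(\rho_{A,B,C}(\alpha,\beta,\gamma)\big) = \rho_{A,\Gamma_i,C}\big(\alpha,\pr(\omega')(\beta),\gamma\big),
\]
and since $\pr(\omega')(\beta)=\theta_i$ by hypothesis while the right-hand side recomposes to $\rho(\theta_1,\dots,\theta_k)$ by coherence again, the identity follows. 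The main obstacle is purely bookkeeping: making the passage between the $k$-ary (respectively $(k-1+\ell)$-ary) laxators and the ternary laxator on the three blocks fully precise, i.e.\ citing associativity of the iterated laxators and naturality of the binary laxator in the correct symmetric-monoidal form. No genuinely new idea is needed beyond functoriality, local monotonicity, and lax-monoidal naturality of $\pr$; in particular the adjoint part of \emph{ajax} plays no role, as all three statements concern only the lax (right-adjoint) laxators $\rho$.
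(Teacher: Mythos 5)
Your proposal is correct and follows essentially the same route as the paper: parts (i) and (ii) are exactly the monotonicity of $\pr(\omega)\cp\rho$ and the local monotonicity (2-functoriality) of $\pr$, and part (iii) is the same combination of laxator coherence, laxator naturality applied to $\omega'$, and 1-functoriality of $\pr$ that the paper encodes in its commutative diagram. The only cosmetic difference is that the paper first uses the braiding to assume without loss of generality that $i=k$, so its naturality square involves two blocks and the binary laxator, whereas you keep the three blocks $A$, $B$, $C$ and invoke the ternary laxator; the content is identical.
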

\begin{proof}
  \begin{enumerate}[label=(\roman*)]
    \item This is the monotonicity of the map $\pr(\omega)\cp \rho$.
    \item This is the 2-functoriality of $\pr$.
    \item This follows from the monoidality and 1-functoriality of $\pr$. In
      particular, it is the commutativity of the following diagram. Using the braiding we can assume without loss of generality that $i=k$.
      \[
	\begin{tikzcd}
	  \prod_{j=1}^{k-1}\pr(\Gamma_j) \times \prod_{j=1}^\ell \pr(\Gamma'_j) 
	  \ar[d,"\id \times \rho"'] \ar[dr,"\rho"] 
	  \\
	  \prod_{j=1}^{k-1}\pr(\Gamma_j) \times \pr\left(\bigoplus_{j=1}^\ell\Gamma'_j\right) \ar[r, "\rho"] \ar[d,"\prod_\pr(\Gamma_j)\times \pr(\omega)"']
	  &
	  \pr\left(\bigoplus_{j=1}^{k-1} \Gamma_j \oplus \bigoplus_{j=1}^\ell \Gamma'_j \right) \ar[d, "\pr(\bigoplus_{j=1}^{k-1}\Gamma_j+\omega')"'] \ar[dr, "\pr((\bigoplus_{j=1}^{k-1}\Gamma_j+\omega')\cp \omega)"] 
	  \\
	  \prod_{j=1}^k\pr(\Gamma_j)\ar[r, "\rho"'] 
	  & 
	  \pr\left(\bigoplus_{j=1}^k\Gamma_j\right) \ar[r, "\pr(\omega)"'] 
	  &[5ex]
	  \pr(\Gamma_\out)
	\end{tikzcd}
      \]
      The upper triangle commutes by coherence laws for $\rho$, the square
      commutes by naturality of $\rho$, and the right hand triangle commutes by
      functoriality of $\pr$. \qedhere
  \end{enumerate}
\end{proof}

\begin{example}
  \cref{prop.diagrams_basic} is perhaps more quickly grasped through a graphical example
  of these facts in action. Suppose we have the entailment
  \[
    \begin{tikzpicture}[unoriented WD, font=\small, pack size=7pt, baseline=(P1.south)]
      \def\angle{-65};
      \node (P1) {
	\begin{tikzpicture}[inner WD]
	  \node[pack] (theta) {$\theta_1$};
	  \draw (theta.180) -- +(180:2pt);
	  \draw (theta.0) -- +(0:2pt);
	  \draw (theta.\angle) -- +(\angle:2pt);
	\end{tikzpicture}	
      };
      \node[right=3 of P1] (P2) {
	\begin{tikzpicture}[inner WD]
	  \node[pack] (xi1) {$\xi_1$};
	  \node[pack, right=1 of xi1] (xi2) {$\xi_2$};
	  \node[link] at ($(xi1.east)!.5!(xi2.west)$) (dot) {};
	  \node[outer pack, fit=(xi1) (xi2)] (outer) {};
	  \draw (outer) -- (xi1.west);
	  \draw (xi1.east) -- (dot);
	  \draw (dot) -- (xi2);
	  \draw (xi2) -- (outer);
	  \draw (dot) -- (outer.\angle);
	\end{tikzpicture}		
      };
      \node at ($(P1.east)!.5!(P2.west)$) {$\vdash$};
    \end{tikzpicture}
    \]
    Then using monotonicity, nesting, and then breaking we can deduce the
    entailment
    \[
      \begin{tikzpicture}[unoriented WD, font=\small, pack size=5pt, baseline=(P1.195)]
	\node (P1) {
	  \begin{tikzpicture}[inner WD]
	    \node[pack] (theta1) {$\theta_1$};
	    \node[pack, right=1.5 of theta1] (theta2) {$\theta_2$};
	    \node[pack] at ($(theta1)!.5!(theta2)+(0,-2)$) (theta3) {$\theta_3$};
	    \node[outer pack, inner xsep=3pt, inner ysep=1pt, fit=(theta1) (theta2) (theta3.-30)] (outer) {};
	    \node[link] at ($(theta2.30)!.5!(outer.30)$) (dot1) {};
	    \node[link,  left=.1 of theta3.west] (dot2) {};
	    \draw (theta2) -- (dot1);
	    \draw[shorten >= -2pt] (dot1) to[bend right] (outer.20);
	    \draw[shorten >= -2pt] (dot1) to[bend left] (outer.45);
	    \draw (dot2) -- (theta3);
	    \draw[shorten >= -2pt] (theta1) -- (outer);
	    \draw[shorten >= -2pt] (theta3) -- (outer);
	    \draw (theta1) -- (theta3);
	    \draw (theta1) -- (theta2);
	    \draw (theta2) -- (theta3);
	  \end{tikzpicture}
	};
	\node[right=2.5 of P1] (P2) {
	  \begin{tikzpicture}[inner WD]
	    \node[pack] (xi1) {$\xi_1$};
	    \node[pack, right=1 of xi1] (xi2) {$\xi_2$};
	    \node[link] at ($(xi1.east)!.5!(xi2.west)$) (dot) {};
	    \draw (xi1.east) -- (dot);
	    \draw (dot) -- (xi2);
	  \node[outer pack, inner xsep=0, inner ysep=0, fit=(xi1) (xi2)] (outerxi) {};
	    \node[pack, right=1.5 of xi2] (theta2) {$\theta_2$};
	    \node[pack, below=.6 of xi2] (theta3) {$\theta_3$};
	    \node[outer pack, inner xsep=3pt, inner ysep=2pt, fit=(xi1.west) (theta2) (theta3.-10)] (outer) {};
	    \node[link] at ($(theta2.30)!.5!(outer.30)$) (dot1) {};
	    \node[link,  left=.1 of theta3.west] (dot2) {};
	    \draw (theta2) -- (xi2.east);
	    \draw (dot) -- (theta3);
	    \draw[shorten >= -2pt] (dot1) to[bend right] (outer.20);
	    \draw[shorten >= -2pt] (dot1) to[bend left] (outer.35);
	    \draw (dot2) -- (theta3);
	    \draw[shorten >= -2pt] (xi1) -- (outer);
	    \draw[shorten >= -2pt] (theta3.south) -- (theta3|-outer.south);
	    \draw (theta2) -- (theta3);
	    \draw (theta2) -- (dot1);
	  \end{tikzpicture}
	};
	\node[right=2.5 of P2] (P3) {
	  \begin{tikzpicture}[inner WD]
	    \node[pack] (xi1) {$\xi_1$};
	    \node[pack, right=1 of xi1] (xi2) {$\xi_2$};
	    \node[link] at ($(xi1.east)!.5!(xi2.west)$) (dot) {};
	    \draw (xi1.east) -- (dot);
	    \draw (dot) -- (xi2);
	    \node[pack, right=1 of xi2] (theta2) {$\theta_2$};
	    \node[pack, below=.6 of xi2] (theta3) {$\theta_3$};
	    \node[outer pack, inner xsep=3pt, inner ysep=1pt, fit=(theta1) (theta2) (theta3.-20)] (outer) {};
	    \node[link] at ($(theta2.30)!.5!(outer.30)$) (dot1) {};
	    \node[link,  left=.1 of theta3.west] (dot2) {};
	    \draw (theta2) -- (xi2.east);
	    \draw (dot) -- (theta3);
	    \draw[shorten >= -2pt] (dot1) to[bend right] (outer.20);
	    \draw[shorten >= -2pt] (dot1) to[bend left] (outer.35);
	    \draw (dot2) -- (theta3);
	    \draw[shorten >= -2pt] (xi1) -- (outer);
	    \draw[shorten >= -2pt] (theta3) -- (outer);
	    \draw (theta2) -- (theta3);
	    \draw (theta2) -- (dot1);
	  \end{tikzpicture}
	};
	\node[right=2.5 of P3] (P4) {
	  \begin{tikzpicture}[inner WD]
	    \node[pack] (xi1) {$\xi_1$};
	    \node[pack, right=1 of xi1] (xi2) {$\xi_2$};
	    \node[link] at ($(xi1.east)!.5!(xi2.west)+(-.3,0)$) (dota) {};
	    \node[link] at ($(xi1.east)!.5!(xi2.west)+(.3,0)$) (dotb) {};
	    \node[link] at ($(xi1.east)!.5!(xi2.west)+(.5,-1)$) (dotc) {};
	    \draw (xi1.east) -- (dota);
	    \draw (dotb) -- (xi2);
	    \node[pack, right=1 of xi2] (theta2) {$\theta_2$};
	    \node[pack, below=.6 of xi2] (theta3) {$\theta_3$};
	    \node[outer pack, inner xsep=3pt, inner ysep=1pt, fit=(theta1) (theta2) (theta3.-20)] (outer) {};
	    \node[link] at ($(theta2.30)!.5!(outer.30)$) (dot1) {};
	    \node[link,  left=.1 of theta3.west] (dot2) {};
	    \draw (theta2) -- (xi2.east);
	    \draw (dotc) -- (theta3);
	    \draw[shorten >= -2pt] (dot1) to[bend right] (outer.20);
	    \draw[shorten >= -2pt] (dot1) to[bend left] (outer.35);
	    \draw (dot2) -- (theta3);
	    \draw[shorten >= -2pt] (xi1) -- (outer);
	    \draw[shorten >= -2pt] (theta3) -- (outer);
	    \draw (theta2) -- (theta3);
	    \draw (theta2) -- (dot1);
	  \end{tikzpicture}
	};
	\node (imp1) at ($(P1.east)!.5!(P2.west)$) {$\vdash$};
	\node[above=-.5 of imp1] {(i)};
	\node (imp2) at ($(P2.east)!.5!(P3.west)$) {$=$};
	\node[above=-.5 of imp2] {(iii)};
	\node (imp3) at ($(P3.east)!.5!(P4.west)$) {$\vdash$};
	\node[above=-.5 of imp3] {(ii)};
      \end{tikzpicture}
    \]
    We'll see many further examples of such reasoning in the subsequent sections
    of this paper, as we prove that we can construct a regular category from a
    regular calculus.
  \end{example}

\begin{example}\label{lem.combining}
  The nesting rule in \cref{prop.diagrams_basic} has two particularly important cases. The first occurs when we
  consider wiring diagrams themselves as poset elements. More precisely, if
  $f\colon \Gamma_1 \to \Gamma_\out$ is a morphism in $\frc$, and
  $\hat{f}\coloneqq \pair{\id_{\Gamma_1},f}$ is its graph, then taking $i=k=1$,
  $\ell=0$, $\theta = \church{(;\hat{f})}$, $\omega =
  \Gamma_\out$ (the identity) and
  $\omega'=\hat{f}$ in (iii) gives $ \church{(\theta;\Gamma_\out)} =
  \church{(;\hat{f})}$. Note that this
  equates a graphical term with inner object $\Gamma_\out$ and annotation
  $\theta$ with a term that has no inner object at all; see e.g.\ \cref{ex.no_inner}.

  The second important case is that of `exterior AND'. If we take $i=k=1$,
  $\ell=2$, and $\omega = \omega'= \Gamma_1\tens \Gamma_2$, then 
  $
    \church{(\theta'_1,\theta'_2;\Gamma_1\tens \Gamma_2)}
    =
    \church{(\rho(\theta'_1,\theta'_2);\Gamma_1 \tens \Gamma_2)}$.
In pictures, this means we can take any two circles, say $\theta_1\in
\pr(\Gamma_1)$ and $\theta_2\in \pr(\Gamma_2)$, and merge them, labelling
the merged circle with $\rho_{\Gamma_1,\Gamma_2}(\theta_1,\theta_2)$:
\[
\begin{tikzpicture}[unoriented WD, font=\small]
	\node (P1) {
	\begin{tikzpicture}[inner WD, pack size=6pt]
		\node[pack] (theta1) {$\theta_1$};
		\node[pack, below=.4 of theta1] (theta2) {$\theta_2$};
		\node[outer pack, inner ysep=0pt, fit=(theta1) (theta2)] (outer) {};
		\draw (theta1.0) -- (theta1.0-|outer.east);
		\draw (theta1.90) -- (outer.north);
		\draw (theta1.180) -- (theta1.180-|outer.west);
		\draw (theta2.270) -- (outer.south);
	\end{tikzpicture}	
	};
	\node[right=3 of P1] (P2) {
	\begin{tikzpicture}[inner WD, pack size=6pt]
		\node[pack] (rho) {$\rho(\theta_1,\theta_2)$};
		\draw (rho.0) -- +(0:2pt);
		\draw (rho.90) -- +(90:2pt);
		\draw (rho.180) -- +(180:2pt);
		\draw (rho.270) -- +(270:2pt);
	\end{tikzpicture}
	};
	\node at ($(P1.east)!.5!(P2.west)$) {$=$};
  \pgfresetboundingbox
	\useasboundingbox (P1.130) rectangle (P2.-40);
\end{tikzpicture}
\]
\end{example}

The meet-semilattice structure permits an intuitive graphical
interpretation. In the following proposition, the graphical terms on right are
illustrative examples of the equalities stated on the left.

\begin{proposition} \label{prop.diagrams_meet} \label{lem.true_removes_circles} \label{lem.meets_merge}
  For all contexts $\Gamma$ in $\frb$ and $\theta,\theta'\in \pr(\Gamma)$, we have
  \begin{enumerate}[label=(\roman*)]
    \item (True is removable) $\church{(\true_\Gamma;\Gamma)} = \church{ (;\epsilon_\Gamma)
    }$ 
    \hfill
    $
    \begin{tikzpicture}[unoriented WD, font=\small,baseline=(true)]
	\node (P1) {
	\begin{tikzpicture}[inner WD]
		\node[pack] (true) {$\true$};
		\draw (true.0) -- +(0:2pt);
		\draw (true.120) -- +(120:2pt);
		\draw (true.240) -- +(240:2pt);
  \end{tikzpicture}	
	};
	\node[right=3 of P1] (P2) {
	\begin{tikzpicture}[inner WD, shorten >=-2pt]
		\coordinate (helper);
		\node[link] (dot0) at ($(helper)+(0:5pt)$) {};
		\node[link] (dot120) at ($(helper)+(120:5pt)$) {};
		\node[link] (dot240) at ($(helper)+(240:5pt)$) {};
		\node[outer pack, surround sep=8pt, fit=(helper)] (outer) {};
		\draw (dot0) -- (outer.0);
		\draw (dot120) -- (outer.120);
		\draw (dot240) -- (outer.240);
  \end{tikzpicture}			
	};
	\node at ($(P1.east)!.5!(P2.west)$) {$=$};
      \end{tikzpicture}
    $

    \item (Meets are merges)
      $
    \church{ (\theta_1\wedge\theta_2;\Gamma)} = \church{
    (\theta_1,\theta_2;\delta_\Gamma) }.
    $
      \hfill
      $\begin{tikzpicture}[unoriented WD,baseline=(theta)]
	\node (P1) {
	\begin{tikzpicture}[inner WD]
		\node[pack] (theta1) {$\theta_1$};
		\node[pack, below=.3 of theta1] (theta2) {$\theta_2$};
		\coordinate (helper) at ($(theta1)!.5!(theta2)$);
		\node[link, left=2 of helper] (dot L) {};
		\node[link, right=2 of helper] (dot R) {};
		\draw (theta1.west) to[out=180, in=60] (dot L);
		\draw (theta2.west) to[out=180, in=-60] (dot L);
		\draw (theta1.east) to[out=0, in=120] (dot R);
		\draw (theta2.east) to[out=0, in=-120] (dot R);
		\draw (dot L) -- +(-5pt, 0);
		\draw (dot R) -- +(5pt, 0);
  \end{tikzpicture}
	};
	\node[right=3 of P1] (P2) {
		\begin{tikzpicture}[inner WD]
		\node[pack] (theta) {$\theta_1\wedge\theta_2$};
		\draw (theta.180) -- +(180:2pt);
		\draw (theta.0) -- +(0:2pt);
  \end{tikzpicture}	
	};
	\node at ($(P1.east)!.5!(P2.west)$) {$=$};
\end{tikzpicture}
$
  \end{enumerate}
\end{proposition}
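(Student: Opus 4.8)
The plan is to prove both equalities by unwinding the interpretation $\church{-}$ of each graphical term and matching the outcome against the explicit descriptions of the top element $\true_\Gamma$ and the binary meet recorded in \eqref{eq.def_true_meet}.

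I begin with the left-hand sides, which are immediate. In a graphical term the symbol $\Gamma$ denotes the identity relation $\id_\Gamma$, and for any lax monoidal functor the $1$-ary laxator is the identity; hence $\church{(\true_\Gamma;\Gamma)} = \pr(\id_\Gamma)(\true_\Gamma) = \true_\Gamma$ and, likewise, $\church{(\theta_1\wedge\theta_2;\Gamma)} = \theta_1\wedge\theta_2$.

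The substance of the argument is identifying the morphisms that appear in the right-hand terms. Since $\epsilon_\Gamma\colon\Gamma\to 0$ and $\delta_\Gamma\colon\Gamma\to\Gamma\oplus\Gamma$ are morphisms of $\frc$, the relations they name in a graphical term are their \emph{cographs}, i.e.\ the right adjoints $0\to\Gamma$ and $\Gamma\oplus\Gamma\to\Gamma$ in $\frb$ (cf.\ \cref{rem.rels_adjoint_composites}); one checks directly that their wiring diagrams are exactly the two pictures displayed to the right of the proposition (the maximal relation on the one hand, the ``merge'' relation on the other). By the adjoint-notation conventions established for $\pr$, applying $\pr$ to these cographs yields precisely the right adjoints $\ust{\epsilon_\Gamma}$ and $\ust{\delta_\Gamma}$.

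With this in hand, the right-hand sides evaluate by definition of $\church{-}$. For (i) there are no annotations ($k=0$), so the relevant laxator is the $0$-ary $\rho$; thus $\church{(;\epsilon_\Gamma)} = \ust{\epsilon_\Gamma}(\rho)$, which is exactly the composite of right adjoints $1\xrightarrow{\rho}\pr(0)\xrightarrow{\ust{\epsilon_\Gamma}}\pr(\Gamma)$ defining $\true_\Gamma$ in \eqref{eq.def_true_meet}. For (ii) the laxator is the $2$-ary $\rho_{\Gamma,\Gamma}$, so $\church{(\theta_1,\theta_2;\delta_\Gamma)} = \ust{\delta_\Gamma}\big(\rho_{\Gamma,\Gamma}(\theta_1,\theta_2)\big)$, which is exactly the composite defining $\theta_1\wedge\theta_2$ in \eqref{eq.def_true_meet}. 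Comparing with the left-hand evaluations gives both equalities. The only place demanding care --- the ``hard part,'' such as it is --- is this middle step: correctly reading off that the named morphisms $\epsilon_\Gamma$ and $\delta_\Gamma$ enter as cographs (hence as the right adjoints $\ust{\epsilon_\Gamma}$, $\ust{\delta_\Gamma}$ under $\pr$) rather than as graphs, and confirming that the laxator arity in $\church{-}$ ($0$-ary for (i), $2$-ary for (ii)) matches the laxator appearing in \eqref{eq.def_true_meet}. Once these bookkeeping identifications are in place, both statements are formal consequences of the definitions.
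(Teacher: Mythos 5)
Your proposal is correct and takes essentially the same route as the paper: the paper's entire proof is the one-line observation that both equalities \emph{are} the definitions of $\true$ and $\wedge$ given in \cref{eq.def_true_meet}, and your argument simply makes that explicit by evaluating $\church{-}$ on both sides. The bookkeeping you flag as the only delicate point---that $\epsilon_\Gamma$ and $\delta_\Gamma$ enter the graphical terms as cographs, so that $\pr$ sends them to the right adjoints $\ust{\epsilon_\Gamma}$ and $\ust{\delta_\Gamma}$, composed with the $0$-ary and $2$-ary laxators respectively---is precisely what the paper's ``simply the definitions'' glosses over, so your write-up is a faithful (if more detailed) version of the same proof.
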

\begin{proof}
  These equations are simply the definitions of $\true$
  and meet; see \cref{eq.def_true_meet}.
\end{proof}

\begin{example}[Discarding]\label{lem.dotting_off}
  Note that \cref{prop.diagrams_meet}(i) and the monotonicity of diagrams
  (\cref{prop.diagrams_basic}(i)) further imply that for all $\theta \in \pr(\Gamma)$
  we have $\church{ (\theta;\Gamma)} \vdash \church{
      (;\epsilon_\Gamma) }$:
      \[
      \begin{tikzpicture}[unoriented WD, font=\small, baseline=(phi)]
	\node (P1) {
	  \begin{tikzpicture}[inner WD, shorten >=-2pt]
	    \node[pack] (phi) {$\theta$};
	    \node[outer pack, fit=(phi)] (outer) {};
	    \draw (phi) -- (outer.west);
	    \draw (phi) -- (outer.east);
	    \draw (phi) -- (outer.south);
	  \end{tikzpicture}
	};
	\node[right=3 of P1] (P2) {
	  \begin{tikzpicture}[inner WD, shorten >=-2pt]
	    \node[pack, fill=white, white] (phi) {$\theta$};
	    \node[outer pack, fit=(phi)] (outer) {};
	    \node[link] at ($(outer.180) - (180:5pt)$) (dot180) {};
	    \node[link] at ($(outer.0) - (0:5pt)$) (dot0) {};
	    \node[link] at ($(outer.270) - (270:5pt)$) (dot270) {};
	    \draw (dot0) -- (outer.0);
	    \draw (dot180) -- (outer.180);
	    \draw (dot270) -- (outer.270);
	  \end{tikzpicture}
	};
	\node at ($(P1.east)!.5!(P2.west)$) {$\vdash$};
      \end{tikzpicture}
    \]
  \end{example}

\chapter{Internal relations in a regular calculus} \label{chap.relations}

Having set up our proof language, we now return to describing the relationship
between regular categories and regular calculi. In this section, we'll see that
to every regular calculus we can construct a certain po-category, called its
internal relations po-category. Although we shall not prove it directly, this
internal relations po-category is in fact a regular po-category. We'll also get to
see our graphical logic in action.

\section{The internal relations po-category}\label{sec.int_rels}

\begin{definition}\label{def.internal_relations}
Given objects $\Gamma_1,\Gamma_2$ and $\varphi_1\in \pr(\Gamma_1)$ and $\varphi_2\in \pr(\Gamma_2)$, we
define the poset $\rela{\pr}(\varphi_1,\varphi_2)$ of \define{$\pr$-internal relations from
$\varphi_1$ to $\varphi_2$} to be the subposet
\[
  \rela{\pr}(\varphi_1,\varphi_2)\coloneqq
          \big\{\theta\in \pr(\Gamma_1\tens \Gamma_2)\,\big|\,
	  \lsh{(\pi_1)}\theta \vdash_{\Gamma_1} \varphi_1
			\text{ and }
	  \lsh{(\pi_2)}\theta \vdash_{\Gamma_2} \varphi_2
		\big\} \subseteq \pr(\Gamma_1\tens \Gamma_2).
\]
\end{definition}

An internal relation $\theta$ may be represented by the graphical term
$\dectheta$
together with the two entailments
\[
\begin{tikzpicture}[unoriented WD]
	\node (P11) {
  \begin{tikzpicture}[inner WD]
    \node[pack] (theta) {$\theta$};
    \node[link, right=5pt of theta] (dot) {};
    \draw (dot) -- (theta);
    \draw (theta) -- ($(theta.west)-(1,0)$);			
  \end{tikzpicture}	
	};
	\node[right=3 of P11] (P12) {
  \begin{tikzpicture}[inner WD, pack size=6pt]
    \node[pack] (phi) {$\varphi_1$};
		\draw (phi.west) to +(-2pt, 0);
  \end{tikzpicture}	
	};
	\node at ($(P11.east)!.5!(P12.west)$) {$\vdash_{\Gamma_1}$};
	\node[right=6 of P12] (P21) {
  \begin{tikzpicture}[inner WD]
    \node[pack] (theta) {$\theta$};
    \node[link, left=5pt of theta] (dot) {};
    \draw (dot) -- (theta);
    \draw (theta) -- ($(theta.east)+(1,0)$);			
  \end{tikzpicture}	
	};
	\node[right=3 of P21] (P22) {
  \begin{tikzpicture}[inner WD, pack size=6pt]
    \node[pack] (phi) {$\varphi_2$};
		\draw (phi.east) to +(2pt, 0);
  \end{tikzpicture}	
	};	
	\node at ($(P21.east)!.5!(P22.west)$) {$\vdash_{\Gamma_2}$};
\end{tikzpicture}
\]

We check that when this definition is applied to the regular calculus $\prd(\cat{R})$ associated to a regular category $\cat{R}$, it  recovers the usual notion of relation between objects in $\cat{R}$.

\begin{proposition}\label{prop.rela_rels_rrel}
Let $\cat{R}$ be a regular category, let $\Gamma_1,\Gamma_2\in\frc[\ob\cat{R}]$ be contexts, and suppose given $r_1\in\sub_{\cat{R}}\Prod[\Gamma_1]$ and $r_2\in\sub_{\cat{R}}\Prod[\Gamma_2]$. There is a natural isomorphism
\[\rela{\prd(\cat{R})}\big((\Gamma_1, r_1),(\Gamma_2, r_2)\big)\cong\rrel{\cat{R}}(r_1,r_2).\]
\end{proposition}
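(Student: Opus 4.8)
The plan is to reduce everything to a statement about subobject lattices in $\cat{R}$ and then recognise the defining conditions as a factorisation condition. Write $\pr\coloneqq\sub_{\cat{R}}\Prod$ for the calculus underlying $\prd(\cat{R})$, and abbreviate $p_i\coloneqq\Prod[\Gamma_i]$, so that each $p_i$ is an object of $\cat{R}$ and $\pr(\Gamma_i)=\sub_{\cat{R}}(p_i)$. Since $\Prod$ is strong monoidal (\cref{eqn.counit_reg_bicat}) it preserves products, so $\pr(\Gamma_1\tens\Gamma_2)=\sub_{\cat{R}}(p_1\times p_2)$, and the predicates $r_1,r_2$ are honest monomorphisms $r_1\inj p_1$, $r_2\inj p_2$ in $\cat{R}$. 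Under this dictionary an element $\theta\in\pr(\Gamma_1\tens\Gamma_2)$ is just a subobject $\theta\ss p_1\times p_2$. As a preliminary step I would check that the calculus operator $\lsh{(\pi_i)}$ agrees with the existential image along the product projection $p_1\times p_2\to p_i$: indeed $\pr$ applied to the graph of $\pi_i$ equals $\sub_{\cat{R}}$ applied to the graph of $\Prod[\pi_i]$, which by the span description of $\sub_{\cat{R}}$ is exactly the existential image operator.

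The heart of the argument is then the following identification. Using the adjunction $\lsh{(\pi_i)}\dashv\ust{(\pi_i)}$, the two defining inequalities of \cref{def.internal_relations} become $\theta\ss\ust{(\pi_1)}(r_1)$ and $\theta\ss\ust{(\pi_2)}(r_2)$. Since $\ust{(\pi_i)}$ is pullback along the projection, $\ust{(\pi_1)}(r_1)=r_1\times p_2$ and $\ust{(\pi_2)}(r_2)=p_1\times r_2$ as subobjects of $p_1\times p_2$. Conjoining these and using that binary meets in $\sub_{\cat{R}}$ are computed by pullback (\cref{prop.tangible}), the two conditions together are equivalent to the single condition
\[
  \theta\ss(r_1\times p_2)\wedge(p_1\times r_2)=r_1\times r_2.
\]
Thus $\rela{\prd(\cat{R})}\big((\Gamma_1,r_1),(\Gamma_2,r_2)\big)$ is precisely the down-set of the subobject $r_1\times r_2\inj p_1\times p_2$ inside $\sub_{\cat{R}}(p_1\times p_2)$.

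To finish, I would invoke the standard fact that composition with the monomorphism $m\colon r_1\times r_2\inj p_1\times p_2$ gives an order-embedding $\sub_{\cat{R}}(r_1\times r_2)\to\sub_{\cat{R}}(p_1\times p_2)$ whose image is exactly this down-set, with inverse given by restricting a subobject contained in $r_1\times r_2$ back along $m$. Since $\rrel{\cat{R}}(r_1,r_2)=\sub_{\cat{R}}(r_1\times r_2)$ by definition of the relations po-category, this order-isomorphism is the desired bijection, sending a relation $x\ss r_1\times r_2$ to the composite subobject $x\inj r_1\times r_2\inj p_1\times p_2$. Naturality in $r_1$ and $r_2$ is then automatic, as both the restriction map and the monos $r_i\inj p_i$ are compatible with the functoriality of $\sub_{\cat{R}}$. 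The only real subtlety—and the step I would treat most carefully—is the preliminary one: matching the abstract operators $\lsh{(\pi_i)},\ust{(\pi_i)}$ of the regular calculus with the concrete image and pullback operations in $\sub_{\cat{R}}$, so that the otherwise purely formal adjunction manipulation is licensed; everything after that is a routine computation in the subobject lattice.
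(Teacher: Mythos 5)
Your proposal is correct and takes essentially the same route as the paper's own proof: both unwind \cref{def.internal_relations} to show that an internal relation $\theta$, viewed as a subobject of $\Prod[\Gamma_1]\times\Prod[\Gamma_2]$, satisfies the two conditions iff $\theta\ss r_1\times r_2$, and then identify that down-set with $\sub_{\cat{R}}(r_1\times r_2)=\rrel{\cat{R}}(r_1,r_2)$. The only difference is presentational—the paper establishes the containment by observing that the span legs of $\theta$ factor through $r_1$ and $r_2$ and pairing to get a mono $\theta\inj r_1\times r_2$, whereas you transpose across $\lsh{(\pi_i)}\dashv\ust{(\pi_i)}$ and compute the meet $\ust{(\pi_1)}(r_1)\wedge\ust{(\pi_2)}(r_2)=r_1\times r_2$; these are the same argument in different notation.
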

\begin{proof}
Let $g_1\coloneqq\Prod[\Gamma_1]$ and $g_2\coloneqq\Prod[\Gamma_2]$ so we have $r_1\ss g_1$ and $r_2\ss g_2$; see \cref{eqn.Prod}. By \cref{def.internal_relations,prop.rels}, a $\prd(\cat{R})$-internal relation between them is an element $t\ss g_1\times g_2$ such that there exist dotted arrows making the following diagram commute:
\[
\begin{tikzcd}[row sep=small]
	r_1\ar[d, >->]&
	t\ar[d, >->]\ar[r, dotted]\ar[l, dotted]&
	r_2\ar[d, >->]\\
	g_1&
	g_1\times g_2\ar[l]\ar[r]&
	g_2
\end{tikzcd}
\]
The composite $t\to r_1\times r_2\to g_1\times g_2$ is monic, so we have that $t\ss r_1\times r_2$. The result follows.
\end{proof}

We shall now present some technical lemmas with the goal of proving the
following theorem, that internal relations form a po-category. The proof is
completed on page~\pageref{page.proof_of_thm.internal_relations}.

\begin{theorem}\label{thm.internal_relations}
  Let $\pr\colon \frb \to \pposet$ be a regular calculus. Then there exists a
  po-category $\rela{\pr}$ whose objects are pairs
  $(\Gamma,\varphi)$, where $\Gamma$ is an object of $\frb$ and $\varphi \in
  \pr(\Gamma)$, and with hom-posets $(\Gamma_1,\varphi_1) \to
  (\Gamma_2,\varphi_2)$ given by $\rela{\pr}(\varphi_1,\varphi_2)$.
\end{theorem}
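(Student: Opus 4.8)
The plan is to endow the stated data with identities and composition and then to verify the po-category axioms, leaning on the graphical calculus of \cref{prop.diagrams_basic,prop.diagrams_meet,lem.dotting_off} together with the adjunctions $\lsh{f}\dashv\ust{f}$ attached to each morphism $f$ of $\frc$. For the identity on an object $(\Gamma,\varphi)$ I would take the graphical term $(\varphi;\delta_\Gamma)$, that is the ``doubled wire'' $\lsh{\delta_\Gamma}(\varphi)\in\pr(\Gamma\oplus\Gamma)$ decorated by $\varphi$, where $\delta_\Gamma\colon\Gamma\to\Gamma\oplus\Gamma$ is the diagonal. Since $\delta_\Gamma\cp\pi_i=\id_\Gamma$ in $\frc$ and $\lsh{-}$ is functorial, one gets $\lsh{\pi_i}\lsh{\delta_\Gamma}(\varphi)=\varphi\vdash\varphi$, so this element indeed lies in $\rela{\pr}(\varphi,\varphi)$. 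The hom-posets are subposets of the $\pr(\Gamma_1\oplus\Gamma_2)$, so their order is inherited and no extra work is needed to see they are posets.

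To compose $\theta\in\rela{\pr}(\varphi_1,\varphi_2)$ with $\psi\in\rela{\pr}(\varphi_2,\varphi_3)$ I would set $\theta\cp\psi\coloneqq\church{(\theta,\psi;\omega)}$, where $\omega\colon(\Gamma_1\oplus\Gamma_2)\oplus(\Gamma_2\oplus\Gamma_3)\tickar\Gamma_1\oplus\Gamma_3$ is the wiring diagram that joins the two middle $\Gamma_2$-shells with a single wire and exposes only $\Gamma_1$ and $\Gamma_3$ on the boundary. Unwinding $\church{-}$, this is the familiar relational composite $\theta\cp\psi=\lsh{\pi_{13}}\bigl(\ust{\pi_{12}}\theta\wedge\ust{\pi_{23}}\psi\bigr)$ in $\pr(\Gamma_1\oplus\Gamma_2\oplus\Gamma_3)$: pull both relations back to the triple context, meet, and existentially quantify the middle away.

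I expect the one substantive step to be checking that this composite lands in $\rela{\pr}(\varphi_1,\varphi_3)$, i.e.\ that $\lsh{\pi_1}(\theta\cp\psi)\vdash\varphi_1$ and symmetrically for $\varphi_3$. Graphically, $\lsh{\pi_1}(\theta\cp\psi)$ is the term obtained from $(\theta,\psi;\omega)$ by additionally discarding the $\Gamma_3$-boundary, and I would argue it entails $\lsh{\pi_1}\theta$ as follows: by the breaking rule (\cref{prop.diagrams_basic}(ii)) we may cut the wire joining $\psi$ to $\theta$, which only increases the represented element; once $\psi$ is disconnected from every exposed boundary, the discarding principle (\cref{lem.dotting_off}) together with monotonicity (\cref{prop.diagrams_basic}(i)) lets us replace $\psi$ by $\true$ and then delete it via \cref{prop.diagrams_meet}(i). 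What remains is exactly $\lsh{\pi_1}\theta\vdash\varphi_1$ by hypothesis, and the $\Gamma_3$ case is symmetric. This ``discarding of the other relation'' is the part I expect to require the most care.

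The remaining axioms should follow formally. For unitality, I would use the diagonal to show $\id_{(\Gamma_1,\varphi_1)}\cp\theta=\ust{\pi_1}(\varphi_1)\wedge\theta$ (the identity wire merely constrains the $\Gamma_1$-value to satisfy $\varphi_1$); since $\theta\in\rela{\pr}(\varphi_1,\varphi_2)$ means $\lsh{\pi_1}\theta\vdash\varphi_1$, the adjunction $\lsh{\pi_1}\dashv\ust{\pi_1}$ gives $\theta\vdash\ust{\pi_1}\varphi_1$, whence the meet collapses to $\theta$; the right unit is symmetric. For associativity, both $(\theta\cp\psi)\cp\chi$ and $\theta\cp(\psi\cp\chi)$ rewrite, via the nesting rule (\cref{prop.diagrams_basic}(iii)), to the single graphical term joining all intermediate shells and exposing the two outermost contexts, so equality reduces to associativity of wiring-diagram composition in $\frb$, which holds because $\frb$ is a po-category. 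Finally, monotonicity of $\cp$ in each variable is immediate from monotonicity of graphical terms (\cref{prop.diagrams_basic}(i)), completing the verification that $\rela{\pr}$ is a po-category.
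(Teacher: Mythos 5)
Your proposal matches the paper's proof in all essentials: the same identity element $\lsh{(\delta_\Gamma)}(\varphi)$, the same composition given by the wiring diagram joining the two middle $\Gamma_2$ shells (the paper's $\comp_{\Gamma_1,\Gamma_2,\Gamma_3}$ applied after the laxator), well-definedness of composition by discarding the other relation via breaking/monotonicity/discarding, associativity by the nesting rule, and the po-structure from monotonicity of the defining maps. Your two local variations---checking that the identity is an internal relation via functoriality of $f\mapsto\lsh{f}$ and $\delta_\Gamma\cp\pi_i=\id_\Gamma$, and deriving the unit law from $\id_{\varphi_1}\cp\theta=\ust{\pi_1}(\varphi_1)\wedge\theta$ together with the adjunction $\lsh{\pi_1}\dashv\ust{\pi_1}$---are slightly more algebraic than the paper's purely graphical entailment chains, but they are correct and discharge exactly the same steps.
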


We begin by specifying the composition rule. For objects
$\Gamma_1,\Gamma_2,\Gamma_3$ in $\frb$, let
\[
  \comp_{\Gamma_1,\Gamma_2,\Gamma_3} \coloneqq
  \begin{aligned}
  \begin{tikzpicture}[inner WD]
    \node[pack] (theta) {};
    \node[pack, right=2 of theta] (theta') {};
    \node[outer pack, fit=(theta) (theta')] (outer) {};
    \draw (outer.east) to (theta'.east) node[right=1] {$\scriptstyle \Gamma_3$};
    \draw (theta) to node[above=-.3] {$\scriptstyle \Gamma_2$} (theta');
    \draw (theta.west) node[left=1] {$\scriptstyle \Gamma_1$} to (outer.west);			
  \end{tikzpicture}	
\end{aligned}
\]
It is a morphism $(\Gamma_1 \oplus \Gamma_2 \oplus \Gamma_2 \oplus \Gamma_3) \tickar (\Gamma_1\oplus\Gamma_3)$ in $\frc$. 
We then define
\begin{equation}\label{eqn.composition}
(-)\cp(-) \colon \pr(\Gamma_1\tens \Gamma_2)\times \pr(\Gamma_2\tens
\Gamma_3)\xrightarrow{\rho} \pr(\Gamma_1 \tens \Gamma_2 \tens \Gamma_2 \tens
\Gamma_3) \xrightarrow{\pr(\comp)}
\pr(\Gamma_1\tens \Gamma_3).
\end{equation}

\begin{remark}
  Note that this construction is reminiscent of the composition map defined in
  the construction of a hypergraph category from a cospan algebra in \cite{fong2019hypergraph}.
\end{remark}

\begin{lemma}\label{lemma.comp_rela_rela}
The composite of internal relations is an internal relation. That is, let
$\varphi_1 \in \pr(\Gamma_1)$, $\varphi_2 \in \pr(\Gamma_2)$, and $\varphi_3 \in \pr(\Gamma_3)$. Then given
$\theta_{12}\in\rela{\pr}(\varphi_1,\varphi_2)$ and $\theta_{23}\in\rela{\pr}(\varphi_2, \varphi_3)$, the
element $(\theta_{12}\cp\theta_{23})\in \pr(\Gamma_1 \tens \Gamma_3)$ is in $\rela{\pr}(\varphi_1,\varphi_3)$. 
\end{lemma}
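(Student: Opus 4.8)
The plan is to verify directly the two defining conditions of \cref{def.internal_relations} for the composite, namely that $\lsh{(\pi_1)}(\theta_{12}\cp\theta_{23})\vdash_{\Gamma_1}\varphi_1$ and $\lsh{(\pi_2)}(\theta_{12}\cp\theta_{23})\vdash_{\Gamma_3}\varphi_3$. These two statements are interchanged by the evident symmetry of the construction---swap the roles of $\Gamma_1,\varphi_1,\theta_{12}$ with $\Gamma_3,\varphi_3,\theta_{23}$ and use the braiding of $\frb$---so it suffices to prove the first, the second reducing in the same way to the hypothesis $\lsh{(\pi_2)}\theta_{23}\vdash_{\Gamma_3}\varphi_3$ coming from $\theta_{23}\in\rela{\pr}(\varphi_2,\varphi_3)$.

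First I would rewrite $\lsh{(\pi_1)}(\theta_{12}\cp\theta_{23})$ as a graphical term. By \eqref{eqn.composition} we have $\theta_{12}\cp\theta_{23}=\church{(\theta_{12},\theta_{23};\comp)}$, and since $\lsh{(\pi_1)}$ is $\pr$ applied to the graph $\hat\pi_1$ of the projection $\pi_1\colon\Gamma_1\oplus\Gamma_3\to\Gamma_1$, functoriality of $\pr$ (the nesting rule, \cref{prop.diagrams_basic}(iii)) presents it as $\church{(\theta_{12},\theta_{23};\comp\cp\hat\pi_1)}$. Graphically this is the diagram with inner boxes $\theta_{12}$ and $\theta_{23}$ joined along $\Gamma_2$, with $\Gamma_1$ wired to the boundary and $\Gamma_3$ dotted off.

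Next I would discard the box $\theta_{23}$ entirely. Applying monotonicity (\cref{prop.diagrams_basic}(i)) with $\theta_{23}\vdash\true_{\Gamma_2\oplus\Gamma_3}$ and then deleting the resulting top element via ``true is removable'' (\cref{prop.diagrams_meet}(i))---equivalently, applying the discarding rule \cref{lem.dotting_off} to $\theta_{23}$---yields $\church{(\theta_{12},\theta_{23};\comp\cp\hat\pi_1)}\vdash\church{(\theta_{12};\omega')}$, where $\omega'$ is the wiring diagram on $\Gamma_1\oplus\Gamma_2$ obtained from $\comp\cp\hat\pi_1$ by erasing $\theta_{23}$. Once $\theta_{23}$ is gone, the $\Gamma_2$-wire leaving $\theta_{12}$ is attached only to a discard dot, so $\omega'$ is exactly the graph of the projection $\pi_1\colon\Gamma_1\oplus\Gamma_2\to\Gamma_1$, whence $\church{(\theta_{12};\omega')}=\lsh{(\pi_1)}\theta_{12}$. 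Since $\theta_{12}\in\rela{\pr}(\varphi_1,\varphi_2)$ gives $\lsh{(\pi_1)}\theta_{12}\vdash_{\Gamma_1}\varphi_1$, chaining the entailments yields the claim.

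The main obstacle is the wiring-diagram bookkeeping in this last step: one must check carefully, using the explicit composition rule in $\frb$ (\cref{ex.comp_as_subst}) together with the descriptions of morphisms, monos, and regular epis in $\frc$ (\cref{cor.descriptions}), that erasing the $\theta_{23}$-box really does collapse $\comp\cp\hat\pi_1$ to the bare projection $\Gamma_1\oplus\Gamma_2\to\Gamma_1$---that is, that both $\Gamma_2$-ports and the entire $\Gamma_3$-shell become discard dots leaving nothing to constrain $\theta_{12}$. Everything else is a routine application of the reasoning rules already established, plus the symmetry argument to handle the $\varphi_3$ side.
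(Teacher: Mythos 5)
Your proposal is correct and is essentially the paper's own proof: both reduce to the single entailment $\lsh{(\pi_1)}(\theta_{12}\cp\theta_{23})\vdash\varphi_1$ (the other side being symmetric), discard $\theta_{23}$ via the dotting-off rule (\cref{lem.dotting_off}) to collapse the composite to $\lsh{(\pi_1)}\theta_{12}$, and then invoke the defining condition of $\theta_{12}\in\rela{\pr}(\varphi_1,\varphi_2)$. The extra bookkeeping you flag as the main obstacle is exactly what the paper's graphical calculus renders immediate, so no further verification is needed beyond what \cref{prop.diagrams_basic} and \cref{lem.dotting_off} already provide.
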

\begin{proof}
  We must prove $\lsh{(\pi_1)}(\theta_{12} \cp \theta_{23}) \vdash \varphi_1$ and
  $\lsh{(\pi_2)}(\theta_{12} \cp \theta_{23}) \vdash \varphi_3$. We prove the first; the second
  follows similarly. This is not hard, we simply use \cref{lem.dotting_off} and then
  that $\theta_{12}$ obeys \cref{def.internal_relations}:
\[
\begin{tikzpicture}[unoriented WD]
	\node (P1) {
  \begin{tikzpicture}[inner WD, pack size=6pt]
    \node[pack] (theta) {$\theta_{12}$};
    \node[pack, right=1 of theta] (theta') {$\theta_{23}$};
    \node[link, right=5pt of theta'] (dot) {};
    \draw (dot) -- (theta');
    \draw (theta) -- (theta');
		\draw (theta.west) -- +(-5pt, 0);
  \end{tikzpicture}	
	};
	\node[right=3 of P1] (P2) {
  \begin{tikzpicture}[inner WD, pack size=6pt]
    \node[pack] (theta) {$\theta_{12}$};
    \node[link, right=5pt of theta] (dot) {};
    \draw (dot) -- (theta);
		\draw (theta.west) -- +(-5pt, 0);
  \end{tikzpicture}		
	};
	\node[right=3 of P2] (P3) {
  \begin{tikzpicture}[inner WD, pack size=6pt]
    \node[pack] (phi) {$\varphi_1$};
		\draw (phi.west) to +(-5pt, 0);
  \end{tikzpicture}		
	};
	\node at ($(P1.east)!.5!(P2.west)$) {$\vdash_{\Gamma_1}$};
	\node at ($(P2.east)!.5!(P3.west)$) {$\vdash_{\Gamma_1}$};
\end{tikzpicture}
\qedhere
\]
\end{proof}

Given an object $\Gamma\in\frb[\typeset]$ and $\varphi\in \pr(\Gamma)$, define $\id_\varphi\coloneqq \lsh{(\delta_\Gamma)}(\varphi)$ in $\pr(\Gamma\oplus\Gamma)$. Here it is graphically.
\begin{equation}\label{eqn.id_phi}
\id_\varphi\coloneqq\begin{tikzpicture}[inner WD, pack size=6pt, baseline=(dot)]
	\node[pack] (phi) {$\varphi$};
	\node[link, below=3pt of phi] (dot) {};
  \node[outer pack, fit=(phi) (dot)] (outer) {};
	\draw (phi) -- (dot);
	\draw (dot) to[pos=1] node[left] {$\Gamma$} (dot-|outer.west);
	\draw (dot) to[pos=1] node[right] {$\Gamma$} (dot-|outer.east);
\end{tikzpicture}
\end{equation}

\begin{lemma}\label{lemma.id_is_rela}
For any $\Gamma\in\frb[\typeset]$ and $\varphi\in \pr(\Gamma)$, the element $\id_\varphi\in
\pr(\Gamma\tens \Gamma)$ is an element of $\rela{\pr}(\varphi,\varphi)$.
\end{lemma}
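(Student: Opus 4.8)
The plan is to verify the two defining conditions of \cref{def.internal_relations} directly, namely $\lsh{(\pi_1)}(\id_\varphi) \vdash_\Gamma \varphi$ and $\lsh{(\pi_2)}(\id_\varphi) \vdash_\Gamma \varphi$. Since the diagonal $\delta_\Gamma$ is symmetric under the braiding that interchanges the two copies of $\Gamma$, it suffices to establish the first condition; the second follows by the identical argument with $\pi_2$ in place of $\pi_1$.

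For the first condition I would unfold $\id_\varphi = \lsh{(\delta_\Gamma)}(\varphi)$ and then appeal to the functoriality of the assignment $f\mapsto\lsh{f}$ recorded in the adjoint-notation discussion following \cref{lemma.fundamental}: with $\cat{R}=\frc$ (so $\rrel{\cat{R}}=\frb$), this assignment is the composite functor $\frc\cong\ladj(\frb)\to\ladj(\pposet)$, hence sends composites to composites. Thus $\lsh{(\delta_\Gamma)}\cp\lsh{(\pi_1)} = \lsh{(\delta_\Gamma\cp\pi_1)}$ as monotone maps $\pr(\Gamma)\to\pr(\Gamma)$. The one genuinely needed fact is the triangle identity $\delta_\Gamma\cp\pi_1 = \id_\Gamma$ in $\frc$, so the right-hand side is $\lsh{(\id_\Gamma)}=\id_{\pr(\Gamma)}$. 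Evaluating at $\varphi$ then gives $\lsh{(\pi_1)}(\id_\varphi)=\lsh{(\pi_1)}\big(\lsh{(\delta_\Gamma)}(\varphi)\big)=\varphi$, which in particular entails $\varphi$. This establishes $\id_\varphi\in\rela{\pr}(\varphi,\varphi)$.

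I expect essentially no obstacle here: the statement reduces to functoriality of $\lsh{-}$ together with the projection identity $\delta_\Gamma\cp\pi_i=\id_\Gamma$, both already available. The same computation can be read off the graphical term in \cref{eqn.id_phi}, which fits the diagrammatic style of the surrounding lemmas: applying $\lsh{(\pi_1)}$ nests $\id_\varphi$ into the graph of the projection, keeping the first $\Gamma$-leg emanating from the diagonal dot and discarding the second. By the discarding move of \cref{lem.dotting_off}, erasing the spare leg returns the single-output term $\church{(\varphi;\Gamma)}$, i.e.\ $\varphi$ itself, recovering the equality above.
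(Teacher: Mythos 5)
Your proof is correct and takes essentially the same route as the paper: the paper's one-line argument invokes the nesting rule (\cref{prop.diagrams_basic}(iii)) to compose the graphical term for $\id_\varphi$ with the projection, which is precisely your algebraic computation $\lsh{(\pi_i)}\big(\lsh{(\delta_\Gamma)}(\varphi)\big)=\lsh{(\delta_\Gamma\cp\pi_i)}(\varphi)=\varphi$ via functoriality of $\lsh{(-)}$ and the triangle identity $\delta_\Gamma\cp\pi_i=\id_\Gamma$. The only cosmetic quibble is in your closing graphical gloss: the spare capped leg vanishes by the composition rule in $\frb$ (a two-wire black dot is just a wire), giving equality outright, rather than by the discarding entailment of \cref{lem.dotting_off}.
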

\begin{proof}
By \cref{prop.diagrams_basic}(iii), composing the nested graphical term on the left is precisely the graphical term on the right (and similarly for the codomain):
\[
\begin{tikzpicture}[unoriented WD, baseline=(P1.base)]
  \node (P1) {
  \begin{tikzpicture}[inner WD, pack size=6pt]
  	\node[pack] (phi) {$\varphi$};
  	\node[link, below=3pt of phi] (dot) {};
    \node[outer pack, fit=(phi) (dot)] (outer) {};
    \node[link, right=3pt] at (dot-|outer.east) (dot2) {};
  	\draw (phi) -- (dot);
  	\draw (dot) -- (dot2);
		\node[outer pack, surround sep=-1pt, fit=(outer) (dot2)] (outer2) {};
  	\draw (dot) -- (dot-|outer2.west);
  \end{tikzpicture}
  };
  \node[right=3 of P1] (P2) {
  \begin{tikzpicture}[inner WD, pack size=6pt]
    \node[pack] (phi) {$\varphi$};
		\draw (phi.west) to +(-2pt, 0);
  \end{tikzpicture}		  
  };
	\node at ($(P1.east)!.5!(P2.west)$) {$\vdash_{\Gamma}$};
\end{tikzpicture}
\qedhere
\] 
\end{proof}

In what follows, we often elide details about---and graphical notation that
indicates---nesting and contexts.

\begin{lemma}\label{lemma.cp_unital}
The map $\cp$ from \cref{eqn.composition} is unital with respect to $\id$, i.e.\
$\theta\cp\id=\theta=\id\cp{\theta}$.
\end{lemma}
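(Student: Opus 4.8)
The plan is to compute $\theta\cp\id$ directly from the definitions, recognize the result as a meet, and then invoke the defining entailments of an internal relation. I will treat $\theta\cp\id_{\varphi_2}=\theta$ in detail; the equation $\id_{\varphi_1}\cp\theta=\theta$ is the mirror image.

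First I would unfold the composite. By \eqref{eqn.composition} we have $\theta\cp\id_{\varphi_2}=\pr(\comp)\big(\rho(\theta,\id_{\varphi_2})\big)$, and by \eqref{eqn.id_phi} the second factor is $\id_{\varphi_2}=\lsh{(\delta_{\Gamma_2})}(\varphi_2)$, whose graphical term carries $\varphi_2$ into a single black dot with two $\Gamma_2$-legs. Substituting this term into the right-hand hole of $\comp$ and applying the nesting rule (\cref{prop.diagrams_basic}(iii)) together with the amalgamation of connected black dots from \cref{ex.comp_as_subst}, the wire joining $\theta$ to $\id_{\varphi_2}$ merges with the diagonal dot, producing a single diagram in which $\theta$ and $\varphi_2$ are both attached to one common $\Gamma_2$-wire that is then carried to the output.

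Second I would identify this diagram algebraically. The shared black dot is exactly the diagonal $\delta_{\Gamma_2}$, so by the meets-are-merges rule (\cref{prop.diagrams_meet}(ii)) and the definition of $\ust{(\pi_2)}$ as pullback along the projection, the composite equals $\theta\wedge\ust{(\pi_2)}(\varphi_2)$ in $\pr(\Gamma_1\tens\Gamma_2)$: the existential over the merged middle variable collapses against the equality constraint carried by the diagonal, leaving only the pullback of $\varphi_2$ onto the second coordinate of $\theta$.

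Finally I would close the argument using the relation property. Since $\theta\in\rela{\pr}(\varphi_1,\varphi_2)$, \cref{def.internal_relations} gives $\lsh{(\pi_2)}\theta\vdash\varphi_2$, and by the adjunction $\lsh{(\pi_2)}\dashv\ust{(\pi_2)}$ recorded in the adjoint-notation subsection this is equivalent to $\theta\vdash\ust{(\pi_2)}(\varphi_2)$; hence $\theta\wedge\ust{(\pi_2)}(\varphi_2)=\theta$, as desired. The mirror argument, using $\lsh{(\pi_1)}\theta\vdash\varphi_1$ and the identification $\id_{\varphi_1}\cp\theta=\theta\wedge\ust{(\pi_1)}(\varphi_1)$, gives left-unitality. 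The main obstacle is the second step: justifying that the composite is \emph{precisely} the meet $\theta\wedge\ust{(\pi_2)}(\varphi_2)$ rather than something involving a nontrivial image over the glued middle variable. Making this rigorous means tracking how the diagonal inside $\id_{\varphi_2}$ interacts with the identification performed by $\comp$, and checking—via the functoriality and monoidality of $\pr$ and the rules of \cref{prop.diagrams_basic,prop.diagrams_meet}—that the quantified middle variable is forced equal to the output, so that no image factorization survives. Everything else is routine bookkeeping.
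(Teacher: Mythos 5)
Your proof is correct, but it takes a genuinely different route from the paper's. The paper proves the two entailments separately, entirely inside the graphical proof system: $(\theta\cp\id)\vdash\theta$ by discarding the $\varphi$-circle (\cref{lem.dotting_off}), and $\theta\vdash(\theta\cp\id)$ by duplicating $\theta$ via meets-are-merges (\cref{lem.meets_merge}), breaking a wire (\cref{lem.breaking}), and then applying monotonicity together with the entailment $\lsh{(\pi_2)}\theta\vdash\varphi_2$ from \cref{def.internal_relations}. You instead establish a single exact identity, $\theta\cp\id_{\varphi_2}=\theta\wedge\ust{(\pi_2)}(\varphi_2)$ --- valid for \emph{arbitrary} $\theta\in\pr(\Gamma_1\oplus\Gamma_2)$ --- and then collapse the meet using the adjunction $\lsh{(\pi_2)}\dashv\ust{(\pi_2)}$. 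The step you flag as the main obstacle does go through: by nesting (\cref{prop.diagrams_basic}(iii)) and meets-are-merges, both sides equal $\church{(\theta,\varphi_2;w)}$, where $w$ is on one hand the composite in $\frb$ of $\comp$ with the graph of $\delta_{\Gamma_2}$, and on the other the composite of the cograph of $\delta_{\Gamma_1\oplus\Gamma_2}$ with the cograph of $\pi_2$; these are literally the same corelation (one three-valent dot joining $\theta$'s second leg, $\varphi_2$'s leg, and the output $\Gamma_2$, with $\Gamma_1$ passed through), so no image factorization survives and the check is a finite computation, not an extra assumption. What your route buys: it isolates precisely where the internal-relation hypothesis enters (only to kill the meet), and it yields the sharper statement that $\theta\cp\id_{\varphi_2}=\theta$ if and only if $\theta\vdash\ust{(\pi_2)}(\varphi_2)$, anticipating \cref{prop.characterize_relation}. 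What the paper's route buys: it never leaves the entailment rules of \cref{prop.diagrams_basic,prop.diagrams_meet}, which is the pedagogical point of that section --- exercising the graphical calculus itself rather than appealing to the order-theoretic adjunction.
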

\begin{proof}
  We prove that $(\theta\cp\id)=\theta$; the other unitality axiom is similar.
  The inequality $(\theta\cp\id) \vdash \theta$ follows from
  \cref{lem.dotting_off,prop.diagrams_basic}:
  \[
  \begin{tikzpicture}[unoriented WD]
		\node (P1) {
	  \begin{tikzpicture}[inner WD, pack size=6pt]
			\node[pack] (theta) {$\theta$};
			\node[link, right=1.5 of theta] (dot) {};
			\node[pack, above=5pt of dot] (phi) {$\varphi$};
			\draw (theta) -- (dot);
			\draw (dot) -- (phi);
			\draw (dot) -- +(10pt,0);
			\draw (theta.west) -- +(-5pt,0);
    \end{tikzpicture}		
		};
		\node[right=4 of P1] (P2) {
	  \begin{tikzpicture}[inner WD, pack size=6pt]
			\node[pack] (theta) {$\theta$};
			\node[link, right=1.5 of theta] (dot) {};
			\node[link, above=4pt of dot] (phi) {};
			\draw (theta) -- (dot);
			\draw (dot) -- (phi);
			\draw (dot) -- +(10pt,0);
			\draw (theta.west) -- +(-5pt,0);
    \end{tikzpicture}		
		};
		\node[right=4 of P2] (P3) {\simpletheta};
  	\node at ($(P1.east)!.5!(P2.west)$) {$\vdash$};
  	\node at ($(P2.east)!.5!(P3.west)$) {$=$};
  \pgfresetboundingbox
	\useasboundingbox (P1.160) rectangle (P3.-20);
  \end{tikzpicture}
  \]
  The reverse inequality $\theta \vdash(\theta\cp\id)$ uses
  \cref{lem.meets_merge}, \cref{lem.breaking}, and \cref{def.internal_relations}:
  \[
  \begin{tikzpicture}[unoriented WD]
		\node (P1) {\simpletheta};
		\node[right=3 of P1] (P2) {
	  \begin{tikzpicture}[inner WD, pack size=6pt]
			\node[pack] (theta1) {$\theta$};
			\node[pack, below=.3 of theta1] (theta2) {$\theta$};
			\coordinate (helper) at ($(theta1)!.5!(theta2)$);
			\node[link, left=2 of helper] (dot L) {};
			\node[link, right=2 of helper] (dot R) {};
			\draw (theta1.west) to[out=180, in=60] (dot L);
			\draw (theta2.west) to[out=180, in=-60] (dot L);
			\draw (theta1.east) to[out=0, in=120] (dot R);
			\draw (theta2.east) to[out=0, in=-120] (dot R);
			\draw (dot L) -- +(-5pt,0);
			\draw (dot R) -- +(5pt,0);
    \end{tikzpicture}
		};
		\node[right=3 of P2] (P3) {
	  \begin{tikzpicture}[inner WD, pack size=6pt]
			\node[pack] (theta) {$\theta$};
			\node[link, right=1.5 of theta] (dot) {};
			\node[pack, above=3pt of dot] (theta2) {$\theta$};
			\node[link, above=2pt of theta2] (dot2) {};
			\draw (theta) -- (dot);
			\draw (theta2) -- (dot2);
			\draw (dot) -- (phi);
			\draw (dot) -- +(10pt,0);
			\draw (theta.west) -- +(-5pt, 0);
    \end{tikzpicture}				
		};
		\node[right=3 of P3] (P4) {
	  \begin{tikzpicture}[inner WD, pack size=6pt]
			\node[pack] (theta) {$\theta$};
			\node[link, right=1.5 of theta] (dot) {};
			\node[pack, above=5pt of dot] (phi) {$\varphi$};
			\draw (theta) -- (dot);
			\draw (dot) -- (phi);
			\draw (dot) -- +(10pt,0);
			\draw (theta.west) -- +(-5pt, 0);
    \end{tikzpicture}		
		};
  	\node at ($(P1.east)!.5!(P2.west)$) {$=$};
  	\node at ($(P2.east)!.5!(P3.west)$) {$\vdash$};
  	\node at ($(P3.east)!.5!(P4.west)$) {$\vdash$};
  \pgfresetboundingbox
	\useasboundingbox ($(P1.160)+(0,5pt)$) rectangle (P4.-20);
  \end{tikzpicture}  
  \qedhere
  \]
\end{proof}

\begin{lemma}\label{lemma.cp_assoc}
The map $\cp$ from \cref{eqn.composition} is associative, i.e.\ $(\theta_1\cp\theta_2)\cp\theta_3=\theta_1\cp(\theta_2\cp\theta_3)$.
\end{lemma}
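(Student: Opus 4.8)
The plan is to reduce associativity of internal-relation composition to associativity of composition in the po-category $\frb$ itself, letting the nesting rule absorb all of the laxator bookkeeping. Relabel the contexts so that $\theta_1 \in \pr(\Gamma_1 \oplus \Gamma_2)$, $\theta_2 \in \pr(\Gamma_2 \oplus \Gamma_3)$ and $\theta_3 \in \pr(\Gamma_3 \oplus \Gamma_4)$; the claim concerns the two bracketings of this length-three chain.

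First I would unfold the left-hand side. By \eqref{eqn.composition} we have $\theta_1 \cp \theta_2 = \church{(\theta_1,\theta_2;\comp_{\Gamma_1,\Gamma_2,\Gamma_3})}$, and hence $(\theta_1\cp\theta_2)\cp\theta_3 = \church{(\theta_1\cp\theta_2,\theta_3;\comp_{\Gamma_1,\Gamma_3,\Gamma_4})}$. Since the first annotation is itself a graphical term, the nesting rule \cref{prop.diagrams_basic}(iii) rewrites this as a single graphical term $\church{(\theta_1,\theta_2,\theta_3;\omega_L)}$ with
\[
  \omega_L = \big(\comp_{\Gamma_1,\Gamma_2,\Gamma_3}\oplus\Gamma_3\oplus\Gamma_4\big)\cp\comp_{\Gamma_1,\Gamma_3,\Gamma_4}.
\]
Running the same computation on the right produces $\church{(\theta_1,\theta_2,\theta_3;\omega_R)}$ with
\[
  \omega_R = \big(\Gamma_1\oplus\Gamma_2\oplus\comp_{\Gamma_2,\Gamma_3,\Gamma_4}\big)\cp\comp_{\Gamma_1,\Gamma_2,\Gamma_4}.
\]
Both $\omega_L$ and $\omega_R$ are morphisms $\Gamma_1\oplus\Gamma_2\oplus\Gamma_2\oplus\Gamma_3\oplus\Gamma_3\oplus\Gamma_4 \tickar \Gamma_1\oplus\Gamma_4$ in $\frb$.

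It then remains only to prove $\omega_L=\omega_R$ in $\frb$: once the two underlying wiring diagrams agree, the graphical terms carry identical data and so represent the same element of $\pr(\Gamma_1\oplus\Gamma_4)$, as required. Pictorially, each of $\omega_L$ and $\omega_R$ is the ``three shells in a row'' diagram that wires the two $\Gamma_2$-boundaries to each other and the two $\Gamma_3$-boundaries to each other while exposing $\Gamma_1$ and $\Gamma_4$; this is precisely the statement that composing the three $\comp$-diagrams is associative. Since $\frb=\rrel{\frc}$ is a po-category---its composition being built from pullbacks and image factorizations in $\frc$, already known to be associative---we get $\omega_L=\omega_R$ for free; alternatively one can verify it directly from the pushout/union formula for composition in $\frc$ of \cref{cor.descriptions}, checking that the two iterated pushouts glue the underlying finite sets in the same way.

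The only genuine obstacle is the indexing discipline in the nesting step: one must track which copy of $\Gamma_2$ (resp.\ $\Gamma_3$) is identified with which when the inner $\comp$-diagram is substituted, and confirm that the braiding implicit in the ``without loss of generality $i=k$'' reduction of \cref{prop.diagrams_basic}(iii) matches the boundary ports consistently on the two sides. Once $\omega_L$ and $\omega_R$ are written out explicitly this is routine, and their equality then follows purely from associativity in $\frb$ with no further appeal to the structure of $\pr$.
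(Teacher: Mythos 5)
Your proof is correct and is essentially the paper's own argument: the paper likewise invokes the nesting rule \cref{prop.diagrams_basic}(iii) and observes that both bracketings reduce to the single ``three shells in a row'' graphical term, leaving the identification of the two underlying wiring diagrams implicit. Your explicit check that $\omega_L=\omega_R$ merely spells out what the paper takes for granted---though note this equality rests on the interchange law for $\oplus$ in $\frb$ (or the direct pushout computation from \cref{cor.descriptions} that you offer as an alternative), not on associativity of $\cp$ in $\frb$ by itself, since $\omega_L$ and $\omega_R$ are composites of different pairs of morphisms rather than two bracketings of one triple composite.
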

\begin{proof}
  This is immediate from \cref{prop.diagrams_basic}(iii). Both sides can be represented by
  (nested versions of) the graphical term
$
  \begin{tikzpicture}[unoriented WD,baseline=(theta1.-40)]
		\node (P1) {
	  \begin{tikzpicture}[inner WD, pack size=2, surround sep=0pt]
			\node[pack] (theta1) {$\theta_1$};
			\node[pack, right=.5 of theta1] (theta2) {$\theta_2$};
			\node[pack, right=.5 of theta2] (theta3) {$\theta_3$};
			\draw (theta1.west) -- +(-5pt, 0);
			\draw (theta1) -- (theta2);
			\draw (theta2) -- (theta3);
			\draw (theta3.east) -- +(5pt, 0);
    \end{tikzpicture}	
		};
	 \end{tikzpicture}
$.
\end{proof}

\begin{proof}[Proof of \cref{thm.internal_relations}]\label{page.proof_of_thm.internal_relations}
  \cref{lemma.cp_unital,lemma.cp_assoc} show that we have a 1-category. 
  Each homset $\rela{\pr}(\varphi_1,\varphi_2) \subseteq \pr(\Gamma_1,\Gamma_2)$ inherits a
  partial order from the poset $\pr(\Gamma_1,\Gamma_2)$. Moreover, composition is given by
  the monotone map
  \[
    \rela{\pr}(\varphi_1,\varphi_2) \times \rela{\pr}(\varphi_2,\varphi_3)
    \stackrel{\rho}\longrightarrow
    \rela{\pr}(\varphi_1,\varphi_2,\varphi_2,\varphi_3)
    \xrightarrow{\pr(\comp)} \rela{\pr}(\varphi_1,\varphi_3).
  \]
  We thus have a po-category.
\end{proof}

\begin{remark}
Note that although each homset is a $\wedge$-semilattice, composition does
\emph{not} preserve meets, and so $\rela{\pr}$ is not $\wedge$-semilattice
enriched; see \cref{rem.meet_pres}.
\end{remark}

To conclude this section, we mention a useful characterization of internal relations.

\begin{proposition} \label{prop.characterize_relation}
  Let $\theta \in \pr(\Gamma_1 \oplus \Gamma_2)$, $\varphi_i \in \pr(\Gamma_i)$. Then $\theta$ is a relation $\varphi_1 \to
  \varphi_2$ if and only if
  \begin{equation}\label{eq.relation_with_domains}
    \begin{aligned}
    \begin{tikzpicture}[unoriented WD, baseline=(P2)]
      \node (P1) {
	\begin{tikzpicture}[inner WD, pack size=6pt]
	  \node[pack] (theta) {$\theta$};
	  \node[link, left=1 of theta] (dot) {};
	  \node[pack, above=3pt of dot] (phi) {$\varphi_{1}$};
	  \node[link, right=1 of theta] (dot2) {};
	  \node[pack, above=3pt of dot2] (phi2) {$\varphi_2$};
	  \draw (theta) -- (dot);
	  \draw (dot) -- (phi);
	  \draw (theta) -- (dot2);
	  \draw (dot2) -- (phi2);
		\draw (dot) -- +(-10pt, 0);
		\draw (dot2) -- +(10pt, 0);
	\end{tikzpicture}		
      };
      \node[right=4 of P1] (P2) {\simpletheta};
      \node at ($(P1.east)!.5!(P2.west)$) {$=$};
    \end{tikzpicture}
  \end{aligned}
  \end{equation}
\end{proposition}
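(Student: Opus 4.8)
The plan is to translate the graphical equation \eqref{eq.relation_with_domains} into an algebraic identity in the meet-semilattice $\pr(\Gamma_1\oplus\Gamma_2)$ and then reduce it to \cref{def.internal_relations} using the adjunctions $\lsh{(\pi_i)}\dashv\ust{(\pi_i)}$ associated to the projections $\pi_i\colon\Gamma_1\oplus\Gamma_2\to\Gamma_i$. First I would read off what the left-hand diagram of \eqref{eq.relation_with_domains} denotes. The first port of $\theta$, the single port of $\varphi_1$, and the first output port are all joined at one black dot, and symmetrically $\varphi_2$ is joined to the second coordinate. Placing $\varphi_i$ on the $i$th coordinate of the output while leaving the other coordinate free is exactly the reindexing $\ust{(\pi_i)}\varphi_i\in\pr(\Gamma_1\oplus\Gamma_2)$ (equivalently $\ust{(\pi_1)}\varphi_1=\rho(\varphi_1,\true_{\Gamma_2})$), and the three-way black dots amount to meeting these with $\theta$ by \cref{lem.meets_merge}. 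Hence the left-hand term represents
\[
  \theta\wedge\ust{(\pi_1)}\varphi_1\wedge\ust{(\pi_2)}\varphi_2\in\pr(\Gamma_1\oplus\Gamma_2),
\]
so that \eqref{eq.relation_with_domains} asserts precisely $\theta\wedge\ust{(\pi_1)}\varphi_1\wedge\ust{(\pi_2)}\varphi_2=\theta$.

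Next I would exploit that $\pr(\Gamma_1\oplus\Gamma_2)$ is a meet-semilattice (\cref{cor.meetsl}): for any $\psi$ one has $\theta\wedge\psi=\theta$ if and only if $\theta\vdash\psi$. Taking $\psi=\ust{(\pi_1)}\varphi_1\wedge\ust{(\pi_2)}\varphi_2$ shows that \eqref{eq.relation_with_domains} is equivalent to the conjunction of $\theta\vdash\ust{(\pi_1)}\varphi_1$ and $\theta\vdash\ust{(\pi_2)}\varphi_2$. The projection adjunctions then supply the transpositions
\[
  \theta\vdash\ust{(\pi_i)}\varphi_i
  \qquad\Longleftrightarrow\qquad
  \lsh{(\pi_i)}\theta\vdash_{\Gamma_i}\varphi_i ,
\]
so \eqref{eq.relation_with_domains} holds exactly when $\lsh{(\pi_1)}\theta\vdash_{\Gamma_1}\varphi_1$ and $\lsh{(\pi_2)}\theta\vdash_{\Gamma_2}\varphi_2$. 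By \cref{def.internal_relations} these are verbatim the conditions for $\theta$ to be an internal relation $\varphi_1\to\varphi_2$, which completes the equivalence. Note that every step is a genuine biconditional, so no separate argument for the two directions is required.

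I expect the only substantive point to be the first step: confirming that the left-hand diagram of \eqref{eq.relation_with_domains} denotes $\theta\wedge\ust{(\pi_1)}\varphi_1\wedge\ust{(\pi_2)}\varphi_2$ and not some other combination, i.e.\ correctly reading the three-way dots as meets and the single-coordinate attachments as the reindexings $\ust{(\pi_i)}$. Once that identification is in hand, the remainder is a formal computation from two facts already established—that each $\pr(\Gamma)$ is a meet-semilattice and that projections in $\frb$ induce the adjunctions $\lsh{(\pi_i)}\dashv\ust{(\pi_i)}$—so no further machinery is needed.
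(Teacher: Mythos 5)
Your proof is correct, but it takes a genuinely different route from the paper's. The paper argues the two directions separately and graphically: the forward direction observes that the left-hand diagram of \eqref{eq.relation_with_domains} is exactly the composite $\id_{\varphi_1}\cp\theta\cp\id_{\varphi_2}$, so the equation is an instance of the already-proven unitality law (\cref{lemma.cp_unital}); the converse caps the free port with a dot and uses discarding (\cref{lem.dotting_off}) to extract the two entailments of \cref{def.internal_relations}. You instead decode the diagram once and for all as the algebraic expression $\theta\wedge\ust{(\pi_1)}\varphi_1\wedge\ust{(\pi_2)}\varphi_2$, after which the whole statement collapses to pure order theory: absorption in a meet-semilattice ($\theta\wedge\psi=\theta$ iff $\theta\vdash\psi$) plus transposition across the Galois connections $\lsh{(\pi_i)}\dashv\ust{(\pi_i)}$. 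What your approach buys is a single chain of biconditionals that needs neither unitality nor discarding, and it makes transparent that the proposition is really just the adjunction property of the projections in disguise; what it costs is that all the graphical content gets concentrated in the decoding step, which you flag but only sketch. That step is genuinely fillable with the paper's own tools---meets-are-merges (\cref{prop.diagrams_meet}(ii)), nesting (\cref{prop.diagrams_basic}(iii)), and the identification of the cograph wiring of $\pi_i$ with $\ust{(\pi_i)}$ (equivalently $\ust{(\pi_1)}\varphi_1=\rho_{\Gamma_1,\Gamma_2}(\varphi_1,\true_{\Gamma_2})$, which follows from laxator naturality and the unit coherence)---so I regard this as an informality rather than a gap, but a complete write-up should carry out that computation of the composite wiring in $\frb$ explicitly.
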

\begin{proof}
  Any internal relation $\varphi_1 \to \varphi_2$ obeys the identity
  \cref{eq.relation_with_domains} by unitality, \cref{lemma.cp_unital}.
  Conversely, if $\theta$ obeys \cref{eq.relation_with_domains}, then by
  \cref{lem.dotting_off}
  \[
    \begin{tikzpicture}[unoriented WD, baseline=(P1)]
      \node (P1) {
	\begin{tikzpicture}[inner WD, pack size=6pt]
	  \node[pack] (theta) {$\theta$};
	  \node[link, right=5pt of theta] (dot) {};
	  \draw (dot) -- (theta);
		\draw (theta.west) -- +(-5pt, 0);			
	\end{tikzpicture}	
      };
      \node[right=3 of P1] (P2) {
	\begin{tikzpicture}[inner WD, pack size=6pt]
	  \node[pack] (theta) {$\theta$};
	  \node[link, left=1 of theta] (dot) {};
	  \node[pack, above=3pt of dot] (phi) {$\varphi_1$};
	  \node[link, right=1 of theta] (dot2) {};
	  \node[pack, above=3pt of dot2] (phi2) {$\varphi_2$};
	  \node[link, right=.7 of dot2] (dot3) {};
	  \draw (theta) -- (dot);
	  \draw (dot) -- (phi);
	  \draw (theta) -- (dot2);
	  \draw (dot2) -- (phi2);
		\draw (dot) -- +(-10pt, 0);
	  \draw (dot2) -- (dot3);
	\end{tikzpicture}		
      };
      \node[right=3 of P2] (P3) {
	\begin{tikzpicture}[inner WD, pack size=6pt]
	  \node[pack] (phi) {$\varphi_1$};
	  \draw (phi.west) to +(-2pt, 0);
	\end{tikzpicture}	
      };
      \node at ($(P1.east)!.5!(P2.west)$) {$=$};
      \node at ($(P2.east)!.5!(P3.west)$) {$\vdash$};
    \end{tikzpicture}
  \]
  and similarly for $\varphi_2$, proving that $\theta \in
  \rela{\pr}(\varphi_1,\varphi_2)$.
\end{proof}

\begin{definition}\label{def.transpose}
Write $\sigma_{\Gamma_1,\Gamma_2}\colon \Gamma_1 \tens \Gamma_2 \longrightarrow
\Gamma_2 \tens \Gamma_1$ for the braiding in $\frc$, and define the map
$(-)\tp\coloneqq \lsh{{\sigma_{\Gamma_1,\Gamma_2}}}\colon
\pr(\Gamma_1\tens\Gamma_2)\to \pr(\Gamma_2\tens \Gamma_1)$. We say that the
\define{transpose} of a graphical term $(\theta;\Gamma_1\oplus \Gamma_2)$ is the
graphical term $(\theta\tp;\Gamma_2\oplus \Gamma_1)$.
\end{definition}

\begin{remark} \label{lem.transpose_rotate}
  Note that transposes are given by ``rotating the shell'':
\[
\begin{tikzpicture}[unoriented WD, font=\small, pack size=6pt]
	\node (P1) {
	\begin{tikzpicture}[inner WD, baseline=(theta.base)]
  	\node[pack] (theta) {$\theta\tp$};
  	\draw (theta.west) to[pos=1] node[left] {$\Gamma_2$}  +(-2pt, 0);
  	\draw (theta.east) to[pos=1] node[right] {$\Gamma_1$}  +(2pt, 0);
  \end{tikzpicture}
	};
	\node[right=3 of P1] (P2) {
  \begin{tikzpicture}[inner WD, surround sep=5pt]
    \node[pack] (theta) {$\theta$};
    \coordinate (helper1) at ($(theta)+(0,.4cm)$);
    \coordinate (helper2) at ($(theta)-(0,.4cm)$);
    \node[outer pack, fit=(theta)] (outer) {};
    \draw	(theta.west) to[out=180, in=180, looseness=2] (helper1)
    to[out=0, in=180, pos=1] node[right] {$\Gamma_1$} (outer.east|-helper1);
    \draw	(theta.east) to[out=0, in=0, looseness=2] (helper2) to[out=180,
    in=0, pos=1] node[left] {$\Gamma_2$} (outer.west|-helper2);
  \end{tikzpicture}			
	};
	\node at ($(P1.east)!.5!(P2.west)$) {$=$};
  \pgfresetboundingbox
	\useasboundingbox (P1.west|-P2.160) rectangle (P2.east|-P2.-20);
\end{tikzpicture}
\]
In particular, for $\varphi \in \pr(\Gamma)$, we have $\church{
(\varphi\tp;\Gamma)} = \church{ (\varphi;\Gamma) }$. That
is, both $\varphi$ and $\varphi\tp$ can be represented by the diagram 
$
\begin{tikzpicture}[inner WD, pack size=6pt, baseline=(phi.-60)]
  \node[pack] (phi) {$\varphi$};
  \draw (phi.west) to +(-2pt, 0);
\end{tikzpicture}\; .
$
\end{remark}

We briefly note the following connection to hypergraph categories.

\begin{proposition}
  The monoidal category underlying $\rela{\pr}$ is a hypergraph 
  category. 
  
  More precisely, recall that we write $\rho$ for the laxators of
  $\pr$. We may equip $\rela{\pr}$ with the symmetric strict monoidal product given
  on objects by $(\Gamma_1,\varphi_1) \otimes (\Gamma_2,\varphi_2) = (\Gamma_1
  \oplus \Gamma_2, \rho(\varphi_1,\varphi_2))$, and on morphisms by the
  restriction to $\rela{\pr}(\varphi_1,\varphi_2) \times
  \rela{\pr}(\varphi_3,\varphi_4)$ of the map 
  \[
    \rho\colon \pr(\Gamma_1\oplus\Gamma_2) \times \pr(\Gamma_3\oplus\Gamma_4) \to
    \pr(\Gamma_1\oplus \Gamma_3\oplus \Gamma_2 \oplus \Gamma_4).
  \] 
  The braiding $\sigma_{\varphi_1,\varphi_2}$ on objects $(\Gamma_1,\varphi_1)$,
  $(\Gamma_2,\varphi_2)$ is given as below. Given this monoidal structure, we
  may the equip $\rela{\pr}$ with the hypergraph structure given on each
  object $(\Gamma,\varphi)$ by the internal relations below.
  \[
    \begin{aligned}
      \begin{tikzpicture}[unoriented WD, font=\small]
	\node (P0) {
	  \begin{tikzpicture}[inner WD]
	    \node[link] (dot1) {};
	    \node[link,below=.4 of dot1] (dot2) {};
	    \node[pack,above=.3 of dot1,inner sep=1pt] (phi1) {$\varphi_1$};
	    \node[pack,below=.3 of dot2,inner sep=1pt] (phi2) {$\varphi_2$};
	    \node[outer pack, inner xsep=3pt, inner ysep=0pt, fit=(phi1) (phi2)] (outer) {};
	    \draw (dot1) -- (phi1.south);
	    \draw (dot2) -- (phi2.north);
	    \draw (dot1) -- (dot1-|outer.west);
	    \draw (dot1) -- (dot2-|outer.east);
	    \draw (dot2) -- (dot2-|outer.west);
	    \draw (dot2) -- (dot1-|outer.east);
	  \end{tikzpicture}	
	};
	\node[below=.1 of P0] {$\sigma_{\varphi_1,\varphi_2}$};
	\node[right=3 of P0] (P1) {
	  \begin{tikzpicture}[inner WD]
	    \node[link] (dot1) {};
	    \node[pack,above=.3 of dot1,inner sep=1pt] (phi1) {$\varphi$};
	    \node[outer pack, circle, minimum size=8ex, fit=(dot1)] (outer) {};
	    \draw (dot1) -- (phi1.south);
	    \draw (dot1) -- (outer.0);
	    \draw (dot1) -- (outer.150);
	    \draw (dot1) -- (outer.210);
	  \end{tikzpicture}	
	};
	\node[below=.1 of P1] {$\mu_{\varphi}$};
	\node[right=3 of P1] (P2) {
	  \begin{tikzpicture}[inner WD]
	    \node[pack,inner sep=1pt] (phi1) {$\varphi$};
	    \node[outer pack, minimum size=8ex, fit=(phi1)] (outer) {};
	    \draw (phi1.east) -- (outer.0);
	  \end{tikzpicture}	
	};	
	\node[below=.1 of P2] {$\eta_{\varphi}$};
	\node[right=3 of P2] (P3) {
	  \begin{tikzpicture}[inner WD]
	    \node[link] (dot1) {};
	    \node[pack,above=.3 of dot1,inner sep=1pt] (phi1) {$\varphi$};
	    \node[outer pack, minimum size=8ex, fit=(dot1)] (outer) {};
	    \draw (dot1) -- (phi1.south);
	    \draw (dot1) -- (outer.180);
	    \draw (dot1) -- (outer.30);
	    \draw (dot1) -- (outer.-30);
	  \end{tikzpicture}	
	};	
	\node[below=.1 of P3] {$\delta_{\varphi}$};
	\node[right=3 of P3] (P4) {
	  \begin{tikzpicture}[inner WD]
	    \node[pack,inner sep=1pt] (phi1) {$\varphi$};
	    \node[outer pack, minimum size=8ex, fit=(phi1)] (outer) {};
	    \draw (phi1.west) -- (outer.180);
	  \end{tikzpicture}	
	};	
	\node[below=.1 of P4] {$\epsilon_{\varphi}$};
      \end{tikzpicture}
    \end{aligned}
  \]
\end{proposition}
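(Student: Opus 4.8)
The plan is to verify the two defining pieces of a hypergraph category: first that $\rela{\pr}$ is symmetric monoidal under the stated product, and second that the displayed morphisms endow each object with a special commutative Frobenius structure, coherently with that product.

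For the monoidal structure, I would check that $\otimes$ is well-defined and functorial directly from the coherence and naturality of the laxators $\rho$ of $\pr$. On objects the assignment $(\Gamma_1,\varphi_1)\otimes(\Gamma_2,\varphi_2)=(\Gamma_1\oplus\Gamma_2,\rho(\varphi_1,\varphi_2))$ lands in $\rela{\pr}$ because the underlying data is the $\oplus$ of $\frb$ together with the exterior-AND of \cref{lem.combining}; functoriality on morphisms (interchange of $\otimes$ with $\cp$) is the naturality of $\rho$ against $\comp$, which is \cref{prop.diagrams_basic}(iii) applied twice. The unit is $(0,\true)$, and the associator, unitors, and the displayed braiding $\sigma$ are inherited from the corresponding canonical isomorphisms and $2$-cells of $\frb=\rrel{\frc}$; their naturality and the coherence hexagon and pentagon reduce to wiring-diagram identities in $\frb$, with decorations carried along unchanged.

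For the Frobenius structure, the first task is to confirm that each of $\mu_\varphi,\eta_\varphi,\delta_\varphi,\epsilon_\varphi$ is a legitimate internal relation, using the characterization \cref{prop.characterize_relation}: capping each output leg with its domain predicate $\varphi$ and reabsorbing it must return the same graphical term. This holds because all four are single $\varphi$-decorated spiders and $\varphi\wedge\varphi=\varphi$ by idempotence of meet (\cref{lem.meets_merge}), so merging an extra $\varphi$-capped leg changes nothing. The heart of the proof is then the special commutative Frobenius laws, for which my approach is a spider normal-form argument: every morphism built by $\cp$ and $\otimes$ from the four generators has an underlying wiring diagram that is a spider in $\frb$, and since $\frb=\rrel{\frc}$ already carries the canonical hypergraph structure (the proposition established earlier for $\rrel{\cat{R}}$, as $\frc$ is regular), these underlying diagrams satisfy all the Frobenius, (co)commutativity, (co)unit, and speciality identities. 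It then remains only to track decorations: by the nesting rule \cref{prop.diagrams_basic}(iii) together with exterior-AND and meet-merging (\cref{lem.combining,lem.meets_merge}), any composite of $\varphi$-decorated spiders collapses to a single spider decorated by a meet of copies of $\varphi$, which by idempotence is again just $\varphi$. Thus both sides of each Frobenius and speciality equation reduce to the same canonical $\varphi$-decorated spider.

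The main obstacle I expect is organizing this normal-form reduction cleanly: one must show the decoration always merges to exactly one copy of $\varphi$, regardless of the order of composition, so that the two sides of the Frobenius equation and of speciality $\mu_\varphi\cp\delta_\varphi=\id_\varphi$ literally coincide as graphical terms; this is where the interplay of \cref{prop.diagrams_basic}(iii), idempotence of meet, and \cref{lem.dotting_off} must be deployed with care. Finally, coherence of the Frobenius structures with $\otimes$---that the structure on $(\Gamma_1,\varphi_1)\otimes(\Gamma_2,\varphi_2)$ is the tensor of the two---is immediate from the definition of $\otimes$ on morphisms and the corresponding coherence already present in $\frb$. Alternatively, the entire statement may be obtained as a decorated-corelations instance of \cite[Theorem 3.1]{fong2017decorated}, exactly as in the analogous proposition proved earlier for $\rrel{\cat{R}}$.
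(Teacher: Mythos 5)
Your proof is correct, and its skeleton is the same as the paper's: the paper's proof merely recalls that $\rela{\pr}$ is already known to be a po-category (\Cref{thm.internal_relations}) and then asserts that well-definedness of the proposed structure maps as internal relations, plus the symmetric monoidal and hypergraph coherence laws, are ``straightforward to verify using the logic of graphical terms.'' Where you genuinely differ is in how you organize that verification. Instead of checking each coherence law by a separate diagram manipulation, you factor every equation into two steps: an undecorated identity of wiring diagrams, which holds because $\frb=\rrel{\frc}$ already carries the canonical hypergraph structure (the paper's earlier proposition on $\rrel{\cat{R}}$, applied to the regular category $\frc$); and a decoration-tracking step, where nesting (\Cref{prop.diagrams_basic}(iii)), exterior-AND (\Cref{lem.combining}), and meet idempotence (\Cref{lem.meets_merge}) collapse all copies of $\varphi$ into a single one. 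This normal-form scheme buys a uniform treatment of Frobenius, (co)unitality, commutativity, and speciality at once, and it correctly accounts for the fact that the identity $\id_\varphi$ is itself decorated (\cref{eqn.id_phi}), so speciality really does require the decoration collapse. Two cautions: well-definedness of $\otimes$ on morphisms (that $\rho(\theta,\theta')$ satisfies the two entailments defining $\rela{\pr}\bigl(\rho(\varphi_1,\varphi_3),\rho(\varphi_2,\varphi_4)\bigr)$) deserves the same explicit one-line check you give the generators rather than the gloss ``lands in $\rela{\pr}$''; and your closing alternative via \cite[Theorem 3.1]{fong2017decorated} should not be relied upon, since at this point in the paper $\rela{\pr}$ has not been exhibited as a relations po-category of any regular category --- that is \Cref{thm.syn_is_regular}, proved only in the following chapter --- so the decorated-corelations theorem does not directly apply here.
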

\begin{proof}
  Recall that we have already shown, in \cref{thm.internal_relations}, that
  $\rela{\pr}$ is a po-category. To prove this theorem then, it
  remains to check that the proposed monoidal products and structure maps are
  always well-defined internal relations, and then that the coherence laws
  for symmetric monoidal categories and hypergraph categories hold. These facts
  are all straightforward to verify using the logic of graphical terms.
\end{proof}

\section{The Carboni-Walters theorem}\label{sec.CW}

In \cite{Carboni:1987a}, Carboni and Walters defined the notions of \emph{cartesian bicategory} and \emph{functionally complete bicategory of relations}. The first of these falls out of our work so far. In what follows, we freely use notation from \cref{sec.int_rels}, such as $\cp$, $\tp$, $\delta$, $\mu$, $\epsilon$, and $\eta$.

\begin{definition}[Carboni-Walters]\label{def.cart_bicat}
	A \emph{cartesian bicategory} is a po-category $\ccat{C}$ with a unique adjoint monoid structure on each object $c$, such that each map $\alpha\colon c\to c'$ induces a lax comonoid homomorphism,
	\[
		\alpha\cp\epsilon_{c'}\leq\epsilon_{c}
		\qqand
		\alpha\cp\delta_{c'}\leq\delta_{c}\cp(\alpha\otimes\alpha).
	\]
\end{definition}

Now for any po-category $\ccat{C}$, there is a po-functor $U\colon\aadjmon(\ccat{C})\to\ccat{C}$ sending an ajax functor $1\to \ccat{C}$ to the image of $1$.

\begin{theorem}
A po-category $\ccat{C}$ is a cartesian bicategory iff $U\colon\aadjmon(\ccat{C})\to\ccat{C}$ is an isomorphism of po-categories.
\end{theorem}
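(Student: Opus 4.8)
The plan is to observe that $U\colon\aadjmon(\ccat{C})\to\ccat{C}$ is always a po-functor, and then to characterize when it is an isomorphism by analyzing its action on objects and on hom-posets separately, matching each with one clause of \cref{def.cart_bicat}. Since $U$ is already a po-functor, it suffices to show that it is a bijection on objects and an isomorphism on each hom-poset. On objects, $U$ sends an adjoint monoid $(c,\mu,\eta)$—equivalently, by \cref{prop.adjoint_monoids}, the cocommutative comonoid $(c,\delta,\epsilon)$—to the underlying object $c$. Thus $U$ is surjective on objects iff every object of $\ccat{C}$ admits at least one adjoint monoid structure, and injective on objects iff that structure is unique; together, $U$ is bijective on objects iff every object of $\ccat{C}$ carries a \emph{unique} adjoint monoid structure, which is precisely the first requirement of a cartesian bicategory.

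For the hom-posets, I would use that $\aadjmon(\ccat{C})=\aajax(1,\ccat{C})$ and that $\ppocat(1,\ccat{C})\cong\ccat{C}$ (a po-functor out of the terminal $1$ is just an object, and po-natural transformations between such are just $1$-cells, with the inherited order). By local full faithfulness of $\aajax(1,\ccat{C})\to\ppocat(1,\ccat{C})$, the functor $U$ exhibits each hom-poset $\aadjmon(\ccat{C})\big((c,\dots),(c',\dots)\big)$ as a \emph{full subposet} of $\ccat{C}(c,c')$: a mate morphism out of $1$ is just a $1$-cell $\alpha\colon c\to c'$ possessing a property (the mate-square conditions), so distinct mate morphisms have distinct underlying $1$-cells and the order is inherited. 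Hence $U$ is faithful, order-preserving, and order-reflecting on homs automatically, and it is an isomorphism on this hom-poset iff it is surjective there, i.e.\ iff \emph{every} $1$-cell $\alpha\colon c\to c'$ is a mate morphism of the ambient adjoint monoids.

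The crux is then to identify ``$\alpha$ is a mate morphism'' with ``$\alpha$ is a lax comonoid homomorphism.'' I would unwind the mate-square conditions of \cref{eqn.lax_monoid_hom} using the equivalences of \cref{eqn.lax_mon_hom_2} and the dictionary $\eta=\rho$, $\mu=\rho_{1,1}$, $\epsilon=\lambda$, $\delta=\lambda_{1,1}$ from \cref{eqn.monoid_comonoid_ajax}. Choosing the third of the four equivalent forms in each half, the $0$-ary square $\alpha\cp\lambda'\leq\lambda$ reads $\alpha\cp\epsilon_{c'}\leq\epsilon_{c}$, and the binary square $\alpha\cp\lambda'_{1,1}\leq\lambda_{1,1}\cp(\alpha\otimes\alpha)$ reads $\alpha\cp\delta_{c'}\leq\delta_{c}\cp(\alpha\otimes\alpha)$. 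These are exactly the two inequalities defining a lax comonoid homomorphism in \cref{def.cart_bicat}, and conversely those inequalities recover the mate-square conditions through the same equivalences. Assembling the two analyses, $U$ is bijective on objects iff each object has a unique adjoint monoid structure, and $U$ is a hom-wise isomorphism iff every map is a lax comonoid homomorphism; therefore $U$ is an isomorphism of po-categories iff $\ccat{C}$ is a cartesian bicategory.

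I expect the main obstacle to be the bookkeeping in this last step: among the four equivalent forms in each half of \cref{eqn.lax_mon_hom_2}, one must select precisely the form whose direction coincides with the comonoid-homomorphism inequalities of \cref{def.cart_bicat}, and confirm that the passage is a genuine two-way equivalence rather than a one-way implication. Once the local full faithfulness lemma and the $\ppocat(1,\ccat{C})\cong\ccat{C}$ identification are in hand, the object-level and hom-level bijectivity arguments are routine.
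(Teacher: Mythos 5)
Your proof is correct and follows essentially the same route as the paper, whose entire proof is the citation of \cref{eqn.monoid_comonoid_ajax} and \cref{eqn.lax_mon_hom_2}: the unique-adjoint-monoid clause matches bijectivity of $U$ on objects, and the dictionary $\epsilon=\lambda$, $\delta=\lambda_{1,1}$ together with the third equivalent form in each half of \cref{eqn.lax_mon_hom_2} identifies mate morphisms out of $1$ with lax comonoid homomorphisms, giving the hom-poset isomorphisms. Your write-up simply makes explicit the bookkeeping (the identification $\ppocat(1,\ccat{C})\cong\ccat{C}$, local full faithfulness, and the choice of equivalent mate condition) that the paper leaves implicit.
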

\begin{proof}
This follows from \cref{eqn.monoid_comonoid_ajax,eqn.lax_mon_hom_2}.
\end{proof}

Our goal is to convert any regular calculus $\pr\colon\frb\to\pposet$ into a regular category $\syn(\pr)$. One approach is to show this directly; we do so in \cref{chap.functions,chap.ess_refl}. Another approach would be to use the Carboni-Walters theorem. While seemingly more direct, the latter approach has two drawbacks. First, it would make our paper less self-contained. Second, \cite{Carboni:1987a} seem not to describe functors between ``functionally complete bicategories of relations'' precisely enough for our needs. Thus we recall their theorem here and proceed to the direct approach, where we really see the graphical calculus in action. We will not see cartesian bicatgories again in this paper.

\begin{theorem}[Carboni-Walters]\label{def.CW_reg_bicat}
  Let $\ccat{C}$ be a cartesian bicategory. It is equivalent to $\rrel{\cat{R}}$ for some regular category $\cat{R}$ if and only if
	\begin{itemize}
		\item (Frobenius) $\mu_c\cp\delta_c=(c\otimes\delta_c)\cp(\mu_c\otimes c)$ for each $c\in\ccat{C}$, and
    \item (Images) For every $f\colon b\to I$ there exists an object $\im(f)$
      and a left adjoint $i\colon \im(f)\to b$ such that:%
	 \[
	 i\cp i\tp=\id_{\im(b)}
		\qqand	 i\tp\cp \epsilon_{\im(b)}=f.
	 \]
    \end{itemize}
\end{theorem}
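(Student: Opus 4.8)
The plan is to recognise this as the main theorem of Carboni and Walters \cite{Carboni:1987a}, so that the cleanest ``proof'' is a citation; the authors flag as much by saying they merely recall it. Nonetheless, here is how the argument goes and where its weight lies. The statement is a biconditional whose two directions are of quite different character.

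For the forward direction, assume $\ccat{C}\iso\rrel{\cat{R}}$ for a regular category $\cat{R}$. Both conditions then become concrete facts about relations, verifiable directly from the composition rule \eqref{eqn.rel_composition} and the fundamental lemma (\cref{lemma.fundamental}). The canonical adjoint monoid on each object $c$ is the diagonal relation, and a short computation shows that both sides of the Frobenius equation evaluate to the single relation $\{(x,x,x,x)\}\ss (c\otimes c)\otimes(c\otimes c)$, so Frobenius holds. For Images, a morphism $f\colon b\to I$ is a relation $f\ss b\times 1\iso b$, i.e.\ a subobject of $b$; taking $\im(f)$ to be that subobject as an object and $i$ the graph of its inclusion---a left adjoint by \cref{lemma.fundamental}---one checks $i\cp i\tp=\id$ because $i$ is monic, and $i\tp\cp\epsilon=f$ because composing with the counit recovers the underlying subobject.

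For the reverse direction, which is the substance, I would set $\cat{R}\coloneqq\ladj(\ccat{C})$, the category of maps (left adjoints), and show it is regular with $\rrel{\cat{R}}\iso\ccat{C}$. The engine is \emph{tabulation}: the Frobenius law forces every morphism $\alpha\colon c\to c'$ to factor as $\alpha=p\tp\cp q$ through an essentially unique jointly monic span of maps $c\from t\to c'$. From tabulations one extracts finite limits in $\cat{R}$ (products are $\otimes$; equalizers and pullbacks come from tabulating suitable composites), while the Images axiom supplies the regular-epi/mono factorization together with its pullback-stability, so that $\cat{R}$ is regular. Tabulation then yields the equivalence directly, since a relation in $\cat{R}$---a subobject of $c\otimes c'$---is exactly a morphism $c\to c'$ of $\ccat{C}$.

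The hard part is precisely this reverse direction: constructing finite limits out of the Frobenius/tabulation structure and then verifying the three regularity axioms is the technical heart of Carboni--Walters, and is where all the real work lives. By contrast, the forward direction and the concluding identification $\rrel{\cat{R}}\iso\ccat{C}$ are routine once \cref{lemma.fundamental} and the earlier characterisation $\aadjmon(\ccat{R})\iso\ccat{R}$ are in hand.
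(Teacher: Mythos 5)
Your proposal matches the paper exactly: the paper's entire proof of this theorem is the citation to \cite[Theorem 3.5]{Carboni:1987a}, which you correctly identify as the intended argument. Your supplementary sketch of the two directions is a faithful outline of the Carboni--Walters proof (one small caveat for the record: tabulations there are extracted using the Images axiom together with Frobenius, not from the Frobenius law alone).
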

\begin{proof}
  This is \cite[Theorem 3.5]{Carboni:1987a}.
\end{proof}

\chapter{Internal functions and the syntactic category construction}\label{chap.functions}

Internal functions are defined to be the left adjoints in the po-category $\rela{\pr}$ of
internal relations (see \cref{thm.internal_relations}).

\begin{definition} \label{def.RT}
  Given a regular calculus $(\typeset, \pr)$, where $\pr\colon \frb \to \pposet$, we define the category $\func{\pr}$
  of $\pr$-internal functions to be the category of left adjoints in $\rela{\pr}$:
  \begin{equation}\label{eqn.RT}
		\func{\pr}\coloneqq\ladj(\rela{\pr}).
	\end{equation}
  In more detail, suppose given elements $\varphi_1\in \pr(\Gamma_1)$ and $\varphi_2\in \pr(\Gamma_2)$. We say that an internal relation $\theta\in\rela{\pr}(\varphi_1, \varphi_2)\ss \pr(\Gamma_1\oplus\Gamma_2)$ is an \define{internal function} if there exists an internal
  relation $\xi$ such that
\[
      \begin{aligned}
	\begin{tikzpicture}[unoriented WD, font=\small]
	  \node (P1) {
	    \begin{tikzpicture}[inner WD, pack size=6pt]
	      \node[pack] (phi) {$\varphi_1$};
	      \node[link, below=2pt of phi] (dot) {};
	      \draw (phi) -- (dot);
	      \draw (dot) -- +(-8pt, 0);
	      \draw (dot) -- +(8pt, 0);
	    \end{tikzpicture}	
	  };
	  \node[right=2 of P1] (P2) {
	    \begin{tikzpicture}[inner WD, pack size=6pt]
	      \node[pack] (theta) {$\theta$};
	      \node[pack, right=1 of theta] (theta') {$\xi$};
	      \draw (theta) -- +(-10pt, 0);
	      \draw (theta') -- +(10pt, 0);
	      \draw (theta) to (theta');
	    \end{tikzpicture}	
	  };
	  \node at ($(P1.east)!.5!(P2.west)$) {$\vdash$};
	\end{tikzpicture}
      \end{aligned}
  \qqand        \begin{aligned}
      \begin{tikzpicture}[unoriented WD, font=\small]
	\node (P3) {
	  \begin{tikzpicture}[inner WD, pack size=6pt]
	    \node[pack] (theta') {$\xi$};
	    \node[pack, right=1 of theta'] (theta) {$\theta$};
	    \draw (theta) -- +(10pt, 0);
	    \draw (theta') -- +(-10pt, 0);
	    \draw (theta)  to  (theta');
	  \end{tikzpicture}	
	};
	\node[right=2 of P3] (P4) {
	  \begin{tikzpicture}[inner WD, surround sep=4pt, pack size=6pt]
	    \node[pack] (phi) {$\varphi_2$};
	    \node[link, below=2pt of phi] (dot) {};
	    \draw (phi) -- (dot);
	    \draw (dot) -- +(-8pt, 0);
	    \draw (dot) -- +(8pt, 0);
	  \end{tikzpicture}
	};
	\node at ($(P3.east)!.5!(P4.west)$) {$\vdash$};
      \end{tikzpicture}
      \end{aligned}
      .\]
      The category $\func{\pr}$ has the same objects $(\Gamma,\varphi)$ as $\rela{\pr}$, and morphisms given by internal functions.
\end{definition}

\begin{notation}
Graphically, we'll sometimes denote an internal function $\theta\in \pr(\Gamma_1,\Gamma_2)$
by the shape 
$
\begin{tikzpicture}[unoriented WD, font=\tiny]
	\node[funcr] (th) {$\theta$};
	\draw (th.west) to[pos=1] node[left=0] {$\Gamma_1$} +(-2pt, 0);
	\draw (th.east) to[pos=1] node[right=0] {$\Gamma_2$} +(2pt, 0);
\end{tikzpicture}
$
.
\end{notation}

Our aim in this section is to prove that the internal functions form a regular category.

\begin{theorem} \label{thm.internal_functions}
  For any regular calculus $\pr\colon\frb \to\pposet$, the category $\func{\pr}$
  of internal functions in $\rela{\pr}$ is regular.
\end{theorem}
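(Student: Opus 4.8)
The plan is to construct finite limits and image factorizations in $\func{\pr}=\ladj(\rela{\pr})$ by hand, using the graphical regular logic of \cref{prop.diagrams_basic,prop.diagrams_meet}, and then to verify pullback-stability of regular epimorphisms. Throughout I would exploit the hypergraph (cartesian-bicategory) structure on $\rela{\pr}$ established just above, which lets me treat internal functions as the total, single-valued internal relations picked out by the cup/cap conditions of \cref{def.RT}, and which lets me recognize when a mediating relation is again a left adjoint. It is worth recording at the outset two distinguished classes of morphisms: the \emph{subobject inclusions} $\id_\chi\colon(\Gamma,\chi)\to(\Gamma,\varphi)$ for $\chi\vdash\varphi$, which will be the monomorphisms, and the corestrictions of internal functions, which will be the regular epimorphisms.

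First I would establish finite limits. The terminal object is the monoidal unit $(0,\true)$: the unique internal relation out of any $(\Gamma,\varphi)$ is the discarding map $\epsilon_\varphi$ of \cref{lem.dotting_off}, which is a left adjoint. The product of $(\Gamma_1,\varphi_1)$ and $(\Gamma_2,\varphi_2)$ is their monoidal product $(\Gamma_1\oplus\Gamma_2,\rho(\varphi_1,\varphi_2))$, with projections the graphs of the projections $\pi_i$ of $\frc$; that these have the product universal property for internal functions is the standard fact that the monoidal structure of a cartesian bicategory restricts to a product on left adjoints, and I would check graphically that the pairing $\langle f,g\rangle$ of two internal functions is again total and single-valued, hence a unique mediating left adjoint. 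For equalizers of parallel internal functions $f,g\colon(\Gamma_1,\varphi_1)\to(\Gamma_2,\varphi_2)$ I would define $\psi\in\pr(\Gamma_1)$ to be the predicate expressing $f(x)=g(x)$ — duplicate the input via $\delta_{\Gamma_1}$, apply $f$ and $g$, and cap the two outputs with an equality — so that $\psi\vdash\varphi_1$ and the equalizer is $(\Gamma_1,\psi)$ with inclusion $\id_\psi$. A short diagram chase then shows an internal function into $(\Gamma_1,\varphi_1)$ coequalizes $f,g$ exactly when it factors uniquely through $\id_\psi$, with the factoring relation again a function. Finite products together with equalizers give all finite limits.

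For image factorization, given an internal function with graph $\theta\in\rela{\pr}(\varphi_1,\varphi_2)$, I would take the image predicate to be the existential quantification $\chi\coloneqq\lsh{(\pi_2)}(\theta)\in\pr(\Gamma_2)$, that is, $\theta$ with its domain leg existentially quantified (``dotted off'' by a \emph{left} adjoint, not discarded). Since $\theta$ is a relation into $\varphi_2$ we have $\chi\vdash\varphi_2$, yielding
\[
  (\Gamma_1,\varphi_1)\surj(\Gamma_2,\chi)\inj(\Gamma_2,\varphi_2),
\]
in which the second map is the subobject inclusion $\id_\chi$ (a monomorphism) and the first is $\theta$ corestricted along it. Using \cref{prop.characterize_relation} and the dotting-off rule I would verify that the first map is a regular epimorphism whose image predicate is all of $\chi$ by construction, and that this is the orthogonal epi--mono factorization in the sense of \cref{lemma.OFS}.

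The main obstacle is the third regular-category axiom — pullback-stability of regular epimorphisms — compounded by the recurring need to confirm that each mediating relation produced above is genuinely a left adjoint rather than a mere internal relation. For stability I would compute the pullback of a regular epi explicitly as an internal relation and show that its corestriction remains surjective onto the pulled-back codomain predicate. The essential inputs are the Beck--Chevalley and Frobenius identities which, by \cref{rem.tangible}, hold for \emph{every} ajax po-functor $\pr$, just as they do for the subobjects functor in \cref{prop.tangible}; these are exactly what force existential quantification to commute with the base-change transporting surjectivity across the pullback square. I expect almost all of the genuine difficulty of the theorem to be concentrated here, the earlier constructions being routine once the graphical calculus and the function-recognition criterion of \cref{def.RT} are in hand.
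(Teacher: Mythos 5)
Your overall strategy matches the paper's: the terminal object $(0,\true)$, finite limits, then an image factorization whose image predicate is the existential quantification of the domain leg (your $\chi=\lsh{(\pi_2)}(\theta)$ is literally the paper's $\im(\theta)=\ust{\epsilon_{\Gamma_1}}\cp\theta$), with the subobject inclusions $\id_\chi$ as the monomorphisms (\cref{cor.monos}) and corestrictions as the regular epimorphisms (\cref{prop.epis,lemma.image_fact}). Two differences are worth noting. First, you assemble finite limits from binary products and equalizers, whereas the paper uses the terminal object together with an \emph{explicit} binary pullback: the pullback of $\theta_1,\theta_2$ over $(\Gamma,\varphi)$ is $(\Gamma_1\oplus\Gamma_2,\;\theta_1\cp\theta_2\tp)$, with projections $\lsh{(\delta_{\Gamma_1}\oplus\Gamma_2)}(\theta_1\cp\theta_2\tp)$ and its mirror (\cref{lemma.pullbacks}); your equalizer formula appears in the paper only as a remark. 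Both decompositions are legitimate, but the paper's choice pays for itself later. Second, the engine behind all your ``is this mediating relation really a left adjoint?'' obligations is \cref{thm.characterize_functions} (a relation is a function iff its right adjoint is its own transpose iff it is total and deterministic), together with \cref{cor.functions_discrete} (the order on functions is discrete, which gives uniqueness in universal properties essentially for free). You correctly flag this recurring burden but do not identify the transpose/total--deterministic criterion that makes it routine.

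The one genuine weak point is your pullback-stability step. You propose to derive it from Beck--Chevalley and Frobenius ``for every ajax po-functor,'' citing \cref{rem.tangible}; but that remark is precisely where the paper writes ``We do not need this result for our work, so we omit the details''---it is never proved, and the paper's own argument deliberately avoids it. Moreover, those identities concern squares in $\frc$, whereas what you need is stability for pullbacks in $\func{\pr}$, which are new objects of the form $(\Gamma_1\oplus\Gamma_2,\theta\cp\xi\tp)$; invoking them would still require a translation step. The paper instead gets stability in three lines (\cref{lemma.pb_stability}): with the explicit pullback relation $\theta\cp\xi\tp$ in hand and the characterization of regular epimorphisms as those $\theta$ with $\varphi_2\vdash\im(\theta)$ (\cref{prop.epis}), stability is the graphical entailment $\varphi_2\vdash\im(\theta)\vdash\im(\theta\cp\xi\tp)$, using totality of $\theta$ and $\im(\xi)=\varphi$. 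So either supply proofs of Beck--Chevalley and Frobenius for ajax functors (they do follow from \cref{prop.diagrams_basic}, but that is real, omitted work), or---better---run the direct computation, which your plan to ``compute the pullback of a regular epi explicitly'' is already one step away from.
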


The proof, found on page \pageref{page.proof_thm.internal_functions} is divided into three parts. In \cref{sec.internal_funs} we'll explore properties of internal
functions, in \cref{sec.finlims} we'll show $\func{\pr}$ has finite limits, and in \cref{sec.images} we'll show it has pullback stable
image factorizations and conclude with the theorem.

\section{Properties and examples of internal functions}\label{sec.internal_funs}
Before we embark on the theorem, let's get to know the category of internal
functions a bit. We'll first characterize functions in two ways: they're the
relations that have their own transposes as right adjoints, and they're the
relations that are total and deterministic. We'll then note that the order
inherited by functions as a subposet of the poset of relations is just the
discrete order, and give two important examples of functions: bijections and
projections.

To obtain our characterizations of functions, we'll need definitions of deterministic and total.
\begin{definition}\label{def.tot_det}
  Let $\theta \in \rela{\pr}(\varphi_1,\varphi_2)$. We say that $\theta$ is
  \begin{itemize}[nolistsep,noitemsep]
    \item  \define{total} if 
      $
      \begin{aligned}
	\begin{tikzpicture}[unoriented WD, pack size=6pt]
	  \node (P1) {
	    \begin{tikzpicture}[inner WD]
	      \node[pack] (xi) {$\varphi_1$};
	      \draw (xi.west) to[pos=1] +(-3pt, 0);
	    \end{tikzpicture}
	  };
	  \node[right=2 of P1] (P2) {
	    \begin{tikzpicture}[inner WD]
	      \node[pack] (theta) {$\theta$};
	      \node[link, right=.5 of theta] (dot) {};
	      \draw (theta.west) -- +(-3pt,0);
	      \draw (theta) -- (dot);
	    \end{tikzpicture}	
	  };
	  \node at ($(P1.east)!.5!(P2.west)$) {$\vdash$};
	\end{tikzpicture}
      \end{aligned}
      $
      , and 
    \item \define{deterministic} if
      $
      \begin{aligned}
	\begin{tikzpicture}[unoriented WD, pack size=6pt]
	  \node (P1) {
	    \begin{tikzpicture}[inner WD]
	      \node[pack] (theta) {$\theta$};
	      \node[pack,below=.3 of theta] (theta2) {$\theta$};
	      \node[link, left=1.5 of $(theta)!.5!(theta2)$] (dotw) {};
	      \draw (dotw) -- +(-10pt,0);
	      \draw (theta.west) -- (dotw);
	      \draw (theta2.west) -- (dotw);
	      \draw (theta.east) -- +(5pt,0);
	      \draw (theta2.east) -- +(5pt,0);
	    \end{tikzpicture}	
	  };
	  \node[right=3 of P1] (P2) {
	    \begin{tikzpicture}[inner WD]
	      \node[pack] (theta) {$\theta$};
	      \node[link, right=.5 of theta] (dot) {};
	      \draw (theta.west) -- +(-5pt,0);
	      \draw (theta.east) -- (dot);
	      \draw (dot) -- +(5pt,5pt);
	      \draw (dot) -- +(5pt,-5pt);
	    \end{tikzpicture}	
	  };
	  \node at ($(P1.east)!.5!(P2.west)$) {$\vdash$};
	\end{tikzpicture}
      \end{aligned}
      $
      .
  \end{itemize}
\end{definition}

\begin{remark}
  Note that by the domain of $\theta$ and discarding (\cref{lem.dotting_off}) we always have
  \[
    \begin{tikzpicture}[unoriented WD, pack size=6pt]
      \node (P1) {
	\begin{tikzpicture}[inner WD]
	  \node[pack] (theta) {$\theta$};
	  \node[link, right=.5 of theta] (dot) {};
		\draw (theta.west) -- +(-5pt, 0);
	  \draw (theta) -- (dot);
	\end{tikzpicture}	
      };
      \node[right=3 of P1] (P2) {
	\begin{tikzpicture}[inner WD]
	  \node[pack] (theta) {$\theta$};
	  \node[link, right=.5 of theta] (dot) {};
	  \node[link, left=.8 of theta] (dotw) {};
	  \node[pack, above=.8 of dotw] (phi) {$\varphi_1$};
	  \draw (dotw) -- +(-1,0);
	  \draw (phi) -- (dotw);
	  \draw (theta) -- (dotw);
	  \draw (theta) -- (dot);
	\end{tikzpicture}	
      };
      \node[right=3 of P2] (P3) {
	\begin{tikzpicture}[inner WD]
	  \node[pack] (xi) {$\varphi_1$};
	  \draw (xi.west) to[pos=1] +(-3pt, 0);
	\end{tikzpicture}
      };
      \node at ($(P1.east)!.5!(P2.west)$) {$=$};
      \node at ($(P2.east)!.5!(P3.west)$) {$\vdash$};
  \pgfresetboundingbox
	\useasboundingbox (P1.west|-P2.170) rectangle (P3.east|-P2.-10);
   \end{tikzpicture}
  \]
  and that by meets (\cref{prop.diagrams_meet}(ii)) and breaking
  (\cref{prop.diagrams_basic}(ii)) we always have
  \[
    \begin{tikzpicture}[unoriented WD, pack size=6pt]
      \node (P1) {
	\begin{tikzpicture}[inner WD]
	  \node[pack] (theta) {$\theta$};
	  \node[link, right=.5 of theta] (dot) {};
	  \draw (theta.west) -- +(-5pt, 0);
	  \draw (theta) -- (dot);
	  \draw (dot) -- +(45:8pt);
	  \draw (dot) -- +(-45:8pt);
	\end{tikzpicture}	
      };
      \node[right=3 of P1] (P2) {
	\begin{tikzpicture}[inner WD]
	  \node[pack] (theta) {$\theta$};
	  \node[pack,below=.5 of theta] (theta2) {$\theta$};
	  \node[link, right=1.5 of $(theta)!.5!(theta2)$] (dot) {};
	  \node[link, left=1.5 of $(theta)!.5!(theta2)$] (dotw) {};
	  \draw (dotw) -- +(-5pt, 0);
	  \draw (theta.west) -- (dotw);
	  \draw (theta2.west) -- (dotw);
	  \draw (theta.east) -- (dot);
	  \draw (theta2.east) -- (dot);
	  \draw (dot) -- +(45:8pt);
	  \draw (dot) -- +(-45:8pt);
	\end{tikzpicture}	
      };
      \node[right=3 of P2] (P3) {
	\begin{tikzpicture}[inner WD]
	  \node[pack] (theta) {$\theta$};
	  \node[pack,below=.5 of theta] (theta2) {$\theta$};
	  \node[link, left=1.5 of $(theta)!.5!(theta2)$] (dotw) {};
	  \draw (dotw) -- +(-5pt, 0);
	  \draw (theta.west) -- (dotw);
	  \draw (theta2.west) -- (dotw);
	  \draw (theta.east) -- +(5pt, 0);
	  \draw (theta2.east) -- +(5pt, 0);
	\end{tikzpicture}	
      };
      \node at ($(P1.east)!.5!(P2.west)$) {$=$};
      \node at ($(P2.east)!.5!(P3.west)$) {$\vdash$};
  \pgfresetboundingbox
	\useasboundingbox (P1.west|-P2.150) rectangle (P3.east|-P2.-10);
    \end{tikzpicture}
  \]
  This means that in \cref{def.tot_det} the two entailments are in fact equalities.
\end{remark}

In what follows, we'll often omit the transpose symbol $\tp$ (see \cref{def.transpose})
from our diagrams when it can be deduced from the ambient contextual information.

\begin{theorem}\label{thm.characterize_functions}
  Let $\theta \in \rela{\pr}(\varphi_1,\varphi_2)$. Then the following are
  equivalent.
  \begin{enumerate}[label=(\roman*),nolistsep, noitemsep]
    \item $\theta\in\func{\pr}$ is an internal function in the sense of \cref{def.RT}. 

    \item $\theta$ has right adjoint $\theta\tp$. That is, 
      $
      \begin{aligned}
	\begin{tikzpicture}[unoriented WD, font=\small, pack size=6pt]
	  \node (P1) {
	    \begin{tikzpicture}[inner WD]
	      \node[pack] (phi) {$\varphi_1$};
	      \node[link, below=2pt of phi] (dot) {};
	      \draw (phi) -- (dot);
	      \draw (dot) -- +(-8pt, 0);
	      \draw (dot) -- +(8pt, 0);
	    \end{tikzpicture}	
	  };
	  \node[right=2 of P1] (P2) {
	    \begin{tikzpicture}[inner WD]
	      \node[pack] (theta) {$\theta$};
	      \node[pack, right=1 of theta] (theta') {$\theta$};
	      \draw (theta) -- +(-10pt, 0);
	      \draw (theta') -- +(10pt, 0);
	      \draw (theta) to (theta');
	    \end{tikzpicture}	
	  };
	  \node at ($(P1.east)!.5!(P2.west)$) {$\vdash$};
	\end{tikzpicture}
      \end{aligned}
      $
      and
      $
      \begin{aligned}
      \begin{tikzpicture}[unoriented WD, pack size=6pt, font=\small]
	\node (P3) {
	  \begin{tikzpicture}[inner WD]
	    \node[pack] (theta') {$\theta$};
	    \node[pack, right=1 of theta'] (theta) {$\theta$};
	    \draw (theta) -- +(10pt, 0);
	    \draw (theta') -- +(-10pt, 0);
	    \draw (theta)  to  (theta');
	  \end{tikzpicture}	
	};
	\node[right=2 of P3] (P4) {
	  \begin{tikzpicture}[inner WD, surround sep=4pt]
	    \node[pack] (phi) {$\varphi_2$};
	    \node[link, below=2pt of phi] (dot) {};
	    \draw (phi) -- (dot);
	    \draw (dot) -- +(-8pt, 0);
	    \draw (dot) -- +(8pt, 0);
	  \end{tikzpicture}
	};
	\node at ($(P3.east)!.5!(P4.west)$) {$\vdash$};
      \end{tikzpicture}
      \end{aligned}
      $.
    \item $\theta$ is total and deterministic in the sense of \cref{def.tot_det}.
\end{enumerate}
\end{theorem}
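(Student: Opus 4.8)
The plan is to prove the three conditions equivalent cyclically, $(i)\Rightarrow(iii)\Rightarrow(ii)\Rightarrow(i)$. The implication $(ii)\Rightarrow(i)$ is immediate: condition $(ii)$ exhibits the specific relation $\theta\tp$ as a right adjoint to $\theta$, so taking $\xi\coloneqq\theta\tp$ in \cref{def.RT} shows $\theta$ is a left adjoint, i.e.\ an internal function. The two substantive directions amount to translating the categorical adjunction inequalities into the relational conditions of totality and determinism and back, and I would carry out every manipulation purely in the graphical calculus of \cref{prop.diagrams_basic,prop.diagrams_meet,lem.dotting_off}.

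For $(iii)\Rightarrow(ii)$ I would verify the two inequalities of \cref{def.RT} with $\xi=\theta\tp$ by reading one off from each of totality and determinism. The unit $\id_{\varphi_1}\vdash\theta\cp\theta\tp$ holds for formal reasons once $\theta$ is total: for \emph{any} relation the diagonal on the domain $(\pi_1)_!\theta$ sits below $\theta\cp\theta\tp$ (this is the standard ``cup'', obtained by folding one $\Gamma_2$-leg of $\theta$ back on itself via \cref{def.transpose,lem.transpose_rotate}), and totality upgrades $\varphi_1$ to $(\pi_1)_!\theta$. Dually, the counit $\theta\tp\cp\theta\vdash\id_{\varphi_2}$ is obtained by capping off the shared input of the determinism inequality in \cref{def.tot_det}: discarding the $\Gamma_1$-leg with \cref{lem.dotting_off} turns the left-hand diagram into $\theta\tp\cp\theta$ and the right-hand diagram into $\id_{(\pi_2)_!\theta}$, and since $\theta$ is an internal relation we have $(\pi_2)_!\theta\vdash\varphi_2$ (\cref{def.internal_relations}), so $\id_{(\pi_2)_!\theta}\vdash\id_{\varphi_2}$ by monotonicity of $(\delta_{\Gamma_2})_!$.

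For $(i)\Rightarrow(iii)$ I would assume an arbitrary right adjoint $\xi$, with unit $\id_{\varphi_1}\vdash\theta\cp\xi$ and counit $\xi\cp\theta\vdash\id_{\varphi_2}$. Totality is easy: discarding the outer $\Gamma_1$-leg of the unit and erasing $\xi$ with \cref{lem.dotting_off} leaves $\varphi_1\vdash(\pi_1)_!\theta$, which is exactly totality. Determinism is the crux and is a genuine two-sided zig-zag. Starting from the determinism-test diagram---one input copied into two parallel copies of $\theta$---I would insert a $\theta\cp\xi$ cup beside one branch using the unit (\cref{prop.diagrams_basic}, monotonicity and nesting), then contract the freshly created $\xi\cp\theta$ cap using the counit; after this snake move the two outputs are forced to coincide, giving the deterministic inequality of \cref{def.tot_det}. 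This is the step I expect to be the main obstacle, because it is the only place that must use both adjunction inequalities at once rather than one at a time, and so it is where the graphical bookkeeping (which wires are identified, where the transpose is implicit) is genuinely delicate.

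Finally I would note the bonus that right adjoints in a po-category are unique---two right adjoints of $\theta$ are each below the other in the relevant hom-poset, hence equal in its partial order---so the relation $\xi$ produced in $(i)$ is necessarily equal to $\theta\tp$. This both reconciles the a priori weaker $(i)$ with the sharper $(ii)$ and confirms that the cycle is consistent, completing the equivalence.
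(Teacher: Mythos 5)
Most of your proposal is sound: $(ii)\Rightarrow(i)$ is indeed immediate; your $(iii)\Rightarrow(ii)$ matches the paper's argument (meets-merge plus breaking to get the unit from totality, capping the determinism inequality and using \cref{def.internal_relations} to get the counit); and the totality half of $(i)\Rightarrow(iii)$ is exactly the paper's observation that the unit plus discarding (\cref{lem.dotting_off}) yields $\varphi_1\vdash\lsh{(\pi_1)}\theta$. The gap is precisely where you predicted it, but it is a real gap, not just delicate bookkeeping: the single ``snake move'' you describe for determinism is circular. Concretely, write the determinism test diagram as $\{(x,a,b)\mid\theta(x,a)\wedge\theta(x,b)\}$. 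Inserting the cup $\id_{\varphi_1}\vdash\theta\cp\xi$ on one branch yields $\{(x,a,b)\mid\exists x',y.\ \theta(x,a)\wedge\theta(x,y)\wedge\xi(y,x')\wedge\theta(x',b)\}$, and contracting the cap $\xi\cp\theta\vdash\id_{\varphi_2}$ replaces $\exists x'.\,\xi(y,x')\wedge\theta(x',b)$ by $(y=b)\wedge\varphi_2(b)$---which merely reattaches the wire and returns you to $\theta(x,a)\wedge\theta(x,b)$. The outputs $a$ and $b$ are never identified, because with only one copy of $\xi$ a counit contraction can identify the fresh internal wire $y$ with at most one of the two outputs; however you place the cup, one cup plus one cap lands you back at the diagram you started with (or something weaker).

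To force $a=b$ you need the counit twice, once per branch, and hence two copies of $\xi$: first duplicate $\xi$ using idempotence of meets together with ``meets are merges'' (\cref{prop.diagrams_meet}), then break (\cref{lem.breaking}) so that each output acquires its own separable cap $\xi\cp\theta$, and contract both. Alternatively---and this is the paper's route---first prove that any right adjoint $\xi$ equals $\theta\tp$; this is exactly your closing ``bonus'' uniqueness remark, which is therefore not a bonus but load-bearing and must come first. Once $\xi=\theta\tp$, a single counit application suffices: duplicating the two original $\theta$'s into four (meets) and then breaking produces a middle cap $\theta\tp\cp\theta$ whose two free legs are both output legs, so contracting it with $\theta\tp\cp\theta\vdash\id_{\varphi_2}$ identifies $a$ with $b$ directly, while the outer two copies retain the link to the input. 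This is how the paper organizes the proof: $(i)\Leftrightarrow(ii)$ (any right adjoint is the transpose, via the uniqueness-style computation), then $(ii)\Leftrightarrow(iii)$ (unit $\Leftrightarrow$ total, counit $\Leftrightarrow$ deterministic). Your cyclic plan is salvageable with the duplicate-$\xi$ fix, but as written the crucial step fails.
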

\begin{proof}
  (i) $\iff$ (ii): Clearly (ii) $\imp$ (i). Conversely,
  assume $\theta$ has a right adjoint $\xi$. Note that the unit axiom implies 
  $
      \begin{aligned}
	\begin{tikzpicture}[unoriented WD, pack size=6pt, font=\small]
	  \node (P1) {
	\begin{tikzpicture}[inner WD]
	  \node[pack] (xi) {$\varphi_1$};
	  \draw (xi.west) to[pos=1] node[left, font=\tiny] {$\tau_1$} +(-3pt, 0);
	\end{tikzpicture}
	  };
	  \node[right=2 of P1] (P2) {
	    \begin{tikzpicture}[inner WD]
	      \node[pack] (theta) {$\theta$};
	      \node[pack, right=.75 of theta] (xi) {$\xi$};
	      \node[link, right=.5 of xi] (dot) {};
	      \draw (theta) -- +(-10pt, 0);
	      \draw (theta) to (xi);
	      \draw (xi) -- (dot);
	    \end{tikzpicture}	
	  };
	  \node[right=2 of P2] (P3) {
	    \begin{tikzpicture}[inner WD]
	      \node[pack] (theta) {$\theta$};
	      \node[link, right=.5 of theta] (dot) {};
	      \draw (theta) -- +(-10pt, 0);
	      \draw (theta) to (dot);
	    \end{tikzpicture}	
	  };
	  \node at ($(P1.east)!.5!(P2.west)$) {$\vdash$};
	  \node at ($(P2.east)!.5!(P3.west)$) {$\vdash$};
	\end{tikzpicture}
      \end{aligned}
      $.
  Then using meets and breaking we have 
  \[
    \begin{tikzpicture}[unoriented WD, pack size=6pt]
      \node (P1) {
	\begin{tikzpicture}[inner WD]
	  \node[pack, ellipse] (xi) {$\xi$};
	  \draw (xi.west) to[pos=1] node[left, font=\tiny] {$\Gamma_2$} +(-3pt, 0);
	  \draw (xi.east) to[pos=1] node[right, font=\tiny] {$\Gamma_1$} +(3pt, 0);
	\end{tikzpicture}
      };
      \node[right=2.5 of P1] (P2) {
	\begin{tikzpicture}[inner WD]
	  \node[pack] (xi) {$\xi$};
	  \node[link, right=6pt of xi] (dot) {};
	  \node[pack, above=3pt of dot] (theta) {$\theta$};
	  \node[link, above=3pt of theta] (dot2) {};
	  \draw (xi.west) -- +(-5pt, 0);
	  \draw (xi.east) -- (dot);
	  \draw (theta.south) -- (dot);
	  \draw (theta.north) -- (dot2);
	  \draw (dot) to +(10pt, 0);
	\end{tikzpicture}			
      };
      \node[right=2.5 of P2] (P3) {
	\begin{tikzpicture}[inner WD]
	  \node[pack] (xi) {$\xi$};
	  \node[link, right=12pt of xi] (dot) {};
	  \node[pack, above left=6pt and 2pt of dot] (theta) {$\theta$};
	  \node[pack, above right=6pt and 2pt of dot] (theta2) {$\theta$};
	  \draw (xi.west) -- +(-5pt, 0);
	  \draw (xi.east) -- (dot);
	  \draw (theta.south) -- (dot);
	  \draw (theta2.south) -- (dot);
	  \draw (theta.north) to[bend left=50pt] (theta2.north);
	  \draw (dot) to +(20pt, 0);
	\end{tikzpicture}			
      };
      \node[right=2.5 of P3] (P4) {
	\begin{tikzpicture}[inner WD]
	  \node[pack] (xi) {$\xi$};
	  \node[pack, right=6pt of xi] (theta) {$\theta$};
	  \node[pack, right=6pt of theta] (theta2) {$\theta$};
	  \draw (xi.west) -- +(-5pt, 0);
	  \draw (xi.east) -- (theta);
	  \draw (theta) -- (theta2);
	  \draw (theta2.east) -- +(5pt, 0);
	\end{tikzpicture}			
      };
      \node[right=2.5 of P4] (P5) {
	\begin{tikzpicture}[inner WD]
	  \node[pack, ellipse] (xi) {$\theta\tp$};
	  \draw (xi.west) to[pos=1] node[left, font=\tiny] {$\Gamma_2$} +(-3pt, 0);
	  \draw (xi.east) to[pos=1] node[right, font=\tiny] {$\Gamma_1$} +(3pt, 0);
	\end{tikzpicture}
      };
      \node at ($(P1.east)!.5!(P2.west)$) {$=$};
      \node at ($(P2.east)!.5!(P3.west)$) {$=$};
      \node at ($(P3.east)!.5!(P4.west)$) {$\vdash$};
      \node at ($(P4.east)!.5!(P5.west)$) {$\vdash$};
  \pgfresetboundingbox
	\useasboundingbox (P1.west|-P2.170) rectangle (P5.east|-P2.-10);
    \end{tikzpicture}
  \]
  Similarly we can show $\theta \vdash \xi\tp$, and hence $\xi = \theta\tp$.

(ii) $\iff$ (iii): We shall prove a stronger statement,
  that $\theta$ has a unit if and only if it is total, and that it has a counit if and only
  if it is deterministic.

  First, (ii)-units iff (iii)-totalness. Using the unit of the adjunction we have
  \[
    \begin{tikzpicture}[unoriented WD, pack size=6pt]
      \node (P1) {
	\begin{tikzpicture}[inner WD]
	  \node[pack] (xi) {$\varphi_1$};
	  \draw (xi.west) to[pos=1] node[left, font=\tiny] {$\Gamma_1$} +(-3pt, 0);
	\end{tikzpicture}
      };
      \node[right=3 of P1] (P2) {
	\begin{tikzpicture}[inner WD]
	  \node[pack] (theta) {$\theta$};
	  \node[pack,below=.5 of theta] (theta2) {$\theta$};
	  \node[link, left=1.5 of $(theta)!.5!(theta2)$] (dotw) {};
	  \draw (dotw) -- +(-5pt, 0);
	  \draw (theta.west) -- (dotw);
	  \draw (theta2.west) -- (dotw);
	  \draw (theta.east) to[bend left=50pt] (theta2.east);
	\end{tikzpicture}	
      };
      \node[right=3 of P2] (P3) {
	\begin{tikzpicture}[inner WD]
	  \node[pack] (theta) {$\theta$};
	  \node[link, right=.5 of theta] (dot) {};
	  \draw (theta) -- +(-10pt,0);
	  \draw (theta) -- (dot);
	\end{tikzpicture}	
      };
      \node at ($(P1.east)!.5!(P2.west)$) {$\vdash$};
      \node at ($(P2.east)!.5!(P3.west)$) {$=$};
  \pgfresetboundingbox
	\useasboundingbox (P1.west|-P2.170) rectangle (P3.east|-P2.-10);
    \end{tikzpicture}
  \]
  Conversely, using totalness, meets, and breaking we have
  \[
    \begin{tikzpicture}[unoriented WD, font=\small, pack size=6pt]
      \node (P1) {
	\begin{tikzpicture}[inner WD, surround sep=4pt]
	  \node[pack] (phi) {$\varphi_1$};
	  \node[link, below=3pt of phi] (dot) {};
	  \draw (phi) -- (dot);
	  \draw (dot) -- +(-10pt, 0);
	  \draw (dot) -- +(10pt, 0);
	\end{tikzpicture}	
      };
      \node[right=3 of P1] (P2) {
	\begin{tikzpicture}[inner WD, surround sep=4pt]
	  \node[pack] (theta) {$\theta$};
	  \node[link, below=3pt of theta] (dot) {};
	  \node[link, above=3pt of theta] (dot2) {};
	  \draw (theta) -- (dot2);
	  \draw (theta) -- (dot);
	  \draw (dot) -- +(-10pt, 0);
	  \draw (dot) -- +(10pt, 0);
	\end{tikzpicture}	
      };
      \node[right=3 of P2] (P3) {
	\begin{tikzpicture}[inner WD]
	  \node[link] (dot) {};
	  \node[pack, above left=6pt and 2pt of dot] (theta) {$\theta$};
	  \node[pack, above right=6pt and 2pt of dot] (theta2) {$\theta$};
	  \draw (dot) -- +(-10pt, 0);
	  \draw (theta.south) -- (dot);
	  \draw (theta2.south) -- (dot);
	  \draw (theta.north) to[bend left=50pt] (theta2.north);
	  \draw (dot) to +(10pt, 0);
	\end{tikzpicture}			
      };
      \node[right=3 of P3] (P4) {
	\begin{tikzpicture}[inner WD, surround sep=4pt]
	  \node[pack] (theta) {$\theta$};
	  \node[pack, right=2 of theta] (theta') {$\theta$};
	  \draw (theta.west) -- +(-10pt, 0);
	  \draw (theta'.east) -- +(10pt, 0);
	  \draw (theta) to node[above] {$\Gamma_2$} (theta');
	\end{tikzpicture}	
      };
      \node at ($(P1.east)!.5!(P2.west)$) {$=$};
      \node at ($(P2.east)!.5!(P3.west)$) {$=$};
      \node at ($(P3.east)!.5!(P4.west)$) {$\vdash$};
  \pgfresetboundingbox
	\useasboundingbox (P1.west|-P2.170) rectangle (P4.east|-P2.-10);
    \end{tikzpicture}
  \]

  Next, (ii)-counits iff (iii)-determinism. We can use the counit of the adjunction to give
  \[
    \begin{tikzpicture}[unoriented WD, pack size=6pt]
      \node (P1) {
	\begin{tikzpicture}[inner WD]
	  \node[pack] (theta) {$\theta$};
	  \node[pack,below=.5 of theta] (theta2) {$\theta$};
	  \node[link, left=1.5 of $(theta)!.5!(theta2)$] (dotw) {};
	  \draw (dotw) -- +(-5pt, 0);
	  \draw (theta.west) -- (dotw);
	  \draw (theta2.west) -- (dotw);
	  \draw (theta.east) -- +(5pt, 0);
	  \draw (theta2.east) -- +(5pt, 0);
	\end{tikzpicture}	
      };
      \node[right=3 of P1] (P2) {
	\begin{tikzpicture}[inner WD]
	  \node[pack] (theta) {$\theta$};
	  \node[pack,below=.5 of theta] (theta2) {$\theta$};
	  \node[pack,below=.5 of theta2] (theta3) {$\theta$};
	  \node[pack,below=.5 of theta3] (theta4) {$\theta$};
	  \node[link, left=1.5 of $(theta2)!.5!(theta3)$] (dotw) {};
	  \node[link, right=1.5 of $(theta)!.5!(theta2)$] (doteu) {};
	  \node[link, right=1.5 of $(theta3)!.5!(theta4)$] (doted) {};
	  \draw (dotw) -- +(-5pt, 0);
	  \draw (theta.west) -- (dotw);
	  \draw (theta2.west) -- (dotw);
	  \draw (theta3.west) -- (dotw);
	  \draw (theta4.west) -- (dotw);
	  \draw (theta.east) -- (doteu);
	  \draw (theta2.east) -- (doteu);
	  \draw (theta3.east) -- (doted);
	  \draw (theta4.east) -- (doted);
	  \draw (doteu) -- +(5pt, 0);
	  \draw (doted) -- +(5pt, 0);
	\end{tikzpicture}	
      };
      \node[right=3 of P2] (P3) {
	\begin{tikzpicture}[inner WD]
	  \node[pack] (theta) {$\theta$};
	  \node[pack,below=.5 of theta] (theta2) {$\theta$};
	  \node[pack,below=.5 of theta2] (theta3) {$\theta$};
	  \node[pack,below=.5 of theta3] (theta4) {$\theta$};
	  \node[link, left=2 of $(theta2)!.5!(theta3)$] (dotww) {};
	  \node[link, right=1.5 of $(theta)!.5!(theta2)$] (doteu) {};
	  \node[link, right=1.5 of $(theta3)!.5!(theta4)$] (doted) {};
	  \draw (dotww) -- +(-5pt, 0);
	  \draw (theta.west) -- (dotww);
	  \draw (theta3.west) to[bend left=50pt] (theta2.west);
	  \draw (theta4.west) -- (dotww);
	  \draw (theta.east) -- (doteu);
	  \draw (theta2.east) -- (doteu);
	  \draw (theta3.east) -- (doted);
	  \draw (theta4.east) -- (doted);
	  \draw (doteu) -- +(5pt, 0);
	  \draw (doted) -- +(5pt, 0);
	\end{tikzpicture}	
      };
      \node[right=3 of P3] (P4) {
	\begin{tikzpicture}[inner WD]
	  \node[pack] (theta) {$\theta$};
	  \node[pack,below=.5 of theta] (theta2) {$\theta$};
	  \node[link, right=1.5 of $(theta)!.5!(theta2)$] (dot) {};
	  \node[link, left=1.5 of $(theta)!.5!(theta2)$] (dotw) {};
	  \draw (dotw) -- +(-5pt, 0);
	  \draw (theta.west) -- (dotw);
	  \draw (theta2.west) -- (dotw);
	  \draw (theta.east) -- (dot);
	  \draw (theta2.east) -- (dot);
	  \draw (dot) -- +(45:8pt);
	  \draw (dot) -- +(-45:8pt);
	\end{tikzpicture}	
      };
      \node[right=3 of P4] (P5) {
	\begin{tikzpicture}[inner WD]
	  \node[pack] (theta) {$\theta$};
	  \node[link, right=.5 of theta] (dot) {};
	  \draw (theta.west) -- +(-5pt, 0);
	  \draw (theta) -- (dot);
	  \draw (dot) -- +(45:8pt);
	  \draw (dot) -- +(-45:8pt);
	\end{tikzpicture}	
      };
      \node at ($(P1.east)!.5!(P2.west)$) {$=$};
      \node at ($(P2.east)!.5!(P3.west)$) {$\vdash$};
      \node at ($(P3.east)!.5!(P4.west)$) {$\vdash$};
      \node at ($(P4.east)!.5!(P5.west)$) {$=$};
  \pgfresetboundingbox
	\useasboundingbox (P1.west|-P2.130) rectangle (P5.east|-P2.-40);
    \end{tikzpicture}
  \]
  Conversely, assuming determinism we get the counit, which concludes the proof:
  \[
    \begin{tikzpicture}[unoriented WD, font=\small, pack size=6pt]
      \node (P1) {
	\begin{tikzpicture}[inner WD, surround sep=4pt]
	  \node[pack] (theta') {$\theta$};
	  \node[pack, right=2 of theta'] (theta) {$\theta$};
	  \draw (theta.east) -- +(5pt, 0);
	  \draw (theta'.west) -- +(-5pt, 0);
	  \draw (theta)  to node[above] {$\Gamma_1$} (theta');
	\end{tikzpicture}	
      };
      \node[right=3 of P1] (P2) {
	\begin{tikzpicture}[inner WD, surround sep=4pt]
	  \node[pack] (theta) {$\theta$};
	  \node[link, below=3pt of theta] (dot) {};
	  \node[link, above=3pt of theta] (dot2) {};
	  \draw (theta) -- (dot2);
	  \draw (theta) -- (dot);
	  \draw (dot) -- +(-10pt, 0);
	  \draw (dot) -- +(10pt, 0);
	\end{tikzpicture}	
      };
      \node[right=3 of P2] (P3) {
	\begin{tikzpicture}[inner WD, surround sep=4pt]
	  \node[pack] (phi) {$\varphi_2$};
	  \node[link, below=3pt of phi] (dot) {};
	  \draw (phi) -- (dot);
	  \draw (dot) -- +(-10pt, 0);
	  \draw (dot) -- +(10pt, 0);
	\end{tikzpicture}
      };
      \node at ($(P1.east)!.5!(P2.west)$) {$=$};
      \node at ($(P2.east)!.5!(P3.west)$) {$\vdash$};
  \pgfresetboundingbox
	\useasboundingbox (P1.west|-P2.160) rectangle (P3.east|-P2.-20);
    \end{tikzpicture}
    \qedhere
  \]
\end{proof}

Next, we describe how the order on relations restricts to the functions. 

\begin{proposition} \label{cor.functions_discrete}
  The order on functions is discrete.
\end{proposition}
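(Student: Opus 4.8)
The plan is to show that if $\theta,\theta'\in\func{\pr}(\varphi_1,\varphi_2)$ are two internal functions with $\theta\vdash\theta'$ in the poset $\pr(\Gamma_1\oplus\Gamma_2)$, then in fact $\theta=\theta'$; since $\func{\pr}(\varphi_1,\varphi_2)$ is a subposet of the antisymmetric poset $\rela{\pr}(\varphi_1,\varphi_2)$, it suffices to establish the reverse entailment $\theta'\vdash\theta$. The natural tool is the adjunction characterization of functions, \cref{thm.characterize_functions}(ii): each function $\theta$ comes with the unit $\id_{\varphi_1}\vdash\theta\cp\theta\tp$ and counit $\theta\tp\cp\theta\vdash\id_{\varphi_2}$ witnessing $\theta\dashv\theta\tp$, and likewise for $\theta'$. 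This reduces the statement to the familiar ``functions are minimal relations'' fact from allegory theory, transported into the graphical calculus of $\rela{\pr}$.

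First I would run the standard zigzag argument. Starting from unitality (\cref{lemma.cp_unital}) and the unit of $\theta$, monotonicity of composition (\cref{prop.diagrams_basic}(i)) gives
\[
  \theta' \;=\; \id_{\varphi_1}\cp\theta' \;\vdash\; (\theta\cp\theta\tp)\cp\theta'.
\]
Associativity (\cref{lemma.cp_assoc}) rewrites the right-hand side as $\theta\cp(\theta\tp\cp\theta')$. Since $\theta\vdash\theta'$ and transpose $(-)\tp=\lsh{\sigma_{\Gamma_1,\Gamma_2}}$ is a monotone map of posets, we have $\theta\tp\vdash\theta'\tp$, whence $\theta\cp(\theta\tp\cp\theta')\vdash\theta\cp(\theta'\tp\cp\theta')$ by monotonicity of composition again. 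Finally the counit of $\theta'$ yields $\theta'\tp\cp\theta'\vdash\id_{\varphi_2}$, so $\theta\cp(\theta'\tp\cp\theta')\vdash\theta\cp\id_{\varphi_2}=\theta$, again by \cref{lemma.cp_unital}. Chaining these entailments gives $\theta'\vdash\theta$, and combined with $\theta\vdash\theta'$ and antisymmetry we conclude $\theta=\theta'$.

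The argument uses totality of $\theta$ (the unit) and determinism of $\theta'$ (the counit) — both available because both are functions — so no asymmetry causes trouble. I expect the only real point needing care is the bookkeeping of directions and the justification that transpose preserves the order, which follows since $\lsh{\sigma}$ is a left adjoint and hence a monotone map; everything else is monotonicity, associativity, and unitality of $\cp$, already established in \cref{prop.diagrams_basic,lemma.cp_assoc,lemma.cp_unital}. If one prefers to stay inside the graphical formalism, the same four-step chain can be drawn as a sequence of wiring-diagram manipulations (inserting a $\theta$-$\theta\tp$ unit cup, sliding one copy of $\theta$ past the inequality $\theta\vdash\theta'$, and collapsing the resulting $\theta'\tp$-$\theta'$ counit cap), which is the form most consistent with the rest of the paper.
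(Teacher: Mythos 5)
Your proof is correct and is essentially the same argument as the paper's: the paper's proof of \cref{cor.functions_discrete} is precisely the chain $\theta' \vdash \theta\cp\theta\tp\cp\theta' \vdash \theta\cp\theta'\tp\cp\theta' \vdash \theta$, using the unit of $\theta$, the monotonicity step $\theta\tp\vdash\theta'\tp$, and the counit of $\theta'$. Your write-up just makes explicit the bookkeeping (associativity, unitality, antisymmetry) that the paper leaves implicit in its wiring-diagram presentation.
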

\begin{proof}
  Suppose 
  $
    \begin{aligned}
    \begin{tikzpicture}[unoriented WD]
      \node (P1) {
	\begin{tikzpicture}[inner WD]
	  \node[funcr] (phi) {$\theta$};
	  \draw (phi.west) to +(-2pt, 0);
	  \draw (phi.east) to +(2pt, 0);
	\end{tikzpicture}	
      };
      \node[right=2 of P1] (P2) {
	\begin{tikzpicture}[inner WD]
	  \node[funcr] (phi) {$\theta'$};
	  \draw (phi.west) to +(-2pt, 0);
	  \draw (phi.east) to +(2pt, 0);
	\end{tikzpicture}	
    };
      \node at ($(P1.east)!.5!(P2.west)$) {$\vdash$};
    \end{tikzpicture}
  \end{aligned}
  $
  . Then using the unit of $\theta$ and counit of $\theta'$ we have
  $
    \begin{aligned}
    \begin{tikzpicture}[unoriented WD]
      \node (P1) {
	\begin{tikzpicture}[inner WD]
	  \node[funcr] (phi) {$\theta'$};
	  \draw (phi.west) to +(-2pt, 0);
	  \draw (phi.east) to +(2pt, 0);
	\end{tikzpicture}	
      };
      \node[right=2 of P1] (P2) {
	\begin{tikzpicture}[inner WD]
	  \node[funcr] (t3) {$\theta'$};
	  \node[funcl, left=1 of t3] (t2) {$\theta$};
	  \node[funcr,left=1 of t2] (t1) {$\theta$};
	  \draw (t1.west) to +(-2pt, 0);
	  \draw (t1.east) to (t2.west);
	  \draw (t2.east) to (t3.west);
	  \draw (t3.east) to +(2pt, 0);
	\end{tikzpicture}	
    };
      \node[right=2 of P2] (P3) {
	\begin{tikzpicture}[inner WD]
	  \node[funcr] (t3) {$\theta'$};
	  \node[funcl, left=1 of t3] (t2) {$\theta'$};
	  \node[funcr, left=1 of t2] (t1) {$\theta$};
	  \draw (t1.west) to +(-2pt, 0);
	  \draw (t1.east) to (t2.west);
	  \draw (t2.east) to (t3.west);
	  \draw (t3.east) to +(2pt, 0);
	\end{tikzpicture}	
    };
      \node[right=2 of P3] (P4) {
	\begin{tikzpicture}[inner WD]
	  \node[funcr] (phi) {$\theta$};
	  \draw (phi.west) to +(-2pt, 0);
	  \draw (phi.east) to +(2pt, 0);
	\end{tikzpicture}	
    };
      \node at ($(P1.east)!.5!(P2.west)$) {$\vdash$};
      \node at ($(P2.east)!.5!(P3.west)$) {$\vdash$};
      \node at ($(P3.east)!.5!(P4.west)$) {$\vdash$};
    \end{tikzpicture}
  \end{aligned}
  $.
\end{proof}

Finally, we note that bijections and projections are examples of functions.
\begin{example}
  A \define{$\pr$-internal bijection} is an invertible $\pr$-internal
  relation. Note that every bijection is a function. We can also characterise
  bijections as the adjunctions whose unit and counit are the identity.
\end{example}

\begin{proposition}\label{prop.projections}
Suppose given $\varphi_1\in \pr(\Gamma_1)$ and $\varphi_2\in \pr(\Gamma_2)$ and a relation
$\theta\in\rela{\pr}(\varphi_1,\varphi_2)\ss \pr(\Gamma_1\tens \Gamma_2)$. Define
\[
	\pi_1\coloneqq\lsh{(\delta_{\Gamma_1}\tens \Gamma_2)}(\theta)
	\qand
	\pi_2\coloneqq\lsh{(\Gamma_2 \tens \delta_{\Gamma_2})}(\theta).\]
	Then $\pi_i\in \pr(\Gamma_1\tens \Gamma_i\tens \Gamma_2)$ are
internal functions for $i=1,2$, i.e.\ $\pi_i\in\func{\pr}(\theta,\varphi_i)$
\end{proposition}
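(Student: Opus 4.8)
The plan is to show that $\pi_1$ (and symmetrically $\pi_2$) is total and deterministic, and then conclude it is an internal function by the characterization $\textrm{(iii)}\Rightarrow\textrm{(i)}$ of \cref{thm.characterize_functions}. Since $\pi_2$ is obtained from $\pi_1$ by interchanging the roles of $\Gamma_1$ and $\Gamma_2$ and transposing (\cref{def.transpose}), it suffices to treat $\pi_1$. Throughout I write $g\coloneqq \delta_{\Gamma_1}\tens \Gamma_2$ for the copying morphism of $\frc$, so that $\pi_1=\lsh{g}(\theta)$, and I use the canonical comonoid structure carried by every object of $\frc$ by \cref{lemma.comonoids_unique}.

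First I would verify that $\pi_1$ is a well-defined internal relation in $\rela{\pr}(\theta,\varphi_1)$, i.e.\ that it satisfies the two entailments of \cref{def.internal_relations}. For the domain condition, functoriality of $\lsh{(-)}$ together with the comonoid law $g\cp p=\id_{\Gamma_1\tens\Gamma_2}$---where $p$ is the projection discarding the codomain copy of $\Gamma_1$---gives $\lsh{p}(\pi_1)=\lsh{(g\cp p)}(\theta)=\theta\vdash\theta$. For the codomain condition, projecting $\pi_1$ onto the codomain $\Gamma_1$ recovers the first projection of $\theta$, so the hypothesis $\theta\in\rela{\pr}(\varphi_1,\varphi_2)$ immediately yields $\lsh{(\pi_1)}(\theta)\vdash\varphi_1$.

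Totalness is then immediate: the graphical term for $\pi_1$ with its codomain discarded is exactly $\theta$ (discarding one branch of a copy undoes the copy, as in the domain computation above), so the required entailment of \cref{def.tot_det} is the identity $\theta\vdash\theta$. The substance of the argument is determinism: feeding a shared domain into two copies of $\pi_1$ must entail a single copy of $\pi_1$ with its codomain duplicated. Graphically, each $\pi_1$ merely copies its $\Gamma_1$ wire out to the codomain, so this reduces to the fact that copying is deterministic---concretely, to cocommutativity and coassociativity of $\delta_{\Gamma_1}$ together with the Frobenius/spider fusion laws for the white and black dots of $\frb$, which let the two shared branches be merged and re-split. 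The main obstacle I anticipate is precisely this diagram chase: carefully tracking the contexts and the implicit braidings (which the paper deliberately elides) and rewriting the two-copies diagram into the duplicated-codomain diagram using only the comonoid and Frobenius identities and the basic rules of \cref{prop.diagrams_basic,prop.diagrams_meet,lem.dotting_off}. Once determinism is established, \cref{thm.characterize_functions} gives $\pi_1\in\func{\pr}(\theta,\varphi_1)$, and the symmetric computation handles $\pi_2$, completing the proof.
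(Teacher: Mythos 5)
Your proposal is correct, but it routes through a different pair of characterizations than the paper does. The paper first establishes $\pi_1\in\rela{\pr}(\theta,\varphi_1)$ by a graphical computation using \cref{lem.meets_merge} together with \cref{prop.characterize_relation}, and then exhibits the adjunction directly: it draws the unit and counit inequalities for $\pi_1\dashv\pi_1\tp$, i.e.\ verifies characterization (ii) of \cref{thm.characterize_functions}, again via meets-merge, breaking, and the hypothesis on $\theta$. You instead verify relation-hood algebraically---functoriality of $f\mapsto\lsh{f}$ plus the comonoid counit law $g\cp p=\id$, which gives the sharper statement $\lsh{p}(\pi_1)=\theta$ on the nose---and then check characterization (iii), totality and determinism, invoking (iii) $\Rightarrow$ (i). Both routes are legitimate, and in fact nearly the same argument in different packaging: the proof of \cref{thm.characterize_functions} matches units with totality and counits with determinism, so your determinism chase is essentially the paper's counit diagram in disguise. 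What your route buys is that totality comes for free, since your domain computation already yields the equality $\lsh{p}(\pi_1)=\theta$, concentrating all the remaining work into the single determinism step. One small caution there: the essential move in that chase is not only dot fusion for the copying structure, but merging the two $\theta$-shells themselves via $\theta\wedge\theta=\theta$, using \cref{prop.diagrams_meet}(ii) together with nesting, \cref{prop.diagrams_basic}(iii). You do cite these rules among your tools, but the phrasing ``copying is deterministic'' slightly understates that the shells, not just the wires, must be amalgamated.
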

\begin{proof}
  We prove $\pi_1$ is a function; the argument for $\pi_2$ is similar. Note that
  $\pi_1$ is depicted by the graphical term
\[
\begin{tikzpicture}[inner WD, surround sep=4pt, pack size=6pt, baseline=(dot.north)]
	\node[pack] (phi) {$\theta$};
	\node[link, below=8pt of phi.290] (dot) {};
	\coordinate (g1) at ($(dot)+(-10pt,0)$);
	\draw (phi.290) -- (dot);
	\draw (dot) to[pos=1] node[left] {$\Gamma_1$} (g1);
	\draw (dot) to[pos=1] node[right] {$\Gamma_1$} +(10pt, 0);
	\draw (phi.250) to[out=270, in=0, looseness=1, pos=1] node[left] {$\Gamma_2$} ($(g1)+(0,7pt)$);
\end{tikzpicture}
\]
By \cref{lem.meets_merge} and the fact that $\theta \in \rela{\pr}(\varphi_1,\varphi_2)$ we have 
\[
  \begin{tikzpicture}[unoriented WD, font=\small, pack size=6pt]
    \node (P1) {
      \begin{tikzpicture}[inner WD, surround sep=4pt]
	\node[pack] (phi) {$\theta$};
	\node[link, below=9pt of phi.290] (dot) {};
	\coordinate (g1) at ($(dot)+(-10pt,0)$);
	\draw (phi.290) -- (dot);
	\draw (dot) to[pos=1] node[left] {$\Gamma_1$} (g1);
	\draw (dot) to[pos=1] node[right] {$\Gamma_1$} +(10pt, 0);
	\draw (phi.250) to[out=270, in=0, looseness=1, pos=1] node[left] {$\Gamma_2$} ($(g1)+(0,7pt)$);
  \end{tikzpicture}
    };
    \node[right=3 of P1] (P2) {
      \begin{tikzpicture}[inner WD, surround sep=4pt]
	\node[pack] (theta) {$\theta$};
	\node[link, below=9pt of theta.290] (dot) {};
	\node[pack, left=10pt of theta] (thetaA) {$\theta$};
	\node[link, below=9pt of thetaA.290] (dotA) {};
	\coordinate (g1) at ($(dotA)+(-20pt,0)$);
	\node[link, inner sep=0, below=4.5pt of thetaA.250] (dot2A) {};
	\node[pack, right=10pt of theta] (phi) {$\varphi_1$};
	\node[link] at (dot-|phi) (dotphi) {};
	\draw (theta.250) to[out=270, in=0, looseness=.8] (dot2A);
	\draw (theta.290) -- (dot);
	\draw (thetaA.250) -- (dot2A);
	\draw (dot2A) to[pos=1] (g1|-dot2A);
	\draw (thetaA.290) -- (dotA);
	\draw (dotA) to[pos=1] (g1);
	\draw (dotA) -- (dot);
	\draw (phi) -- (dotphi);
	\draw (dot) -- (dotphi);
	\draw (dotphi) -- +(20pt,0);
      \end{tikzpicture}
    };
	\node at ($(P1.east)!.5!(P2.west)$) {$=$};
  \pgfresetboundingbox
	\useasboundingbox (P1.west|-P1.160) rectangle (P2.east|-P2.-10);
  \end{tikzpicture}
\]
and hence by \cref{prop.characterize_relation}, $\pi_1 \in
\rela{\pr}(\theta,\varphi_1)$.

Proving that $\pi_1$ is an adjunction in $\rela{\pr}(\theta,\varphi_1)$ again uses
\cref{lem.meets_merge} and that $\theta \in \rela{\pr}(\varphi_1,\varphi_2)$, as
well as \cref{lem.breaking}:
\[
  \begin{tikzpicture}[unoriented WD, font=\small, pack size=6pt, baseline=(P1)]
    \node (P1) {
      \begin{tikzpicture}[inner WD, surround sep=4pt]
	\node[pack] (phi) {$\theta$};
	\node[link, below=11pt of phi.290] (dot) {};
	\node[link, below=3pt of phi.250] (dot2) {};
	\draw (phi.250) -- (dot2);
	\draw (phi.290) -- (dot);
	\draw (dot2) to[pos=1] node[left] {$\Gamma_2$} +(-8pt, 0);
	\draw (dot2) to[pos=1] node[right] {$\Gamma_2$} +(12pt, 0);
	\draw (dot) to[pos=1] node[left] {$\Gamma_1$} +(-10pt, 0);
	\draw (dot) to[pos=1] node[right] {$\Gamma_1$} +(10pt, 0);
      \end{tikzpicture}
    };
    \node[right=2 of P1] (P2) {
      \begin{tikzpicture}[inner WD, surround sep=4pt]
	\node[pack] (theta) {$\theta$};
	\node[link, below=9pt of theta.250] (dot) {};
	\node[link, below=4.5pt of theta.290] (dot2) {};
	\node[pack, left=8pt of theta] (thetaA) {$\theta$};
	\node[link, below=9pt of thetaA.290] (dotA) {};
	\node[link, below=4.5pt of thetaA.250] (dot2A) {};
	\draw (theta.290) -- (dot2);
	\draw (theta.250) -- (dot);
	\draw (thetaA.250) -- (dot2A);
	\draw (thetaA.290) -- (dotA);
	\draw (dot2A) -- +(-8pt, 0);
	\draw (dotA) -- +(-10pt, 0);
	\draw (dotA) -- (dot);
	\draw (dot2A) -- (dot2);
	\draw (dot2) -- +(10pt, 0);
	\draw (dot) -- +(12pt, 0);
      \end{tikzpicture}
    };
    \node[right=2 of P2] (P3) {
      \begin{tikzpicture}[inner WD, surround sep=4pt]
	\node[pack] (theta) {$\theta$};
	\node[link, below=9pt of theta.290] (dot) {};
	\coordinate (g1) at ($(dot)+(-10pt, 0)$);
	\node[pack, right=8pt of theta] (thetaA) {$\theta$};
	\node[link, below=9pt of thetaA.250] (dotA) {};
	\coordinate (g2) at ($(dotA)+(10pt, 0)$);
	\coordinate (h) at ($(theta.south)+(0,-5pt)$);
	\draw (theta.250) to[out=270, in=0, looseness=.8] (g1|-h);
	\draw (theta.290) -- (dot);
	\draw (dot) --  (g1);
	\draw (dot) to (dotA);
	\draw (thetaA.290) to[out=270, in=180, looseness=.8]   (g2|-h);
	\draw (thetaA.250) -- (dotA);
	\draw (dotA) --  (g2);
      \end{tikzpicture}
    };
	\node at ($(P1.east)!.5!(P2.west)$) {$=$};
	\node at ($(P2.east)!.5!(P3.west)$) {$\vdash$};
  \end{tikzpicture}
\qquad\text{and}\quad
  \begin{tikzpicture}[unoriented WD, font=\small, pack size=6pt, baseline=(P1)]
    \node (P1) {
      \begin{tikzpicture}[inner WD, surround sep=4pt]
	\node[pack] (theta) {$\theta$};
	\node[link, below=8pt of theta.290] (dot) {};
	\node[pack, left=8pt of theta] (thetaA) {$\theta$};
	\node[link, below=8pt of thetaA.250] (dotA) {};
	\coordinate[below=4.5pt of $(theta.250)!.5!(thetaA.290)$] (dot2);
	\draw (theta.290) -- (dot);
	\draw (thetaA.250) -- (dotA);
	\draw (dotA) -- (dot);
	\draw (thetaA.290) to[out=270,in=180,looseness=1.3] (dot2);
	\draw (theta.250) to[out=270,in=0,looseness=1.3] (dot2);
	\draw (dotA) to[pos=1] node[left] {$\Gamma_1$} +(-10pt, 0);
	\draw (dot) to[pos=1] node[right] {$\Gamma_1$} +(10pt, 0);
      \end{tikzpicture}
    };
    \node[right=2 of P1] (P2) {
      \begin{tikzpicture}[inner WD, surround sep=4pt]
	\node[pack] (phi) {$\theta$};
	\node[link, below=9pt of phi.290] (dot) {};
	\node[link, below=4.5pt of phi.250] (dot2) {};
	\draw (phi.250) -- (dot2);
	\draw (phi.290) -- (dot);
	\draw (dot) -- +(10pt, 0);
	\draw (dot) -- +(-10pt, 0);
      \end{tikzpicture}
    };
    \node[right=2 of P2] (P3) {
      \begin{tikzpicture}[inner WD, surround sep=4pt]
	\node[pack] (phi) {$\varphi_1$};
	\node[link, below=5pt of phi.270] (dot) {};
	\draw (phi.270) -- (dot);
	\draw (dot) -- +(10pt, 0);
	\draw (dot) -- +(-10pt, 0);
      \end{tikzpicture}
    };
	\node at ($(P1.east)!.5!(P2.west)$) {$=$};
	\node at ($(P2.east)!.5!(P3.west)$) {$\vdash$};
  \end{tikzpicture}
  \qedhere
\]
\end{proof}

\begin{definition}
	With $\varphi_1,\varphi_2,\theta$ as in \cref{prop.projections}, we refer to the map $\lsh{(\delta_{\Gamma_1}\tens \Gamma_2)}\in\func{\pr}(\theta,\varphi_1)$ as the \emph{left projection} and similarly to $\lsh{(\Gamma_1\tens\delta_{\Gamma_2})}\in\func{\pr}(\theta,\varphi_2)$ as the \emph{right projection}.
\end{definition}

\section{Finite limits in $\func{\pr}$}\label{sec.finlims}

We now show how to construct finite limits in the category $\func{\pr}$ of
internal functions in $P$.

\begin{lemma}[Terminal object]\label{lemma.terminal}
	The object $(\unit,\true)\in\func{\pr}$ is terminal.
\end{lemma}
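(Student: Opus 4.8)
The plan is to exhibit, for every object $(\Gamma,\varphi)\in\func{\pr}$, a unique internal function $(\Gamma,\varphi)\to(\unit,\true)$, the candidate being $\varphi$ itself, regarded as an element of $\pr(\Gamma\oplus\unit)$ via the canonical isomorphism $\Gamma\oplus\unit\cong\Gamma$ in $\frc$ (recall $\unit=0$ is both the monoidal unit and the terminal object). First I would compute the relevant hom-poset. By \cref{def.internal_relations}, an internal relation $\theta\in\rela{\pr}(\varphi,\true)$ is an element $\theta\in\pr(\Gamma\oplus\unit)$ with $\lsh{(\pi_1)}\theta\vdash\varphi$ and $\lsh{(\pi_2)}\theta\vdash\true$. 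The second condition is automatic, since $\true$ is the top element of the meet-semilattice $\pr(\unit)$ (\cref{cor.meetsl}); and since $\pi_1\colon\Gamma\oplus\unit\to\Gamma$ is an isomorphism, $\lsh{(\pi_1)}$ is the identity under $\pr(\Gamma\oplus\unit)\cong\pr(\Gamma)$. Hence $\rela{\pr}(\varphi,\true)$ is exactly the principal down-set $\{\theta\in\pr(\Gamma)\mid\theta\vdash\varphi\}$.

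Next I would use \cref{thm.characterize_functions}(iii) to determine which of these relations are functions, i.e.\ total and deterministic in the sense of \cref{def.tot_det}. The key simplification is that the codomain $\unit$ is the empty shell, with no ports: the discard map $\epsilon_\unit$ and the diagonal $\delta_\unit$ both act as the identity on $\pr(\unit)$. Consequently the determinism entailment is vacuous---both of its graphical terms reduce, via \cref{lem.meets_merge}, to $\theta$ drawn with a single $\Gamma$-side input and no output---so every $\theta\in\rela{\pr}(\varphi,\true)$ is deterministic. Likewise, discarding the empty codomain of $\theta$ leaves it unchanged, so the totality entailment collapses to $\varphi\vdash\theta$.

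For existence I would then observe that $\varphi$ lies in the down-set by reflexivity $\varphi\vdash\varphi$, and is total (totality is again $\varphi\vdash\varphi$), hence is an internal function by \cref{thm.characterize_functions}. For uniqueness, any internal function $\theta\colon(\Gamma,\varphi)\to(\unit,\true)$ must be total, giving $\varphi\vdash\theta$; combined with the hom-poset condition $\theta\vdash\varphi$ and antisymmetry of $\vdash$ in $\pr(\Gamma)$, this forces $\theta=\varphi$. (Alternatively, uniqueness follows immediately from \cref{cor.functions_discrete}, since the order on functions is discrete while any two such $\theta$ are comparable.) This yields the unique terminal morphism $\varphi\colon(\Gamma,\varphi)\to(\unit,\true)$.

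The main obstacle is purely bookkeeping: making rigorous the degenerate graphical moves when the codomain is the unit context, namely confirming that $\epsilon_\unit$ and $\delta_\unit$ behave as identities, so that determinism becomes vacuous and totality reduces to $\varphi\vdash\theta$. Once that reduction is justified, terminality is forced by the antisymmetry of the entailment order on $\pr(\Gamma)$.
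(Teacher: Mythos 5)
Your proof is correct, and its skeleton matches the paper's: the unique morphism $(\Gamma,\varphi)\to(\unit,\true)$ is $\varphi$ itself, and uniqueness comes from combining the relation condition $\theta\vdash\varphi$ with the function condition $\varphi\vdash\theta$ and antisymmetry in $\pr(\Gamma)$. Where you genuinely diverge is in how function-ness is verified. The paper works directly from \cref{def.RT}: it exhibits the adjunction graphically, producing the unit by meets and breaking and the counit from $\true$ being the top element, and its uniqueness step is likewise a graphical computation (unit of the adjunction followed by discarding). You instead invoke the total-and-deterministic characterization, \cref{thm.characterize_functions}(iii), and observe that at the unit context everything degenerates: since $\epsilon_\unit$ and $\delta_\unit$ are identities (as $\unit\oplus\unit=\unit$), determinism reduces via \cref{lem.meets_merge} to $\theta\vdash\theta$ and totality collapses to $\varphi\vdash\theta$, so the entire lemma becomes a statement of order theory in $\pr(\Gamma)$ with no diagram manipulation required. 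Your route buys a shorter, purely order-theoretic argument, at the cost of leaning on the earlier and harder characterization theorem (which is legitimate, since it precedes this lemma in the paper); the paper's version is self-contained relative to \cref{def.RT} and doubles as a demonstration of the graphical calculus in action, which is one of the paper's stated aims. Your alternative uniqueness argument via \cref{cor.functions_discrete} is also sound, since any two internal functions $\theta,\theta'$ into the terminal object satisfy $\theta\vdash\varphi\vdash\theta'$ and are therefore comparable.
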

\begin{proof}
For any context $\Gamma$ and element $\varphi\in \pr(\Gamma)$ we shall show
$\varphi\in\func{\pr}((\Gamma,\varphi),(\unit,\true)) \subseteq
\pr(\Gamma\tens\unit)$ is the unique element. Note first that $\varphi$ is indeed
an internal function: it's an internal relation because $\varphi\vdash \varphi$
and $\lsh{\pi_2}(\varphi) \vdash \true$, and is an adjunction with counit given by
the fact that $\true$ is the top element, and unit given by meets and breaking
as follows
\[
\begin{tikzpicture}[unoriented WD, font=\small, pack size=6pt]
	\node (P1) {
  \begin{tikzpicture}[inner WD]
  	\node[pack] (phi) {$\varphi$};
  	\node[link, below=3pt of phi] (dot) {};
  	\draw (phi) -- (dot);
  	\draw (dot) -- +(-10pt, 0);
  	\draw (dot) -- +(10pt, 0);
  \end{tikzpicture}	
	};
	\node[right=4 of P1] (P2) {
  \begin{tikzpicture}[inner WD]
  	\node[pack] (phi) {$\varphi$};
		\node[pack, below=1 of phi] (phi2) {$\varphi$};
  	\node[link] at ($(phi)!.5!(phi2)$) (dot) {};
  	\draw (phi) -- (dot);
  	\draw (phi2) -- (dot);
  	\draw (dot) -- +(-10pt, 0);
  	\draw (dot) -- +(10pt, 0);
  \end{tikzpicture}		
	};
	\node[right=4 of P2] (P3) {
  \begin{tikzpicture}[inner WD]
  	\node[pack] (phi) {$\varphi$};
		\node[pack, below=1 of phi] (phi2) {$\varphi$};
		\coordinate (ow) at ($(phi)!.5!(phi2)+(-10pt, 0)$);
		\coordinate (oe) at ($(phi)!.5!(phi2)+(10pt, 0)$);
  	\draw (phi.south) to[out=270, in=0] (ow);
  	\draw (phi2.north) to[out=90, in=180] (oe);
  \end{tikzpicture}		
	};
	\node[right=4 of P3] (P4) {
  \begin{tikzpicture}[inner WD]
		\node[pack] (theta) {$\varphi$};
		\node[pack, right=1.5 of theta] (theta') {$\varphi$};
		\draw (theta.west) -- +(-5pt, 0);
		\draw (theta'.east) -- +(5pt, 0);
  \end{tikzpicture}	
	};
	\node at ($(P1.east)!.5!(P2.west)$) {$=$};
	\node at ($(P2.east)!.5!(P3.west)$) {$\vdash$};
	\node at ($(P3.east)!.5!(P4.west)$) {$=$};
  \pgfresetboundingbox
	\useasboundingbox (P1.west|-P2.140) rectangle (P4.east|-P2.-30);
\end{tikzpicture}
\] 

It remains to show uniqueness. If $\theta$ is an internal function then $\theta\vdash\varphi$, so it remains to show that $\varphi\vdash\theta$.  But it is easy to verify:
$
\begin{tikzpicture}[unoriented WD, font=\small, pack size=6pt, baseline=(P1.-20)]
	\node (P1) {
  \begin{tikzpicture}[inner WD]
  	\node[pack] (phi) {$\varphi$};
  	\draw (phi.west) -- +(-5pt,0);
	\end{tikzpicture}
	};
	\node[right=2 of P1] (P2) {
  \begin{tikzpicture}[inner WD]
    \node[pack] (theta) {$\theta$};
    \node[pack, right=.5 of theta] (theta') {$\theta$};
    \node[link, right=3pt of theta'] (dot) {};
    \draw (dot) -- (theta');
    \draw (theta.west) -- +(-5pt, 0);			
  \end{tikzpicture}			
	};
	\node[right=2 of P2] (P3) {
  \begin{tikzpicture}[inner WD]
    \node[pack] (phi) {$\theta$};
		\draw (phi.west) to +(-2pt, 0);
  \end{tikzpicture}	
	};
	\node at ($(P1.east)!.5!(P2.west)$) {$\vdash$};
	\node at ($(P2.east)!.5!(P3.west)$) {$\vdash$};
\end{tikzpicture}
$\;.
\end{proof}

\begin{lemma}[Pullbacks]\label{lemma.pullbacks}
	Let $\theta_1\colon(\Gamma_1,\varphi_1)\to(\Gamma,\varphi)$ and $\theta_2\colon(\Gamma_2,\varphi_2)\to(\Gamma,\varphi)$ be morphisms in $\func{\pr}$. Let $\theta_{12}\coloneqq(\theta_1\cp\theta_2\tp)$. Then the following is a pullback square in $\func{\pr}$:
	\[
	\begin{tikzcd}[row sep=22pt, column sep=70pt]
		\left((\Gamma_1\tens \Gamma_2),\theta_{12}\right)
		\ar[d, "{\lsh{(\delta_{\Gamma_1}\oplus\Gamma_2)}(\theta_{12})}"']
		\ar[r, "{\lsh{(\Gamma_1\oplus\delta_{\Gamma_2})}(\theta_{12})}"]&
		(\Gamma_2, \varphi_2)
			\ar[d, "\theta_2"]\\
		(\Gamma_1, \varphi_1)
			\ar[r, "\theta_1"']&
		(\Gamma,\varphi)
	\end{tikzcd}
	\]
\end{lemma}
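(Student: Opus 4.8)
The plan is to verify the two defining properties of a pullback square in $\func{\pr}$: that it commutes, and that it satisfies the universal property. Throughout I would work graphically, using \cref{thm.characterize_functions} to treat each internal function simultaneously as a total and deterministic relation and as a left adjoint whose right adjoint is its transpose. The first preliminary observation is that $\theta_{12}=\theta_1\cp\theta_2\tp$ is a bona fide internal relation in $\rela{\pr}(\varphi_1,\varphi_2)$: the transpose $\theta_2\tp$ lies in $\rela{\pr}(\varphi,\varphi_2)$ because transposition merely swaps the two projection conditions in \cref{def.internal_relations}, and then \cref{lemma.comp_rela_rela} applies to the composite. Consequently \cref{prop.projections} guarantees that the two legs $p_1\coloneqq\lsh{(\delta_{\Gamma_1}\oplus\Gamma_2)}(\theta_{12})$ and $p_2\coloneqq\lsh{(\Gamma_1\oplus\delta_{\Gamma_2})}(\theta_{12})$ are genuine internal functions out of $(\Gamma_1\oplus\Gamma_2,\theta_{12})$, so the diagram is at least a diagram in $\func{\pr}$.

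For commutativity I would compute both composites $p_1\cp\theta_1$ and $p_2\cp\theta_2$ and show that each equals the graphical term obtained from the nested diagram of $\theta_1$ and $\theta_2$ joined along $\Gamma$ (which is $\theta_{12}$) by pulling the internal $\Gamma$-wire out as an additional output. Expanding $p_1\cp\theta_1$ by nesting (\cref{prop.diagrams_basic}(iii)), one copy of $\theta_1$ sits inside $\theta_{12}$ while a second copy is attached to the duplicated $\Gamma_1$-port created by $\delta_{\Gamma_1}$; the determinism of $\theta_1$ (\cref{thm.characterize_functions}(iii), \cref{def.tot_det}) merges these two copies sharing the same $\Gamma_1$-input, forcing the output wire to coincide with the shared $\Gamma$-wire of $\theta_{12}$. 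The symmetric calculation for $p_2\cp\theta_2$, using determinism of $\theta_2$, yields the same term, so the square commutes.

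For the universal property, suppose given internal functions $f\colon(\Delta,\psi)\to(\Gamma_1,\varphi_1)$ and $g\colon(\Delta,\psi)\to(\Gamma_2,\varphi_2)$ with $f\cp\theta_1=g\cp\theta_2$. I would take as mediating morphism the pairing $h=\langle f,g\rangle\in\pr(\Delta\oplus\Gamma_1\oplus\Gamma_2)$ obtained by merging the $\Delta$-ports of $f$ and $g$ along $\delta_\Delta$ (\cref{lem.meets_merge}). That $h$ is total and deterministic, hence a function, follows from totality and determinism of $f$ and $g$ together with the merge rule; that its $(\Gamma_1\oplus\Gamma_2)$-projection entails $\theta_{12}$ is precisely where the hypothesis $f\cp\theta_1=g\cp\theta_2$ is used, since composing that projection with $\theta_1$ on the left and $\theta_2$ on the right produces equal terms, which by \cref{prop.characterize_relation} and breaking (\cref{prop.diagrams_basic}(ii)) places the projection below $\theta_{12}=\theta_1\cp\theta_2\tp$. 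The triangle identities $h\cp p_1=f$ and $h\cp p_2=g$ then follow by discarding the unused output leg and invoking totality of the discarded function (\cref{lem.dotting_off}).

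The step I expect to be the main obstacle is uniqueness, where the interplay between function structure and the order is most delicate. Here I would use that the order on functions is discrete (\cref{cor.functions_discrete}): any mediating function $h'$ satisfies $h'\cp p_1=f$ and $h'\cp p_2=g$, so I would prove graphically the identity that every function into $\theta_{12}\subseteq\Gamma_1\oplus\Gamma_2$ equals the pairing of its two component projections—established by duplicating the domain via $\delta_\Delta$, using determinism of $h'$ to split it into two copies, discarding complementary outputs, and re-merging. This exhibits $h'=\langle h'\cp p_1,\, h'\cp p_2\rangle=\langle f,g\rangle=h$, with discreteness of the function order upgrading any residual entailment to an equality, thereby completing the proof that the square is a pullback.
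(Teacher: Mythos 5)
Your proposal is correct and follows essentially the same route as the paper's proof: the legs are functions by \cref{prop.projections}, the square commutes by determinism of $\theta_1$ and $\theta_2$, the mediating morphism is the same pairing obtained by merging the domains of the cone along the diagonal, membership in $\rela{\pr}(\varphi',\theta_{12})$ uses the hypothesis together with the unit of $\theta_1$ and counit of the cone maps, and uniqueness is a graphical manipulation finished off by discreteness of the order on functions (\cref{cor.functions_discrete}). The only differences are cosmetic: the paper settles the triangle identities by proving an entailment and upgrading it via \cref{cor.functions_discrete} where you use totality and discarding, and your uniqueness argument (every function into $\theta_{12}$ equals the pairing of its two projections) makes explicit what the paper dismisses as ``basic diagram manipulations.''
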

\begin{proof}
The graphical term for the proposed pullback $\left((\Gamma_1\tens \Gamma_2),\theta_{12}\right)$ is shown left, and its proposed projection maps are shown middle and right:
\[
\begin{tikzpicture}[unoriented WD, font=\small]
  \node (P1) {
  \begin{tikzpicture}[inner WD, pack size=6pt]
  	\node[pack] (theta) {$\theta_{12}$};
  	\draw (theta.west) -- +(-2pt, 0);
  	\draw (theta.east) -- +(2pt, 0);
	\end{tikzpicture}
	};
	\node[right=3 of P1] (P2) {
  \begin{tikzpicture}[inner WD]
		\node[funcr] (theta) {$\theta_1$};
		\node[funcl, right=1 of theta] (theta') {$\theta_2$};
		\draw (theta.west) -- +(-5pt,0);
		\draw (theta'.east) -- +(5pt,0);
		\draw (theta) to node[above] {$\Gamma$} (theta');
  \end{tikzpicture}	
	};
	\node[right=7 of P2] (P3) {
  \begin{tikzpicture}[inner WD]
		\node[funcr] (theta) {$\theta_1$};
		\node[funcl, right=1 of theta] (theta') {$\theta_2$};
		\node[link, left=.5 of theta] (dot) {};
		\draw (theta) -- (dot);
		\draw (dot) to[pos=1] node[left] {$\Gamma_1$} +(135:10pt);
		\draw (dot) to[pos=1] node[left] {$\Gamma_1$} +(225:10pt);
		\draw (theta'.east) to[pos=1] node[right] {$\Gamma_2$} +(5pt,0);
		\draw (theta) -- (theta');
  \end{tikzpicture}	
	};
	\node[right=4 of P3] (P4) {
  \begin{tikzpicture}[inner WD]
		\node[funcr] (theta) {$\theta_1$};
		\node[funcl, right=1 of theta] (theta') {$\theta_2$};
		\node[link, right=.5 of theta'] (dot) {};
		\draw (theta.west) to[pos=1] node[left] {$\Gamma_1$} +(-5pt, 0);
		\draw (theta') -- (dot);
		\draw (dot) to[pos=1] node[right] {$\Gamma_2$} +(45:10pt);
		\draw (dot) to[pos=1] node[right] {$\Gamma_2$} +(-45:10pt);
		\draw (theta) -- (theta');
  \end{tikzpicture}	
	};
	\node at ($(P1.east)!.5!(P2.west)$) {$\coloneqq$};
  \pgfresetboundingbox
	\useasboundingbox (P1.west|-P1.160) rectangle (P4.east|-P4.-10);
\end{tikzpicture}
\]
Both projections are internal functions by \cref{prop.projections}. The necessary diagram commutes, i.e.\ we have equalities
\begin{equation}\label{eqn.comm_diag_projs}
\begin{tikzpicture}[unoriented WD, font=\small,baseline=(P1)]
  \node (P1) {
	\begin{tikzpicture}[inner WD]
		\node[funcr] (theta1) {$\theta_1$};
		\node[funcl, right=1 of theta1] (theta2) {$\theta_2$};
		\node[funcd, below left=.5 and 1 of theta1.west] (theta1') {$\theta_1$};
		\node[link] at (theta1'|-theta1) (dot) {};
		\draw (theta1'.south) -- +(0, -5pt);
		\draw (theta1'.north) -- (dot);
		\draw (dot) -- +(-10pt,0);
		\draw (dot) -- (theta1.west);
		\draw (theta1.east) -- (theta2.west);
		\draw (theta2.east) -- +(5pt, 0);
  \end{tikzpicture}
	};
	\node[right=3 of P1] (P2) {
  \begin{tikzpicture}[inner WD, surround sep=4pt]
		\node[funcr] (theta) {$\theta_1$};
		\node[funcl, right=1 of theta] (theta') {$\theta_2$};
		\node[link, "$\Gamma$"] at ($(theta)!.5!(theta')$) (dot) {};
		\draw (theta.west) -- +(-5pt,0);
		\draw (theta'.east) -- +(5pt, 0);
		\draw (theta) -- (dot);
		\draw (theta') -- (dot);
		\draw (dot) -- +(0, -10pt);
  \end{tikzpicture}	
	};
	\node[right=3 of P2] (P3) {
	\begin{tikzpicture}[inner WD, minimum size=20pt]
		\node[funcr] (theta1) {$\theta_1$};
		\node[funcl, right=1 of theta1] (theta2) {$\theta_2$};
		\node[funcd, below right=.5 and 1 of theta2.east] (theta2') {$\theta_2$};
		\node[link] at (theta2'|-theta2) (dot) {};
		\draw (theta2'.south) -- +(0, -5pt);
		\draw (theta2'.north) -- (dot);
		\draw (dot) -- +(10pt, 0);
		\draw (dot) -- (theta2.east);
		\draw (theta1.east) -- (theta2.west);
		\draw (theta1.west) -- +(-5pt, 0);
  \end{tikzpicture}	
	};
	\node at ($(P1.east)!.5!(P2.west)$) {$=$};
	\node at ($(P2.east)!.5!(P3.west)$) {$=$};
\end{tikzpicture}
\end{equation}
because functions are deterministic (\cref{thm.characterize_functions}).

Now we come to the universal property. Suppose given an object $(\Gamma',\varphi')$ and morphisms $\theta_1'\colon(\Gamma',\varphi')\to(\Gamma_1,\varphi_1)$ and $\theta_2'\colon(\Gamma',\varphi')\to(\Gamma_2,\varphi_2)$ in $\func{\pr}$, such that the $\theta_1'\cp\theta_1=\theta_2'\cp\theta_2$. Let $\pair{\theta_1',\theta_2'}$ denote the following graphical term:
\begin{equation}\label{eqn.pairing_pb}
  \begin{tikzpicture}[inner WD,baseline=(dot)]
		\node[funcl] (theta) {$\theta_1'$};
		\node[funcr, right=2 of theta] (theta') {$\theta_2'$};
		\node[link, "$\Gamma$" below] at ($(theta)!.5!(theta')$) (dot) {};
		\draw (theta.west) -- +(-5pt, 0);
		\draw (theta'.east) -- +(5pt, 0);
		\draw (theta) -- (dot);
		\draw (theta') -- (dot);
		\draw (dot) -- +(0,10pt);
  \end{tikzpicture}	
\end{equation}
We give one half of the proof that $\pair{\theta_1',\theta_2'}\in\rela{\pr}(\varphi',\theta_{12})$, the other half being easier.
\[
\begin{tikzpicture}[unoriented WD, font=\small]
  \node (P1) {
  \begin{tikzpicture}[inner WD]
		\node[funcl] (theta) {$\theta_1'$};
		\node[funcr, right=1 of theta] (theta') {$\theta_2'$};
		\draw (theta.west) -- +(-5pt, 0);
		\draw (theta'.east) -- +(5pt, 0);
		\draw (theta) -- (theta');
  \end{tikzpicture}	
	};
	\node[right=3 of P1] (P2) {
  \begin{tikzpicture}[inner WD]
		\node[funcl] (theta1') {$\theta_1'$};
		\node[funcr, right=1 of theta1'] (theta2') {$\theta_2'$};
		\node[funcl, left=1 of theta1'] (theta1) {$\theta_1$};
		\node[funcr, left=1 of theta1] (theta1t) {$\theta_1$};
		\draw (theta1t.west) -- +(-5pt, 0);
		\draw (theta1t) -- (theta1);
		\draw (theta1) -- (theta1');
		\draw (theta1') -- (theta2');
		\draw (theta2'.east) -- +(5pt, 0);
  \end{tikzpicture}		
	};
	\node[right=3 of P2] (P3) {
  \begin{tikzpicture}[inner WD]
		\node[funcr] (theta1') {$\theta_2'$};
		\node[funcl, right=1 of theta1'] (theta2') {$\theta_2'$};
		\node[funcl, left=1 of theta1'] (theta1) {$\theta_2$};
		\node[funcr, left=1 of theta1] (theta1t) {$\theta_1$};
		\draw (theta1t.west) -- +(-5pt, 0);
		\draw (theta1t) -- (theta1);
		\draw (theta1) -- (theta1');
		\draw (theta1') -- (theta2');
		\draw (theta2'.east) -- +(5pt, 0);
  \end{tikzpicture}
  };
  \node[right=3 of P3] (P4) {
  \begin{tikzpicture}[inner WD]
		\node[funcr] (theta) {$\theta_1$};
		\node[funcl, right=1 of theta] (theta') {$\theta_2$};
		\draw (theta.west) -- +(-5pt, 0);
		\draw (theta) -- (theta');
		\draw (theta'.east) -- +(5pt, 0);
  \end{tikzpicture}	  
  };
	\node at ($(P1.east)!.5!(P2.west)$) {$\vdash$};
	\node at ($(P2.east)!.5!(P3.west)$) {$=$};
	\node at ($(P3.east)!.5!(P4.west)$) {$\vdash$};  
  \pgfresetboundingbox
	\useasboundingbox (P1.west|-P1.160) rectangle (P4.east|-P4.-10);
\end{tikzpicture}
\]
Moreover, applying \cref{thm.characterize_functions}, a similarly straightforward diagrammatic argument shows $\pair{\theta_1',\theta_2'}\in\func{\pr}(\varphi',\theta_{12})$. We next need to show that $\pair{\theta_1',\theta_2'}\cp\lsh{(\delta_{\Gamma_1}\oplus\Gamma_2)}(\theta_{12})=\theta_1'$ and similarly for $\theta_2'$. This follows easily from \cref{cor.functions_discrete} and the diagram
\[
\begin{tikzpicture}[unoriented WD, font=\small]
	\node (P1) {
	\begin{tikzpicture}[inner WD]
		\node[funcl] (theta1) {$\theta_1'$};
		\node[funcr, below=1 of theta1] (theta2) {$\theta_1$};
		\coordinate (helper) at ($(theta1)!.5!(theta2)$);
		\node[link, left=2 of helper] (dot L) {};
		\node[funcr, right=2 of theta1] (theta'1) {$\theta_2'$};
		\node[link] at ($(theta1)!.5!(theta'1)$) (dot R) {};
		\node[funcl, right=2 of theta2] (theta'2) {$\theta_2$};
		\draw (theta1.west) to[out=180, in=60] (dot L);
		\draw (theta2.west) to[out=180, in=-60] (dot L);
		\draw (theta1.east) -- (dot R);
		\draw (dot R) -- (theta'1.west);
		\draw (dot R) -- ++(0, 5pt) to[out=90, in=180] ++(35pt,5pt);
		\draw (theta2.east) -- (theta'2.west);
		\draw (theta'1.east) to[out=0, in=0] (theta'2.east);
		\draw (dot L) -- +(-8pt, 0);
	\end{tikzpicture}
	};
	\node[right=3 of P1] (P2) {
	\begin{tikzpicture}[inner WD]
		\node[funcl] (theta) {$\theta_1'$};
		\draw (theta.west) -- +(-5pt, 0);
		\draw (theta.east) -- +(5pt, 0);
  \end{tikzpicture}
	};
	\node at ($(P1.east)!.5!(P2.west)$) {$\vdash$};
  \pgfresetboundingbox
	\useasboundingbox (P1.west|-P1.160) rectangle (P2.east|-P1.-20);
\end{tikzpicture}
\]
It only remains to show that this is unique. So suppose given $\theta'\in\func{\pr}(\varphi',\theta_{12})$ with 
$
\begin{aligned}
\begin{tikzpicture}[unoriented WD, font=\small]
  \node (P1) {
  \begin{tikzpicture}[inner WD]
  	\node[funcr] (theta) {$\theta_1'$};
  	\draw (theta.west) to[pos=1] node[left] {$\Gamma'$} +(-2pt, 0);
  	\draw (theta.east) to[pos=1] node[right] {$\Gamma_1$} +(2pt, 0);
	\end{tikzpicture}
	};
	\node[right=1 of P1] (P2) {
  \begin{tikzpicture}[inner WD, pack size=6pt]
  	\node[pack, minimum size=15pt] (theta) {$\theta'$};
		\node[link] at ($(theta.-30)+(-30:4pt)$) (dot){};
  	\draw (theta.west) to[pos=1] node[left] {$\Gamma'$} +(-2pt, 0);
  	\draw (theta.30) to[pos=1] node[right] {$\Gamma_1$} +(30:2pt);
  	\draw (theta.-30) -- (dot);
	\end{tikzpicture}
	};
\node at ($(P1.east)!.5!(P2.west)$) {$=$};
\end{tikzpicture}
\end{aligned}
$
and 
$
\begin{aligned}
\begin{tikzpicture}[unoriented WD, font=\small]
	\node (P3) {
  \begin{tikzpicture}[inner WD]
  	\node[funcr] (theta) {$\theta_2'$};
  	\draw (theta.west) to[pos=1] node[left] {$\Gamma'$} +(-2pt, 0);
  	\draw (theta.east) to[pos=1] node[right] {$\Gamma_2$} +(2pt, 0);
	\end{tikzpicture}	
	};
	\node[right=1 of P3] (P4) {
  \begin{tikzpicture}[inner WD, pack size=6pt]
  	\node[pack, minimum size=15pt] (theta) {$\theta'$};
		\node[link] at ($(theta.30)+(30:4pt)$) (dot){};
  	\draw (theta.west) to[pos=1] node[left] {$\Gamma'$} +(-2pt, 0);
  	\draw (theta.-30) to[pos=1] node[right] {$\Gamma_2$} +(-30:2pt);
  	\draw (theta.30) -- (dot);
	\end{tikzpicture}
	};
	\node at ($(P3.east)!.5!(P4.west)$) {$=$};
\end{tikzpicture}
\end{aligned}
$.
Then by basic diagram manipulations, one shows that $\theta'$ must equal the graphical term in \cref{eqn.pairing_pb}, as desired.
\end{proof}

\begin{proposition}\label{prop.monos}
Suppose that
$
\begin{tikzpicture}[unoriented WD, surround sep=2pt, font=\tiny, baseline=(theta.base)]
	\node[funcr] (theta) {$\theta$};
	\draw (theta.west) -- +(-2pt, 0);
	\draw (theta.east) -- +(2pt, 0);
\end{tikzpicture}
\in\func{\pr}(\varphi_1,\varphi_2)$ is an internal function. It is a monomorphism iff it satisfies
	$
	\begin{tikzpicture}[unoriented WD, baseline=(P1)]
  	\node (P1) {
  	\begin{tikzpicture}[inner WD, pack size=6pt]
    	\node[pack] (phi) {$\varphi_1$};
    	\node[link, below=3pt of phi] (dot) {};
    	\draw (phi) -- (dot);
    	\draw (dot) -- +(6pt,0);
    	\draw (dot) -- +(-6pt,0);
    \end{tikzpicture}	
    };
    \node[right=2 of P1] (P2) {
    \begin{tikzpicture}[inner WD]
  		\node[funcr] (theta) {$\theta$};
  		\node[funcl, right=1 of theta] (theta') {$\theta$};
			\draw (theta.west) -- +(-4pt,0);
			\draw (theta.east) -- (theta'.west);
			\draw (theta'.east) -- +(4pt,0);
    \end{tikzpicture}
    };
		\node at ($(P1.east)!.5!(P2.west)$) (vdash) {$=$};
  \end{tikzpicture}
	$.
\end{proposition}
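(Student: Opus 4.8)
The plan is to read the displayed equation as the assertion $\theta\cp\theta\tp=\id_{\varphi_1}$, where $\theta\tp$ is the right adjoint of $\theta$ furnished by \cref{thm.characterize_functions}. Because $\theta$ is an internal function, the unit of the adjunction $\theta\dashv\theta\tp$ already gives one half for free: the first entailment of \cref{thm.characterize_functions}(ii) is exactly $\id_{\varphi_1}\vdash\theta\cp\theta\tp$. Hence the asserted equality is equivalent to the single reverse entailment $\theta\cp\theta\tp\vdash\id_{\varphi_1}$, and the whole proposition reduces to showing that $\theta$ is a monomorphism if and only if this reverse entailment holds.

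For the direction assuming $\theta\cp\theta\tp=\id_{\varphi_1}$, I would argue purely algebraically in $\rela{\pr}$. Given any parallel internal functions $\alpha,\beta$ into $(\Gamma_1,\varphi_1)$ with $\alpha\cp\theta=\beta\cp\theta$, unitality (\cref{lemma.cp_unital}) together with associativity (\cref{lemma.cp_assoc}) yields
\[
\alpha=\alpha\cp\id_{\varphi_1}=\alpha\cp\theta\cp\theta\tp=\beta\cp\theta\cp\theta\tp=\beta\cp\id_{\varphi_1}=\beta,
\]
so $\theta$ is monic.

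For the converse I would pass to the kernel pair. Applying \cref{lemma.pullbacks} to $\theta_1=\theta_2=\theta$ exhibits the pullback of $\theta$ against itself as the object $\bigl((\Gamma_1\oplus\Gamma_1),\,\theta\cp\theta\tp\bigr)$, whose two projections $\pi_1,\pi_2$ are the left and right projections of \cref{prop.projections}; by construction the square commutes, that is $\pi_1\cp\theta=\pi_2\cp\theta$. If $\theta$ is monic these parallel internal functions must agree, $\pi_1=\pi_2$, and since the order on functions is discrete (\cref{cor.functions_discrete}) they coincide as elements of $\pr(\Gamma_1\oplus\Gamma_1\oplus\Gamma_1)$. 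I would then apply the base-change map $\ust{(\delta_{\Gamma_1}\oplus\Gamma_1)}$ to this equality of elements. On the one hand, $\delta_{\Gamma_1}\oplus\Gamma_1$ is a monomorphism in $\frc$ and $\pi_1=\lsh{(\delta_{\Gamma_1}\oplus\Gamma_1)}(\theta\cp\theta\tp)$, so $\ust{}$ is a retraction of $\lsh{}$ and we recover $\ust{(\delta_{\Gamma_1}\oplus\Gamma_1)}(\pi_1)=\theta\cp\theta\tp$. On the other hand, the same operation applied to $\pi_2=\lsh{(\Gamma_1\oplus\delta_{\Gamma_1})}(\theta\cp\theta\tp)$ forces the two $\Gamma_1$-legs of $\theta\cp\theta\tp$ to be identified, producing an element that entails $\id_{\varphi_1}$ (its support on each leg lies below $\varphi_1$ by \cref{def.internal_relations}). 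Chaining these, $\theta\cp\theta\tp=\ust{(\delta_{\Gamma_1}\oplus\Gamma_1)}(\pi_1)=\ust{(\delta_{\Gamma_1}\oplus\Gamma_1)}(\pi_2)\vdash\id_{\varphi_1}$, which is the missing entailment. The main obstacle is precisely this last computation: keeping the leg-orderings straight and verifying the two base-change identities $\ust{(\delta_{\Gamma_1}\oplus\Gamma_1)}(\pi_1)=\theta\cp\theta\tp$ and $\ust{(\delta_{\Gamma_1}\oplus\Gamma_1)}(\pi_2)\vdash\id_{\varphi_1}$, both of which rest on the standard subobject identities ($\ust{}\lsh{}=\id$ along monos, and restriction to the diagonal) that hold for any regular calculus by \cref{rem.tangible}; I expect this is cleanest to carry out graphically, by drawing $\pi_1$ and $\pi_2$ as in \cref{prop.projections} and connecting the relevant pair of legs.
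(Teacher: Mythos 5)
Your proof is correct, and for the substantive direction (monomorphism implies the displayed equation) it rests on the same key ingredient as the paper's own proof: \cref{lemma.pullbacks}, which exhibits the kernel pair of $\theta$ as $\bigl((\Gamma_1\oplus\Gamma_1),\theta\cp\theta\tp\bigr)$ with projections $\lsh{(\delta_{\Gamma_1}\oplus\Gamma_1)}(\theta\cp\theta\tp)$ and $\lsh{(\Gamma_1\oplus\delta_{\Gamma_1})}(\theta\cp\theta\tp)$. The paper compresses everything into the slogan that a morphism is monic iff its self-pullback projections are identities and declares the rest ``immediate'' from graphical logic; you instead make the extraction explicit, deducing $\pi_1=\pi_2$ from monomorphy and then recovering $\theta\cp\theta\tp\vdash\id_{\varphi_1}$ by applying $\ust{(\delta_{\Gamma_1}\oplus\Gamma_1)}$. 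Your easy direction is genuinely more elementary than the paper's: the split-mono computation $\alpha=\alpha\cp\theta\cp\theta\tp=\beta\cp\theta\cp\theta\tp=\beta$ needs only unitality and associativity (\cref{lemma.cp_unital,lemma.cp_assoc}), and no pullbacks at all. Your opening reduction is also right: the unit of $\theta\dashv\theta\tp$ from \cref{thm.characterize_functions}(ii) gives $\id_{\varphi_1}\vdash\theta\cp\theta\tp$ for free, so only the reverse entailment is ever at issue.

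Two citations deserve repair, though neither is a real gap. First, you lean on \cref{rem.tangible} for the identities $\ust{m}\lsh{m}=\id$ (for monic $m$) and $\ust{(\delta_{\Gamma_1}\oplus\Gamma_1)}\lsh{(\Gamma_1\oplus\delta_{\Gamma_1})}=\lsh{\delta_{\Gamma_1}}\ust{\delta_{\Gamma_1}}$; but that remark is explicitly left unproven in the paper, and the first identity is not even among its four listed properties (it does follow from Beck--Chevalley, but that still requires an argument). Both facts are better obtained directly from po-functoriality of $\pr$ applied to identities of relations in $\frb$: for monic $m$ in $\frc$ one has $\hat{m}\cp\hat{m}\tp=\id$ in $\frb$, because the self-pullback of a mono is trivial and the image of the resulting diagonal is the identity relation, so applying $\pr$ gives $\ust{m}\lsh{m}=\id$; similarly $\widehat{(\Gamma_1\oplus\delta_{\Gamma_1})}\cp\widehat{(\delta_{\Gamma_1}\oplus\Gamma_1)}\tp=\widehat{\delta_{\Gamma_1}}\tp\cp\widehat{\delta_{\Gamma_1}}$ holds in $\frb$ by a direct pullback computation, and applying $\pr$ gives the second identity. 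The last ingredient, $\ust{\delta_{\Gamma_1}}(\theta\cp\theta\tp)\vdash\varphi_1$, follows from breaking (\cref{prop.diagrams_basic}(ii)) together with $\theta\cp\theta\tp\in\rela{\pr}(\varphi_1,\varphi_1)$, after which monotonicity of $\lsh{\delta_{\Gamma_1}}$ finishes the chain. Second, the appeal to \cref{cor.functions_discrete} is superfluous: once $\theta$ is monic, $\pi_1$ and $\pi_2$ are equal as morphisms of $\func{\pr}$, and morphisms of $\func{\pr}$ \emph{are} elements of $\pr(\Gamma_1\oplus\Gamma_1\oplus\Gamma_1)$, so no discreteness of the order is needed.
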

\begin{proof}
  Recall that a morphism is a monomorphism iff the projection maps of its
  pullbacks along itself are the identity maps. Using the characterization of
  the projection maps of the pullback of $\theta$ along itself
  (\cref{lemma.pullbacks}) and the graphical logic, the proposition is immediate. 
\end{proof}

\begin{corollary}[Monomorphisms] \label{cor.monos}
If $\varphi\vdash_\Gamma\varphi'$, then $\id_{\varphi}\in \pr(\Gamma\tens \Gamma)$ as in
\cref{eqn.id_phi} is an element of $\func{\pr}((\Gamma,\varphi),(\Gamma,\varphi'))$ and it
is a monomorphism.
\end{corollary}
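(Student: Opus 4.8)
The plan is to exploit the fact that $\id_\varphi = \lsh{(\delta_\Gamma)}(\varphi)$ is literally the identity morphism on $(\Gamma,\varphi)$ in $\rela{\pr}$ (this is exactly what \cref{lemma.cp_unital} records), so that the only genuinely new content is the adjustment of the \emph{codomain} object from $\varphi$ to $\varphi'$. First I would check that $\id_\varphi$ is an internal relation $\varphi \to \varphi'$. By \cref{lemma.id_is_rela} we already have $\lsh{(\pi_1)}(\id_\varphi)\vdash_\Gamma\varphi$ and $\lsh{(\pi_2)}(\id_\varphi)\vdash_\Gamma\varphi$; combining the second with the hypothesis $\varphi\vdash_\Gamma\varphi'$ by transitivity of entailment gives $\lsh{(\pi_2)}(\id_\varphi)\vdash_\Gamma\varphi'$, so $\id_\varphi\in\rela{\pr}(\varphi,\varphi')$ as required by \cref{def.internal_relations}.

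Next I would argue that $\id_\varphi$ is an internal function. The key observation is that totalness and determinism, as formulated in \cref{def.tot_det}, are conditions on $\theta$ and its domain object $\varphi_1=\varphi$ alone; neither mentions the codomain object. Since $\id_\varphi$ is the identity morphism of $(\Gamma,\varphi)$ in $\rela{\pr}$, it is a left adjoint --- indeed self-adjoint --- and hence lies in $\func{\pr}((\Gamma,\varphi),(\Gamma,\varphi))$; by \cref{thm.characterize_functions} it is therefore total and deterministic. As these two properties are insensitive to the codomain object, \cref{thm.characterize_functions} applies again to conclude that $\id_\varphi$, now viewed as the relation $\varphi\to\varphi'$ established above, is an internal function $(\Gamma,\varphi)\to(\Gamma,\varphi')$.

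Finally I would apply the monomorphism criterion of \cref{prop.monos}: with domain predicate $\varphi_1=\varphi$, it states that $\id_\varphi$ is a monomorphism iff $\id_\varphi\cp(\id_\varphi)\tp$ equals the domain identity $\lsh{(\delta_\Gamma)}(\varphi)=\id_\varphi$. Since $\id_\varphi$ is symmetric under the braiding, rotating its shell (\cref{lem.transpose_rotate}) gives $(\id_\varphi)\tp=\id_\varphi$, and then unitality of composition (\cref{lemma.cp_unital}) yields $\id_\varphi\cp(\id_\varphi)\tp=\id_\varphi\cp\id_\varphi=\id_\varphi$, which is exactly the required equality.

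The main --- really the only --- obstacle is conceptual rather than computational: one must notice that the defining conditions for ``function'' and for ``monomorphism'' refer only to the morphism $\theta$ together with its \emph{domain} predicate, so that all the work already done for the genuine identity $\id_\varphi\colon(\Gamma,\varphi)\to(\Gamma,\varphi)$ transports verbatim to the reinterpreted morphism $(\Gamma,\varphi)\to(\Gamma,\varphi')$. The single place where the new codomain $\varphi'$ intervenes is the entailment $\lsh{(\pi_2)}(\id_\varphi)\vdash\varphi'$, which is immediate from $\lsh{(\pi_2)}(\id_\varphi)\vdash\varphi$ (\cref{lemma.id_is_rela}) and $\varphi\vdash\varphi'$ by transitivity.
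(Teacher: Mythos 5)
Your proof is correct, but it takes a somewhat different route from the paper's. The paper's entire proof is one graphical identity: by ``meets merge circles'' (\cref{lem.meets_merge}, i.e.\ $\varphi\wedge\varphi=\varphi$) it establishes the equation $\id_\varphi\cp(\id_\varphi)\tp=\id_\varphi$ in a single stroke, and then observes that this one equation does both jobs at once --- it exhibits $\id_\varphi$ as self-adjoint, hence an internal function into $(\Gamma,\varphi')$ (the codomain condition coming from $\varphi\vdash_\Gamma\varphi'$ and monotonicity), and it is literally the monomorphism criterion of \cref{prop.monos}. You arrive at the same two facts by a more modular path: the relation condition via \cref{lemma.id_is_rela} plus transitivity; the function condition via the observation that totalness and determinism (\cref{def.tot_det}) never mention the codomain predicate, so \cref{thm.characterize_functions} transports function-hood from $\id_\varphi\colon(\Gamma,\varphi)\to(\Gamma,\varphi)$ to the reinterpreted morphism into $(\Gamma,\varphi')$; and the key idempotence $\id_\varphi\cp(\id_\varphi)\tp=\id_\varphi$ via transpose symmetry (\cref{lem.transpose_rotate}) and unitality (\cref{lemma.cp_unital}) rather than via meets. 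Both routes are sound and of comparable length once the paper's ``it follows easily'' is unpacked; what yours buys is the explicit and reusable observation that the function and monomorphism criteria depend only on the morphism and its domain predicate, which is precisely why weakening the codomain from $\varphi$ to $\varphi'$ costs nothing --- a point the paper leaves implicit.
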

\begin{proof}
Since meets merge circles, we have the equality
\begin{equation}\label{eqn.phi_phi_phi}
\begin{tikzpicture}[unoriented WD, pack size=6pt, baseline=(P1)]
  \node (P1) {
	\begin{tikzpicture}[inner WD]
	  \coordinate (theta);
	  \node[link, left=1.5 of theta] (dot) {};
	  \node[pack, above=3pt of dot] (phi) {$\varphi$};
	  \node[link, right=1.5 of theta] (dot2) {};
	  \node[pack, above=3pt of dot2] (phi2) {$\varphi$};
	  \draw (theta) -- (dot);
	  \draw (dot) -- (phi);
	  \draw (theta) -- (dot2);
	  \draw (dot2) -- (phi2);
	  \draw (dot) -- +(-.5cm, 0);
	  \draw (dot2) -- +(.5cm, 0);
  \end{tikzpicture}
	};
	\node[right=3 of P1] (P2) {
  \begin{tikzpicture}[inner WD]
  	\node[pack] (phi) {$\varphi$};
  	\node[link, below=1pt of phi] (dot) {};
  	\draw (phi) -- (dot);
  	\draw (dot) -- +(-7pt, 0);
  	\draw (dot) -- +(7pt, 0);
  \end{tikzpicture}		
	};
	\node at ($(P1.east)!.5!(P2.west)$) {$=$};
\end{tikzpicture}	
\end{equation}
and it follows easily that $\id_{\varphi}\in\func{\pr}(\varphi,\varphi')$. But this also proves that $\id_{\varphi}$ is a monomorphism, by \cref{prop.monos}.
\end{proof}

\begin{remark}[Equalizers]
Given parallel arrows $\theta, \theta' \colon (\Gamma_1,\varphi_1) \to (\Gamma_2, \varphi_2)$, their equalizing object $(\Gamma_1,e)$ is the following graphical term:
\[
\begin{tikzpicture}[unoriented WD]
	\node (P1) {
	\begin{tikzpicture}[inner WD]
		\node[pack] (xi) {$e$};
	  \draw (xi.west) to[pos=1] node[left, font=\tiny] {$\Gamma_1$} +(-3pt, 0);
	\end{tikzpicture}
  };
  \node[right=3 of P1] (P2) {
	\begin{tikzpicture}[inner WD, pack size=12pt]
	  \node[funcr] (theta) {$\theta$};
	  \node[funcr, below=.5 of theta] (theta2) {$\theta'$};
	  \node[link, left=1.5 of $(theta)!.5!(theta2)$] (dotw) {};
	  \draw (dotw) -- +(-5pt,0);
	  \draw (theta.west) -- (dotw);
	  \draw (theta2.west) -- (dotw);
	  \draw (theta.east) to[out=0, in=0] (theta2.east);
	\end{tikzpicture}	
	};
	\node at ($(P1.east)!.5!(P2.west)$) {$=$};
\end{tikzpicture}
\]
\end{remark}

\section{Image factorizations}\label{sec.images}
We next discuss image factorizations, and show that they are stable under
pullback.

\begin{definition}
Suppose that
$
\theta\in\func{\pr}(\varphi_1,\varphi_2)$ is an internal function. Define its
\define{image}, denoted $\im(\theta)\in \pr(\Gamma_2)$ to be the graphical term 
$
\begin{tikzpicture}[inner WD]
	\node[funcr] (theta) {$\theta$};
	\node[link, left=5pt of theta] (dot) {};
  \draw (dot) -- (theta);
  \draw (theta.east) -- +(5pt,0);			
\end{tikzpicture}
$
or, in symbols, $\ust{\epsilon_{\Gamma_1}}\cp \theta$.
\end{definition}

We will now show that this has the usual properties of images, for example that
$\theta$ is a regular epimorphism in $\func{\pr}$ iff it satisfies 
$
\begin{aligned}
\begin{tikzpicture}[inner WD, pack size=6pt]
	\node[pack] (theta) {$\varphi_2$};
  	\draw (theta.east) -- +(5pt,0);			
\end{tikzpicture}
\end{aligned}
\vdash
\begin{aligned}
\begin{tikzpicture}[inner WD]
	\node[funcr] (theta) {$\theta$};
	\node[link, left=5pt of theta] (dot) {};
  \draw (dot) -- (theta);
  \draw (theta.east) -- +(5pt,0);			
\end{tikzpicture}
\end{aligned}
$.

\begin{proposition} \label{prop.epis}
Consider an element $\theta\in\func{\pr}(\varphi_1,\varphi_2)$. The following are equivalent:
\begin{enumerate}
	\item $\theta$, considered as a morphism in $\func{\pr}$, is a regular epimorphism,
	\item $\varphi_2\vdash_{\Gamma_2} \im(\theta)$,
	\item $\varphi_2= \im(\theta)$, and
	\item
	$
	\begin{tikzpicture}[unoriented WD, baseline=(P1)]
  	\node (P1) {
  	\begin{tikzpicture}[inner WD, pack size=6pt]
    	\node[pack] (phi) {$\varphi_2$};
    	\node[link, below=3pt of phi] (dot) {};
    	\draw (phi) -- (dot);
    	\draw (dot) -- +(10pt,0);
    	\draw (dot) -- +(-10pt,0);
    \end{tikzpicture}	
    };
    \node[right=2 of P1] (P2) {
    \begin{tikzpicture}[inner WD]
  		\node[funcl] (theta) {$\theta$};
  		\node[funcr, right=1 of theta] (theta') {$\theta$};
			\draw (theta.west) -- +(-4pt,0);
			\draw (theta.east) -- (theta'.west);
			\draw (theta'.east) -- +(4pt,0);
    \end{tikzpicture}
    };
		\node at ($(P1.east)!.5!(P2.west)$) (vdash) {$=$};
  \end{tikzpicture}
	$\;.
\end{enumerate}
\end{proposition}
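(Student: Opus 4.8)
The plan is to prove the four conditions equivalent by first disposing of the two ``soft'' equivalences and then concentrating all the work on the characterization of regular epimorphisms. The first observation is that $\im(\theta)=\ust{\epsilon_{\Gamma_1}}\cp\theta$ is precisely the existential pushforward of $\theta$ along the projection that forgets $\Gamma_1$ (graphically, capping the $\Gamma_1$ leg of $\theta$ with a black dot), so that $\im(\theta)=\lsh{(\pi_2)}(\theta)$. Since $\theta\in\rela{\pr}(\varphi_1,\varphi_2)$, \cref{def.internal_relations} gives $\lsh{(\pi_2)}(\theta)\vdash_{\Gamma_2}\varphi_2$, i.e.\ $\im(\theta)\vdash\varphi_2$ holds unconditionally. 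Hence condition (2), which asserts the reverse entailment $\varphi_2\vdash\im(\theta)$, is equivalent by antisymmetry of $\vdash$ to the equality in (3). This settles (2)$\iff$(3) in one line.

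For (3)$\iff$(4) the crux is the graphical identity $\theta\tp\cp\theta=\id_{\im(\theta)}$. The term $\theta\tp\cp\theta$ consists of two copies of $\theta$ sharing their $\Gamma_1$ leg, which is exactly the left-hand configuration of the determinism axiom; by \cref{thm.characterize_functions}(iii) together with the fact (noted after \cref{def.tot_det}) that the determinism entailment is an equality, this equals $\theta$ with its $\Gamma_1$ leg capped off and its $\Gamma_2$ leg doubled along a diagonal, namely $\lsh{(\delta_{\Gamma_2})}(\im(\theta))=\id_{\im(\theta)}$. Granting this, condition (4) reads $\id_{\im(\theta)}=\id_{\varphi_2}$, and capping one leg of an $\id_{(-)}$ recovers the underlying predicate (this is $\lsh{\pi}\lsh{\delta}=\id$, or \cref{lem.dotting_off} applied to \cref{eqn.id_phi}), so $\id_{\im(\theta)}=\id_{\varphi_2}$ holds iff $\im(\theta)=\varphi_2$, which is (3). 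Thus (3)$\iff$(4), and the computation of $\theta\tp\cp\theta$ is a short diagram chase.

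The substantive step is (1)$\iff$(3), tying the abstract categorical notion of regular epimorphism to the concrete predicate equality. The approach is to factor $\theta$ as a corestriction $\bar\theta$ landing in $(\Gamma_2,\im(\theta))$ followed by the monomorphism $\id_{\im(\theta)}\colon(\Gamma_2,\im(\theta))\to(\Gamma_2,\varphi_2)$ supplied by \cref{cor.monos} (applicable since $\im(\theta)\vdash\varphi_2$). If $\im(\theta)=\varphi_2$ this mono is the identity, so $\theta=\bar\theta$, and I would verify directly—using \cref{lemma.pullbacks} to compute the kernel pair of $\theta$ and the graphical logic to exhibit $\theta$ as its coequalizer—that $\theta$ is a regular epimorphism, giving (3)$\Rightarrow$(1). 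Conversely, a regular epimorphism that factors through a monomorphism forces that monomorphism to be a regular epimorphism, hence (being simultaneously mono) an isomorphism, so $\im(\theta)=\varphi_2$, giving (1)$\Rightarrow$(3).

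The hard part will be this last equivalence, for two related reasons. First, I must show the corestriction $\bar\theta$ really is a regular epimorphism; second, I must reason about the notion of regular epimorphism before $\func{\pr}$ is known to be regular (\cref{thm.internal_functions} is only completed later), so I cannot simply invoke an image-factorization theorem that is itself still under construction. To avoid this circularity I expect to argue concretely at the level of coequalizers of kernel pairs—computing everything through the explicit pullback of \cref{lemma.pullbacks} and the determinism/totalness equalities of \cref{thm.characterize_functions}—rather than appealing to the orthogonal factorization system. The remaining verifications, in particular that $\bar\theta$ satisfies the surjectivity equation and that a mono which is a regular epi is an isomorphism, are routine.
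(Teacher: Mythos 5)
Your steps $(2)\iff(3)$ and $(3)\iff(4)$ are correct and essentially identical to the paper's: the observation $\im(\theta)=\lsh{(\pi_2)}(\theta)\vdash\varphi_2$, which holds for any internal relation by \cref{def.internal_relations}, handles the first; and the identity $\theta\tp\cp\theta=\id_{\im(\theta)}$ --- determinism, read as an equality, with the dangling $\Gamma_1$ wire capped --- is exactly the mechanism of the paper's $(3\Rightarrow 4)$. Your $(1)\Rightarrow(3)$, via factoring $\theta$ through the monomorphism $\id_{\im(\theta)}$ of \cref{cor.monos} and invoking the standard fact that a regular epimorphism factoring through a monomorphism forces that mono to be split epi and hence iso, is sound, and is arguably cleaner than the paper's $(1\Rightarrow 2)$, which runs the same idea through the coequalizer's universal property directly. (One small point you elide: invertibility of $\id_{\im(\theta)}$ gives an isomorphism of objects, and you still need a line --- e.g.\ applying $\lsh{(\pi_2)}$ to the equation expressing that the inverse composed with $\id_{\im(\theta)}$ is $\id_{\varphi_2}$ --- to get the equality of predicates $\im(\theta)=\varphi_2$; this is indeed routine.)

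The genuine gap is $(3)\Rightarrow(1)$, which is where nearly all of the work in the paper's proof lives and which you leave as ``I would verify directly \dots\ that $\theta$ is a regular epimorphism.'' Exhibiting $\theta$ as the coequalizer of its kernel pair requires, for an \emph{arbitrary} internal function $\theta'\colon(\Gamma_1,\varphi_1)\to(\Gamma_2',\varphi_2')$ coequalizing the two projections, the construction of a unique mediating function $(\Gamma_2,\varphi_2)\to(\Gamma_2',\varphi_2')$ --- and neither the explicit kernel pair of \cref{lemma.pullbacks} nor the determinism/totalness equalities of $\theta$ alone produce it. The paper's key move is to set $\xi\coloneqq\theta\tp\cp\theta'$ and then to prove three things graphically: that $\xi$ is a function, where the counit $\xi\tp\cp\xi\vdash\id_{\varphi_2'}$ crucially uses the hypothesis that $\theta'$ coequalizes the parallel pair while the unit $\id_{\varphi_2}\vdash\xi\cp\xi\tp$ uses condition (4) together with functionality of $\theta'$; that $\theta\cp\xi=\theta'$, again split into one entailment from totalness of $\theta$ and one from the coequalizing hypothesis; and uniqueness, which follows by cancelling $\theta\tp\cp\theta=\id_{\varphi_2}$ (condition (4) once more). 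This construction, and in particular the way the coequalizing hypothesis on $\theta'$ enters the diagrammatic computations, is the essential content of the proposition; it is not covered by your closing remark that ``the remaining verifications \dots\ are routine,'' so as it stands the proposal does not prove the hard direction.
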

\begin{proof}
$(1\imp 2)$:\quad It is straightforward to show that $\theta\in \pr(\Gamma_1,\Gamma_2)$ is an element of $\func{\pr}(\varphi_1,\im(\theta))$. Now supposing that $\theta$ is a regular epi, i.e.\ that the kernel pair diagram
\[
\begin{tikzcd}
	\varphi_1\times_{\varphi_2}\varphi_1\ar[r, shift left=3pt]\ar[r, shift right=3pt]&
	\varphi_1\ar[r]&
	\varphi_2
\end{tikzcd}
\]
is a coequalizer, it suffices to show that $\im(\theta)$ also coequalizes the
parallel pair:
\begin{equation}\label{eqn.coeqs}
\begin{tikzpicture}[unoriented WD, font=\small,baseline=(P1)]
  \node (P1) {
	\begin{tikzpicture}[inner WD, minimum size=20pt]
		\node[funcr] (theta1) {$\theta$};
		\node[funcl, right=1 of theta1] (theta2) {$\theta$};
		\node[funcd, below left=0 and .5 of theta1] (theta1') {$\theta$};
		\node[link] at (theta1'|-theta1) (dot) {};
		\draw (theta1'.south) -- +(0, -5pt);
		\draw (theta1'.north) -- (dot);
		\draw (dot) -- +(-10pt, 0);
		\draw (dot) -- (theta1.west);
		\draw (theta1.east) -- (theta2.west);
		\draw (theta2.east) -- +(10pt, 0);
  \end{tikzpicture}
	};
	\node[right=3 of P1] (P2) {
	\begin{tikzpicture}[inner WD, minimum size=20pt]
		\node[funcr] (theta1) {$\theta$};
		\node[funcl, right=1 of theta1] (theta2) {$\theta$};
		\node[funcd, below right=0 and .5 of theta2] (theta2') {$\theta$};
		\node[link] at (theta2'|-theta2) (dot) {};
		\draw (theta2'.south) -- +(0, -5pt);
		\draw (theta2'.north) -- (dot);
		\draw (dot) -- +(10pt, 0);
		\draw (dot) -- (theta2.east);
		\draw (theta1.east) -- (theta2.west);
		\draw (theta1.west) -- +(-10pt, 0);
  \end{tikzpicture}	
	};
	\node at ($(P1.east)!.5!(P2.west)$) {$=$};
\end{tikzpicture}
\end{equation}
This follows directly from determinism.

$(2\imp 3)$: For any relation $\theta\in\rela{\pr}(\varphi_1,\varphi_2)$ we always have the converse $\im(\theta)\vdash\varphi_2$.

$(3\imp 4)$: By determinism of $\theta$, we have
$
	\begin{tikzpicture}[unoriented WD, baseline=(P1)]
	  \node (P1) {
	    \begin{tikzpicture}[inner WD, pack size=6pt]
	      \node[pack] (theta) {$\varphi_2$};
	      \node[link, right=.5 of theta] (dot) {};
	      \draw (theta.east) -- (dot);
	      \draw (dot) -- +(5pt,5pt);
	      \draw (dot) -- +(5pt,-5pt);
	    \end{tikzpicture}	
	  };
	  \node[right=3 of P1] (P2) {
	    \begin{tikzpicture}[inner WD]
	      \node[funcr] (theta) {$\theta$};
	      \node[link, right=.5 of theta] (dot) {};
	      \node[link, left=5pt of theta] (dot L) {};
	      \draw (theta.west) -- +(dot L);
	      \draw (theta.east) -- (dot);
	      \draw (dot) -- +(5pt,5pt);
	      \draw (dot) -- +(5pt,-5pt);
	    \end{tikzpicture}
	  };
	  \node[right=3 of P2] (P3) {
	    \begin{tikzpicture}[inner WD]
	      \node[funcr] (theta) {$\theta$};
	      \node[funcr,below=.3 of theta] (theta2) {$\theta$};
	      \draw (theta.west) to[out=180, in=180] (theta2.west);
	      \draw (theta.east) -- +(5pt,0);
	      \draw (theta2.east) -- +(5pt,0);
	    \end{tikzpicture}	
	  };
	  \node at ($(P1.east)!.5!(P2.west)$) {=};
	  \node at ($(P2.east)!.5!(P3.west)$) {=};
	\end{tikzpicture}
$

$(4\imp 1)$: Assuming 4, we need to show that
$
\begin{tikzcd}[column sep=small]
	\varphi_1\times_{\varphi_2}\varphi_1\ar[r, shift left=2pt]\ar[r, shift right=2pt]&
	\varphi_1\ar[r]&
	\varphi_2
\end{tikzcd}
$
is a coequalizer. It is easy to show that $\varphi_2$ coequalizes the parallel pair; this is basically \cref{eqn.coeqs} again. So let $\theta'\colon\varphi_1\to\varphi_2'$ coequalize the parallel pair, and define $\xi\in\rela{\pr}(\varphi_2,\varphi_2')$ by $\xi\coloneqq\theta\tp\cp\theta'$. We need to show that $\xi$ is a function and that $\theta\cp\xi=\theta'$.

We obtain $\id_{\varphi_2}\vdash \xi\cp\xi\tp$ using (4) and the fact that $\theta'$ is a function: 
\[
	\begin{tikzpicture}[unoriented WD, baseline=(P1)]
  	\node (P1) {
  	\begin{tikzpicture}[inner WD, pack size=6pt]
    	\node[pack] (phi) {$\varphi_2$};
    	\node[link, below=3pt of phi] (dot) {};
    	\draw (phi) -- (dot);
    	\draw (dot) -- +(6pt,0);
    	\draw (dot) -- +(-6pt,0);
    \end{tikzpicture}	
    };
    \node[right=2 of P1] (P2) {
    \begin{tikzpicture}[inner WD]
  		\node[funcl] (theta) {$\theta$};
  		\node[funcr, right=1 of theta] (theta') {$\theta$};
			\draw (theta.west) -- +(-4pt,0);
			\draw (theta.east) -- (theta'.west);
			\draw (theta'.east) -- +(4pt,0);
    \end{tikzpicture}
    };
    \node[right=2 of P2] (P3) {
    \begin{tikzpicture}[inner WD]
  		\node[funcl] (theta) {$\theta$};
  		\node[funcr, right=1 of theta] (theta') {$\theta'$};
  		\node[funcl, right=1 of theta'] (theta'') {$\theta'$};
  		\node[funcr, right=1 of theta''] (theta''') {$\theta$};
			\draw (theta.west) -- +(-4pt,0);
			\draw (theta.east) -- (theta'.west);
			\draw (theta'.east) -- (theta''.west);
			\draw (theta''.east) -- (theta'''.west);
			\draw (theta'''.east) -- +(4pt,0);
    \end{tikzpicture}
    };
		\node at ($(P1.east)!.5!(P2.west)$) (vdash) {$=$};
		\node at ($(P2.east)!.5!(P3.west)$) (vdash) {$\vdash$};
  \end{tikzpicture}
.
\]
We obtain $\xi\tp\cp\xi\vdash\id_{\varphi_2'}$ as follows:
\[
\begin{tikzpicture}[unoriented WD]
	\node (P1) {
    \begin{tikzpicture}[inner WD]
  		\node[funcl] (theta) {$\theta'$};
  		\node[funcr, right=1 of theta] (theta') {$\theta$};
  		\node[funcl, right=1 of theta'] (theta'') {$\theta$};
  		\node[funcr, right=1 of theta''] (theta''') {$\theta'$};
			\draw (theta.west) -- +(-4pt,0);
			\draw (theta.east) -- (theta'.west);
			\draw (theta'.east) -- (theta''.west);
			\draw (theta''.east) -- (theta'''.west);
			\draw (theta'''.east) -- +(4pt,0);
    \end{tikzpicture}	
	};
	\node[right=3 of P1] (P2) {
    \begin{tikzpicture}[inner WD]
  		\node[funcr] (theta') {$\theta$};
			\node[link, left=3pt of theta'] (dot) {};
  		\node[funcl, right=1 of theta'] (theta'') {$\theta$};
			\node[link, right=1.25 of theta''] (dot R) {};
  		\node[funcr, right=1.25 of dot R] (theta''') {$\theta'$};
			\node[funcd, below=.25 of dot R] (theta) {$\theta'$};
			\draw (dot) -- (theta'.west);
			\draw (theta'.east) -- (theta''.west);
			\draw (theta''.east) -- (dot R);
			\draw (dot R) -- (theta'''.west);
			\draw (dot R) -- (theta.north);
			\draw (theta'''.east) -- +(4pt,0);
			\draw (theta.south) to[out=270, in=90] +(0, -5pt);
    \end{tikzpicture}			
	};
	\node[right=3 of P2] (P3) {
	\begin{tikzpicture}[inner WD]
	  \node[funcr] (phi) {$\theta'$};
	  \node[link, left=3pt of phi] (dot) {};
	  \node[link, right=3pt of phi] (dot R) {};
	  \draw (phi.west) to (dot);
	  \draw (phi.east) to (dot R);
	  \draw (dot R) -- +(45:7pt);
	  \draw (dot R) -- +(-45:7pt);
	\end{tikzpicture}	
	};
	\node at ($(P1.east)!.5!(P2.west)$) (vdash) {$=$};
	\node at ($(P2.east)!.5!(P3.west)$) (vdash) {$\vdash$};
  \pgfresetboundingbox
	\useasboundingbox (P1.170) rectangle (P3.-10);
\end{tikzpicture}
\]
where the first equality comes from the fact that $\theta'$ coequalizes the parallel pair, and the second is discarding and determinism of $\theta'$. Finally, $\theta'\vdash\theta\cp\theta\tp\cp\theta=\theta\cp\xi$ follows easily from $\theta$ being a function. The converse $\theta\cp\xi\vdash\theta'$ follows from the fact that $\theta'$ coequalizes the parallel pair:
\[
\begin{tikzpicture}[unoriented WD]
	\node (P1) {
  \begin{tikzpicture}[inner WD]
 		\node[funcr] (theta') {$\theta$};
  	\node[funcl, right=1 of theta'] (theta'') {$\theta$};
 		\node[funcr, right=1 of theta''] (theta''') {$\theta'$};
		\draw (theta'.west) -- +(-4pt,0);
		\draw (theta'.east) -- (theta''.west);
		\draw (theta''.east) -- (theta'''.west);
		\draw (theta'''.east) -- +(4pt,0);
  \end{tikzpicture}	
	};
	\node[right=3 of P1] (P2) {
  \begin{tikzpicture}[inner WD]
 		\node[funcr] (theta') {$\theta$};
  	\node[funcl, right=1 of theta'] (theta'') {$\theta$};
		\node[link, left=1.5 of theta'] (dot L) {};
		\node[link, right=.5 of theta''] (dot R) {};
		\node[funcd, below=.25 of dot L] (theta) {$\theta'$};
		\draw (theta'.west) -- (dot L);
		\draw (theta'.east) -- (theta''.west);
		\draw (theta''.east) -- (dot R);
		\draw (dot L) -- +(-6pt,0);
		\draw (dot L) -- (theta);
		\draw (theta.south) -- +(0,-4pt);
  \end{tikzpicture}	
	};
	\node[right=3 of P2] (P3) {
	\begin{tikzpicture}[inner WD]
	  \node[funcr] (phi) {$\theta'$};
	  \draw (phi.west) to +(-4pt,0);
	  \draw (phi.east) to +(4pt,0);
	\end{tikzpicture}	
	};
	\node at ($(P1.east)!.5!(P2.west)$) (vdash) {$=$};	
	\node at ($(P2.east)!.5!(P3.west)$) (vdash) {$\vdash$};	
\end{tikzpicture}
\qedhere
\]
\end{proof}

\begin{lemma}[Image factorizations]\label{lemma.image_fact}
  Any morphism $\theta\colon(\Gamma',\varphi')\to(\Gamma,\varphi)$ can be factored into a regular epimorphism followed by a monomorphism; the image object is $(\Gamma,\ust{\epsilon_{\Gamma'}}\cp\theta)$.
\end{lemma}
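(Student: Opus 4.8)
The plan is to read off the factorization directly from the image object, assembling it from results already in hand. Write $\psi\coloneqq\im(\theta)=\ust{\epsilon_{\Gamma'}}\cp\theta\in\pr(\Gamma)$. The first thing I would record is the general inequality $\psi\vdash_\Gamma\varphi$, which holds for every internal relation $\theta\in\rela{\pr}(\varphi',\varphi)$ and was already observed in the proof of \cref{prop.epis} (the step $2\imp 3$). This inequality is exactly what makes both halves of the factorization land in the correct places.

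Next I would exhibit the two factors, each sharing its underlying poset element with $\theta$ (resp. with an identity). For the regular epimorphism, take $\theta$ itself, now regarded as a morphism $\theta_e\colon(\Gamma',\varphi')\to(\Gamma,\psi)$; the content of \cref{prop.epis} is precisely that $\theta$ is an internal function onto its own image, and since the codomain predicate is now $\psi=\im(\theta_e)$, condition (3) of \cref{prop.epis} holds on the nose, so $\theta_e$ is a regular epi. For the monomorphism, take $\theta_m\coloneqq\id_\psi\in\pr(\Gamma\oplus\Gamma)$ as in \cref{eqn.id_phi}, regarded as a morphism $(\Gamma,\psi)\to(\Gamma,\varphi)$; because $\psi\vdash_\Gamma\varphi$, \cref{cor.monos} shows this is a well-defined internal function and is monic.

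Finally I would check $\theta_e\cp\theta_m=\theta$ in $\func{\pr}$. Since $\theta_m$ has the same underlying element as the identity relation on $(\Gamma,\psi)$, the composite reduces by unitality (\cref{lemma.cp_unital}) to the element $\theta$, which is exactly the original morphism $(\Gamma',\varphi')\to(\Gamma,\varphi)$; and because the order on functions is discrete (\cref{cor.functions_discrete}), equality of underlying elements already forces equality of morphisms. The only genuinely non-formal point — and hence the step I expect to cost the most attention — is confirming that $\theta_e$ is total and deterministic as a function into $(\Gamma,\psi)$ rather than into $(\Gamma,\varphi)$; but shrinking the codomain predicate from $\varphi$ to the smaller $\psi$ leaves the totality and determinism diagrams of \cref{thm.characterize_functions} untouched, so this is immediate from the graphical identities already established. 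Assembling these pieces yields the desired regular-epimorphism-followed-by-monomorphism factorization of $\theta$ through the image object $(\Gamma,\psi)$.
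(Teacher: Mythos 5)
Your proposal is correct and follows essentially the same route as the paper: factor $\theta$ through $(\Gamma,\im\theta)$, with $\theta$ itself as the regular epi (via \cref{prop.epis}) and $\id_{\im\theta}=\lsh{(\delta_\Gamma)}(\ust{\epsilon_{\Gamma'}}\cp\theta)$ as the mono (via \cref{cor.monos}, using $\im\theta\vdash\varphi$). The paper merely asserts the composite equals $\theta$ via a graphical identity and calls the rest immediate, whereas you spell out the same verifications (function-hood of $\theta$ into the image object, unitality for the composite) explicitly — a difference of detail, not of approach.
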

\begin{proof}
  The image factorization of $\theta$ is given by
    \[
    \begin{aligned}
      \begin{tikzpicture}[unoriented WD, font=\small]
	\node (P1) {
	  \begin{tikzpicture}[inner WD]
	    \node[funcr] (xi) {$\theta$};
	    \draw (xi.west) to[pos=1] node[left, font=\tiny] {$\Gamma'$} +(-3pt, 0);
	    \draw (xi.east) to[pos=1] node[right, font=\tiny] {$\Gamma$} +(3pt, 0);
	  \end{tikzpicture}
	};
	\node[right=3 of P1] (P2) {
	  \begin{tikzpicture}[inner WD]
	    \node[funcr] (xi) {$\theta$};
	    \node[outer pack, surround sep = 1pt, fit=(xi)] (cp1) {};
	    \node[link, right=20pt of xi] (dot) {};
	    \node[funcd, above=2pt of dot] (theta) {$\theta$};
	    \node[link, above=2pt of theta] (dot2) {};
	    \node[outer pack, surround sep = 0pt, fit=(theta) (dot) (dot2)] (cp2) {};
	    \node[outer pack, surround sep = 4pt, fit=(xi) (theta) (dot) (dot2)] (outer) {};
	    \draw (xi.west) to (xi-|outer.west);
	    \draw (xi.east) -- (dot);
	    \draw (theta.south) -- (dot);
	    \draw (theta.north) -- (dot2);
	    \draw (dot) to (xi-|outer.east);
	  \end{tikzpicture}			
	};
	\node at ($(P1.east)!.5!(P2.west)$) {$=$};
  \pgfresetboundingbox
	\useasboundingbox (P1.west|-P2.160) rectangle (P2.east|-P2.-20);
      \end{tikzpicture}
    \end{aligned}
  \]
  The graphical representation of the image object $(\Gamma,\ust{\epsilon_{\Gamma'}}\cp\theta)$ is $
\begin{tikzpicture}[inner WD]
	\node[funcr] (theta) {$\theta$};
	\node[link, left=5pt of theta] (dot) {};
  \draw (dot) -- (theta);
  \draw (theta.east) -- +(5pt,0);			
\end{tikzpicture}
$
\;. It is immediate from
  \cref{prop.epis} that $\theta$ is a regular epimorphism $(\Gamma',\varphi') \to
  (\Gamma,\ust{\epsilon_{\Gamma'}}\cp\theta)$, and from \cref{cor.monos} that
  $\lsh{(\delta_\Gamma)}(\ust{\epsilon_{\Gamma'}}\cp\theta)$ is a monomorphism
  $(\Gamma,\ust{\epsilon_{\Gamma'}}\cp\theta) \to (\Gamma,\varphi)$.
\end{proof}

\begin{lemma}[Pullback stability of image factorizations]\label{lemma.pb_stability}
The pullback of a regular epimorphism along any morphism is again a regular epimorphism in $\func{\pr}$.
\end{lemma}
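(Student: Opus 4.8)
The plan is to combine the explicit pullback of \cref{lemma.pullbacks} with the characterization of regular epimorphisms in \cref{prop.epis}, and then read off the conclusion from a short graphical computation. By the symmetry of the pullback construction it suffices to treat one projection, so fix an arbitrary $\theta_1\colon(\Gamma_1,\varphi_1)\to(\Gamma,\varphi)$ and a regular epimorphism $\theta_2\colon(\Gamma_2,\varphi_2)\to(\Gamma,\varphi)$ in $\func{\pr}$, and form the pullback of \cref{lemma.pullbacks}, whose vertex is $((\Gamma_1\oplus\Gamma_2),\theta_{12})$ with $\theta_{12}=\theta_1\cp\theta_2\tp$. I must show that the projection $p_1\coloneqq\lsh{(\delta_{\Gamma_1}\oplus\Gamma_2)}(\theta_{12})\colon((\Gamma_1\oplus\Gamma_2),\theta_{12})\to(\Gamma_1,\varphi_1)$ is a regular epimorphism. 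By the implication $(2)\Rightarrow(1)$ of \cref{prop.epis}, it is enough to prove the single entailment $\varphi_1\vdash_{\Gamma_1}\im(p_1)$.

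The heart of the argument is to identify $\im(p_1)$, the graphical term obtained by capping off the domain $\Gamma_1\oplus\Gamma_2$ of $p_1$. Drawing $p_1$ as the left-projection diagram of \cref{lemma.pullbacks}, capping the domain places discard dots on the domain-$\Gamma_1$ leg of the copy node and on the dangling $\Gamma_2$-port of the $\theta_2\tp$ factor. Capping one leg of the copy node collapses it to a bare wire, by the counit law $\delta_{\Gamma_1}\cp(\Gamma_1\oplus\epsilon_{\Gamma_1})=\Gamma_1$ for the unique copy comonoid on $\Gamma_1$ (\cref{lemma.comonoids_unique}), so the input $\Gamma_1$-port of $\theta_1$ becomes the sole output; capping the $\Gamma_2$-port of $\theta_2\tp$ turns that factor into $\im(\theta_2)\in\pr(\Gamma)$. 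Thus $\im(p_1)$ is exactly $\theta_1$ with its $\Gamma$-port met against the predicate $\im(\theta_2)$ and read off at $\Gamma_1$; in other words, it is the preimage of $\im(\theta_2)$ under the function $\theta_1$.

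Now I invoke the two hypotheses. Since $\theta_2$ is a regular epimorphism, \cref{prop.epis} gives $\im(\theta_2)=\varphi$, so the $\Gamma$-port of $\theta_1$ is met against $\varphi$. But $\theta_1\in\rela{\pr}(\varphi_1,\varphi)$, so by \cref{def.internal_relations} we have $\lsh{(\pi_2)}\theta_1\vdash\varphi$; hence $\theta_1$ already entails $\varphi$ on its $\Gamma$-port and this meet is redundant (\cref{lem.meets_merge,prop.diagrams_basic}). Therefore $\im(p_1)$ reduces to $\theta_1$ with its $\Gamma$-port simply capped and its $\Gamma_1$-port output, namely the domain $\lsh{(\pi_1)}\theta_1$ of $\theta_1$. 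Finally, $\theta_1$ is a function and hence total (\cref{thm.characterize_functions,def.tot_det}), which says precisely that $\varphi_1\vdash\lsh{(\pi_1)}\theta_1=\im(p_1)$. This is the desired entailment.

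The step I expect to be the main obstacle is the graphical bookkeeping in the second paragraph: carefully tracking which of the three ports of $p_1$ is discarded when forming the image, and checking that the half-capped copy node genuinely collapses to an identity wire (via the comonoid counit law) rather than leaving behind a spurious existentially bound variable. Once $\im(p_1)$ has been correctly identified as the preimage of $\im(\theta_2)$ along $\theta_1$, the two remaining reductions are immediate consequences of $\theta_2$ being a regular epimorphism and $\theta_1$ being total.
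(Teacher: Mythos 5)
Your proof is correct and follows essentially the same route as the paper's: both reduce the claim via \cref{prop.epis}\,(2)$\Rightarrow$(1) to the entailment $\varphi_1\vdash\im(p_1)$, identify $\im(p_1)$ as the preimage of $\im(\theta_2)$ along $\theta_1$ (where the meet against $\varphi=\im(\theta_2)$ is redundant because $\theta_1$ is a relation into $\varphi$), and conclude by totality of $\theta_1$. The only difference is presentational: the paper packages this as a two-step entailment chain $\varphi_1\vdash(\theta_1\text{ capped})\vdash\im(p_1)$ using $\varphi\vdash\im(\theta_2)$, whereas you first simplify $\im(p_1)$ to $\lsh{(\pi_1)}\theta_1$ using the equality $\im(\theta_2)=\varphi$ and then apply totality.
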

\begin{proof}
Suppose that $\xi\colon\varphi_1\to\varphi$ is a regular epimorphism and that
$\theta\colon\varphi_2\to\varphi$ is any morphism. Then the pullback
$\theta\times_{\varphi}\xi\to\varphi_2$ is a regular epimorphism by
\cref{prop.epis} and the following reasoning:
$\begin{aligned}
\begin{tikzpicture}[unoriented WD]
	\node (P1) {
  \begin{tikzpicture}[inner WD, pack size=6pt]
	  \node[pack] (phi) {$\varphi_2$};
		\draw (phi.west) to +(-2pt, 0);
	\end{tikzpicture}
	};
	\node[right=3 of P1] (P2) {
  \begin{tikzpicture}[inner WD]
		\node[funcr] (theta) {$\theta$};
	  \node[link, right=.5 of theta] (dot) {};
	  \draw (theta.west) -- +(-2pt,0);
	  \draw (theta.east) -- (dot);
  \end{tikzpicture}	
	};
	\node[right=3 of P2] (P3) {
  \begin{tikzpicture}[inner WD]
		\node[funcr] (theta) {$\theta$};
		\node[funcl, right=1 of theta] (theta') {$\xi$};
	  \node[link, right=.5 of theta'] (dot) {};
	  \draw (theta.west) -- +(-2pt,0);
	  \draw (theta.east) -- (theta'.west);
	  \draw (theta'.east) -- (dot);
  \end{tikzpicture}	
	};
	\node at ($(P1.east)!.5!(P2.west)$) (vdash) {$\vdash$};	
	\node at ($(P2.east)!.5!(P3.west)$) (vdash) {$\vdash$};	
\end{tikzpicture}
\end{aligned}$.
\end{proof}

It is now straightforward to observe that $\func{\pr}$ is a regular category.

\begin{proof}[Proof of \cref{thm.internal_functions}]\label{page.proof_thm.internal_functions}
By \cref{lemma.terminal,lemma.pullbacks}, $\func{\pr}$ has all finite limits, and by \cref{lemma.image_fact,lemma.pb_stability}, it has pullback-stable image factorizations.
\end{proof}

\section{Subobject lattices in $\func{\pr}$}

We will find the following characterization of the subobject lattices in $\func{\pr}$ useful.

\begin{proposition}\label{prop.T_sub_RT}
Let $(\typeset, \pr)$ be a regular calculus, let $\Gamma\in\frc$ be a context, and let $s\in \pr(\Gamma)$. There is an isomorphism of posets
\[
  \{t \in \pr(\Gamma) \mid t \leq s\} \cong \sub_{\func{\pr}} (\Gamma, s),
\]
with each element $t \leq s$ mapped to the subobject $\pr(\lsh\delta)(t) =
  \begin{tikzpicture}[inner WD]
    \node[pack, inner sep=0pt] (phi) {$t$};
    \node[link, below=2pt of phi] (dot) {};
    \draw (phi) -- (dot);
    \draw (dot) -- +(-8pt, 0);
    \draw (dot) -- +(8pt, 0);
  \end{tikzpicture}
  \colon (\Gamma,t) \to (\Gamma,s)
$.
\end{proposition}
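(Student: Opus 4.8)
The plan is to exhibit the claimed isomorphism as the map $\Phi\colon t\mapsto[\id_t]$ sending $t\le s$ to the subobject of $(\Gamma,s)$ represented by the monomorphism $\id_t=\lsh{(\delta_\Gamma)}(t)$ of \cref{eqn.id_phi}, and to construct an explicit inverse using image factorization in $\func{\pr}$. That $\Phi$ is well defined---that $\id_t$ is indeed a monomorphism $(\Gamma,t)\to(\Gamma,s)$ whenever $t\le s$---is exactly \cref{cor.monos}, which relies on $\func{\pr}$ being regular (\cref{thm.internal_functions}). So the target poset $\sub_{\func{\pr}}(\Gamma,s)$ makes sense and $\Phi$ maps into it.

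First I would record two graphical computations. The composite of two diagonal relations is $\id_t\cp\id_{t'}=\id_{t\wedge t'}$, immediate from the ``meets are merges'' identity \cref{lem.meets_merge}; and the image of a diagonal relation is $\im(\id_t)=t$, obtained by capping a leg of the graphical term $\id_t$ and applying \cref{lem.meets_merge} and \cref{lem.dotting_off}. The first computation shows $\Phi$ is monotone: if $t\le t'\le s$ then $\id_t\colon(\Gamma,t)\to(\Gamma,t')$ is a morphism (again \cref{cor.monos}), and $\id_t\cp\id_{t'}=\id_{t\wedge t'}=\id_t$ exhibits a factorization of $\id_t$ through $(\Gamma,t')$, giving $[\id_t]\le[\id_{t'}]$.

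For the inverse I would set $\Psi([m])\coloneqq\im(m)$ for a monomorphism $m\colon(\Gamma',\psi)\inj(\Gamma,s)$. By \cref{def.internal_relations} the image satisfies $\im(m)\vdash s$, so $\Psi([m])$ lands in the downset of $s$; and $\Psi$ is independent of the chosen representative by uniqueness of image factorization (\cref{lemma.OFS}) in the regular category $\func{\pr}$. The relation $\Psi\circ\Phi=\id$ is then just $\im(\id_t)=t$. For $\Phi\circ\Psi=\id$, given a monomorphism $m$ put $t=\im(m)$: \cref{lemma.image_fact} factors $m$ as a regular epimorphism $(\Gamma',\psi)\surj(\Gamma,t)$ followed by the monomorphism $\id_t\colon(\Gamma,t)\inj(\Gamma,s)$. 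Since $m$ is itself monic, the trivial factorization $m=\id_{(\Gamma',\psi)}\cp m$ is also an image factorization, so uniqueness (\cref{lemma.OFS}) yields an isomorphism $(\Gamma',\psi)\iso(\Gamma,t)$ compatible with the two monos, whence $[m]=[\id_t]=\Phi(\Psi([m]))$. This proves $\Phi$ is a bijection.

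Finally, to upgrade the order-preserving bijection $\Phi$ to an isomorphism of posets it remains to check that $\Phi$ reflects order. Suppose $[\id_t]\le[\id_{t'}]$, witnessed by a morphism $g\colon(\Gamma,t)\to(\Gamma,t')$ in $\func{\pr}$ with $g\cp\id_{t'}=\id_t$. Applying $\im(-)$ to this equality and using $\im(\id_t)=t$ and $\im(\id_{t'})=t'$ together with the general fact that the image of a composite lies below the image of its last factor, $\im(g\cp\id_{t'})\le\im(\id_{t'})$ (valid in any regular category), we obtain $t\le t'$. I expect the main obstacle to be bookkeeping rather than conceptual: pinning down the two graphical identities $\id_t\cp\id_{t'}=\id_{t\wedge t'}$ and especially $\im(\id_t)=t$ precisely, and then invoking uniqueness of the (regular epi, mono) factorization carefully enough to identify an arbitrary subobject of $(\Gamma,s)$ with one of the form $[\id_t]$.
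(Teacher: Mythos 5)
Your proof is correct and takes essentially the same route as the paper: both send $t$ to the diagonal monomorphism $\lsh{(\delta_\Gamma)}(t)$ (well-defined by \cref{cor.monos}), both obtain surjectivity from the image factorization of \cref{lemma.image_fact} together with uniqueness of (regular epi, mono) factorizations, and both obtain injectivity/order-reflection by applying the image operation to a morphism witnessing subobject containment (the paper's computation $\im\theta\wedge t = t'$ is your final step in different clothing). The one point to tighten is your parenthetical ``valid in any regular category'': deducing the poset inequality $\im(g\cp\id_{t'})\vdash t'$ from categorical image containment in $\func{\pr}$ would presuppose the order-correspondence being proved, so justify it directly instead—$g\cp\id_{t'}$ is an internal relation into $(\Gamma,t')$ by \cref{lemma.comp_rela_rela}, hence its image entails $t'$ by \cref{def.internal_relations}.
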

\begin{proof}
  The proposed map indeed sends each $t$ to a subobject by the characterization of monomorphisms in \cref{cor.monos}. To see that it is surjective, note that given a monomorphism $\theta\colon (\Gamma',s') \to (\Gamma,s)$ in $\func{\pr}$, \cref{lemma.image_fact} (characterizing image factorizations) shows that it is isomorphic to the monomorphism
	\[
		\begin{tikzpicture}[inner WD,baseline=(current  bounding  box.center)]
			\node[link] (dot) {};
			\node[funcd, above=2pt of dot] (theta) {$\theta$};
			\node[link, above=2pt of theta] (dot2) {};
			\draw (dot) -- +(-1,0);
			\draw (theta.south) -- (dot);
			\draw (theta.north) -- (dot2);
			\draw (dot) -- +(1,0);
		\end{tikzpicture}
		\colon
		\big(\Gamma,
		    \resizebox{2em}{!}{
		\begin{tikzpicture}[inner WD]
			\node[funcr] (theta) {$\theta$};
			\node[link, left=5pt of theta] (dot) {};
			\draw (dot) -- (theta);
			\draw (theta.east) -- +(5pt,0);
		\end{tikzpicture}}\big)
		\to (\Gamma,s)
	\]
	where $
		    \resizebox{2em}{!}{
	\begin{tikzpicture}[inner WD]
			\node[funcr] (theta) {$\theta$};
			\node[link, left=5pt of theta] (dot) {};
			\draw (dot) -- (theta);
			\draw (theta.east) -- +(5pt,0);
		\end{tikzpicture}
			}
		=
		\pr(\lsh\epsilon \oplus \Gamma)(\theta)$.

		To see that it is injective, suppose we have a map $\theta$ of monomorphisms
		\[
			\begin{tikzcd}[row sep=.5ex, column sep =6ex]
        (\Gamma,t') \ar[dd, "\theta"'] \ar[dr, "\pr(\lsh\delta)(t')" near start] \\
        & (\Gamma,s) \\
        (\Gamma,t) \ar[ur, "\pr(\lsh\delta)(t)"' near start]
			\end{tikzcd}
    \] 
    Note that this implies that
\[
	\begin{tikzpicture}[unoriented WD, pack size=10pt]
		\node (P0) {$\im\theta \wedge t$};
		\node[right=2 of P0] (P1) {
			\begin{tikzpicture}[inner WD]
				\coordinate (cent);
				\node[link, left=2 of theta] (dot) {};
				\node[funcr, inner sep=2pt, left=.8 of cent] (theta) {$\theta$};
				\node[link, right=.8 of cent] (dot2) {};
				\node[pack, above=2pt of dot2, inner sep=1pt] (phi2) {$t$};
				\draw (theta.east) -- (dot2);
				\draw (theta) -- (phi);
				\draw (dot2) -- (phi2);
				\draw (theta) -- (dot);
				\draw (dot2) -- +(.5cm, 0);
			\end{tikzpicture}
		};
		\node[right=3 of P1] (P2) {
			\begin{tikzpicture}[inner WD]
				\node[pack, inner sep=1pt] (phi) {$t'$};
				\node[link, below=1pt of phi] (dot) {};
				\node[link, left=3pt of dot] (dot0) {};
				\draw (phi) -- (dot);
				\draw (dot) -- (dot0);
				\draw (dot) -- +(7pt, 0);
			\end{tikzpicture}
		};
		\node[right=2 of P2] (P3) {$t'$};
		\node at ($(P0.east)!.5!(P1.west)$) {$=$};
		\node at ($(P1.east)!.5!(P2.west)$) {$=$};
		\node at ($(P2.east)!.5!(P3.west)$) {$=$};
	\end{tikzpicture}
\]
and hence that $t' \le t \in \pr(\Gamma)$. Thus the subobjects $(\Gamma,t)$ and $(\Gamma,t')$ of $(\Gamma,s)$ are isomorphic if and only if $t = t'$. This proves the proposition.
\end{proof}

Our main theorem is to prove an adjunction between regular calculi and regular categories, and we will get to this in the next section. To round out the picture, however, we quickly record that the \emph{po-category} $\rela{\pr}$ of internal relations in a regular calculus is also regular: it is the relations po-category of $\func{\pr}$.

\begin{corollary} \label{thm.syn_is_regular}
 Let $(\typeset, \pr)$ be a regular calculus. Then $\rela{\pr}$ is isomorphic to the po-category of relations in $\func{\pr}$. In particular, $\rela{\pr}$ is a regular po-category.
\end{corollary}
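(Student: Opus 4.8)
The plan is to establish an isomorphism of po-categories $\rela{\pr}\cong\rrel{\func{\pr}}$ over the identity on objects, and then invoke \cref{thm.internal_functions} to conclude that $\rela{\pr}$ is a regular po-category. Since both po-categories have objects of the form $(\Gamma,\varphi)$, the identity-on-objects claim is automatic, so the content is entirely in identifying the hom-posets and checking that the identification respects composition, identities, and the local order. First I would recall the definition of $\rrel{\func{\pr}}$: a morphism $(\Gamma_1,\varphi_1)\tickar(\Gamma_2,\varphi_2)$ in the relations po-category of $\func{\pr}$ is a subobject of the product $(\Gamma_1,\varphi_1)\times(\Gamma_2,\varphi_2)$ in $\func{\pr}$. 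By \cref{lemma.pullbacks,lemma.terminal} this product is computed via the pullback construction, and by \cref{cor.descriptions} its underlying context is $\Gamma_1\oplus\Gamma_2$ with predicate $\rho(\varphi_1,\varphi_2)$. Thus I would first show that the product object $(\Gamma_1,\varphi_1)\times(\Gamma_2,\varphi_2)$ in $\func{\pr}$ is $(\Gamma_1\oplus\Gamma_2,\rho(\varphi_1,\varphi_2))$.

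The key step is then to apply \cref{prop.T_sub_RT} to the product object: it gives an isomorphism of posets
\[
  \sub_{\func{\pr}}\big(\Gamma_1\oplus\Gamma_2,\rho(\varphi_1,\varphi_2)\big)
  \cong
  \{\theta\in\pr(\Gamma_1\oplus\Gamma_2)\mid\theta\vdash\rho(\varphi_1,\varphi_2)\}.
\]
I would then identify the right-hand side with $\rela{\pr}(\varphi_1,\varphi_2)$ as defined in \cref{def.internal_relations}. This requires checking that the condition $\theta\vdash\rho(\varphi_1,\varphi_2)$ is equivalent to the two conditions $\lsh{(\pi_1)}\theta\vdash\varphi_1$ and $\lsh{(\pi_2)}\theta\vdash\varphi_2$. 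Graphically, $\theta\vdash\rho(\varphi_1,\varphi_2)$ says $\theta$ entails the term with $\varphi_1$ and $\varphi_2$ merged into the shell, and by \cref{lem.dotting_off} (discarding) together with the adjunction $\lsh{(\pi_i)}\dashv\ust{(\pi_i)}$ one extracts the two projection conditions; conversely, meeting the two projected entailments using \cref{lem.meets_merge} recovers the single entailment. This is exactly the sort of reasoning already performed in \cref{prop.characterize_relation}, so I would lean on that style of argument.

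Finally I would verify functoriality: that the identification carries the composition rule \cref{eqn.composition} of $\rela{\pr}$ to relational composition in $\rrel{\func{\pr}}$, and identities to identities. Composition in $\rrel{\func{\pr}}$ is pullback followed by image factorization, and I would match this against the $\comp_{\Gamma_1,\Gamma_2,\Gamma_3}$ map using \cref{lemma.pullbacks,lemma.image_fact}, with the pullback-along-diagonal computing the fibre product and the image discarding $\Gamma_2$; this is essentially the content of the graphical composite already drawn in \cref{sec.int_rels}. The local order matches by construction since both are the subposet order inherited from $\pr(\Gamma_1\oplus\Gamma_2)$. I expect the \textbf{main obstacle} to be the composition-matching step: showing that the relational composite in $\rrel{\func{\pr}}$ (pullback then image) agrees on the nose with $\pr(\comp)\cp\rho$, since this requires carefully tracking how the pullback of the two relations, taken in $\func{\pr}$ and then imaged to the product of endpoints, unwinds via \cref{lemma.pullbacks,lemma.image_fact} into precisely the graphical nesting that defines $\cp$. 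Once that identification is pinned down, the isomorphism of po-categories follows, and \cref{def.regular_pocat} together with \cref{thm.internal_functions} gives that $\rela{\pr}$ is a regular po-category.
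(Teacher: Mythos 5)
Your proposal is correct and follows essentially the same route as the paper: identity on objects, identification of the hom-posets via \cref{prop.T_sub_RT} applied to the product $(\Gamma_1\oplus\Gamma_2,\rho(\varphi_1,\varphi_2))$, and agreement of composition via \cref{lemma.pullbacks} and graphical reasoning. The paper's own proof is just a compressed version of this, leaving implicit the two details you spell out (the computation of products in $\func{\pr}$ and the equivalence of the condition $\theta\vdash\rho(\varphi_1,\varphi_2)$ with the projection conditions of \cref{def.internal_relations}).
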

\begin{proof}
  Observe that $\rela{\pr}$ and $\func{\pr}$ have the same set of objects by definition, and that by \cref{prop.T_sub_RT} for any two objects $(\Gamma,s)$, $(\Gamma',s')$ the poset of relations $(\Gamma,s) \tickar (\Gamma',s')$ in $\func{\pr}$ is given by $\{\theta \in \pr(\Gamma\oplus\Gamma') \mid \theta \le s \boxplus s'\}$. It remains to prove that the composition rule in $\rela{\pr}$ agrees with composition of relations in $\func{\pr}$. Reasoning using graphical terms, this is a straightforward consequence of \cref{lemma.pullbacks}, which describes pullbacks in the category $\func{\pr}$. 
\end{proof}

\chapter{$\rgcat$ is essentially a reflective subcategory of $\rgcalc$}\label{chap.ess_refl}

We have now proved \cref{thm.internal_functions}, which constructs a regular category $\func{\pr}$ from any regular calculus $(\typeset, \pr)$. We call $\func{\pr}$ the \emph{syntactic category} corresponding to $\pr$. In this section we show that this construction is functorial, and that there is an adjunction
\[
	\adj{\rgcalc}{\syn}{\prd}{\rgcat.}
\]
Moreover, $\rgcat$ is essentially a reflective subcategory of $\rgcalc$, in the sense that for any regular category $\cat{R}$, the counit map $\syn(\prd(\cat{R}))\to\cat{R}$ is an equivalence of categories. In future work we plan to show that there is 2-dimensional structure throughout, such that the above adjunction extends to a 2-adjunction in which $\rgcat$ is 2-reflective.

\section{The functor $\syn\colon\rgcalc\to\rgcat$}

We want to define a functor $\syn\colon\rgcalc\to\rgcat$ that is adjoint to $\prd$ from \cref{prop.rels}. On objects, this is
now easy: given a regular calculus $(\typeset, \pr)\in\rgcalc$, define
$\syn(\typeset, \pr)\coloneqq\func{\pr}$ as in \cref{eqn.RT}; objects are pairs
$(\Gamma,\varphi)$ where $\Gamma\in\frc$ and $\varphi\in \pr(\Gamma)$,
and morphisms are internal functions $\theta$ as in \cref{thm.characterize_functions}. 

For morphisms, suppose given $(F,F^\sharp)\colon(\typeset, \pr)\to (\typeset',\pr')$:
\[
\begin{tikzcd}[row sep=5pt]
  \typeset\ar[dd, "F"']&
  \frb[\typeset]\ar[dd, "\ol{F}"']\ar[dr, bend left=15pt, "\pr", ""' name={T}]\\
  &&[20pt]\pposet\\
  \typeset'&
  \frb[\typeset']\ar[ur, bend right=15pt, "\pr'"', "" name={T'}]
  \ar[from=T, to={T'-|T}, twocell, "F^\sharp"']
\end{tikzcd}
\]
where again $\ol{F}\coloneqq\frb[F]$. We define $\funr{F}\coloneqq\syn(F,F^\sharp)\colon\func{\pr}\to\func{\pr'}$
on an object $(\Gamma,\varphi)\in\func{\pr}$ by
\begin{equation}\label{eqn.F_objects}
	\funr{F}(\Gamma,\varphi)\coloneqq \left(\ol{F}(\Gamma), F^\sharp_\Gamma(\varphi)\right)\in\func{\pr'}.
\end{equation}
and on a morphism $\theta\colon(\Gamma_1,\varphi_1)\to (\Gamma_2,\varphi_2)$ by
\begin{equation}\label{eqn.F_morphisms}
	\funr{F}(\theta)\coloneqq F^\sharp_{\Gamma_1\oplus\Gamma_2}(\theta).
\end{equation}

\begin{theorem} \label{thm.syn_def}
The assignment $\syn(\typeset, \pr)\coloneqq\func{\pr}$ on objects, and \cref{eqn.F_objects,eqn.F_morphisms} on morphisms, constitutes a functor $\syn\colon\rgcalc\to\rgcat$.
\end{theorem}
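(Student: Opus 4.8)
The plan is to reduce the entire statement to a single \emph{compatibility lemma}: that $F^\sharp$ intertwines the interpretation of graphical terms in $\pr$ with that in $\pr'$. Writing $\ol{F}\coloneqq\frb[F]$, I would first record the structural facts I intend to use. Since $(F,F^\sharp)$ is a morphism of regular calculi, each component $F^\sharp_\Gamma\colon\pr(\Gamma)\to\pr'(\ol{F}\Gamma)$ is monotone, the naturality squares commute on the nose (so $\pr(\omega)\cp F^\sharp_{\Gamma'}=F^\sharp_{\Gamma}\cp\pr'(\ol{F}\omega)$ for every $\omega\colon\Gamma\tickar\Gamma'$ in $\frb$), and, by the strictness hypothesis, $F^\sharp$ commutes strictly with the laxators $\rho$, hence preserves $\true$ and binary meets. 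On the other side, $\ol{F}=\frb[F]$ is a strong monoidal regular po-functor, so it preserves the unit $\unit$, the product $\oplus$, and the structural morphisms $\delta$, $\epsilon$, $\pi_i$, $\sigma$, and $\comp$ of $\frc\subseteq\frb$, as well as the local order. Combining naturality with strict monoidality then yields the lemma
\[
  F^\sharp\big(\church{(\theta_1,\dots,\theta_k;\omega)}\big)
  =\church{(F^\sharp\theta_1,\dots,F^\sharp\theta_k;\ol{F}\omega)},
\]
which says that $F^\sharp$ transports each graphical term to its $\ol{F}$-image. This is the engine of the whole proof.

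Given the lemma, I would show $\funr{F}=\syn(F,F^\sharp)$ is a well-defined functor $\func{\pr}\to\func{\pr'}$. Objects go to objects by construction (\cref{eqn.F_objects}). For a morphism $\theta\in\rela{\pr}(\varphi_1,\varphi_2)$, monotonicity of $F^\sharp$ together with naturality applied to the projections $\pi_i$ (which $\ol{F}$ preserves) turns the domain entailments $\lsh{(\pi_i)}\theta\vdash\varphi_i$ of \cref{def.internal_relations} into the corresponding entailments for $F^\sharp\theta$, so $F^\sharp\theta$ is an internal relation $F^\sharp\varphi_1\to F^\sharp\varphi_2$. To see that it is a \emph{function}, I would invoke the characterisation of functions as total and deterministic relations (\cref{thm.characterize_functions}): totality and determinism are equalities of graphical terms, so by the lemma they pass from $\theta$ to $F^\sharp\theta$. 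Functoriality is then immediate: the identity $\id_\varphi=\church{(\varphi;\delta_\Gamma)}$ of \cref{eqn.id_phi} and the composite $\theta_{12}\cp\theta_{23}=\church{(\theta_{12},\theta_{23};\comp)}$ of \cref{eqn.composition} are graphical terms built from structural morphisms preserved by $\ol{F}$, so the lemma gives $F^\sharp\id_\varphi=\id_{F^\sharp\varphi}$ and $F^\sharp(\theta_{12}\cp\theta_{23})=F^\sharp\theta_{12}\cp F^\sharp\theta_{23}$.

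Next I would check that $\funr{F}$ is a \emph{regular} functor, so that it is a genuine morphism of $\rgcat$; all three requirements follow from the explicit graphical constructions of \cref{chap.functions}. The terminal object is preserved because $\ol{F}(\unit)=\unit$ and $F^\sharp(\true)=\true$, whence $\funr{F}(\unit,\true)=(\unit,\true)$ (\cref{lemma.terminal}). For pullbacks, the construction of \cref{lemma.pullbacks} expresses the pullback of $\theta_1,\theta_2$ and its projections entirely through $\cp$, $\tp$, and left actions along $\delta$ and $\pi_i$; since $\funr{F}$ preserves all of these (by the lemma, using that $\ol{F}$ preserves $\sigma$, $\delta$, $\pi_i$), it carries the constructed pullback cone to the constructed pullback cone for $\funr{F}\theta_1,\funr{F}\theta_2$, which is a pullback in $\func{\pr'}$. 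Finally, regular epimorphisms are preserved: by \cref{prop.epis} a function $\theta$ is a regular epi iff $\varphi_2=\im(\theta)=\ust{\epsilon_{\Gamma_1}}\cp\theta$, and the lemma gives $\im(\funr{F}\theta)=F^\sharp(\im\theta)$, so this equality transports along $F^\sharp$.

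It then remains to verify that $\syn$ itself respects identities and composition in $\rgcalc$. For the identity morphism $(\id_\typeset,\id_\pr)$ one has $\ol{\id_\typeset}=\id_{\frb}$ and $\id_\pr$ acting as the identity, so $\syn(\id)$ is the identity functor on $\func{\pr}$. For a composite $(G,G^\sharp)\circ(F,F^\sharp)$, whose $\sharp$-component at $\Gamma$ is $F^\sharp_\Gamma\cp G^\sharp_{\ol{F}\Gamma}$, I would observe that both $\syn\big((G,G^\sharp)\circ(F,F^\sharp)\big)$ and $\syn(G,G^\sharp)\circ\syn(F,F^\sharp)$ reduce, via $\overline{G\circ F}=\ol{G}\circ\ol{F}$, to the same assignment $(\Gamma,\varphi)\mapsto\big(\ol{G}\,\ol{F}\Gamma,\;G^\sharp_{\ol{F}\Gamma}F^\sharp_{\Gamma}\varphi\big)$ on objects, and analogously on morphisms. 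The conceptual content is thus entirely in the compatibility lemma together with the observation that every construction of \cref{chap.functions} is graphical; the main obstacle I anticipate is bookkeeping—pinning down exactly which coherence equalities the word ``strict'' in the definition of a morphism of regular calculi guarantees, and confirming that $\ol{F}=\frb[F]$ preserves the structural morphisms $\delta,\epsilon,\pi_i,\sigma,\comp$ strictly enough (rather than merely up to coherent isomorphism) for the graphical relations to transport as literal equalities, not just entailments.
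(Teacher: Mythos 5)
Your proposal is correct and follows essentially the same route as the paper: the compatibility lemma you isolate is exactly the paper's Lemma~\ref{lem.funr_preserves_diagrams} (proved the same way, via strict naturality and monoidality of $F^\sharp$), and the paper likewise deduces regularity of $\funr{F}$ by noting that internal functions, composition, identities, finite limits, and regular epis are all expressed by graphical terms and entailments, then checks functoriality of $\syn$ directly from \cref{eqn.F_objects,eqn.F_morphisms}. Your write-up merely spells out details the paper leaves implicit (totality/determinism, \cref{lemma.terminal,lemma.pullbacks,prop.epis}), which is fine.
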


\Cref{thm.syn_def} is proved on page~\pageref{page.proof_thm.syn_def}; first we need the following lemma.
\begin{lemma} \label{lem.funr_preserves_diagrams}
  $F^\sharp$ preserves semantics of graphical terms. 
  
  More precisely, given any $\pr$-graphical term $(\theta_1,\dots,\theta_k; \omega)$,
  the morphism $(F,F^\sharp)$ induces a $\pr'$-graphical term
  $(F^\sharp\theta_1,\dots,F^\sharp\theta_k; \overline{F}(\omega))$; we call this its
  \define{image} under $F^\sharp$. The image obeys
  \[
  F^\sharp\church{(\theta_1,\dots,\theta_k; \omega)} = 
  \church{(F^\sharp\theta_1,\dots,F^\sharp\theta_k; \overline{F}(\omega))}.
  \]
  Furthermore, given the entailment $(\theta_1,\dots,\theta_k; \omega) \vdash  (\theta_1',\dots,\theta_{k'}'; \omega')$, it follows that
    \[
    (F^\sharp\theta_1,\dots,F^\sharp\theta_k; \overline{F}(\omega)) \vdash
  (F^\sharp\theta_1',\dots,F^\sharp\theta_{k'}'; \overline{F}(\omega')).
\]
\end{lemma}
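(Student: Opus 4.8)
The plan is to unwind the definition of the semantics $\church{-}$ and reduce everything to two facts about the morphism $(F,F^\sharp)$: that $F^\sharp$ is a (strict) monoidal natural transformation, and that $\overline{F}\coloneqq\frb[F]$ is a po-functor. Recall that by definition $\church{(\theta_1,\dots,\theta_k;\omega)} = (\pr(\omega)\cp\rho)(\theta_1,\dots,\theta_k)$, where $\rho$ is the $k$-ary laxator of $\pr$, and similarly on the $\pr'$ side the semantics uses $\pr'(\overline{F}\omega)$ and the laxator $\rho'$ of $\pr'$. So the claimed identity
\[
F^\sharp\church{(\theta_1,\dots,\theta_k;\omega)} =
\church{(F^\sharp\theta_1,\dots,F^\sharp\theta_k;\overline{F}\omega)}
\]
amounts to the equation $F^\sharp_{\Gamma_\out}\cp\pr'(\overline{F}\omega)\cp\rho' \,(\text{on }F^\sharp\theta_\bullet) = \pr(\omega)\cp\rho\,(\text{on }\theta_\bullet)$, read off after naming the domain and codomain contexts.

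First I would establish the naturality step. Since $F^\sharp\colon \pr\to(\overline{F}\cp\pr')$ is a natural transformation, for the morphism $\omega\colon \Gamma_1\oplus\dots\oplus\Gamma_k\tickar\Gamma_\out$ in $\frb$ the naturality square gives
\[
\pr(\omega)\cp F^\sharp_{\Gamma_\out} = F^\sharp_{\Gamma_1\oplus\dots\oplus\Gamma_k}\cp\pr'(\overline{F}\omega).
\]
Second I would use that $F^\sharp$ is \emph{strictly monoidal}: the strictness clause in \cref{def.reg_sketch} says the relevant coherence diagrams of posets commute on the nose, so the $k$-ary laxator compatibility
\[
\rho\cp F^\sharp_{\Gamma_1\oplus\dots\oplus\Gamma_k}
= (F^\sharp_{\Gamma_1}\times\dots\times F^\sharp_{\Gamma_k})\cp\rho'
\]
holds as an equality (using that $\overline{F}$ sends $\bigoplus_i\Gamma_i$ to $\bigoplus_i\overline{F}\Gamma_i$, which is immediate from $\overline{F}=\frb[F]$ being strong monoidal). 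Composing these two displayed equalities yields
\[
\pr(\omega)\cp\rho\cp F^\sharp_{\Gamma_\out}\text{-precomposition}
= (F^\sharp_{\Gamma_1}\times\dots\times F^\sharp_{\Gamma_k})\cp\rho'\cp\pr'(\overline{F}\omega),
\]
and evaluating both sides at $(\theta_1,\dots,\theta_k)$ gives precisely the semantics identity. The entailment statement then follows for free: since $\pr'(\overline{F}\omega)\cp\rho'$ is a monotone map between posets, applying it to the witnessed inequality $\church{(\theta_\bullet;\omega)}\vdash\church{(\theta'_\bullet;\omega')}$ (after pushing through $F^\sharp$ on both sides via the identity just proved) preserves $\vdash$.

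The main obstacle I anticipate is purely bookkeeping: keeping the contexts straight across the two diagrams. In particular one must use that $\overline{F}$ commutes with $\oplus$ up to the chosen strict structure and that the $k$-ary laxator $\rho$ is assembled from the binary laxators coherently, so that the ``strict in every respect'' hypothesis really does make the laxator-compatibility square an equality rather than merely a mate-square inequality (this is exactly the distinction flagged in the \textbf{Warning} after the definition of ajax). Since strictness is built into the definition of a morphism of regular calculi, no genuine inequality-juggling is needed; the only care is to invoke strictness at the laxator step and naturality at the $\omega$ step, in that order, and to treat the $k=0$ and $k\geq 2$ cases uniformly by using the $k$-ary laxator notation $\rho_{\Gamma_1,\dots,\Gamma_k}$ from the notation conventions.
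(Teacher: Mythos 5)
Your proposal is correct and takes essentially the same approach as the paper's proof: unwind the definition of $\church{-}$, apply naturality of $F^\sharp$ at $\omega$, then the strict (on-the-nose) laxator compatibility coming from the strictness clause in the definition of a morphism of regular calculi, and get the entailment claim from monotonicity. The only small slip is in the last step: the monotone map to apply to both sides of the hypothesis $\church{(\theta_1,\dots,\theta_k;\omega)}\vdash\church{(\theta'_1,\dots,\theta'_{k'};\omega')}$ is the component $F^\sharp_{\Gamma_\out}$ (as the paper does), not $\pr'(\overline{F}(\omega))\cp\rho'$; after applying $F^\sharp_{\Gamma_\out}$, the identity just proved rewrites both sides as the $\pr'$-semantics of the image terms.
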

\begin{proof}
  The naturality and monoidality of $(F,F^\sharp)$ imply:
  \begin{align*}
    F^\sharp\church{(\theta_1,\dots,\theta_k; \omega)}
    &= F^\sharp(\pr(\omega))(\rho(\theta_1,\dots,\theta_k))) \\ 
    &= \pr'(\overline{F}(\omega))(F^\sharp(\rho(\theta_1,\dots,\theta_k))) \\ 
    &= \pr'(\overline{F}(\omega))(\rho(F^\sharp\theta_1,\dots,F^\sharp\theta_k)) \\ 
    &= \church{(F^\sharp\theta_1,\dots,F^\sharp\theta_k; \overline{F}(\omega))}.
  \end{align*}
  The second claim then follows from the monotonicity of components in $F^\sharp$.
\end{proof}

\begin{proof}[Proof of \cref{thm.syn_def}]\label{page.proof_thm.syn_def}
  First we must check that our data type-checks. We have already shown that $\func{\pr}$
  is a regular category, so it remains to show that $\funr{F}$ is a regular
  functor. This is a consequence of \cref{lem.funr_preserves_diagrams}.

  In particular, recall from \cref{def.RT} that morphisms
  in $\func{\pr}$ can be represented by $\pr$-graphical terms obeying certain
  entailments. It was shown in \cref{sec.finlims,sec.images} that composition, identities, finite limits, and regular epis
  can also be described in this way.  \cref{lem.funr_preserves_diagrams} implies
  that given a $\pr$-graphical term, its image under $F^\sharp$ preserves
  entailments and equalities. Thus $\funr{F}$ sends
  internal functions to internal functions of the required domain and codomain,
  preserves composition, identities, finite limits, and regular epis, and hence
  is a regular functor.

  It is then immediate from the definition
  (\cref{eqn.F_objects,eqn.F_morphisms}) that $\syn$ preserves identity
  morphisms and composition, and so $\syn$ is indeed a functor.
\end{proof}

\section{The essential reflection}

Recall that $\prd(\cat{R})=(\ob\cat{R},\sub_{\cat{R}}\Prod)$ and $\syn(\pr)=\ladj(\rela{\pr})$; see \cref{eqn.rels_on_objects,thm.internal_relations}.

\begin{proposition}\label{prop.essential_reflection}
For any regular category $\cat{R}$, there is a natural equivalence of categories
\[\epsilon\colon\syn(\prd(\cat{R}))\To{\simeq}\cat{R}.\]
\end{proposition}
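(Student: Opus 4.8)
The plan is to exhibit $\epsilon_{\cat{R}}$ as the functor induced on left adjoints by an equivalence of po-categories $\rela{\prd(\cat{R})}\simeq\rrel{\cat{R}}$, and then to invoke the fundamental lemma. Recall that $\syn(\prd(\cat{R}))=\func{\prd(\cat{R})}=\ladj(\rela{\prd(\cat{R})})$, so it is enough to compare $\rela{\prd(\cat{R})}$ with $\rrel{\cat{R}}$ and apply the construction $\ladj(-)$.

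First I would define a po-functor $G\colon\rela{\prd(\cat{R})}\to\rrel{\cat{R}}$ sending an object $(\Gamma,\varphi)$—where $\varphi\in\sub_{\cat{R}}(\Prod[\Gamma])$ is a subobject $\varphi\ss\Prod[\Gamma]$—to its domain $\dom(\varphi)\in\cat{R}=\ob\rrel{\cat{R}}$, and acting on hom-posets by the isomorphism $\rela{\prd(\cat{R})}\big((\Gamma_1,\varphi_1),(\Gamma_2,\varphi_2)\big)\iso\rrel{\cat{R}}(\varphi_1,\varphi_2)$ of \cref{prop.rela_rels_rrel}. The naturality clause of \cref{prop.rela_rels_rrel} is precisely what is needed to check that $G$ preserves identities and composition, so that $G$ is a genuine po-functor; by construction it is a local isomorphism. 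It is moreover essentially surjective—indeed surjective on objects—since every $r\in\cat{R}$ is the domain of the top subobject $\true\ss r=\Prod[\unary{r}]$, whence $G(\unary{r},\true)=r$. A po-functor that is essentially surjective and locally an isomorphism of posets is an equivalence of po-categories.

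Next I would apply $\ladj(-)$. Since po-functors are strict $2$-functors, they send adjunctions to adjunctions, so $G$ restricts to a functor $\ladj(G)\colon\ladj(\rela{\prd(\cat{R})})\to\ladj(\rrel{\cat{R}})$; and since $\ladj(-)$ is itself $2$-functorial it preserves the equivalence, so $\ladj(G)$ is an equivalence of categories. Composing with the identity-on-objects isomorphism $\ladj(\rrel{\cat{R}})\iso\cat{R}$ of the fundamental lemma (\cref{lemma.fundamental}) produces the desired equivalence $\epsilon_{\cat{R}}\colon\syn(\prd(\cat{R}))=\ladj(\rela{\prd(\cat{R})})\To{\simeq}\cat{R}$. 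Concretely, $\epsilon_{\cat{R}}$ sends $(\Gamma,\varphi)$ to $\dom(\varphi)$ and sends an internal function—a left adjoint in $\rela{\prd(\cat{R})}$—to the unique morphism of $\cat{R}$ whose graph it corresponds to under \cref{prop.rela_rels_rrel,lemma.fundamental}.

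Finally I would verify naturality of $\epsilon$ in $\cat{R}$. Given a regular functor $\funr{F}\colon\cat{R}\to\cat{R}'$, the square relating $\epsilon_{\cat{R}}$, $\epsilon_{\cat{R}'}$, $\funr{F}$, and $\syn(\prd(\funr{F}))$ commutes because the isomorphisms of \cref{prop.rela_rels_rrel} are natural in $\cat{R}$, because $\rrel{\funr{F}}$ carries graphs of morphisms to graphs of morphisms compatibly with $\funr{F}$, and because the fundamental lemma isomorphism is itself natural. I expect the only real work to lie in this last bookkeeping step and in confirming that the $G$-based map agrees with the categorical counit of $\syn\dashv\prd$; the substantive mathematics—local full faithfulness together with (essential) surjectivity—is already delivered by \cref{prop.rela_rels_rrel} and the one-line observation about unary contexts.
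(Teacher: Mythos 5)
Your proposal is correct and takes essentially the same approach as the paper: both arguments rest on the hom-poset isomorphism of \cref{prop.rela_rels_rrel}, the fundamental lemma (\cref{lemma.fundamental}), and essential surjectivity via unary contexts $(\unary{r},\true)$. The only difference is packaging: the paper directly defines the quasi-inverse pair $\epsilon(\Gamma,r)=r$ and $\epsilon'(r)=(\unary{r},r)$, transporting morphisms along the composite isomorphism $\func{\prd(\cat{R})}\big((\Gamma,r),(\Gamma',r')\big)\cong\ladj(\rrel{\cat{R}})(r,r')\cong\cat{R}(r,r')$, whereas you first assemble \cref{prop.rela_rels_rrel} into a po-functor equivalence $\rela{\prd(\cat{R})}\simeq\rrel{\cat{R}}$ and then apply $\ladj(-)$---in both cases the (lightly treated) crux is that these hom-isomorphisms respect composition and identities, so that left adjoints correspond to left adjoints.
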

\begin{proof}
We will define functors $\epsilon\colon\func{\prd(\cat{R})}\tofrom\cat{R}\cocolon\epsilon'$ and show that they constitute an equivalence. We have $\ob(\func{\prd(\cat{R})})=\{(\Gamma,r)\mid \Gamma\in\frc[\ob\cat{R}], \;r\in \sub_{\cat{R}}\Prod[\Gamma]\}$, so  put
\[
  \epsilon(\Gamma,r)\coloneqq r,
  \qqand  \epsilon'(r)\coloneqq (\unary{r},r),
\]
where $\unary{r}$ is the unary context on $r$ and $r\ss r=\Prod[\unary{r}]$ is the top element. Given also $(\Gamma',r')$, we have an isomorphism of hom-sets
\[
\func{\prd(\cat{R})}\left((\Gamma,r),(\Gamma',r')\right) \cong \ladj(\rrel{\cat{R}})(r,r') \cong \cat{R}(r,r'),
\]
by \cref{def.RT,prop.rela_rels_rrel,lemma.fundamental}. Hence, we define $\epsilon$ and $\epsilon'$ on morphisms to be the corresponding mutually-inverse maps. Obviously, $\epsilon$ and $\epsilon'$ are fully faithful functors, and $\epsilon'\cp \epsilon=\id_{\cat{R}}$, so $\epsilon$ is essentially surjective.
\end{proof}

We next prove that $\prd\colon\rgcat\to\rgcalc$ is full, fulfilling a promise made after \cref{prop.rels}, where $\prd$ was first defined. Recall that $\prd(\cat{R})=(\ob(R),\sub_{\cat{R}}\Prod[-])$.

\begin{corollary}\label{cor.rels_full}
The functor $\prd\colon\rgcat\to\rgcalc$ is full.
\end{corollary}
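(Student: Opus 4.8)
The plan is to leverage the functor $\syn$ and the essential reflection just established, rather than build $\funr{G}$ by hand. Given a morphism of regular calculi $g=(F,F^\sharp)\colon\prd(\cat{R})\to\prd(\cat{R}')$, I would set
\[
  \funr{G}\coloneqq\epsilon'_{\cat{R}}\cp\syn(g)\cp\epsilon_{\cat{R}'}\colon\cat{R}\to\cat{R}',
\]
where $\epsilon_{\cat{R}'}\colon\syn(\prd(\cat{R}'))\to\cat{R}'$ and its quasi-inverse $\epsilon'_{\cat{R}}\colon\cat{R}\to\syn(\prd(\cat{R}))$ are the equivalences of \cref{prop.essential_reflection} (recall $\epsilon'\cp\epsilon=\id$), and $\syn(g)\colon\func{\prd(\cat{R})}\to\func{\prd(\cat{R}')}$ is the regular functor of \cref{thm.syn_def}. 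This is a composite of regular functors, hence regular. Since \cref{prop.rels} already supplies faithfulness, the whole statement reduces to verifying the single equation $\prd(\funr{G})=g$ in $\rgcalc$, that is, $(\ob\funr{G},\funr{G}^\sharp)=(F,F^\sharp)$.

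On objects this is a direct computation: $\epsilon'_{\cat{R}}(r)=(\unary{r},\true)$, and since $\ol{F}(\unary{r})=\unary{Fr}$ and strictness of $F^\sharp$ gives $F^\sharp(\true)=\true$, we get $\syn(g)(\unary{r},\true)=(\unary{Fr},\true)$, whence $\funr{G}(r)=\epsilon_{\cat{R}'}(\unary{Fr},\true)=Fr$. Thus $\ob\funr{G}=F$, so $\ol{\funr{G}}=\ol{F}$ and the two transformations $\funr{G}^\sharp,F^\sharp\colon\sub_{\cat{R}}\Prod\to\ol{F}\cp\sub_{\cat{R}'}\Prod$ have equal source and target and may be compared componentwise. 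For the comparison I would first handle graphs: tracing the definition of $\funr{G}$ through the hom-isomorphism $\func{\prd(\cat{R})}\big((\unary{r},\true),(\unary{s},\true)\big)\cong\cat{R}(r,s)$ of \cref{prop.rela_rels_rrel,lemma.fundamental}, the morphism $\funr{G}(f)$ attached to $f\colon r\to s$ is exactly the $\cat{R}'$-morphism whose graph is $F^\sharp_{\unary{r}\oplus\unary{s}}(\hat{f})$. On the other hand, by the relations-po-functor description of $\prd$ used in the faithfulness argument of \cref{prop.rels}, $\funr{G}^\sharp_{\unary{r}\oplus\unary{s}}(\hat{f})$ is the graph of $\funr{G}(f)$. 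Hence $\funr{G}^\sharp$ and $F^\sharp$ agree on every graph.

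It then remains to promote agreement-on-graphs to an arbitrary predicate $\varphi\in\pr(\Gamma)=\sub_{\cat{R}}(\Prod[\Gamma])$. Here I would invoke \cref{prop.T_sub_RT} with $s=\true_\Gamma$: the map $\varphi\mapsto\lsh{(\delta_\Gamma)}(\varphi)$ exhibits each predicate as a monomorphism $(\Gamma,\varphi)\inj(\Gamma,\true)$ in $\func{\prd(\cat{R})}$. Applying the regular functor $\syn(g)$ and reading off \cref{eqn.F_objects}, this subobject is sent to $(\ol{F}\Gamma,F^\sharp_\Gamma(\varphi))\inj(\ol{F}\Gamma,\true)$, so $F^\sharp_\Gamma(\varphi)$ is precisely the predicate naming the image subobject $\syn(g)(\Gamma,\varphi)$ via \cref{prop.T_sub_RT}. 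The same image subobject is named by $\funr{G}^\sharp_\Gamma(\varphi)=\funr{G}(\varphi)\ss\Prod[\ol{F}\Gamma]$, the direct image of $\varphi\inj\Prod[\Gamma]$ along the regular (hence mono-preserving) functor $\funr{G}$, once transported through $\epsilon,\epsilon'$. Matching the two readings yields $F^\sharp_\Gamma(\varphi)=\funr{G}^\sharp_\Gamma(\varphi)$ for all $\Gamma,\varphi$, completing $\prd(\funr{G})=g$.

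I expect the main obstacle to be exactly this last reconciliation. The two descriptions of $F^\sharp_\Gamma(\varphi)$—one as the strict monoidal, natural value of the calculus morphism, the other as a direct image computed by $\funr{G}$—are linked only through the \emph{non-strict} equivalences $\epsilon_{\cat{R}},\epsilon_{\cat{R}'}$ and the bookkeeping that a context $\Gamma$ is merely a presentation of the single object $\Prod[\Gamma]$. The clean way to control this is to keep everything inside $\func{\prd(\cat{R})}$ and use \cref{prop.T_sub_RT,prop.rela_rels_rrel} together with \cref{thm.syn_is_regular} to identify the relevant subobject posets on the nose, so that ``the predicate naming a subobject'' is unambiguous; strictness of $F^\sharp$—in particular $F^\sharp(\true)=\true$ together with compatibility with $\lsh{(\delta_\Gamma)}$ via naturality and monoidality—is what forces the two readings to coincide literally rather than merely up to isomorphism.
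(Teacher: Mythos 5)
Your proposal is correct and takes essentially the same route as the paper: the paper's own proof likewise obtains the required regular functor by applying $\syn$ to $(F,F^\sharp)$ and sandwiching it between the equivalences $\epsilon'_{\cat{R}}$ and $\epsilon_{\cat{R}'}$ of \cref{prop.essential_reflection}, then declares the identity $\prd(\funr{F})=(F,F^\sharp)$ ``routine to check.'' Your extra work---objects via strictness $F^\sharp(\true)=\true$, morphisms via graphs and the hom-isomorphism, and general predicates via \cref{prop.T_sub_RT}---just fills in the verification the paper leaves to the reader (and, incidentally, gets the direction of $\syn(F,F^\sharp)$ right, which the paper's text has typo-reversed).
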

\begin{proof}
Let $\cat{R},\cat{R}'$ be regular categories, and suppose given a map
$(F,F^\sharp)\colon\prd(\cat{R})\to\prd(\cat{R}')$; we need to show there exists
a functor $\funr{F}\colon\cat{R}\to\cat{R}'$ such that
$\prd(\funr{F})=(F,F^\sharp)$. The key idea is that $(F,F^\sharp)$ specifies the
action of the desired functor $\funr{F}$ on subobject semilattices, which is
enough, since every morphism in $\cat{R}$ can be recovered from its graph.

Applying $\syn$ to $(F,F^\sharp)$, we obtain a regular functor
$\syn(F,F^\sharp)\colon \syn(\prd(\cat{R}')) \to \syn(\prd(\cat{R}))$. Pre- and
post-composing this with the equivalences $\epsilon'_R\colon
\cat{R}\to\syn(\prd(\cat{R}))$ and $\epsilon_{\cat{R}'}\colon
\syn(\prd(\cat{R'}))\to\cat{R}'$ from \cref{prop.essential_reflection}, we
obtain a regular functor $\funr{F}\colon\cat{R} \to \cat{R'}$. It is routine to
check that the image of this functor is $\prd(\funr{F})=(F,F^\sharp)$. 
\end{proof}

\begin{theorem}\label{thm.main}
The functors $\prd$ and $\syn$ are adjoint:
\[
\adj{\rgcalc}{\syn}{\prd}{\rgcat.}
\]
Moreover, $\prd$ is fully faithful, and for any regular category $\cat{R}$, the counit map $\syn(\prd(\cat{R}))\to\cat{R}$ is an equivalence.
\end{theorem}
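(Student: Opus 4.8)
The statement collects three facts: $\prd$ is fully faithful, the functors $\prd$ and $\syn$ form an adjunction $\syn\dashv\prd$, and the counit $\syn(\prd(\cat{R}))\to\cat{R}$ is an equivalence. Most of the work has already been discharged by the earlier results, so the proof will be a matter of assembling them and verifying the adjunction. I would begin by recording full faithfulness: $\prd$ is faithful by \cref{prop.rels} and full by \cref{cor.rels_full}. The equivalence $\epsilon_{\cat{R}}\colon\syn(\prd(\cat{R}))\To{\simeq}\cat{R}$ is exactly \cref{prop.essential_reflection}; the content of the theorem is then to promote this family of equivalences into the counit of an adjunction.

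**Constructing the adjunction.** The cleanest route is to exhibit the unit and counit and check the triangle identities, or—more economically—to establish the natural hom-set bijection
\[
\rgcat\big(\syn(\typeset,\pr),\cat{R}\big)\;\cong\;\rgcalc\big((\typeset,\pr),\prd(\cat{R})\big).
\]
I would construct this bijection directly. Given a regular functor $\funr{G}\colon\syn(\typeset,\pr)=\func{\pr}\to\cat{R}$, apply $\prd$ to obtain $\prd(\funr{G})\colon\prd(\func{\pr})\to\prd(\cat{R})$, and precompose with a canonical morphism $(\typeset,\pr)\to\prd(\func{\pr})=\prd(\syn(\typeset,\pr))$—the unit $\eta_{(\typeset,\pr)}$—to land in $\rgcalc((\typeset,\pr),\prd(\cat{R}))$. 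Conversely, given $(F,F^\sharp)\colon(\typeset,\pr)\to\prd(\cat{R})$, apply $\syn$ to get $\syn(F,F^\sharp)\colon\func{\pr}\to\func{\prd(\cat{R})}$ and postcompose with the equivalence $\epsilon_{\cat{R}}\colon\func{\prd(\cat{R})}\To{\simeq}\cat{R}$ from \cref{prop.essential_reflection}; this is precisely the move already performed in the proof of \cref{cor.rels_full}. The main task is therefore to produce the unit $\eta_{(\typeset,\pr)}\colon(\typeset,\pr)\to\prd(\syn(\typeset,\pr))$ and to check that these two passages are mutually inverse and natural.

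**The unit and the triangle identities.** To build $\eta$, note that $\prd(\func{\pr})=\big(\ob\func{\pr},\,\sub_{\func{\pr}}\Prod\big)$, so its first component is the set of objects $(\Gamma,\varphi)$ of $\func{\pr}$. The function $\typeset\to\ob\func{\pr}$ sends a type $t$ to the object $(\unary{t},\true)$, and the monoidal natural transformation $\eta^\sharp$ sends a predicate $\varphi\in\pr(\Gamma)$ to the subobject $(\Gamma,\varphi)\inj(\Gamma,\true)$ of \cref{prop.T_sub_RT}; indeed \cref{prop.T_sub_RT} gives the isomorphism $\pr(\Gamma)\cong\sub_{\func{\pr}}(\Gamma,\true)$ that makes $\eta^\sharp$ componentwise an iso, which is what will eventually force the adjunction's unit to be an isomorphism and hence $\prd$ fully faithful. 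I expect the principal obstacle to be the bookkeeping in verifying the triangle identities—specifically, checking that the composite $\prd\xRightarrow{\prd\eta}\prd\syn\prd\xRightarrow{\prd\epsilon}\prd$ is the identity, which amounts to tracing through the definitions of $\prd$ on morphisms (\cref{prop.rels}), of $\syn$ on morphisms (\cref{eqn.F_objects,eqn.F_morphisms}), and of the equivalence $\epsilon$ (\cref{prop.essential_reflection}) simultaneously. Once the hom-set bijection is shown natural in both variables, the adjunction $\syn\dashv\prd$ follows formally; \cref{prop.essential_reflection} identifies its counit with the stated equivalence, and the fact that the unit $\eta$ is a natural isomorphism (via \cref{prop.T_sub_RT}) re-derives that $\prd$ is fully faithful, consistent with \cref{cor.rels_full}.
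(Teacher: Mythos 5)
Your proposal is correct and takes essentially the same approach as the paper: the paper likewise assembles full faithfulness from \cref{prop.rels,cor.rels_full}, takes the counit equivalence from \cref{prop.essential_reflection}, and constructs the unit $\eta=(e,e^\sharp)$ with $e(\tau)=(\unary{\tau},\true)$ and $e^\sharp$ the natural isomorphism of \cref{prop.T_sub_RT}, reducing everything to the triangle identities. Your alternative packaging via the hom-set bijection uses exactly the same unit, counit, and verification, so it is not a genuinely different route.
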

\begin{proof}
We showed that $\prd$ is fully faithful in \cref{prop.rels,cor.rels_full} and that there is a natural transformation $\epsilon\colon\prd\cp\syn\to\id_{\rgcat}$ with the property that $\epsilon_{\cat{R}}$ is an equivalence for any $\cat{R}$. It remains to construct $\eta\colon\id_\rgcalc\to\syn\cp\prd$ and check that $\epsilon$ and $\eta$ satisfy the triangle identities.

Given a regular calculus $(\typeset, \pr)$, we have $\prd(\syn(\typeset, \pr))=(\ob\func{\pr},\sub_{\func{\pr}}\Prod)$, where $\ob\func{\pr}=
	\{(\Gamma,\varphi)\mid\Gamma\in\frc,\varphi\in \pr(\Gamma)\}$. There is an obvious function $e\colon\typeset\to\ob\func{\pr}$ sending $\tau \mapsto (\unary{\tau}, \true)$, where as usual, $\unary{\tau}$ is the unary context and $\true\in \pr(\unary{\tau})$ is its top element. We will define $\eta\coloneqq(e,e^\sharp)$, where $e^\sharp(\Gamma)\colon \pr(\Gamma)\to\sub_{\func{\pr}}\Prod[\ol{e}(\Gamma)]=\sub_{\func{\pr}}(\Gamma,\true)$ is the natural isomorphism given in \cref{prop.T_sub_RT}:
\[
\begin{tikzcd}[row sep=5pt]
  \typeset\ar[dd, "e"']&
  \frb[\typeset]\ar[dd, "\ol{e}"']\ar[dr, bend left=15pt, "\pr", ""' name={T}]\\
  &&[20pt]\pposet\\
  \ob\func{\pr}&
  \frb[\ob\func{\pr}]\ar[ur, bend right=15pt, "\sub_{\func{\pr}}\Prod"', "" name={T'}]
  \ar[from=T, to={T'-|T}, twocell, "e^\sharp"']
\end{tikzcd}
\]
The fact that $\epsilon_{\cat{R}}$ is an equivalence and that $e^\sharp$ is a natural isomorphism make the triangle identities particularly easy (if tedious) to verify. This completes the proof.
\end{proof}

\section*{Acknowledgments}

The authors thank Paolo Perrone for comments that have improved this article.

\printbibliography
\end{document}